\date{\today}
\newtheorem{theorem}{Theorem}[section]
\newtheorem{lemma}[theorem]{Lemma}
\newtheorem{definition}[theorem]{Definition}
\newtheorem{corollary}[theorem]{Corollary}
\newtheorem{proposition}[theorem]{Proposition}
\newtheorem*{fact}{Fact}
\newtheorem*{fait}{Fact}
\newcommand{\Z}{\mathbb Z }
\newcommand{\N}{\mathbb N}
\newcommand{\GGBS}{$vGBS$\xspace}
\newcommand{\psa}{p.s.a.\xspace}
\newcommand{\psad}{p.s.a.d.\xspace}
\newcommand{\tor}{toric\xspace}
\newcommand{\pa}{pre-ascending\xspace}
\newcommand{\pde}{pre-descending\xspace}
\newcommand{\pbe}{potentially bi-elliptic\xspace}
\newcommand{\bea}{bearing\xspace}
\newcommand{\vGBS}{$vGBS$\xspace}
\newcommand{\GBS}{$GBS$\xspace}
\newcommand{\GBSn}[1]{$GBS_{#1}$\xspace}
\newcommand{\gr}[1]{\mathfrak{#1}}
\title{Compatibility JSJ decomposition of graphs of free abelian groups}
\author{Benjamin Beeker}
\begin{document}
\maketitle

\begin{abstract}
A group $G$ is a \vGBS group if it admits a decomposition as a finite graph of groups with all edge and vertex groups finitely generated and free abelian. We describe the compatibility JSJ decomposition over abelian groups. We prove that in general this decomposition is not algorithmically computable. 
\end{abstract}

\section{Introduction}
The theory of JSJ splittings starts with the work of Jaco-Shalen and Johansson on orientable irreducible closed 3-manifolds giving a canonical family of 2-dimensional tori. Kropholler first introduced the notion into group theory giving a JSJ decomposition for some Poincar\'e duality groups \cite{Krop}. Then Sela gave a construction for torsion-free hyperbolic groups \cite{Sela}. This notion has been more generally developed by Rips and Sela \cite{RiSe}, Dunwoody and Sageev \cite{DunSa2}, Fujiwara and Papasoglu \cite{FuPa} for various classes of groups. 

However, in general, given a group $G$ and a class of subgroups, there is not a unique JSJ splitting of $G$ over these subgroups.  Guirardel and Levitt define in \cite{Gl3a} the JSJ deformation spacse which generalize the previous notions. They also introduce in \cite{Gl3b} a new object called the compatibility JSJ splitting of $G$ over these subgroups. Unlike the usual JSJ splittings, the compatibility JSJ splitting is unique, and so is invariant under is automorphisms. However, it is often harder to construct it, the purpose of this article is to give some positive and negative results about the constructibility of this object.

In this paper we focus on the construction of the compatibility JSJ tree over abelian groups for the following class of groups. Let $G$ be a group acting on a simplicial tree $T$ with free abelian vertex (and edge) stabilizers. We call such a group a Generalized Baumslag-Solitar group of variable rank, or \vGBS group, and such a tree $T$ a \vGBS ($G$-)tree. We call \GBSn{n} the collection of groups which admits an action on a simplicial tree with vertex stabilizers $\Z^n$ with $n$ a fixed integer. This two classes of groups generalize the one of \GBSn{1} groups (or \GBS groups) introduced by Forester in \cite{for03} as examples of groups for which we do not have a canonical (usual) JSJ tree. 

Recall that an element of $G$ is \textit{elliptic} if it fixes a vertex of $T$, and \textit{hyperbolic} otherwise. Given a hyperbolic element $g\in G$, it acts by translation on a line $\mathcal A_g$ of $T$ called the \textit{axis} of $g$ or its \textit{characteristic space}. The characteristic space of an elliptic element is the set of its fixed points.
A subgroup of $G$ is \textit{elliptic}, if it is included in the stabilizer of a vertex. A subgroup of $G$ is \textit{universally elliptic} if it is elliptic in every $G$-tree.
With no restriction on the $G$-trees, almost any tree can be universally elliptic, we thus consider $G$-trees whose edge groups are included in a set $\mathcal A$ of subgroups of $G$. We then talk about $G$-trees over $\mathcal A$, and a subgroup is $\mathcal A$-universally elliptic if it is elliptic in every  $G$-tree over $\mathcal A$.

Given two $G$-trees $T$ and $T'$, the tree $T$ \textit{refines} $T'$ if $T'$ may be obtained from $T$ by equivariantly collapsing edges. The tree $T$  \textit{dominates} $T'$ if every elliptic group of $T$ is elliptic in $T'$. In particular, if $T$ refines $T'$ then it also dominates it.
We say that $T$ and $T'$ are  \textit{compatible} if there exists a $G$-tree $T''$ which refines both $T$ and $T'$.
A $G$-tree is  \textit{$\mathcal A$-universally compatible} if it is compatible with every $G$-tree (over $\mathcal A$).

A $G$-tree over $\mathcal A$ is a \textit{JSJ tree} over  $\mathcal A$ if it is $\mathcal A$-universally elliptic and dominates every universally elliptic $G$-tree over $\mathcal A$. A $G$-tree over $\mathcal A$ is a \textit{compatibility JSJ tree} over  $\mathcal A$ if it is $\mathcal A$-universally compatible and dominates every universally compatible $G$-tree over $\mathcal A$.

The present paper splits into two parts. We first describe the compatibility JSJ tree  over $\Z^n$ groups of the \GBSn{n} groups. In the second part, we describe the compatibility JSJ tree over abelian groups of the \vGBS groups.

Let $G$ be a \GBSn{n} group. We propose to describe the compatibility JSJ tree over $\Z^n$ groups in the following sense. Starting from a JSJ tree $T$ of $G$ over $\Z^n$ groups (except for some degenerated cases, any \GBSn{n} tree is a JSJ tree \cite{Moi1}), we explicit a set of edges and a set of vertices such that the compatibility JSJ tree is obtained by expanding (in a precise way) these vertices and collapsing these edges in $T$.

The compatibility relation is very restrictive. For example, two $G$-trees with an isomorphic quotient graph of groups may be not compatible. Take the \GBS group $\Z\ast_{2\Z}\Z\ast_{4\Z}\Z$. Its JSJ deformation space contains infinitely many reduced trees and any reduced JSJ tree of this deformation space have same isomorphic quotient graphs of groups. However, each of these reduced JSJ tree is compatible with exactly two others reduced JSJ trees.

Here are the two main examples of edges to collapse. The labelled graphs represent \GBS groups, and must be understood in the following manner: all vertices and edges carry the infinite cyclic group $\Z$, the number $n$ at the end of each edge indicates that the injection of the edge group into the group of its endpoint is $k\mapsto nk$.

In Figure \ref{exempleslip}, the two graphs of groups have isomorphic fundamental group, and are related by a slide along the edge denoted by $\gr e$. In this case, the edge $\gr e$ is called \textit{slippery} (see Section \ref{prelim} for the complete definition) and is collapsed in the compatibility JSJ. In fact, after collapsing the edge $\gr e$, we exactly obtain a compatibility JSJ over group $\Z$.
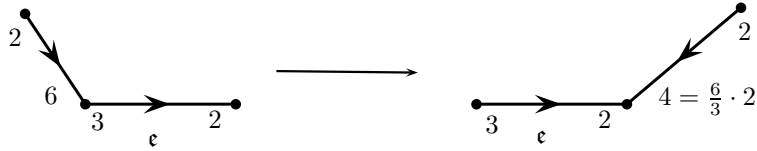
\begin{figure}[!ht]
\begin{center}
\scalebox{1} 
{
\begin{pspicture}(0,-1.0459619)(10.242812,1.0259618)
\definecolor{color54b}{rgb}{0.00392156862745098,0.00392156862745098,0.00392156862745098}
\psline[ArrowInside=->, ArrowInsidePos=0.5,arrowsize=0.25291667cm,linewidth=0.04cm](0.36504862,0.8518507)(1.1650486,-0.3481493)
\psline[ArrowInside=->, ArrowInsidePos=0.5,arrowsize=0.25291667cm,linewidth=0.04cm](1.1650486,-0.3481493)(3.1650486,-0.3481493)
\psline[ArrowInside=->, ArrowInsidePos=0.5,arrowsize=0.25291667cm,linewidth=0.04cm](9.880938,0.9359618)(8.365048,-0.3481493)
\psline[ArrowInside=->, ArrowInsidePos=0.5,arrowsize=0.25291667cm,linewidth=0.04cm](6.3650484,-0.3481493)(8.365048,-0.3481493)
\psdots[dotsize=0.14](8.365048,-0.3481493)
\psdots[dotsize=0.14](6.3650484,-0.3481493)
\psdots[dotsize=0.14](3.1650486,-0.3481493)
\psdots[dotsize=0.14](1.1650486,-0.3481493)
\psdots[dotsize=0.14](0.36504862,0.8518507)
\usefont{T1}{ptm}{m}{n}
\rput(2.067861,-0.8231493){$\gr e$}
\usefont{T1}{ptm}{m}{n}
\rput(7.227861,-0.7831493){$\gr e$}
\psline[linewidth=0.03cm, arrowsize=0.05291667cm 2.0,arrowlength=1.4,arrowinset=0.4]{->}(3.7050486,0.0918507)(5.5850487,0.071850695)
\psdots[dotsize=0.14](9.880938,0.9359618)
\usefont{T1}{ptm}{m}{n}
\rput(1.3323437,-0.5790382){$3$}
\usefont{T1}{ptm}{m}{n}
\rput(6.572344,-0.6190382){$3$}
\usefont{T1}{ptm}{m}{n}
\rput(2.8923438,-0.5590382){$2$}
\usefont{T1}{ptm}{m}{n}
\rput(8.072344,-0.5590382){$2$}
\usefont{T1}{ptm}{m}{n}
\rput(0.71234375,-0.2390382){$6$}
\usefont{T1}{ptm}{m}{n}
\rput(9.432344,-0.2790382){$4=\frac{6}{3}\cdot 2$}
\usefont{T1}{ptm}{m}{n}
\rput(9.9323435,0.6209618){$2$}
\usefont{T1}{ptm}{m}{n}
\rput(0.23234375,0.5409618){$2$}
\end{pspicture} 
}
\end{center}
\caption{The two graphs of groups are related by a slide.}
\label{exempleslip}
\end{figure}

In Figure \ref{exemplepa}, the two graphs have isomorphic fundamental groups, and are related by a deformation called \textit{induction along $\gr e$} (see definition in Section  \ref{prelim}). In this case, the edge $\gr e$ is called \textit{strictly ascending}. Collapsing $\gr e$, we obtain a compatibility JSJ.

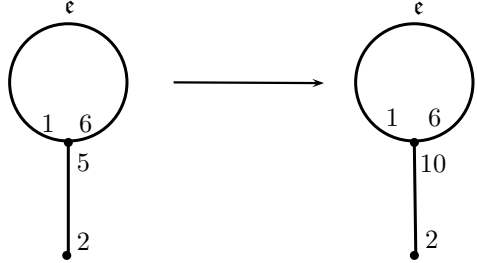
\begin{figure}[!ht]
\begin{center}
\scalebox{1} 
{
\begin{pspicture}(0,-1.7800001)(6.301875,1.8000001)
\pscircle[linewidth=0.04,dimen=outer](0.8,0.60156256){0.8}
\psdots[dotsize=0.12](0.8,-0.19843745)
\psline[linewidth=0.03cm,arrowsize=0.05291667cm 2.0,arrowlength=1.4,arrowinset=0.4]{->}(2.2,0.60156256)(4.2,0.5918751)
\pscircle[linewidth=0.04,dimen=outer](5.4,0.60156256){0.8}
\psdots[dotsize=0.12](5.4,-0.19843745)
\usefont{T1}{ptm}{m}{n}
\rput(0.82281256,1.6065625){$\gr e$}
\usefont{T1}{ptm}{m}{n}
\rput(5.4628124,1.5865625){$\gr e$}
\usefont{T1}{ptm}{m}{n}
\rput(0.5314062,0.04499995){$1$}
\usefont{T1}{ptm}{m}{n}
\rput(1.0314063,0.04499995){$6$}
\usefont{T1}{ptm}{m}{n}
\rput(5.1114063,0.12499995){$1$}
\usefont{T1}{ptm}{m}{n}
\rput(5.6714063,0.12499995){$6$}
\psline[linewidth=0.04cm](0.8,-1.7)(0.8,-0.20000005)
\psline[linewidth=0.04cm](5.42,-1.72)(5.4,-0.22000004)
\psdots[dotsize=0.12](5.4,-1.7)
\psdots[dotsize=0.12](0.78,-1.7)
\usefont{T1}{ptm}{m}{n}
\rput(1.0014062,-0.470500004){$5$}
\usefont{T1}{ptm}{m}{n}
\rput(5.6514063,-0.47500006){$10$}
\usefont{T1}{ptm}{m}{n}
\rput(1.0014063,-1.5150001){$2$}
\usefont{T1}{ptm}{m}{n}
\rput(5.631406,-1.495){$2$}
\end{pspicture} 
}
\end{center}
\caption{The two graphs of groups are related by an induction.}
\label{exemplepa}
\end{figure}

Given a usual abelian JSJ tree of a \GBSn{n} group, we obtain a universally compatible tree by collapsing four types of edges. The definitions of these edges is technical and described in Section \ref{prelim}. .

\begin{proposition}
Let $G$ be a \GBSn{n} group.
 Let $T$ be a reduced abelian JSJ tree of $G$. Let $\mathcal E$ be the set of edges containing the vanishing edges of $T$, the potentially strictly ascending edges of $T$, the non-ascending slippery edges of $T$, the \tor $2$-slippery edges of $T$. 

\begin{enumerate}[noitemsep, topsep=0cm]
\item The tree $T'$obtained from $T$ by collapsing the edges of $\mathcal E$ is compatible with every $G$-tree over $\Z^n$ groups.
\item Let $T''$ be a collapse of $T$ in which an edge of $\mathcal E$ is not collapsed. Then $T''$ is not compatible with every $G$-tree over $\Z^n$ groups.
\end{enumerate}
\end{proposition}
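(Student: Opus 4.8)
The plan is to reduce both parts to an analysis of how the edges of $T$ behave under the elementary deformations connecting $G$-trees over $\Z^n$. The key structural input, recalled in the introduction, is that outside the degenerate cases every \GBSn{n} tree over $\Z^n$ is itself a JSJ tree; hence every reduced $G$-tree over $\Z^n$ lies in the JSJ deformation space of $T$ and is joined to $T$ by a finite sequence of expansions, collapses, slides and inductions, with the degenerate trees treated directly. A point worth stressing at the outset is that, since all trees in a single deformation space dominate one another, they share the same elliptic subgroups: the edge group of every edge of $T$ stays elliptic in every $G$-tree $S$ over $\Z^n$. Consequently the coarse elliptic/hyperbolic (Rips--Sela) dichotomy is vacuous here, and the entire problem is the \emph{finer} question of whether a common refinement exists — which is exactly where the four families of $\mathcal E$ enter.

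First I would prove the positive statement (1). By the compatibility criterion of Guirardel--Levitt it suffices to build, for each one-edge splitting $S$ of $G$ over a $\Z^n$ subgroup, a $G$-tree refining both $T'$ and $S$, and then to assemble these local refinements. Because $T$ is universally elliptic and collapsing preserves universal ellipticity, every edge stabilizer of $T'$ is elliptic in $S$, so the only possible obstruction to a common refinement is that some surviving edge of $T'$ be ``crossed'' by the move producing $S$ in a way that forces a fold. I would argue that each surviving edge $\gr e$ of $T'$ — one that is neither vanishing, nor potentially strictly ascending, nor non-ascending slippery, nor \tor $2$-slippery — is universally compatible: its edge group is elliptic in $S$, and because $\gr e$ fails all four defining conditions, the indices of its edge-group injection are compatible with the slide or induction carried out across the incident vertices, so the superimposition of $T'$ and $S$ at that vertex does not fold. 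Gluing these vertexwise refinements yields the required common refinement.

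For the optimality statement (2) I would argue by contraposition, type by type. For each of the four families I would exhibit an explicit \GBSn{n} tree $S$ incompatible with any collapse $T''$ retaining an edge of that type. For a non-ascending slippery edge I take $S$ to be the tree obtained by the slide along $\gr e$ (as in Figure~\ref{exempleslip}); for a potentially strictly ascending edge, the tree obtained by the induction along $\gr e$ (as in Figure~\ref{exemplepa}); the vanishing and \tor $2$-slippery cases use the corresponding deformations from Section~\ref{prelim}. In each case, since $T''$ and $S$ lie in the same deformation space the edge group of $\gr e$ remains elliptic in $S$, so incompatibility cannot come from hyperbolicity; instead I would show that the two prescribed edge-group injections have incompatible indices, so that any candidate common refinement is forced to fold an edge and therefore collapses onto one of the two trees, contradicting that it refines the other.

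The main obstacle I anticipate is the sufficiency half of part (1): proving that the four listed families exhaust all sources of incompatibility, i.e. that every edge surviving in $T'$ is genuinely universally compatible. This demands a uniform criterion deciding, for an edge whose group is already universally elliptic, precisely when a slide or induction across it can be ``seen'' inside a common refinement and when it forces a fold — exactly the finer information the coarse dichotomy discards. The borderline between the non-ascending slippery and the \tor $2$-slippery cases seems the most delicate, since there the index-$2$ phenomenon interacts with the ambient $\Z^n$ in a way a plain rank count misses; I expect the bulk of the proof to be the case analysis settling this borderline, with the negative direction (2) being comparatively routine once the witness trees are written down.
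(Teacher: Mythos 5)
There is a genuine gap, and it is twofold. First, your structural premise fails: it is not true that every reduced $G$-tree over $\Z^n$ subgroups lies in the JSJ deformation space of $T$ and is joined to $T$ by Clay--Forester moves --- the collapse $T'$ itself is such a tree and lies in a different deformation space, as does the amalgam witness for $BS(2,2)$ discussed in the paper. What actually holds, and what the paper must prove (Lemma \ref{GBSsplit}), is that edge stabilizers of a JSJ tree are commensurable with commensurator all of $G$, so the edge group of \emph{any} minimal $G$-tree over $\Z^{r}$, $r\leq n$, virtually contains a universally elliptic group and is therefore universally elliptic; only then does \cite[Lemma 5.3]{Gl3a} give that every such tree is \emph{refined by} (not deformation-equivalent to) a JSJ tree, and \cite[Proposition 3.22]{Gl3b} reduces part 1 to compatibility of each one-edge collapse $T_f$ with each JSJ tree. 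That last step is not automatic either: it needs Proposition \ref{construction}, i.e.\ that the set of edges to be collapsed is invariant under slides, inductions and $\mathcal A^{\pm1}$-moves, so that $f$ survives in every reduced JSJ tree. Your replacement for all of this --- ``the indices of its edge-group injection are compatible\dots so the superimposition does not fold'' --- restates the conclusion; no fold or index criterion of this kind is formulated, and none is available.

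Second, your mechanism for part 2 is wrong in kind. A slide changes no edge or vertex stabilizer whatsoever, so the two trees related by a slide have \emph{identical} edge-group injections and indices; ``incompatible indices forcing a fold'' cannot detect anything here. The paper's obstruction is positional: Lemma \ref{incompatibility} produces four elements $a,b,c,d$ whose convex hulls of characteristic spaces are disjoint in the pattern $\{a,b\}\,|\,\{c,d\}$ before the move and $\{a,c\}\,|\,\{b,d\}$ after it, so in a common refinement the bridge between two disjoint hulls would lie inside two other disjoint hulls --- a contradiction (for \pa edges one instead invokes \cite[Proposition 7.1]{GL07}). Moreover your witness trees do not exist as described: slippery, \psa, vanishing and $2$-slippery only guarantee the relevant move after \emph{some} admissible sequence performed elsewhere in the deformation space, so one must first normalize that sequence (Lemmas \ref{suiteglissement}, \ref{nonslipvanish}, \ref{lemme2slip}) and verify that it preserves the refinement relation with the collapsed tree (Lemma \ref{rafinementcontinu}). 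This is where the bulk of the paper's work sits; your assessment that part 2 is ``comparatively routine'' while part 1 carries the case analysis inverts the actual difficulty.
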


However the tree $T'$ is not always the compatibility JSJ tree: some vertices could have to be expanded. Roughly speaking, some vertices could act as dead end: no edge "arriving" at this vertex by a slide may continue to slide further. These vertices must be expanded in the compatibility JSJ tree in a precise way called a \textit{blow up} (see Section \ref{prelim}). The vertex $\gr v$ of Figure \ref{exempleblowup} is an example. The top edge may slide along $\gr e$ or along $\gr f$, but after performing one of these slides, no new slide is allowed (except the converse one). The compatibility JSJ is obtained by collapsing $\gr e$ and $\gr f$ (which are slippery) in the graph of groups on the right. The complete description of \textit{dead ends} is given in Section \ref{prelim}. 

\begin{figure}[!ht]
\begin{center}
\scalebox{1} 
{
\begin{pspicture}(0,-1.63375)(9.642813,1.6078612)
\pscircle[linewidth=0.04,dimen=outer](1.4995313,-0.4646874){0.8}
\psdots[dotsize=0.12](1.4995313,-1.2646874)
\psline[linewidth=0.03cm,arrowsize=0.05291667cm 2.0,arrowlength=1.4,arrowinset=0.4]{->}(3.7195313,-0.4646874)(5.719531,-0.4743749)
\psdots[dotsize=0.12](1.4995313,0.3353126)
\psline[linewidth=0.04cm](1.4795313,1.53375)(1.4995313,0.35375)
\psline[linewidth=0.04cm](8.099531,1.53375)(8.099531,0.33375)
\psline[linewidth=0.04cm](8.099531,0.33375)(6.8995314,-1.26625)
\psline[linewidth=0.04cm](8.099531,0.33375)(9.299531,-1.26625)
\psline[linewidth=0.04cm](6.8995314,-1.26625)(9.299531,-1.26625)
\psdots[dotsize=0.12](8.099531,0.33375)
\psdots[dotsize=0.12](6.8995314,-1.26625)
\psdots[dotsize=0.12](9.299531,-1.26625)
\psdots[dotsize=0.12](8.099531,1.53375)
\psdots[dotsize=0.12](1.4995313,1.53375)
\usefont{T1}{ptm}{m}{n}
\rput(1.1323438,-1.36125){$3$}
\usefont{T1}{ptm}{m}{n}
\rput(6.7323437,-0.96125){$3$}
\usefont{T1}{ptm}{m}{n}
\rput(1.9323437,-1.36125){$5$}
\usefont{T1}{ptm}{m}{n}
\rput(9.332344,-0.96125){$5$}
\usefont{T1}{ptm}{m}{n}
\rput(7.132344,-1.46125){$1$}
\usefont{T1}{ptm}{m}{n}
\rput(9.012343,-1.46125){$1$}
\usefont{T1}{ptm}{m}{n}
\rput(1.1723437,0.01875){$2$}
\usefont{T1}{ptm}{m}{n}
\rput(7.552344,0.13875){$2$}
\usefont{T1}{ptm}{m}{n}
\rput(1.7923437,0.03875){$7$}
\usefont{T1}{ptm}{m}{n}
\rput(8.552343,0.15875){$7$}
\usefont{T1}{ptm}{m}{n}
\rput(1.8623438,0.47875){$14$}
\usefont{T1}{ptm}{m}{n}
\rput(8.462344,0.53875){$14$}
\usefont{T1}{ptm}{m}{n}
\rput(1.7923437,1.29875){$2$}
\usefont{T1}{ptm}{m}{n}
\rput(8.452344,1.33875){$2$}
\usefont{T1}{ptm}{m}{n}
\rput(0.46234375,-0.36125){$\gr e$}
\usefont{T1}{ptm}{m}{n}
\rput(2.6396875,-0.42125){$\gr f$}
\usefont{T1}{ptm}{m}{n}
\rput(7.202344,-0.40125){$\gr e$}
\usefont{T1}{ptm}{m}{n}
\rput(8.982344,-0.40125){$\gr f$}
\usefont{T1}{ptm}{m}{n}
\rput(1.5714062,-1.027139){$\gr v$}
\usefont{T1}{ptm}{m}{n}
\usefont{T1}{ptm}{m}{n}
\end{pspicture} 
}
\end{center}
\caption{The edges $\gr e$ and $\gr f$ are slippery and $\gr v$ is a dead end.}
\label{exempleblowup}
\end{figure}

\begin{theorem}\label{compintro}
Let $G$ be a \GBSn{n} group.
 Let $T$ be a reduced abelian JSJ tree of $G$.
  Call $T_{comp}$ the $G$-tree obtained from $T$ by blowing up up the dead ends and collapsing the  vanishing edges of $T$,  the \psa edges of $T$, the non-ascending slippery edges of $T$, the \tor $2$-slippery edges of $T$.

Then $T_{comp}$ is a compatibility JSJ tree over $\Z^n$ groups.
\end{theorem}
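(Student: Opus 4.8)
The plan is to check the two defining properties of a compatibility JSJ tree: that $T_{comp}$ is $\Z^n$-universally compatible, and that it dominates every universally compatible $G$-tree over $\Z^n$ groups. By the existence and uniqueness result of Guirardel--Levitt \cite{Gl3b}, the compatibility JSJ tree is exactly the maximum, in the domination order, among universally compatible trees, so it is enough to show that $T_{comp}$ is universally compatible and lies above every such tree.

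For the first property I would bootstrap from the preceding Proposition, which already tells us that the tree $T'$ obtained from $T$ by collapsing exactly the edges of $\mathcal E$ (the vanishing, the \psa, the non-ascending slippery and the \tor $2$-slippery edges) is compatible with every $G$-tree over $\Z^n$ groups. Since $T_{comp}$ is obtained from $T'$ by blowing up the dead ends, and a blow-up is a refinement, what remains is to see that this specific refinement does not destroy compatibility. I would argue locally at a dead end $\gr v$: given any $G$-tree $S$ over $\Z^n$ and a common refinement $R$ of $T'$ and $S$ furnished by the Proposition, I would show that the blow-up splitting of $G_{\gr v}$ is already carried by the action of $G_{\gr v}$ on the preimage of $\gr v$ in $R$, so that $R$ enlarges equivariantly into a common refinement of $T_{comp}$ and $S$. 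The description of dead ends in Section~\ref{prelim} is what makes this work: the blow-up is the unique way of separating the edges incident to $\gr v$ that no slide can push any further, so its edge groups are canonical and must be respected by every refinement. Equivalently, one checks that $\ell_{T_{comp}}+\ell_S$ is again a tree length function, which is the clean Guirardel--Levitt packaging of compatibility.

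For the second property, let $U$ be an arbitrary universally compatible $G$-tree over $\Z^n$ groups. I would first record that universal compatibility forces universal ellipticity: an edge group of $U$ lifts, with unchanged stabiliser, to any common refinement of $U$ and a given tree $S$, and hence stays elliptic after collapsing onto $S$; so $T$, being a JSJ tree, dominates $U$. This already settles every $U$ that happens to be dominated by $T'$, since $T_{comp}$ dominates $T'$ and domination is transitive. The substance is therefore to dominate the more refined universally compatible trees, and for this it suffices to show that each vertex group of $T_{comp}$ is elliptic in $U$. These vertex groups are of two kinds. For a vertex coming from collapsing a component of $\mathcal E$, I would use the obstruction underlying part~2 of the Proposition: because the collapsed edges are vanishing, \psa, slippery or \tor $2$-slippery, the associated slide or induction move produces a tree that $U$ cannot be compatible with unless the corresponding splitting is degenerate in $U$, which forces the collapsed group to be elliptic in $U$. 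For a vertex created by the dead-end blow-up, I would show that its finer stabiliser is likewise elliptic in $U$: a non-elliptic behaviour of such a group would contradict compatibility of $U$ with one of the intermediate trees obtained by sliding into the dead end $\gr v$.

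The hard part, I expect, is the treatment of the dead ends, where the two halves of the argument meet: one must prove at once that the blow-up can be performed without breaking universal compatibility (the subtle point in the first property) and that it is forced and maximal, so that no universally compatible tree strictly refines $T_{comp}$ (the subtle point in the second). This is precisely where $T'$ fails to be the compatibility JSJ while $T_{comp}$ succeeds, so the whole weight of the theorem sits in the dead-end combinatorics of Section~\ref{prelim}: one has to show that every slide or induction that ``arrives'' at a dead end cannot be continued, so the blow-up edges are visible to all trees in the deformation space, whereas any strictly finer splitting at $\gr v$ would be moved by some admissible slide and hence fail to be compatible with its own image.
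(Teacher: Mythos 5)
Your top-level skeleton (universal compatibility plus maximality, with the reduction to one-edge trees via \cite[Proposition 3.22]{Gl3b}) matches the paper, but both halves contain genuine gaps. For universal compatibility, your plan to graft the blow-up edge onto an arbitrary common refinement $R$ of $T'$ and $S$ rests on the unproved claim that the blow-up splitting of $G_{\gr v}$ is ``already carried'' by the action on the preimage of $\gr v$ in $R$; this is not how it works and not what the paper does. The paper's route is: prove by a commensurability argument (Lemma \ref{GBSsplit}) that edge groups of \emph{every} minimal $G$-tree over subgroups of $\Z^n$ are universally elliptic, so by \cite[Lemma 5.3]{Gl3a} every such tree is refined by a JSJ tree; it then suffices to show $T_{comp}$ is compatible with every JSJ tree (Proposition \ref{univcomp}, Corollary \ref{Tcomp}), and the blow-up edges are handled by Lemma \ref{compexpand}, which \emph{constructs} a refinement of the given JSJ tree by performing an expansion at an explicitly located vertex $v_1$ on the path $[w,v]$ (using the dead-end condition $H\subset\langle G_{\tilde e},G_{\tilde f}\rangle$) --- the splitting is inserted, not found. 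You also bootstrap from the introductory Proposition, which is itself only established by this same machinery, and you miss the hypothesis, present in the body's theorem, that $G$ is not an ascending HNN-extension (needed in Corollary \ref{casreduit}, case 1), as well as the non-generic case where $T_{comp}$ is trivial.

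The more serious gap is in maximality. Reducing domination of $U$ to ellipticity in $U$ of the vertex groups of $T_{comp}$ is legitimate, but your mechanism --- invoking ``the obstruction underlying part 2 of the Proposition'' --- only speaks about collapses of the reduced JSJ tree $T$, whereas a universally compatible tree need not be of that form. After reducing to one-edge trees $T_e$, the JSJ refinement furnished by Lemma \ref{rafinementdeT} may have a \emph{non-reduced} surviving edge (exactly what happens for splittings arising from expansions and blow-ups), and the bulk of the paper's proof, Proposition \ref{maxcasnonreduit}, is a long case analysis dealing with precisely this: it produces, for each configuration, explicit trees incompatible with $T_e$ via quadruples $a,b,c,d$ of elements and the characteristic-space criterion of Lemma \ref{incompatibility}, after normalising deformation sequences (Lemma \ref{suiteglissement}) and lifting them through the expansion (Lemmas \ref{lift} and \ref{liftsequence}); the dead-end definition enters there (Lemma \ref{casnonecrase} and case \ref{cas221}), not through any local rigidity of refinements at $\gr v$. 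Your proposal gives no argument for why a one-edge universally compatible tree whose edge is invisible in $T$ is dominated by $T_{comp}$, so the part you yourself identify as ``the hard part'' is in fact missing rather than merely compressed.
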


This construction is algorithmic whenever no vertex group is conjugated to one of its proper subgroup. In the general case, the decidability of the construction is unknown.

The \GBSn{n} groups also admit splittings over $\Z^{n+1}$ groups. If for the usual JSJ tree, this does not have any incidence - the abelian JSJ tree and the JSJ tree over $\Z^n$ are isomorphic -, it has one for the compatibility JSJ tree. For example, applying Theorem \ref{compintro}, we may easily see that the compatibility JSJ tree over $\Z^n$ groups of $BS(2,2)=\langle a, t | ta^2t^{-1} =a ^2 \rangle$ is the one associated to the HNN extension, but this tree is not compatible with the tree associated to the amalgamated product
$$BS(2,2) =\langle a,s | sa^2s^{-1}=a^2\rangle \ast_{a^2=b,s=t^2} \langle b, t | bt=tb\rangle.$$ The incompatibility comes from the fact that  the element $t$ is hyperbolic in the first splitting but stabilizes an edge in the second. Such an element is said to be \textit{\pbe.}

In the case of \vGBS groups, to obtain universally compatible tree over abelian groups, it suffices to collapse in any JSJ tree the four types of edges described previously, and every edge in the axis of a \pbe element. Again, the obtained tree is not always the abelian compatibility JSJ tree, even if we blow up the dead ends. Some other edges have to be expanded:

In some very specific cases (see description in Section \ref{sectioninert}), the axes of \pbe elements must be separated from the rest of the tree, and then the axes must be collapsed (see Figure \ref{exempleinert}).
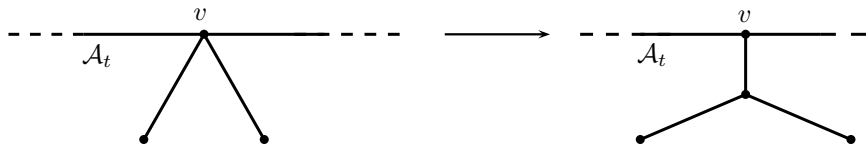
\begin{figure}[!ht]
\begin{center}
\scalebox{1} 
{
\begin{pspicture}(0,-0.95921874)(11.42,0.9792187)
\psline[linewidth=0.04cm](1.0,0.5207813)(4.2,0.5207813)
\psdots[dotsize=0.12](2.6,0.5207813)
\psline[linewidth=0.04cm](2.6,0.5207813)(1.8,-0.87921876)
\psline[linewidth=0.04cm](2.6,0.5207813)(3.4,-0.87921876)
\psline[linewidth=0.03cm,arrowsize=0.05291667cm 2.0,arrowlength=1.4,arrowinset=0.4]{->}(5.8,0.5207813)(7.2,0.5207813)
\psline[linewidth=0.04cm](8.4,0.5207813)(10.8,0.5207813)
\psdots[dotsize=0.12](9.8,0.5207813)
\psline[linewidth=0.04cm](9.8,-0.27921876)(8.4,-0.87921876)
\psline[linewidth=0.04cm](9.8,-0.27921876)(11.2,-0.87921876)
\psline[linewidth=0.04cm,linestyle=dashed,dash=0.16cm 0.16cm](3.8,0.5207813)(5.2,0.5207813)
\psline[linewidth=0.04cm,linestyle=dashed,dash=0.16cm 0.16cm](1.0,0.5207813)(0.0,0.5207813)
\psline[linewidth=0.04cm,linestyle=dashed,dash=0.16cm 0.16cm](8.8,0.5207813)(7.6,0.5207813)
\psline[linewidth=0.04cm,linestyle=dashed,dash=0.16cm 0.16cm](10.8,0.5207813)(11.4,0.5207813)
\psline[linewidth=0.04cm](9.8,0.5207813)(9.8,-0.27921876)
\psdots[dotsize=0.12](9.8,-0.27921876)
\psdots[dotsize=0.12](1.8,-0.87921876)
\psdots[dotsize=0.12](3.4,-0.87921876)
\usefont{T1}{ptm}{m}{n}
\rput(2.5514061,0.78578126){$v$}
\usefont{T1}{ptm}{m}{n}
\rput(9.791407,0.7657812){$v$}
\usefont{T1}{ptm}{m}{n}
\rput(1.1814063,0.22578125){$\mathcal A_t$}
\usefont{T1}{ptm}{m}{n}
\rput(8.561406,0.24578124){$\mathcal A_t$}
\psdots[dotsize=0.12](8.4,-0.87921876)
\psdots[dotsize=0.12](11.2,-0.87921876)
\end{pspicture} 
}
\end{center}
\caption{The axis $\mathcal A_t$ of a \pbe element $t$ must be "taken away" from the rest of the tree.}
\label{exempleinert}
\end{figure}

These separations are called \textit{expansions (of inert edges)}. We obtain the following theorem.

\begin{theorem}
Let $G$ be a \vGBS group.
Let $T$ be a reduced abelian JSJ tree of $G$.
Call $\mathcal E$ the set of non-ascending slippery, potentially strictly ascending  and \tor $2$-slippery edges.

We define $T_{ab}$ as the $G$-tree obtained by expanding inert edges, blowing up dead ends, then collapsing every edge of $\mathcal E$ and the axes of \pbe elements.

Then $T_{ab}$ is an abelian compatibility JSJ tree of $G$.
\end{theorem}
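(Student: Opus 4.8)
The plan is to verify the two defining properties of a compatibility JSJ tree for $T_{ab}$: that it is $\mathcal A$-universally compatible over abelian groups, and that it dominates every universally compatible $G$-tree over abelian groups; the uniqueness part of \cite{Gl3b} then identifies it as \emph{the} compatibility JSJ tree. The conceptual organisation is that the abelian deformation space of a \vGBS group differs from the space already handled in Theorem \ref{compintro} only through the trees in which a \pbe element becomes edge-stabilising. Accordingly I would start from the reduced abelian JSJ tree $T$ (whose edge stabilisers are already universally elliptic, being those of a JSJ tree), import the analysis of slippery, \psa and \tor $2$-slippery edges and of dead ends from the \GBSn{n} argument, and concentrate the new work on the \pbe elements and the inert-edge expansions they force.

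For universal compatibility, let $T'$ be an arbitrary $G$-tree over abelian groups; I must produce a tree refining both $T_{ab}$ and $T'$. I would invoke the common-refinement criterion of Guirardel-Levitt \cite{Gl3b}: once the edge stabilisers of each tree are elliptic in the other, the only obstruction to a common refinement is the presence of an element acting hyperbolically on one tree while stabilising a crossing edge of the other. In our situation the edges surviving the collapse of $\mathcal E$ are exactly those kept in the construction of Theorem \ref{compintro}, so their stabilisers remain universally elliptic; and the only elements that are hyperbolic in $T$ yet can stabilise an edge of some abelian $T'$ are, by definition, the \pbe elements. Since $T_{ab}$ has collapsed precisely the axes of these elements, every \pbe element is now elliptic in $T_{ab}$ and this crossing obstruction disappears. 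The role of the inert-edge expansion (Figure \ref{exempleinert}) is to detach each such axis from the rest of the tree before the collapse, so that collapsing it yields a genuine $G$-tree; with this in hand the local common refinements along the quotient graph of groups assemble into a refinement of $T_{ab}$ and $T'$, the non-\pbe part being handled exactly as in the proof of Theorem \ref{compintro}.

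For domination it suffices to show that every vertex group of $T_{ab}$ is elliptic in each universally compatible $G$-tree $\hat T$ over abelian groups, i.e.\ that we have not collapsed too much. For the edges inherited from the \GBSn{n} construction this is the content of the second part of the Proposition, transported to the abelian setting: collapsing any \psa, non-ascending slippery or \tor $2$-slippery edge beyond those prescribed, or failing to blow up a dead end, destroys universal compatibility. The genuinely new case is the edges created by the inert-edge expansions: I would show that collapsing any of them reintroduces an incompatibility with the tree in which the associated \pbe element is edge-stabilising, by exhibiting the crossing pair explicitly, so that such an edge must survive in any universally compatible tree dominating $\hat T$. Together with the fact that each vertex group of $T_{ab}$ is a union of subgroups that are universally-compatibly elliptic, this gives domination, and combining the two directions shows $T_{ab}$ is a compatibility JSJ tree.

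I expect the main obstacle to be the analysis of the inert-edge expansions and their interaction with the collapse of the \pbe axes, the one ingredient with no counterpart in Theorem \ref{compintro}. The delicate points are, first, confirming that the expansion produces a bona fide $G$-tree whose newly created edge stabilisers are universally elliptic and that the final tree does not depend on the order in which inert-edge expansions, dead-end blow-ups and the two collapses are performed; and second, pinning down exactly the very specific configurations of Section \ref{sectioninert} in which an axis must be separated, so that the recipe is simultaneously coarse enough to remain universally compatible and fine enough to dominate every universally compatible tree. Once the behaviour of the \pbe elements is completely controlled, the remainder of the argument runs parallel to the \GBSn{n} case.
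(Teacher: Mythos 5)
Your skeleton agrees with the paper's (collapse the bearing edges so that every abelian edge group, being finitely generated and generated by \pbe elements, becomes elliptic in $T_{ab}$; then run the Guirardel--Levitt machinery; test maximality against one-edge trees via \cite[Proposition 3.22]{Gl3b}), but the pivotal step of universal compatibility rests on a criterion that is not a theorem: mutual ellipticity of edge stabilizers does \emph{not} by itself produce a common refinement. What \cite[Lemma 3.2]{Gl3a} actually gives, once an arbitrary abelian tree $\tilde T$ is elliptic with respect to $T_{ab}$ (the paper's Lemma \ref{elliptic}), is a refinement $T$ of $\tilde T$ that merely \emph{dominates} $T_{ab}$. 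Upgrading domination to refinement requires $T_{ab}$ to be universally elliptic (Lemma \ref{univell}) \emph{and} refined by every JSJ tree (Lemma \ref{dominerafine}); the latter holds for $\tilde T_{ab}$ but fails precisely for the edges created by dead-end blow-ups and inert-edge expansions. For those edges the paper proves the hardest result of the section, Lemma \ref{localcomp}: any abelian tree dominating the one-edge tree $T_e$ is compatible with $T_e$, established by \emph{recreating} the edge inside the dominating tree --- the key point being that the vertex fixed by $G_e$ in the collapsing subtree $T_w$ is unique, which depends on the centralizer properties of inert edges (Lemma \ref{commut}, itself resting on Lemma \ref{forcementtoric}) and on universal ellipticity of the relevant vertex group. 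Your phrase ``the local common refinements \dots assemble into a refinement'' is exactly the content that needs proof, and you offer no substitute. A smaller but genuine misconception: you justify the inert-edge expansion as needed ``so that collapsing \dots yields a genuine $G$-tree,'' but collapsing any $G$-invariant family of edge orbits always yields a $G$-tree; the expansion exists so that $T_{ab}$ does not lose domination, i.e.\ it is a maximality device, not a well-definedness one.

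On maximality itself, deferring the inherited edge types to the \GBSn{n} analysis is legitimate (that is Lemma \ref{rafinementdeT} together with Propositions \ref{reduced} and \ref{maxcasnonreduit}), and your remark that a bearing surviving edge immediately contradicts universal compatibility matches the paper's reduced case. But the genuinely new content is the non-reduced case analysis around an inert edge $f$: the paper isolates five residual configurations, distinguished by comparing $G_e$ with the canonical subgroups $H_f$ and $F_f$ attached to inert edges of types 1 and 2 and by the centralizer of $G_e$, and kills each either by an explicit crossing pair (via Lemma \ref{constructionnoncomp}) or by showing $e$ is itself bearing. You name this as ``the main obstacle'' and ``a delicate point'' to be pinned down, which is an accurate self-diagnosis: the proposal is a correct plan whose two essential lemmas --- the edge-recreation argument of Lemma \ref{localcomp} and the $F_f$/$H_f$ case analysis --- are missing, so it does not yet constitute a proof.
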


This gives a description of the abelian compatibility JSJ tree. However this construction is not and cannot be algorithmic.

\begin{theorem}
There is no algorithm that constructs the abelian compatibility JSJ tree of \vGBS groups.
\end{theorem}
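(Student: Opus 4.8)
The plan is to prove undecidability by reduction. I exhibit a \emph{computable} family of \vGBS groups $(G_\iota)$, indexed by instances $\iota$ of a problem $P$ known to be undecidable, together with their defining graphs of abelian groups, such that the quotient graph of groups of the abelian compatibility JSJ tree $T_{ab}(G_\iota)$ has one shape when $\iota$ is a positive instance of $P$ and a genuinely different shape otherwise. Because the previous theorem identifies $T_{ab}$ as the outcome of a canonical, isomorphism-invariant procedure, any algorithm that on input $G_\iota$ returns (the quotient graph of groups of) $T_{ab}(G_\iota)$ would let us read off whether $\iota\in P$. This contradicts the undecidability of $P$, so no such algorithm exists.

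The first step is to locate the obstruction. Comparing with Theorem \ref{compintro}, all the ingredients of $T_{ab}$ that already occur in the \GBSn{n} case — detecting slippery, \psa, and \tor $2$-slippery edges, and blowing up dead ends — are carried out by the algorithm described there, and are harmless here. What is new in the \vGBS setting is the collapse of the axes of \pbe elements and the associated expansion of inert edges (Section \ref{sectioninert}); both of these hinge on deciding, for a given hyperbolic element $t$, whether $t$ is \pbe, i.e. whether $t$ becomes elliptic in \emph{some} abelian $G$-tree. The strategy is therefore to build $G_\iota$ so that a single distinguished edge of $T_{ab}(G_\iota)$ lies on the axis of a candidate element $t$, and is collapsed precisely when $t$ is \pbe; arranging that $t$ is \pbe if and only if $\iota\in P$ then makes the number of edges of the quotient graph differ between the two cases, which any tree-producing algorithm must detect.

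For $P$ I would use a membership problem naturally realized by the monodromy of a graph of free abelian groups, namely the generalized word problem (subgroup membership) in $\mathrm{GL}_N(\Z)$, which is undecidable once $N$ is large enough since $F_2\times F_2$ embeds in $SL_4(\Z)$ and already has undecidable generalized word problem. Concretely, I would take a vertex carrying $\Z^N$ together with loops whose two edge-injections realize a fixed finite generating set of matrices, so that products of monodromies around loops range over the subgroup $H\le\mathrm{GL}_N(\Z)$ they generate; I then add one extra stable letter $t$, engineered so that the commutation/ellipticity condition characterizing ``$t$ is \pbe'' is equivalent to a prescribed target matrix lying in $H$. The group $BS(2,2)=\langle a,t\mid ta^2t^{-1}=a^2\rangle$ is the prototype here: $t$ is \pbe exactly because $\langle a^2,t\rangle$ is free abelian and yields an abelian edge group; the reduction makes the existence of such an abelian overgroup of $t$ equivalent to the membership instance.

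The hard part will be establishing faithfulness of the encoding in both directions, and this is where the real work lies. Using the description of \pbe elements and inert edges in Section \ref{sectioninert}, I first have to convert the existential statement ``$t$ is elliptic in some abelian $G$-tree'' into a concrete algebraic predicate $\Phi(t)$ internal to $G_\iota$ (existence of an abelian subgroup of the appropriate rank containing $t$ and commensurating the relevant edge groups), so that $\Phi(t)$ is exactly what the membership instance controls. I must then verify that $G_\iota$ produces \emph{no unintended} abelian overgroups or auxiliary splittings that would render $t$ \pbe for spurious reasons, so that a positive diagnosis never occurs ``by accident''; and conversely that when $\iota\in P$ the intended abelian subgroup genuinely gives an abelian $G$-tree in which $t$ is elliptic, so that the axis really is collapsed. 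Controlling these two directions simultaneously — ruling out spurious ellipticity while guaranteeing the genuine one — is the crux; once it is in place, computability of $T_{ab}$ would decide $P$, and the theorem follows.
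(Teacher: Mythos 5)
There is a genuine gap, and it starts with where you locate the obstruction. You assert that the \GBSn{n}-type ingredients of $T_{ab}$ --- detecting slippery, \psa and \tor $2$-slippery edges, and dead ends --- ``are carried out by the algorithm described there, and are harmless here,'' and you therefore place the undecidability in detecting \pbe elements and inert edges. The paper says the opposite: the remark following Theorem \ref{compintro} only claims the construction is algorithmic \emph{when no vertex group is conjugate to a proper subgroup of itself} (and states that decidability is unknown in general), and the paper explicitly attributes the inconstructibility to the fact that \emph{slipperiness} of an edge cannot be detected in \vGBS trees. Indeed, in the paper's proof (Proposition \ref{JSJconj}) one takes the Bogopolski--Martino--Ventura group $G=\Z^4\rtimes_{(\varphi_i)}F_4$, whose conjugacy problem for elliptic elements is undecidable, attaches two $\Z$-loops $\ell_1,\ell_2$ along primitive elements $x_1,x_2\in\Z^4$, and then checks that every other ingredient of $T_{ab}$ is determined: the \tor edges are $2$-slippery hence collapsed, no edge is inert (since $\langle x_1,x_2\rangle$ has infinite index in $\Z^4$), and the loops are not \psa (by primitivity and $\varphi_i\in GL_4(\Z)$). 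The \emph{only} undetermined ingredient is whether $\ell_1$ is slippery, which holds if and only if $x_1$ and $x_2$ are conjugate in $G$; so triviality of the compatibility JSJ tree is equivalent to an undecidable conjugacy instance, with no \pbe detection involved at all. Your premise that the slipperiness step is algorithmic is thus not only unjustified but contradicted by the very mechanism the theorem rests on.

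The second gap is that your reduction is never actually constructed. The two directions you yourself identify as ``the crux'' --- that ``$t$ is \pbe'' is equivalent to the membership instance in $H\le GL_N(\Z)$, and that $G_\iota$ admits no spurious abelian splittings making $t$ \pbe by accident --- are exactly the content of the proof, and they are hard: deciding \pbe-ness means quantifying over \emph{all} abelian $G$-trees, and Section \ref{sectioninert} gives no algebraic criterion of the form $\Phi(t)$ that you could directly wire to a matrix-membership condition (the $BS(2,2)$ example shows one sufficient mechanism, not a characterization). The paper sidesteps this entirely by choosing an encoding in which the undetermined datum reduces to a single conjugacy question, whose undecidability is already a theorem of Bogopolski--Martino--Ventura, so the ``no spurious behavior'' verification collapses to the short checks listed above. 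Your choice of undecidable problem is in the right spirit --- both reductions ultimately trace back to Mihailova-type subgroups of $F_2\times F_2\le SL_4(\Z)$ --- but as written the proposal is a plan whose essential equivalence is deferred, resting on a false assumption about which parts of the construction are computable.
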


This inconstructibility follows from the fact that in \vGBS trees, we may not dectect whether an edge is slippery or not. We describe this problem in the last section.

\section{Preliminaries}\label{prelim}

Let $G$ be a  finitely generated group. A \textit{$G$-tree} is a simplicial tree $T$ equipped with an action of $G$ that we suppose without inversion (the stabilizer of an edge is included in the stabilizer of its endpoints) and minimal.  Given an (oriented) edge $e$, the opposite edge is denoted by $\bar e$. We denote by $G_e$ the stabilizer of $e$. We have $G_e=G_{\bar e}$. The stabilizer of a vertex $v$ is denoted by$G_v$. The orbit of a vertex $v$ and an edge $e$ is denoted by Gothic font $\gr{v}$ and $\gr{e}$. Similarly the opposite orbit  of  ${\gr{e}}$ is denoted by $\bar{\gr{e}}$. The \textit{valency} of a vertex $v$ (or an orbit of vertices $\gr v$) is the number of orbits of edges with initial vertex in $\gr v$.

An edge is \textit{reduced} if its endpoints are in the same orbit under the action of $G$ or its stabilizer is strictly included in both stabilizers of its endpoints. An orbit of edges is \textit{reduced} if one (or equivalently every) representative is reduced.
A tree is reduced if all its edges are reduced. 
The \textit{deformation space} $\mathcal D_T$ of $T$ is the set of $G$-trees having the same set of elliptic subgroups as $T$.
The  \textit{reduced deformation space} of $T$ is the subset of reduced trees of $\mathcal D_T$. 

To \textit{collapse} an orbit of edges $\gr e$ consists in collapsing every connected component of edges in the orbit $\gr e$ to a point. This operation produces a new $G$-tree. The converse of collapsing is called \textit{expanding}.
Given a $G$-tree $T$ and an orbit of edges $\gr e$, the tree obtained by collapsing $\gr e$ is in the deformation space of $T$ if and only if $\gr e$ is not reduced.
We may thus construct a reduced $G$-tree in the deformation space of $T$ by collapsing one by one orbits of non-reduced edges until the tree is reduced (we assume here that number of orbits of edges is finite). Note that if we collapse at the same time all non reduced edges, the obtained tree may be in a different deformation space.

\paragraph{Whitehead moves for $G$-trees}
In \cite{ClayFor09}, Clay and Forester describe three deformation moves on $G$-trees such that any two reduced $G$-trees of a given deformation space are related by a finite sequence of these three moves, and every intermediate tree in this sequence is reduced.

The first deformation move is the \emph {slide} of an edge $e$ along an edge $f$: if two edges $e$ and $f$ are such that 

\begin{itemize}[noitemsep]
\item $e$ is not in the orbit of $f$ or $\bar f$,
\item the terminal vertex of $e$ is equal to the initial vertex $v$ of $f$,
\item the stabilizer of $e$ is included in the stabilizer of $f$,
\end{itemize}

then $e$ may slide along $f$. Call $w$ the terminal vertex of $f$. The new $G$-tree is obtained by changing the terminal vertex of every vertex $g\cdot e$ in the orbit of $e$  from $g\cdot v$ to $g\cdot w$ (see Figure \ref{figureglissement} for the changes on the associated graph of groups). The case were $\gr v=\gr w$ is not excluded. This move does not change the stabilizers of vertices and edges. This move is fully determined by the data of $e$ and $f$. Note that the converse move is also a slide.

The slide of $e$ along $f$ will be denoted by $e/f$.

\begin{figure}[!ht]
\begin{center}
\scalebox{1} 
{
\begin{pspicture}(0,-0.98375)(8.19,0.97375)
\definecolor{color54b}{rgb}{0.00392156862745098,0.00392156862745098,0.00392156862745098}
\psline[ArrowInside=->, ArrowInsidePos=0.5,arrowsize=0.25291667cm,linewidth=0.04cm](0.08,0.86375)(0.88,-0.33625)
\psline[ArrowInside=->, ArrowInsidePos=0.5,arrowsize=0.25291667cm,linewidth=0.04cm](0.88,-0.33625)(2.88,-0.33625)
\psline[ArrowInside=->, ArrowInsidePos=0.5,arrowsize=0.25291667cm,linewidth=0.04cm](5.28,0.86375)(8.08,-0.33625)
\psline[ArrowInside=->, ArrowInsidePos=0.5,arrowsize=0.25291667cm,linewidth=0.04cm](6.08,-0.33625)(8.08,-0.33625)
\psdots[dotsize=0.14](8.08,-0.33625)
\psdots[dotsize=0.14](5.28,0.86375)
\psdots[dotsize=0.14](6.08,-0.33625)
\psdots[dotsize=0.14](2.88,-0.33625)
\psdots[dotsize=0.14](0.88,-0.33625)
\psdots[dotsize=0.14](0.08,0.86375)
\usefont{T1}{ptm}{m}{n}
\rput(0.74140626,0.44875){$\gr e$}
\usefont{T1}{ptm}{m}{n}
\rput(6.8214064,0.60875){$\gr e$}
\usefont{T1}{ptm}{m}{n}
\rput(1.7814063,-0.81125){$\gr f$}
\usefont{T1}{ptm}{m}{n}
\rput(0.7814063,-0.61125){$\gr v$}
\usefont{T1}{ptm}{m}{n}
\rput(3.0014063,-0.61125){$\gr w$}
\usefont{T1}{ptm}{m}{n}
\rput(6.9414062,-0.77125){$\gr f$}
\usefont{T1}{ptm}{m}{n}
\rput(5.9414062,-0.57125){$\gr v$}
\usefont{T1}{ptm}{m}{n}
\rput(8.1214062,-0.57125){$\gr w$}
\psline[linewidth=0.03cm,fillcolor=color54b,arrowsize=0.05291667cm 2.0,arrowlength=1.4,arrowinset=0.4]{->}(3.42,0.10375)(5.3,0.08375)
\end{pspicture} 
}
\end{center}
\caption{Slide of $\gr e$ along $\gr f$}
\label{figureglissement}
\end{figure}

\paragraph{}
The second deformation move is the \emph{induction}. Let $e$ be an edge with initial and terminal vertices $v$ and $w$ and $A$ a subgroup of $G_w$ such that 
\begin{itemize}[noitemsep]
\item $v$ and $w$ are in the same orbit,
\item the stabilizers $G_v$ and $G_e$ are equal,
\item the group $A$ contains  $G_e$.
\end{itemize}

Then we may perform an induction on $e$ with group $A$.  We first add an edge $f$ with terminal vertex $w$,  a new initial vertex $v'$ which is also the new terminal vertex of $e$ and with $G_f=G_{v'}=A$. Every edge of another orbit with initial vertex $w$ keeps $w$ as initial vertex. We then collapse $e$ (see Figure \ref{induction}). This description is given around the edge $e$ but must be made equivariantly.

This move does not change the underlying graph, but the edge $e$ and the vertex $v$ have been replaced by an edge $f$ and a vertex $v'$ with a distinct stabilizer $A$. It is fully determined by the data of $e$ and the group $A$. 

The induction on the edge $e$ with group $A$ will be denoted $i_A(e)$ or $e/e$ if we do not precise the group.

\begin{figure}[!ht]
\begin{center}
\scalebox{1} 
{
\begin{pspicture}(0,-1.308125)(11.21875,1.308125)
\pscircle[linewidth=0.04,dimen=outer](0.8,0.0096875){0.8}
\psdots[dotsize=0.12](0.8,-0.7903125)
\psline[linewidth=0.03cm,arrowsize=0.05291667cm 2.0,arrowlength=1.4,arrowinset=0.4]{->}(2.2,0.0096875)(4.2,0)
\pscircle[linewidth=0.04,dimen=outer](5.6,0.0096875){0.8}
\psdots[dotsize=0.12](5.6,-0.7903125)
\psline[linewidth=0.03cm,arrowsize=0.05291667cm 2.0,arrowlength=1.4,arrowinset=0.4]{->}(7.0,0.0096875)(9.0,0)
\pscircle[linewidth=0.04,dimen=outer](10.4,0.0096875){0.8}
\psdots[dotsize=0.12](10.4,-0.7903125)
\psdots[dotsize=0.12](5.6,0.8096875)
\usefont{T1}{ptm}{m}{n}
\rput(1.0114063,-1.0853125){$\gr w=\gr v$}
\rput(1.2814063,0.9146875){$\gr e$}
\rput(6.481406,0.3146875){$\gr e$}
\rput(5.7614064,-1.0853125){$\gr w=\gr v$}
\rput(4.6614063,0.3146875){$\gr f$}
\rput(5.581406,1.1146874){$\gr  v'$}
\rput(10.861406,0.9146875){$\gr f$}
\rput(10.531406,-1.0853125){$\gr v'$}
\end{pspicture} 
}
\end{center}
\caption{Induction on $\gr e$}
\label{induction}
\end{figure}

If we have an inclusion of groups $G_e\subset A\subset B\subset G_w$, we may first perform an induction on $A$ and then an induction on $B$. The composition of these two inductions is equal to the induction on $B$: $i_A(e)\cdot i_B(f)=i_B(e)$ (where $f$ is the edge appearing in the induction on $e$). 

The induction with $A=G_e$ has no effect on the $G$-tree. We call this move a \textit{trivial induction}. At the opposite we may take $A=G_w$. In this case performing the induction is the same as sliding along $e$every edge $f$ (not in the orbit of $e$ or $\bar e$) with terminal vertex $v$. 

The converse move of an induction with any group is just an induction with group $G_w$ and slides along $\bar e$. Indeed from the first remark, we have $i_A(e)\cdot i_{G_w}(f)=i_{G_w}(e)$. And by the second remark, the move $i_{G_w}(e)$ is the same as a finite sequence of slides.

By extension, performing the converse of an induction with group $A$ consists in performing an induction with group $A$ and then sliding every edge around $w$ along $\bar e$. This move is possible whenever, for every edge $g$ with initial vertex $w$ not in the orbit of $\gr e$, the group $A$ contains $G_g$.

\paragraph{}The third move is the \emph {$\mathcal A^{\pm1}$-move}.
We first describe the $\mathcal A^{-1}$-move. Suppose that $e$ is an edge with initial and terminal vertices $v$ and $w$, and that $f$ is an edge with terminal vertex $w$ and initial vertex not in $\gr w$, such that
\begin{itemize}[noitemsep]
\item $v$ and $w$ are in the same orbit,
\item the stabilizers $G_v$ and $G_e$ are equal,
\item for every edge $g$ with initial vertex $w$ not in the orbit of $\gr e$, the stabilizer of $f$ contains $G_g$.
\end{itemize}
We may then perform an {$\mathcal A^{-1}$-move} on $e$ by first performing the converse of an induction on $e$ with group $G_f$ 
and then collapsing $f$ which is now non reduced (see Figure \ref{figureamouvement} for the changes on the underlying graph of groups). We will say that  $v$ (and $\gr{v}$) is a \textit{vanishing vertex} and $f$ (and $\gr{f}$) a \textit{vanishing edge}. 

For some technical reasons, the $\mathcal A^{-1}$-move is not exactly the same as the one described in \cite{ClayFor09}: we allow here the vertex $w$ to be of valence more than $3$.

However the definition of an $\mathcal A$-move remains the same:
Let $e$ be an edge with initial and terminal vertices $v$ and $w$ and $a\in G$ such that

\begin{itemize}[noitemsep]
\item $w=a\cdot v$ (so $v$ and $w$ belong to the same orbit of edges),
\item we  have $G_e \subsetneq aG_ea^{-1}$.
\end{itemize}

Call $\mathcal F$ the set of the edges with initial vertex $w$ not in the orbit of $e$ or $\bar e$.
Performing a $\mathcal A$-move consists in performing the two following deformations. First expand $w$ in an edge $f$ with initial and terminal vertex $w'$ and $w''$, with $G_f=G_{w''}=G_e$ and $G_{w'}=G_w$ and such that the initial vertex of $e$ and the new terminal vertex of $a\cdot e$ are $w''$ and the initial vertices of the edges of $\mathcal F$ are $w'$. Then perform an induction on $e$ with group $aG_ea^{-1}$.

The $\mathcal A^{-1}$-move is determined by the data of the orbits of $e$ and $f$, however  the $\mathcal A$-move is determined by the data of two edges $e$ and $e'$ in the same orbit such that the terminal vertex of $e$ is the initial edge of $e'$, and $G_e\subsetneq G_{e'}$.

A $\mathcal A^{\pm 1}$-move on $e$ is denoted $\mathcal A^{\pm 1}(e)$ or $e/e$.

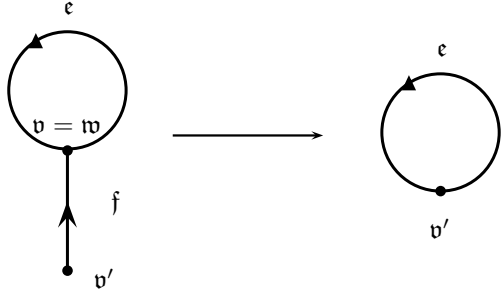
\begin{figure}[!ht]
\begin{center}
\scalebox{1} 
{
\begin{pspicture}(0,-1.9829688)(6.56,1.9829688)
\definecolor{color477b}{rgb}{0.00392156862745098,0.00392156862745098,0.00392156862745098}
\psdots[dotsize=0.14](0.8,-0.11546875)
\psdots[dotsize=0.14](0.8,-1.7154688)
\psline[ArrowInside=->, ArrowInsidePos=0.5,arrowsize=0.25291667cm,linewidth=0.04cm](0.8,-1.7154688)(0.8,-0.11546875)
\pscircle[linewidth=0.04,dimen=outer](0.8,0.6845313){0.8}
\psline[linewidth=0.03cm,arrowsize=0.05291667cm 2.0,arrowlength=1.4,arrowinset=0.4]{->}(2.2,0.08453125)(4.2,0.08453125)
\usefont{T1}{ptm}{m}{n}
\rput(1.4414062,-0.81046873){$\gr f$}
\usefont{T1}{ptm}{m}{n}
\rput(0.82140625,1.7895312){$\gr e$}
\pstriangle[linewidth=0.04,dimen=outer,fillstyle=solid,fillcolor=color477b](0.38,1.2445313)(0.2,0.2)
\usefont{T1}{ptm}{m}{n}
\rput(0.7914063,0.20953125){$\gr v=\gr w$}
\pscircle[linewidth=0.04,dimen=outer](5.76,0.12453125){0.8}
\usefont{T1}{ptm}{m}{n}
\rput(5.7814064,1.2295313){$\gr e$}
\pstriangle[linewidth=0.04,dimen=outer,fillstyle=solid,fillcolor=color477b](5.34,0.6845313)(0.2,0.2)
\usefont{T1}{ptm}{m}{n}
\rput(5.751406,-1.1504687){$\gr v'$}
\psdots[dotsize=0.14](5.76,-0.65546876)
\usefont{T1}{ptm}{m}{n}
\rput(1.2914063,-1.8104688){$\gr v'$}
\end{pspicture} 
}
\end{center}
\caption{$\mathcal A^{-1}$-move}
\label{figureamouvement}
\end{figure}

\paragraph{}
A move is \textit{admissible} if  we may perform it, if the obtained tree it is reduced, and if it is not a trivial induction. A sequence of moves is \textit{admissible} if after performing the first $n$ moves of the sequence the $n+1$th is admissible. 

If $S$ is an admissible sequence of moves on the $G$-tree $T$, the tree obtained by applying $S$ is denoted by $S\cdot T$.

The number of orbits of vertices and edges in the $G$-trees of a reduced deformation space is not fixed. However, the only way to make a vertex or an edge disappear is to perform an $\mathcal A^{-1}$-move. Thus if two $G$-trees $T$ and $T'$ are related by a single move, we may identify the orbits of vertices and edges of $T$ with the ones of $T'$, except for the one that vanish. Moreover if the move is not an induction or an $\mathcal A^{\pm 1}$-move, the edge and vertex stabilizers does not change, we may then identify not only the orbits of edges but each edge.
To be more precise, let $T$ be a $G$-tree, $e$ an edge of $T$ and $\textbf{m}$ an admissible move on $T$ such that \begin{itemize}[noitemsep]
\item the move $\textbf{m}$ is not an induction on the orbit $\gr e$,
\item the move $\textbf{m}$ is not an $\mathcal A^{\pm 1}$-move on the orbit $\gr e$,
\item if $\textbf{m}$ is a $\mathcal A^{-1}$-move, its vanishing orbit of edges is not $\gr e$.
\end{itemize}
We may then identify $e$ with an edge of $\textbf{m}\cdot T$ and this identification is equivariant.

By extension, an orbit of vertices or edges is \textit{vanishing} if we may perform an admissible sequence of moves ending by a $\mathcal A^{-1}$-move in which it vanishes. A sequence of moves \textit{preserves} an orbit $\gr e$ if no move of the sequence is an $\mathcal A^{-1}$-move in which $\gr e$ vanishes.

As two reduced $G$-trees in the same reduced deformation space are related by a finite admissible sequence of deformation moves, given two reduced $G$-trees $T$ and $T'$ in the same deformation space, we may identify any non-vanishing orbit of vertices or edges of $T$ to one of $T'$.

\paragraph{Edges properties}
Let $T$ be a reduced $G$-tree. An orbit of edges $\gr e$ of $T$ is \textit{slippery} if there exists an admissible sequence of moves on $T$, preserving $\gr e$ and ending by a slide along $\gr e$.  The orbit $\gr e$ is \textit{$2$-slippery} if there exist an admissible sequence $S$ of moves on $T$, preserving $\gr e$ and two distinct orbits of edges $\gr f$ and $\gr f'$ of $S\cdot T$ such that the slides $\gr f/\gr e$ and $\gr f'/\gr e$ are simultaneously admissible in $S\cdot T$. Note that $\gr f'=\bar {\gr f}$ is allowed.

An edge $e$ of $T$ is \textit{ascending} if its endpoints are in the same orbit, and if its stabilizer is equal to the stabilizer of its initial vertex. It is \textit{strictly ascending} if moreover its stabilizer is strictly contained in the stabilizer of its terminal vertex. An edge is \textit{\tor} if its stabilizer is equal to the stabilizer of both its endpoints. The opposite edge of a (strictly) ascending edge is \textit{(strictly) descending}. 

An edge $e$ is \textit{\pa} in $T$  if its initial and terminal vertices $v$ and $v'$ are in the same orbit and if there exists $t\in G$ such that $t\cdot v'=v$ and $tG_et^{-1}\subsetneq G_e$. The opposite edge of a \pa edge is said to be \textit{\pde}. The edge $e$ is \textit{potentially strictly ascending} (or \textit{\psa}) if $\gr e$ is \pa after a finite admissible sequence of moves preserving $\gr e$.
Note that a strictly ascending edge is also \pa. If an edge $e$ is such that $e$ or $\bar e$ is \psa then $e$ is said to be \psad.

An orbit of edges is slippery, $2$-slippery, \pa, \pde, (potentially) strictly ascending (or descending) or \tor if one of its representative is.

The inductions and $\mathcal A^{-1}$-moves may only be performed on strictly ascending (or descending) edges. An $\mathcal A$-move may only be performed on the \pa edges which are not ascending. Note that an $\mathcal A$-move changes a \pa edge into a strictly ascending edge.

Figure \ref{exempledef} represent a Generalized Baumslag-Solitar group and is constructed in the following way: all vertices and edges carry the infinite cyclic group $\Z$, the number $n$ at the end of each edge indicate that the injection of the edge group into the group of its endpoint is $k \mapsto  nk$.

 In such a representation, \pa edges and admissible slides may be seen as divisibility relation. For example in Figure \ref{exempledef}, the edge $\gr e$ is \pde, the edges $\gr f$ and $\gr g$ are slippery since $\gr h$ may first slide along $\gr g$ then along $\gr f$, and $\gr h$ is \psa since it is \pa after sliding along $\gr g$ and $\gr f$.

\begin{figure}[!ht]
\begin{center}
\scalebox{1} 
{
\begin{pspicture}(0,-1.3829688)(6.8028126,1.3829688)
\psarc[linewidth=0.04,arrowsize=0.15291667cm 2.0,arrowlength=1.4,arrowinset=0.4]{->}(4.7809377,0.08453125){1.0}{0.}{0.0}
\pspolygon[ArrowInside=->, ArrowInsidePos=0.5,arrowsize=0.25291667cm,linewidth=0.04](3.7809374,0.08453125)(1.5809375,1.0845313)(0.5809375,-1.1154687)
\psdots[dotsize=0.12](0.5809375,-1.1154687)
\psdots[dotsize=0.12](1.5809375,1.0845313)
\psdots[dotsize=0.12](3.7809374,0.08453125)
\usefont{T1}{ptm}{m}{n}
\rput(5.562344,0.11046875){$\gr e$}
\usefont{T1}{ptm}{m}{n}
\rput(2.6423438,0.88953123){$\gr f$}
\usefont{T1}{ptm}{m}{n}
\rput(2.3623437,-0.71046873){$\gr g$}
\usefont{T1}{ptm}{m}{n}
\rput(0.87234374,0.13953125){$\gr h$}
\usefont{T1}{ptm}{m}{n}
\rput(1.2323437,0.98953123){$2$}
\usefont{T1}{ptm}{m}{n}
\rput(0.42234374,-0.81046873){$12$}
\usefont{T1}{ptm}{m}{n}
\rput(1.0323437,-1.2104688){$3$}
\usefont{T1}{ptm}{m}{n}
\rput(3.5323437,-0.21046876){$5$}
\usefont{T1}{ptm}{m}{n}
\rput(4.0323437,-0.21046875){$3$}
\usefont{T1}{ptm}{m}{n}
\rput(4.122344,0.38953125){$21$}
\usefont{T1}{ptm}{m}{n}
\rput(3.5323437,0.38953125){$2$}
\usefont{T1}{ptm}{m}{n}
\rput(1.8323439,1.1895312){$3$}
\end{pspicture} 
}
\end{center}

\caption{}
\label{exempledef}
\end{figure}

\paragraph{Expansion}
Let $T$ be a $G$-tree, and $v$ a vertex of $T$. Take $\mathcal F$ a set of edges  with pairwise distinct orbits and initial vertex $v$, and $H$ a subgroup of $G_v$ containing $G_f$ for all $f \in \mathcal F$. Then the \textit{expansion} of $v$ of group $H$ and set $\mathcal F$ is the tree $\tilde T$ obtained from $T$ by expanding $v$ in an edge $e$ of initial and terminal vertex $\tilde v$ and $\tilde v'$, such that $G_e=G_{\tilde v}=H$, $G_{\tilde v'}=G_v$, the edges of $\mathcal F$ have initial vertex $\tilde v$, and every edge of initial vertex $v$ in $T$ and not in the orbit of any edge of $\mathcal F$ has initial vertex $\tilde v'$ (see Figure \ref{expansion} for the changes in the graph of groups). This construction produces a non-reduced tree.

\begin{figure}[!ht]
\begin{center}
\scalebox{1} 
{
\begin{pspicture}(0,-1.0789063)(10.82,1.0589062)
\psline[ArrowInside=->, ArrowInsidePos=0.5,arrowsize=0.25291667cm,linewidth=0.04cm](1.2,-0.16109376)(0.0,0.8389062)
\psline[ArrowInside=->, ArrowInsidePos=0.5,arrowsize=0.25291667cm,linewidth=0.04cm](1.2,-0.16109376)(0.0,-0.9610937)
\psdots[dotsize=0.12](1.2,-0.16109376)
\psline[ArrowInside=->, ArrowInsidePos=0.5,arrowsize=0.25291667cm,linewidth=0.04cm](1.2,-0.16109376)(2.6,-0.76109374)
\psline[ArrowInside=->, ArrowInsidePos=0.5,arrowsize=0.25291667cm,linewidth=0.04cm](1.2,-0.16109376)(2.6,0.23890625)
\psline[ArrowInside=->, ArrowInsidePos=0.5,arrowsize=0.25291667cm,linewidth=0.04cm](1.2,-0.16109376)(2.6,0.8389062)
\psline[linewidth=0.04cm,arrowsize=0.05291667cm 2.0,arrowlength=1.4,arrowinset=0.4]{->}(4.0,0.03890625)(6.2,0.03890625)
\psline[ArrowInside=->, ArrowInsidePos=0.5,arrowsize=0.25291667cm,linewidth=0.04cm](9.4,0.03890625)(10.8,-0.56109375)
\psline[ArrowInside=->, ArrowInsidePos=0.5,arrowsize=0.25291667cm,linewidth=0.04cm](9.4,0.03890625)(10.8,0.43890625)
\psline[ArrowInside=->, ArrowInsidePos=0.5,arrowsize=0.25291667cm,linewidth=0.04cm](9.4,0.03890625)(10.8,1.0389062)
\psdots[dotsize=0.12](7.8,0.03890625)
\psdots[dotsize=0.12](9.4,0.03890625)
\psline[ArrowInside=->, ArrowInsidePos=0.5,arrowsize=0.25291667cm,linewidth=0.04cm](7.8,0.03890625)(9.4,0.03890625)
\usefont{T1}{ptm}{m}{n}
\rput(0.6014063,0.56390625){$f$}
\usefont{T1}{ptm}{m}{n}
\rput(0.64140624,-0.85609376){$f'$}
\psline[ArrowInside=->, ArrowInsidePos=0.5,arrowsize=0.25291667cm,linewidth=0.04cm](7.8,0.03890625)(6.6,1.0389062)
\psline[ArrowInside=->, ArrowInsidePos=0.5,arrowsize=0.25291667cm,linewidth=0.04cm](7.8,0.03890625)(6.6,-0.76109374)
\usefont{T1}{ptm}{m}{n}
\rput(7.2814064,0.74390626){$f$}
\usefont{T1}{ptm}{m}{n}
\rput(7.2414064,-0.6560938){$f'$}
\usefont{T1}{ptm}{m}{n}
\rput(1.2114062,-0.41609374){$v$}
\usefont{T1}{ptm}{m}{n}
\rput(7.891406,-0.17609376){$\tilde v$}
\usefont{T1}{ptm}{m}{n}
\rput(9.391406,-0.17609376){$\tilde v'$}
\usefont{T1}{ptm}{m}{n}
\rput(8.581407,0.24390624){$e$}
\end{pspicture} 
}
\end{center}
\caption{Expansion of $v$ of set $\left\{f,f'\right\}$.}
\label{expansion}
\end{figure}
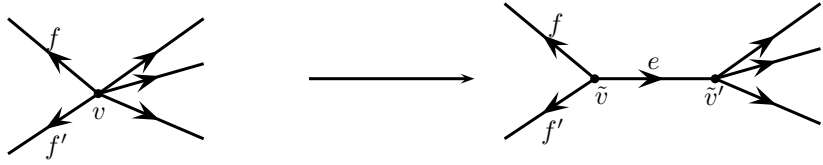

\paragraph{Dead ends}
Let $T$ be a reduced $G$-tree. Let $v$ be a non-vanishing vertex and $f$ an edge with initial vertex $v$. The vertex $v$ is a \textit{dead end} with \textit{wall} $f$ (or $\gr f$) if 
\begin{itemize}[noitemsep]
\item $G_f\subsetneq G_v$,
\item the edge $f$ is slippery or \psad,
\item for every edge $h$ with initial vertex $v$ not in the orbit of $f$, we have the equality $\langle G_{h},G_{f}\rangle=G_{v}$, 
\item there exists an edge $g$ not in the orbit of $f$, with initial vertex $v$, with $G_g\subsetneq G_v$, such that for all edges $h$ not in the orbit of $f$ with initial vertex $v$ there exists $a\in G_v$ such that $G_{a\cdot h}\subset G_g$,

(we may notice that automatically  $\gr g$ and $\gr f$ do not slide one along the other)
\end{itemize}
and if for every $G$-tree $T'$ in the reduced deformation space of $T$ and any representative $v'$ of $\gr v$ in $T'$, there exists an edge with initial edge $v'$ which has the listed properties.

The orbit $\gr v$ of $v$ is a \textit{dead end} if $v$ is a dead end. 

If there exists two edges $f$ and $g$ with distinct orbits such that $v$ is a dead end with wall $f$ and with wall $g$ in $T$, then there are exactly two orbits of edges with initial vertex $\gr v$.
However given another $G$-tree $T'$ in the reduced deformation space of $T$, the orbits of the walls of $v$ in $T$ and $T'$ may differ.

For example, Figure \ref{figuresensunique} represents the two reduced graphs of groups  of the deformation space of a \GBS decomposition. We may reach the right graph from the left one by a slide of ${\gr h}_2$ along $\bar {\gr h}_1$.

The vertex $\gr v$ is a dead end. In the graph on the left, ${\gr h}_1$ is  the wall $\gr f$ and $\bar {\gr h}_1$ is the edge orbit $\gr g$ as in the fourth point. In the one on the right, $\bar {\gr h}_1$ is the wall $\gr f$ and $\bar {\gr h}_1$ plays the role of $\gr g$. The edge orbit $\gr g$ may be non-unique since another edge may have the same stabilizer as the one of $g$. 

\begin{figure}[!ht]
\begin{center}
\scalebox{1} 
{
\begin{pspicture}(0,-1.6592188)(7.681875,1.6792188)
\psline[ArrowInside=-<, ArrowInsidePos=0.5,arrowsize=0.25291667cm,linewidth=0.04cm](1.06,-0.37921876)(0.06,-1.5792187)
\pscircle[ArrowInside=->, ArrowInsidePos=0.5,arrowsize=0.25291667cm,linewidth=0.04,dimen=outer](1.06,0.42078125){0.8}
\psdots[dotsize=0.12](1.06,-0.37921876)
\psdots[dotsize=0.12](0.06,-1.5792187)
\psline[ArrowInside=-<, ArrowInsidePos=0.5,arrowsize=0.25291667cm,linewidth=0.04cm](6.26,-0.37921876)(7.26,-1.5792187)
\pscircle[ArrowInside=->, ArrowInsidePos=0.5,arrowsize=0.25291667cm,linewidth=0.04,dimen=outer](6.26,0.42078125){0.8}
\psdots[dotsize=0.12](6.26,-0.37921876)
\psdots[dotsize=0.12](7.26,-1.5792187)
\usefont{T1}{ptm}{m}{n}
\rput(1.3114063,-0.07421875){$3$}
\usefont{T1}{ptm}{m}{n}
\rput(0.31140625,-0.27421874){$5$}
\usefont{T1}{ptm}{m}{n}
\rput(1.2014062,-0.87421876){$10$}
\usefont{T1}{ptm}{m}{n}
\rput(6.1114063,-0.87421876){$6$}
\usefont{T1}{ptm}{m}{n}
\rput(6.911406,-0.47421876){$3$}
\usefont{T1}{ptm}{m}{n}
\rput(5.911406,-0.07421875){$5$}
\pspolygon[linewidth=0.04,fillstyle=solid,fillcolor=black](6.1,1.1607813)(6.34,1.2807813)(6.34,1.1007812)
\pspolygon[linewidth=0.04,fillstyle=solid,fillcolor=black](0.9,1.1807812)(1.24,1.2807813)(1.2,1.0807812)
\usefont{T1}{ptm}{m}{n}
\rput(1.1214062,1.4857812){${\gr h}_1$}
\usefont{T1}{ptm}{m}{n}
\rput(0.70140624,-1.3342187){${\gr h}_2$}
\usefont{T1}{ptm}{m}{n}
\rput(7.3814063,-1.0742188){${\gr h}_2$}
\usefont{T1}{ptm}{m}{n}
\rput(6.2614064,1.4457812){${\gr h}_1$}
\usefont{T1}{ptm}{m}{n}
\rput(1.3114063,-0.47421876){$\gr v$}
\usefont{T1}{ptm}{m}{n}
\rput(5.911406,-0.47421876){$\gr v$}
\psline[linewidth=0.04cm,fillcolor=black,arrowsize=0.05291667cm 2.0,arrowlength=1.4,arrowinset=0.4]{->}(2.66,0.42078125)(4.66,0.42078125)
\end{pspicture} 
}
\end{center}
\caption{Example of dead end vertex.}
\label{figuresensunique}
\end{figure}
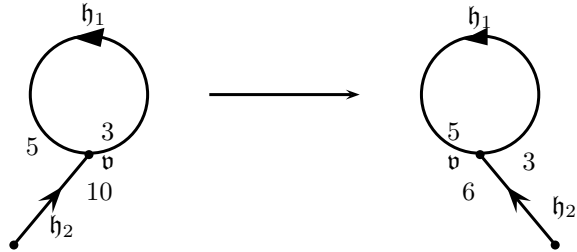

To \textit{blow up} this dead end vertex $v$ in $T$ consists in expanding $v$ with group $G_{v}$ and set $\left\{ f \right\}$ (note that in this case the expansion does not depends of the choice of $f\in \gr f$).  Figure \ref{eclatementsensunique} represents the blow up of $\gr v$ in the left graph of Figure \ref{figuresensunique}. Performing a blow-up does not change the deformation space.
If the dead end $\gr v$ has valence $2$, we may choose equivalently one of the two edges to be the wall. However the blow up does not depend on this choice.

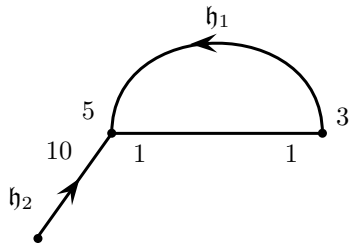
\begin{figure}[!ht]
\begin{center}
\scalebox{1} 
{
\begin{pspicture}(0,-1.5992187)(5.0028124,1.6192187)
\psline[linewidth=0.04cm,fillcolor=black](1.6209375,-0.11921875)(4.4209375,-0.11921875)
\psdots[dotsize=0.12](1.6209375,-0.11921875)
\psdots[dotsize=0.12](4.4209375,-0.11921875)
\psbezier[ArrowInside=-<, ArrowInsidePos=0.5,arrowsize=0.25291667cm,linewidth=0.04,fillcolor=black](1.6209375,-0.11921875)(1.6209375,1.4807812)(4.4209375,1.4807812)(4.4209375,-0.11921875)
\psline[ArrowInside=->, ArrowInsidePos=0.5,arrowsize=0.25291667cm,linewidth=0.04cm,fillcolor=black](0.6209375,-1.5192188)(1.6209375,-0.11921875)
\usefont{T1}{ptm}{m}{n}
\rput(0.41234374,-0.99421877){${\gr h}_2$}
\usefont{T1}{ptm}{m}{n}
\rput(3.0323439,1.4257812){${\gr h}_1$}
\usefont{T1}{ptm}{m}{n}
\rput(1.3123437,0.18578126){$5$}
\usefont{T1}{ptm}{m}{n}
\rput(4.6923437,0.10578125){$3$}
\usefont{T1}{ptm}{m}{n}
\rput(0.92234373,-0.35421875){$10$}
\usefont{T1}{ptm}{m}{n}
\rput(1.9923438,-0.39421874){$1$}
\usefont{T1}{ptm}{m}{n}
\rput(4.012344,-0.39421874){$1$}
\psdots[dotsize=0.12](0.6409375,-1.5192188)
\end{pspicture} 
}
\end{center}
\caption{Blow up of a dead end vertex}
\label{eclatementsensunique}
\end{figure}

\section{\texorpdfstring{Compatibility JSJ tree of \GBSn{n} groups over $\Z^n$ subgroups}{}}

\subsection{Construction}
Given a group $G$ and any deformation space $ \mathcal D$ of $G$, we may canonically construct a $G$-tree associated to $\mathcal D$, as follows.

\begin{definition}\label{comp}
 Let $T$ be a reduced $G$-tree in the deformation space $\mathcal D$. Define $T_{\mathcal D}$ as the $G$-tree obtained from $T$ by blowing up the dead ends and collapsing the following edges:
\begin{itemize}[noitemsep]
 \item the vanishing edges of $T$
 \item the \psa edges of $T$, 
 \item the non-ascending slippery edges of $T$,
 \item the \tor $2$-slippery edges of $T$.
\end{itemize}
\end{definition}

The non-vanishing edge and vertice orbits  of a reduced $G$-tree $T$ may be identified with another orbit in any $G$-tree $T'$ in the same reduced deformation space. Being a dead end vertex, being \psa, being slippery, and being $2$-slippery does not depend on the chosen tree in the deformation space. But a \tor edge may be changed in a non-ascending edge and vice-versa.

Note that as an edge is either non-ascending, strictly ascending (or descending) or \tor, 	all $2$-slippery edges are collapsed in $T_{\mathcal D}$.

\begin{proposition}\label{construction}
 The tree $T_{\mathcal D}$ does not depend on the choice of the reduced tree in $\mathcal D$ taken for the construction.
\end{proposition}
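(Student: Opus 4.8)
The plan is to use the remarks recorded just before the statement. By \cite{ClayFor09}, any two reduced $G$-trees $T$ and $T'$ of $\mathcal D$ are joined by a finite admissible sequence of Whitehead moves all of whose intermediate trees are reduced; by induction on the length of such a sequence it suffices to prove that the construction of Definition \ref{comp} applied to $T$ and to $T'=\mathbf m\cdot T$ yields equivariantly isomorphic trees, for a single admissible move $\mathbf m$. Throughout I use the canonical equivariant identification of the non-vanishing orbits of $T$ with those of $T'$, together with the fact that being vanishing, being \psa, being slippery, being $2$-slippery and being a dead end are all invariant along $\mathcal D$.

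I would first treat the conservative case, where $\mathbf m$ is a slide. A slide alters no stabilizer and changes the incidence of a single orbit only, so each edge of $T$ is identified with an edge of $T'$ carrying the same stabilizer; the dead ends are the same vertex orbits, and since a blow-up does not depend on the choice of wall the blow-ups agree. The only edges whose membership in the collapse set is in doubt are those whose \tor/non-ascending status is flipped by the slide. Here lies the first delicate point, resolved by the observation preceding the statement: an edge which is \psa (in particular any strictly ascending or strictly descending edge, since collapsing an orbit and collapsing its opposite are the same operation) is collapsed in every tree because \psa-ness is invariant; and a $2$-slippery edge is always collapsed, being a \tor $2$-slippery edge when \tor and a non-ascending slippery edge when non-ascending. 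What must be checked is therefore that an edge which is slippery but not $2$-slippery keeps its \tor/non-ascending status along $\mathcal D$; equivalently, that a slippery edge which changes this status is necessarily $2$-slippery. Granting this, the collapse set is the same set of orbits for $T$ and for $T'$, and the two constructions coincide.

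The substance of the argument is the non-conservative case, where $\mathbf m$ is an induction or an $\mathcal A^{\pm1}$-move, which creates, destroys, or re-stabilizes orbits. The guiding principle is that every orbit whose stabilizer is altered, created, or made to vanish by such a move is itself collapsed in both constructions. Indeed an induction and an $\mathcal A^{-1}$-move are performed only on strictly ascending or descending edges, which are \psad and hence collapsed; an $\mathcal A$-move is performed on a non-ascending \pa edge, which is \psa hence collapsed, and turns it into a strictly ascending edge, again collapsed; and the edge made to vanish by an $\mathcal A^{-1}$-move, equivalently the one created by an $\mathcal A$-move, is a vanishing edge, hence collapsed. It then remains to verify, move by move, that collapsing these distinguished orbits together with the (invariant) remainder of the collapse set produces the same quotient of $T$ and of $T'$. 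This reduces to short stabilizer computations showing that the collapse forgets the intermediate group introduced by the move: for an induction on an edge $e$ with group $A$, where $G_e\subseteq A\subseteq G_w$, collapsing the resulting edge merges its endpoints into the vertex group $\langle A,G_w\rangle=G_w=\langle G_e,G_w\rangle$, the same group obtained by collapsing $e$ in $T$; the computations for the $\mathcal A^{\pm1}$-moves are analogous, using that collapsing the vanishing edge undoes the accompanying expansion (respectively induction-converse).

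I expect the main obstacle to be exactly this last family of verifications, together with the bookkeeping of how the dead-end blow-ups interact with the collapses. One must check that blowing up a dead end commutes with collapsing, noting that the blown-up edge is \tor and, not being $2$-slippery at a dead end, is not itself collapsed and so survives; that the dead ends are blown up on the same vertex orbits of $T$ and $T'$ even though their walls may differ; and that, apart from the invariances already established, no move moves an edge between the collapsed and the surviving families. Once these checks are assembled, $T$ and $\mathbf m\cdot T$ give equivariantly isomorphic trees, which completes the induction and proves that $T_{\mathcal D}$ depends only on $\mathcal D$.
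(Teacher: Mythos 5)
Your skeleton is exactly the paper's: reduce to a single admissible move via Clay--Forester, then check in three steps that the collapse set is invariant, that the move commutes with the collapses, and that it commutes with the blow-ups. But at each of the places where the paper does actual work, your proposal either grants the claim or draws an invalid inference. First, the central claim of the invariance step --- that a non-ascending slippery edge which becomes \tor is necessarily $2$-slippery --- is stated and then ``granted''; it needs a proof, and the paper supplies one separately for each move (for a slide, the only candidate is the sliding edge $f$ itself, which is reduced, so $G_f\subsetneq G_v$ persists and $f$ cannot become \tor; for an induction or an $\mathcal A^{\pm1}$-move on $\gr e$, any orbit $\gr g$ changing status has its endpoints at the vertex of $\gr e$, so $\gr e$ and $\bar{\gr e}$ slide along $\gr g$, making it $2$-slippery). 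Second, in the slide case you conclude that since ``the collapse set is the same set of orbits for $T$ and for $T'$, the two constructions coincide.'' This inference fails as stated: if $f$ slides along $e$ and neither orbit is collapsed, the two collapsed trees genuinely differ. The missing ingredient is the paper's second step: since $f$ slides along $e$, the edge $e$ is slippery, so if $e$ survives the collapse it must be \tor and not $2$-slippery, which forces the rigid configuration where $\gr e$ is a loop with $\gr f$ its only adjacent orbit, and then by \cite[Theorem 1, case 3]{Levitt05} the slide leaves $T$ unchanged.

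Third, you flag the interaction of blow-ups with the moves as ``the main obstacle'' but leave it unresolved, and the hardest subcase is not among those you list. For an $\mathcal A^{-1}$-move, the terminal vertex $\gr v$ of the vanishing orbit $\gr f$ may be a dead end, and if $\gr f$ is its unique wall the naive commutation breaks down: the paper runs a dedicated argument (using maximality for inclusion of the edge stabilizers at $v$ and the \pa structure of $\gr e$ in $T'$) to show that this configuration contradicts the definition of a dead end, so it never occurs. Relatedly, in the slide case the fact that ``a blow-up does not depend on the choice of wall'' is not the relevant one (that independence only concerns valence-two dead ends); what is needed is that, because $f$ slides along $e$, neither $e$ nor $\bar f$ can be a wall, so after blowing up they lie on the same side of the new edge and the slide commutes with the blow-up. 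Until these verifications are supplied, what you have is a correct plan --- the same plan as the paper's --- rather than a proof.
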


\begin{proof}
 By Clay-Forester \cite[Corollaire 1.2]{ClayFor09}, two reduced $G$-trees of $\mathcal D$ are related by a finite sequence of deformation moves. We just have to show that we obtain the same tree $T_{\mathcal D}$ starting from two trees $T$ and $T'$ which differ by a single move.
 
 For each move we proceed in three steps. We first prove that the set of orbits we have to collapse is the same before or after performing the move. We then prove that the move commutes with the collapses. Finally we prove that the move commutes with the blow-ups.
 
 For the first step as \psa edges are collapsed, we have to prove that if a non-ascending slippery edge is turned into a \tor edge, then it is $2$-slippery.
 
\begin{enumerate}
 \item Assume that $T$ and $T'$ differ by a slide of an edge $f$ along another edge $e$.
 
Call $v$ the initial vertex of $f$. The only edge which may possibly become \tor is $f$. If $f$ is non-ascending slippery, then as $f$ is reduced we have $G_f\subsetneq G_v$, in $T$, thus also in $T'$. Then $f$ is not \tor in $T'$. Thus no non-ascending slippery edge may be turned into a \tor edge. Thus the edges in $T$ and $T'$ that must be collapsed are the same.

For the second step, it suffices to to check that either $e$ is collapsed in $T_{\mathcal D}$ or $T=T'$.  

As $f$ slides along $e$, the edge $e$ is slippery. If $e$ is not collapsed in $T_{\mathcal D}$, then $e$ must be \tor and not $2$-slippery. Then in the associated graph of group $\gr e$ is a loop and $\gr f$ is the only edge adjacent to $\gr e$ (see Figure \ref{figurecas1-1simplementglissant}).This case is exactly the rigidity case describe in \cite[Theorem 1, case 3]{Levitt05}, that is, the slide leaves $T$ unchanged.

\begin{figure}[!ht]
\begin{center}
 \begin{pspicture}(0,-0.8)(3.8009374,1.0)
  \pscircle[linewidth=0.04,dimen=outer](1.1809375,0.0){0.8}
  \psline[ArrowInside=->, ArrowInsidePos=0.5,arrowsize=0.25291667cm,linewidth=0.04cm,dotsize=0.07055555cm 2.0]{*-*}(3.7809374,0.0)(1.9809375,0.0)
  \usefont{T1}{ptm}{m}{n}
  \rput(0.22234374,0.105){$\gr e$}
  \usefont{T1}{ptm}{m}{n}
  \rput(3.0023437,0.305){$\gr f$}
    \usefont{T1}{ptm}{m}{n}
  \rput(3.6023437,0.205){$\gr v$}
  \psline[linewidth=0.04cm,linestyle=dashed,dash=0.16cm 0.16cm](3.7809374,0.0)(4.7809377,1.2)
  \psline[linewidth=0.04cm,linestyle=dashed,dash=0.16cm 0.16cm](3.7809374,0.0)(4.5809374,0.0)
  \psline[linewidth=0.04cm,linestyle=dashed,dash=0.16cm 0.16cm](3.7809374,0.0)(3.9809375,-0.2)
  \psline[linewidth=0.04cm,linestyle=dashed,dash=0.16cm 0.16cm](3.7809374,0.0)(3.9809375,0.8)
 \end{pspicture}
\end{center}
\caption{The only orbits of edges adjacent to $\gr v$ are $\gr e$, $\bar {\gr e}$ and $\gr f$.}
\label{figurecas1-1simplementglissant}
\end{figure}
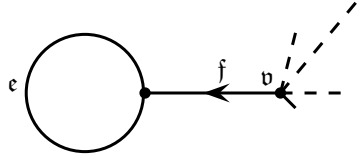

It remains to show that any blow-up of a dead end vertex commutes with the slide. The only non-trivial cases is when an endpoint of $e$ is a dead end. We may assume that the initial vertex $v$ of $e$ is a dead end. As $f$ slides along $e$, neither $e$ nor $\bar f$ may be a wall. Thus if we blow up $v$, the edges $e$ and $\bar f$ will be on the same side of the new edge. Hence this is equivalent to blow up a vertex then slide $f$ along $e$ or slide $f$ along $e$ and then blow up a vertex. 

\item Assume that $T$ and $T'$ differ by an induction on an orbit of edges $\gr e$.

Call $\gr v$ the orbit of the endpoints of $\gr e$.

Here $\gr e$ (or $\bar {\gr e}$) is strictly ascending in both trees. Assume that an edge $f$ was non-ascending slippery and becomes \tor. Then the endpoints of $f$ are in $\gr v$. And as $f$ is \tor in $T'$, the orbit of edges $\gr e$ and $\gr {\bar e}$ may slide along $f$ in $T'$. Then $f$ is $2$-slippery.
Thus the edges in $T$ and $T'$ that must be collapsed are the same. 
 
 As $\gr e$ is strictly ascending in both trees, then it is collapsed in both $T$ and $T'$, and the trees obtained are the same.. 
Moreover as $\gr e$ is ascending the vertex orbit $\gr v$ is not a dead end,  thus blowing up dead end vertices and performing the induction commute.

 \item Assume that $T$ and $T'$ differ by an $\mathcal{A}^{-1}$-move of an orbit of edges $\gr e$ with collapse of an orbit $\gr f$.

When we perform the $\mathcal A^{\pm1}$-move on the orbit of edges $\gr e$, if an orbit of edges $\gr g$ becomes \tor (or stops beeing \tor), then $\gr e$, $\bar {\gr e}$, $\gr g$ ans $\bar {\gr g}$ have same terminal vertex orbit, thus $\gr e$ and $\bar {\gr e}$ may slide along $\gr g$ thus $\gr g$ is two slippery.

In $T$, the orbit $\gr e$ is strictly ascending and $\gr f$ is vanishing, thus both $\gr e$ and $\gr f$ are collapsed in $T_{\mathcal D}$. In $T'$, the orbit $\gr f$ is already collapsed and $\gr e$ is \pa, thus collapsed in $T_{\mathcal D}$. Thus the collapses commute with the move.

It remains to show that the move commutes with the blow up of dead ends. Before performing the $\mathcal A^{-1}$-move the vertex of $\gr e$ is not a dead end, since $\gr e$ is ascending. However the terminal orbit of vertices $\gr v$ of $\gr f$ (which is also the vertex of $\gr e$ after performing the $\mathcal A^{-1}$-move) may be a dead end. This is the only non-trivial case of commutativity. Assume $\gr v$ is a dead end. Call $\gr g$ a wall in $T$. If $\gr g\neq \gr f$, then $\gr g$ remains the wall in $T'$ and it is easy to see that the $\mathcal A^{-1}$-move and the blow up commute.

Two distinct orbits may play the role of the wall if and only if the orbit of vertices $\gr v$ is of valence $2$. Assume that $\gr f$ is the unique wall of $\gr v$, then $\gr v$ is of valence at least $3$. Let $\gr g$ be as in the definition of the dead end and $\gr h$ another orbit of edges with initial vertex $\gr v$ in $T$. Take $v$ a representative of $\gr v$ and $f$, and $g$ representatives of $\gr f$, and $\gr g$ with initial vertex $v$. By assumption $G_g$ is maximal for inclusion and there exists $h$ in the orbit of $\gr h$ with initial vertex $v$ such that $G_{h}\subset G_{g}\not \subset G_{f}$. But in $T'$, as $\gr e$ is \pa, thus  there exists two edges $e'$ and $e$ in the orbit of $\gr e$ such that the initial vertex of $e$ and the terminal vertex of $e'$ are $v$ and $G_{e'}\subset G_{e}$ and we still have $G_{h}\subset G_{g}\not \subset G_{e}(=G_{f})$ with $G_g$ and $G_e$ maximal for the inclusion among groups of edges adjacent to $v$. There is no wall to $v$, this is a contradiction we the fact that $v$ is a dead end.
\end{enumerate}

We obtain the same tree $T_{\mathcal D}$ starting from two reduced tree related by a move. Thus the tree $T_{\mathcal D}$ only depends from the deformation space $\mathcal D$.
\end{proof}

We denote by $T_{comp}$ the $G$-tree associated to Guirardel-Levitt's JSJ deformation space of $G$.

\begin{lemma}\label{compexpand}
Let $T$ be a JSJ tree. Let $\tilde T$ be a reduced Guirardel-Levitt's JSJ tree refined by $T$. Let $\tilde e$ be an edge of $\tilde T$ with initial vertex $\tilde v$ and let $H$ be a subgroup of $G_{\tilde v}$ containing $G_{\tilde e}$ such that for every edge $\tilde f\not \in \tilde {\gr e}$ with initial vertex $\tilde v$ we have $H\subset \langle G_{\tilde e}, G_{\tilde f}\rangle$. Denote by $\tilde T_{\tilde e}$ the tree obtained from $\tilde T$ by performing an expansion of $v$ with group $H$ and set $\{ \tilde e\}$. Then $T$ is compatible with $\tilde T_{\tilde e}$.
\end{lemma}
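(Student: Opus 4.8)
The plan is to exhibit a single $G$-tree $T''$ refining both $T$ and $\tilde T_{\tilde e}$; this is all that compatibility requires, and no constraint is placed on the edge groups of $T''$. Since $\tilde T_{\tilde e}$ is an expansion of $\tilde T$ and $T$ refines $\tilde T$ by hypothesis, any common refinement again refines $\tilde T$, so everything is concentrated at the orbit of $\tilde v$: writing $\pi\colon T\to\tilde T$ for the collapse, the vertex $\tilde v$ is blown up on the $T$-side into the $G_{\tilde v}$-subtree $Y=\pi^{-1}(\tilde v)$ (with the lift of each incident edge attached at some vertex of $Y$), while on the $\tilde T_{\tilde e}$-side it is blown up into the one-edge ``star'' obtained by pulling $\tilde e$ off along a new edge of stabiliser $H$. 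Concretely I would construct $T''$ as an expansion of $T$ itself: inside $Y$ I insert one orbit of edges with stabiliser $H$ that separates the attaching vertex of $\tilde e$ from the attaching vertices of all the $\tilde f$, and then check this expansion collapses onto $T$ (collapse the new orbit) and onto $\tilde T_{\tilde e}$ (collapse the original edges of $Y$).

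First I record the ellipticity input. Since $T$ and $\tilde T$ are both JSJ trees they lie in the same deformation space, so $G_{\tilde v}$ is elliptic in $T$; as $\pi$ is equivariant and preserves $Y$, the group $G_{\tilde v}$ fixes a vertex $y_0\in Y$, and since $H\subseteq G_{\tilde v}$ the group $H$ fixes $y_0$ as well. Let $e_T$ be the lift of $\tilde e$, with endpoint $a\in Y$, and for each edge $\tilde f\notin\tilde{\gr e}$ at $\tilde v$ let $b_{\tilde f}\in Y$ carry its lift. Along the geodesic from $a$ towards $y_0$ I take $w$ to be the vertex nearest to $a$ whose stabiliser contains $H$ (this is well defined since $H\subseteq G_{y_0}$), and I perform the expansion of $w$ with group $H$ and set equal to the single direction at $w$ pointing towards $a$ (hence towards $e_T$). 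This is a legitimate expansion because $G_{e_T}=G_{\tilde e}\subseteq H\subseteq G_w$, and it creates exactly one orbit of edges of stabiliser $H$.

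The crucial point, which I expect to be the main obstacle, is to verify that $w$ separates $a$ from every $b_{\tilde f}$, i.e.\ that no geodesic $[a,b_{\tilde f}]$ has branched off towards a $b_{\tilde f}$ before $w$ is reached; this is precisely where the hypothesis $H\subseteq\langle G_{\tilde e},G_{\tilde f}\rangle$ enters. In the extreme case where some $\tilde f$ attaches at the very vertex $a$ one has $\langle G_{\tilde e},G_{\tilde f}\rangle\subseteq G_a$, whence $H\subseteq G_a$ and $w=a$, so the single edge $e_T$ is pulled off directly. In general I would argue along the path from $a$ that if a branch towards some $b_{\tilde f}$ split off at a vertex $u$ strictly before the stabiliser grows to contain $H$, then both $G_{\tilde e}$ and $G_{\tilde f}$ would fix $u$, giving $\langle G_{\tilde e},G_{\tilde f}\rangle\subseteq G_u$ and hence $H\subseteq G_u$, contradicting the choice of $w$ as the first such vertex. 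Thus the inserted $H$-edge genuinely lies on the $\tilde e$-side of all the $b_{\tilde f}$, equivariantly and simultaneously.

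Granting this, the verification is routine. Collapsing the new orbit of $H$-edges reattaches $e_T$ at $a$ and returns $Y$, hence $T$; collapsing instead every original edge of $Y$ leaves only the new orbit, with $\tilde e$ on one side of it and all the $\tilde f$ on the other, which is exactly the star blow-up produced by the expansion defining $\tilde T_{\tilde e}$. Therefore the expanded tree $T''$ collapses onto both $T$ and $\tilde T_{\tilde e}$, so these two trees are compatible, as claimed.
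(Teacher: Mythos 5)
Your construction is essentially the paper's own proof. Both arguments lift $\tilde e$ to $T$, use that $T$ and $\tilde T$ lie in the same (JSJ) deformation space to find a vertex of $\pi^{-1}(\tilde v)$ whose stabilizer is all of $G_{\tilde v}$ (your $y_0$, the paper's $v$), take on the segment from the attaching point $a$ of the lift towards that vertex the vertex closest to $a$ whose stabilizer contains $H$ (your $w$, the paper's $v_1$), and perform an expansion there with group $H$. Your separation argument is also correct and is just a median reformulation of the paper's: the branch vertex $u$ lies on $[a,y_0]\cap[b_{\tilde f},y_0]$, the group $G_{\tilde e}$ fixes $[a,y_0]$ and $G_{\tilde f}$ fixes $[b_{\tilde f},y_0]$ because each fixes the two endpoints (here it is essential, and you should say explicitly, that $G_{y_0}=G_{\tilde v}$ contains both edge groups), so $H\subset\langle G_{\tilde e},G_{\tilde f}\rangle\subset G_u$ contradicts the minimality of $w$; the paper phrases the same contradiction as $G_{\tilde f}\not\subset G_{e_1}$ for $e_1$ the last edge of $[a,w]$.

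The one concrete slip is in the legitimacy of your expansion. In the non-degenerate case $w\neq a$ the set of your expansion is not $\{e_T\}$ but the single edge at $w$ pointing towards $a$, i.e.\ $\bar e_1$, and the paper's definition of an expansion requires $G_{e_1}\subset H$ — not merely $G_{e_T}\subset H$, which is all you verify. This containment is not free: your choice of $w$ only yields $H\not\subset G_{e_1}$, since $G_{e_1}\subset G_u$ for $u$ the endpoint of $e_1$ preceding $w$. The paper records exactly the needed chain $G_e\subset G_{e_1}\subsetneq H$ at this step; the first inclusion follows from your own two-endpoint argument ($G_e=G_{\tilde e}$ fixes $a$ and $y_0$, hence all of $[a,y_0]\supset e_1$), and the containment $G_{e_1}\subset H$ is clear in the situations where the lemma is invoked (for blow-ups of dead ends and inert edges of type $1$ one has $H=G_{\tilde v}$, so $G_{e_1}\subset G_w\subset G_{\tilde v}=H$; for inert edges of type $2$ the quotient $G_{\tilde v}/G_{\tilde e}\simeq\Z/p^k\Z$ makes all subgroups containing $G_{\tilde e}$ pairwise comparable, and $H\not\subset G_{e_1}$ then forces $G_{e_1}\subsetneq H$). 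Without $G_{e_1}\subset H$ the expansion at $w$ with group $H$ and set $\{\bar e_1\}$ is simply not defined, so this verification must be added; with it, the rest of your argument (the two collapses of the expanded tree onto $T$ and onto $\tilde T_{\tilde e}$) goes through exactly as in the paper.
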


\begin{proof}
Let $e$ be the lift of $\tilde e$ in $T$ and $v$ a lift of $\tilde v$ such that $G_v= G_{\tilde v}$. Such a $v$ exists since $T$ and $\tilde T$ are both JSJ trees. Call $[w,v]$ the path between the initial vertex $w$ of $e$ and $v$. Call $v_1$ the vertex of $[w,v]$ the closest of $w$ such that $H\subset G_{v_1}$.

If $w= v_1$, then the $G$-tree obtained by the expansion of $w$ with group $H$ and set $\{e\}$  obviously refines $\tilde T_{\tilde e}$. 

Otherwise, denote by $e_1$ the last edge of $[w,v_1]$. 
We have $G_e\subset G_{e_1}\subsetneq H$.
Moreover as for every edge $\tilde f\not \in \tilde {\gr e}$ of initial vertex $\tilde v$ in $\tilde T$, we have $H\subset \langle G_{\tilde f},G_{\tilde e}\rangle$, we obtain $G_{\tilde f}\not \subset G_{e_1}$. Hence for $f$ the lift of $\tilde f$ in $T$, the edges $e$ and $f$ are in distinct components of $T\setminus v_1$. Let $T_e$ be the tree obtained by performing an expansion of $v_1$ with group $H$ and set $\{e_1\}$. Then $T_e$ refines $\tilde T_{\tilde e}$.
\end{proof}

\begin{corollary}\label{Tcomp}
Let $T$ be a JSJ tree.
The $G$-tree $T_{comp}$ is compatible with $T$. Moreover there exists a common refining tree which is a JSJ tree.
\end{corollary}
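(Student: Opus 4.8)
The plan is to realize $T_{comp}$ as a collapse of a blow-up of a reduced JSJ tree refined by $T$, and to produce the required common refinement by applying Lemma \ref{compexpand} once for each blow-up. First I would reduce: collapsing the non-reduced edges of $T$ yields a reduced Guirardel-Levitt JSJ tree $\tilde T$ refined by $T$, and by Proposition \ref{construction} we may compute $T_{comp}$ starting from $\tilde T$. Thus $T_{comp}$ is obtained from $\tilde T$ in two stages: first blow up the finitely many dead-end orbits, producing a (non-reduced) tree $\tilde T_1$; then collapse the vanishing, \psa, non-ascending slippery and \tor $2$-slippery edges. In particular $\tilde T_1$ refines $T_{comp}$, while none of the collapsed edges is a blow-up edge, since the edge created by a blow-up carries the full vertex group $G_v$ and is therefore \tor but not $2$-slippery.

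Next I would handle the blow-ups with Lemma \ref{compexpand}. Blowing up a dead end $\gr v$ with wall $\gr f$ is exactly the expansion of $\gr v$ with group $H=G_v$ and set $\{f\}$; moreover the defining relation $\langle G_h,G_f\rangle=G_v$ for every edge $h\notin\gr f$ at $v$ is precisely the hypothesis $H\subset\langle G_{\tilde e},G_{\tilde f}\rangle$ of Lemma \ref{compexpand}, taking $\tilde e=f$. Applying that lemma at each of the finitely many dead-end orbits yields a tree $\hat T$ refining both $T$ and $\tilde T_1$. Since $\hat T$ is obtained from $T$ by the expansions the lemma constructs, each of which has a new edge whose stabilizer equals the stabilizer of one of its endpoints and is hence non-reduced, every such expansion preserves the deformation space; therefore $\hat T$ lies in the Guirardel-Levitt JSJ deformation space and is itself a JSJ tree.

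Finally I would combine refinements by transitivity: $\hat T$ refines $\tilde T_1$ and $\tilde T_1$ refines $T_{comp}$, so $\hat T$ refines $T_{comp}$; as $\hat T$ also refines $T$, it is a common refinement of $T$ and $T_{comp}$ which is a JSJ tree, giving both assertions at once.

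The main obstacle is the bookkeeping in the blow-up step. Lemma \ref{compexpand} is stated for a single expansion of the reduced tree $\tilde T$, whereas $\tilde T_1$ may involve several blow-ups; one must check that the local expansions the lemma produces inside $T$ take place at distinct vertices (the dead-end orbits being pairwise distinct) and therefore can be performed simultaneously, so that the resulting $\hat T$ genuinely refines the fully blown-up $\tilde T_1$ and not merely one intermediate expansion. Once this non-interference is verified, the transitivity argument and the deformation-space argument for the JSJ property are immediate.
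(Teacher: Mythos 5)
Your route is genuinely different from the paper's. The paper never constructs a global common refinement by hand: it invokes point 1 of \cite[Proposition 3.22]{Gl3b}, which reduces compatibility of $T$ with $T_{comp}$ to compatibility of $T$ with each one-edge collapse $T_f$ of $T_{comp}$. For $f$ not coming from a blow-up, the reduced JSJ tree $\tilde T$ refined by $T$ already refines $T_f$; for a blow-up edge it applies Lemma \ref{compexpand} with $\tilde e$ the wall and $H=G_v$, exactly as you do --- but one orbit at a time, so the simultaneity problem you flag never arises there. Your globalization can be made to work, but note two things. First, the \emph{statement} of Lemma \ref{compexpand} only asserts compatibility; the localized expansion of $T$ lives in its proof, and your non-interference check needs more than ``the dead-end orbits are pairwise distinct'': what actually saves you is that the vertex $v_1$ and edge $e_1$ of that proof lie in the subtree of $T$ collapsing onto the dead-end vertex $\tilde v$ (both $w$ and the lift $v$ with $G_v=G_{\tilde v}$ lie in it), and the preimage subtrees of distinct dead-end orbits are disjoint; hence the expansions are supported on disjoint orbits of subtrees, commute, and the result $\hat T$ really refines the fully blown-up tree $\tilde T_1$. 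What your approach buys is an explicit proof of the ``Moreover'' clause, which the paper's own proof leaves implicit.

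There is, however, a genuine gap in your last step: lying in the Guirardel--Levitt JSJ deformation space does not make $\hat T$ a JSJ tree. A JSJ tree must in addition be universally elliptic, and the new edges of $\hat T$ carry $H=G_v$ for $v$ a dead end, which is not an edge group of $T$. You must argue that $G_v$ is universally elliptic; this follows from the dead-end condition $\langle G_f,G_h\rangle=G_v$, since $G_v\simeq \Z^n$ is abelian and generated by edge groups of a JSJ tree, and an abelian group generated by universally elliptic subgroups is universally elliptic --- exactly the argument the paper makes later in Lemma \ref{univell}. Separately, your parenthetical reason that blow-up edges escape the collapse (``\tor but not $2$-slippery'') is off target: $2$-slipperiness is only defined for reduced trees, and at a dead end of valence at least $3$ every non-wall edge satisfies the stabilizer and vertex conditions to slide along the new edge; the correct reason is simply that Definition \ref{comp} collapses only vanishing, \psa, slippery and $2$-slippery edges \emph{of the reduced tree $T$ itself}, and the blow-up edges are new. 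With these two repairs your proposal is sound.
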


\begin{proof}
For $f$ an edge of $T_{comp}$ call $T_f$ the tree obtained by collapsing every edge of $T_{comp}$ except $f$.

By the point $1.$ of \cite[Proposition 3.22]{Gl3b}, we have to show that $T$ is compatible with $T_f$ for every edge $f$ of $T_{comp}$. 

Call $\tilde T$ a reduced JSJ tree refined by $T$. By construction, $\tilde T$ refines every tree $T_f$ with $f$ an edge of $T_{comp}$ not obtained by a blow-up. Thus $T$ also refines all these trees.

If $f$ comes from a blow-up, then we may apply Lemma \ref{compexpand}, taking $T$ the JSJ tree, $\tilde T$ the reduced JSJ tree, $v$ the dead end from $f$ comes from, $e$ the wall of $v$ and $H=G_v$. We obtain that $T$ is compatible with  $T_f$.

If follows that $T$ is compatible with $T_{comp}$.
\end{proof}

\subsection{Universal compatibility}

\begin{proposition}\label{univcomp}
Let $G$ be a group that admits a JSJ tree over a set of subgroup $\mathcal A$ and such that the edge groups of every minimal $G$-tree over $\mathcal A$ are elliptic in the JSJ deformation space of $G$. Then $T_{comp}$ is universally compatible. 
\end{proposition}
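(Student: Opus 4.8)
The plan is to test compatibility of $T_{comp}$ against an arbitrary $G$-tree $S$ over $\mathcal A$ one edge at a time. For an edge $f$ of $T_{comp}$ write $T_f$ for the one-edge tree obtained by collapsing every edge of $T_{comp}$ outside the orbit $\gr f$. By point $1$ of \cite[Proposition 3.22]{Gl3b}, the same reduction already used in the proof of Corollary \ref{Tcomp}, the tree $S$ is compatible with $T_{comp}$ as soon as it is compatible with each $T_f$, so it suffices to treat these one-edge trees separately. This is the right reduction: although the JSJ tree refined by $T_{comp}$ is in general \emph{not} universally compatible, the edges responsible for that failure, namely the vanishing, \psa, non-ascending slippery and \tor $2$-slippery edges, have been collapsed, so each surviving $f$ should cut out a rigid splitting.

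First I would record the ellipticity inputs. By Corollary \ref{Tcomp} there is a JSJ tree $\hat T$ refining $T_{comp}$; hence each $T_f$ is a collapse of $\hat T$ and $G_f$ coincides with an edge group of $\hat T$. Being a JSJ edge group, $G_f$ is universally elliptic, and in particular elliptic in $S$. Conversely, the hypothesis that every edge group of a $G$-tree over $\mathcal A$ is elliptic in the JSJ deformation space shows that each edge group of $S$ is elliptic in $\hat T$, hence in its collapse $T_f$. Thus for every edge $f$ the one-edge tree $T_f$ and the tree $S$ have mutually elliptic edge groups.

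Next I would manufacture a candidate common refinement. Because every edge group of $S$ is elliptic in $T_f$, one may blow up each vertex $v$ of $S$ according to the action of $G_v^S$ on $T_f$; the ellipticity of the edge groups of $S$ in $T_f$ is exactly what makes these local blow-ups agree along the edges of $S$, so they assemble equivariantly into a $G$-tree $\tilde S$ that refines $S$ and carries a natural equivariant map onto $T_f$. When $f$ arises from a blow-up the relevant local analysis is the one performed in Lemma \ref{compexpand} (applied to the dead-end vertex from which $f$ issues, its wall, and $H=G_v$); for an $f$ inherited from $\hat T$ it is the standard pullback of the splitting $T_f$ over $S$.

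The step I expect to be the main obstacle is checking that this natural map $\tilde S \to T_f$ is a collapse, i.e.\ that collapsing the $S$-edges of $\tilde S$ returns $T_f$ itself rather than some other one-edge splitting of $G$. Mutual ellipticity of edge groups does \emph{not} in general suffice for two one-edge trees to be compatible, so this is where one must genuinely use that $f$ survives in $T_{comp}$: any discrepancy between the collapse of $\tilde S$ and $T_f$ would amount to displacing the single edge $f$ by a slide or an induction, which would exhibit $\gr f$ as slippery or \psad in some tree of the deformation space and contradict the fact that $f$ was not collapsed. Turning this heuristic into a proof means reading off the incompatibility through the edge-type dictionary of Section \ref{prelim}, and it is this combinatorial step, rather than the ellipticity bookkeeping, that carries the weight of the argument.
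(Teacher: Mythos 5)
There is a genuine gap, and it sits exactly where you say you expect ``the main obstacle'': your argument never actually proves that the blown-up tree $\tilde S$ collapses onto $T_f$, and the heuristic you offer for closing it does not work. Mutual ellipticity of edge groups, which is all your first two paragraphs establish, is indeed insufficient for compatibility of one-edge splittings, and the dictionary of Section \ref{prelim} cannot rescue you: slides, inductions and $\mathcal A^{\pm1}$-moves relate trees \emph{within a single reduced deformation space}, whereas the tree $S$ you are testing against is an arbitrary $G$-tree over $\mathcal A$, in general in a different deformation space from $T_{comp}$. So ``any discrepancy would amount to displacing $f$ by a slide or an induction'' is not a statement the machinery of the paper supports, and the fact that $f$ is not slippery, \psa, vanishing or \tor $2$-slippery does not, by itself, yield a refinement of $S$ collapsing to $T_f$. (A secondary inaccuracy: when $f$ comes from a blow-up, $G_f$ is a dead-end \emph{vertex} group of the JSJ tree, not an edge group of $\hat T$, so even your ellipticity bookkeeping needs repair there; compare Lemma \ref{univell}, where this requires a separate argument.)

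The missing idea is that the hypothesis of the proposition is precisely the hypothesis of \cite[Lemma 5.3]{Gl3a}: if the edge groups of every minimal $G$-tree over $\mathcal A$ are elliptic in the JSJ deformation space, then \emph{every} $G$-tree over $\mathcal A$ is refined by a JSJ tree. That converts the problem entirely: given any $S$, choose a JSJ tree $T$ refining $S$; by Corollary \ref{Tcomp} there is a common refinement $\hat T$ of $T_{comp}$ and $T$, and since $\hat T$ refines $T$ which refines $S$, the tree $\hat T$ refines both $T_{comp}$ and $S$. This is the paper's whole proof. Your reduction to the one-edge trees $T_f$ via \cite[Proposition 3.22]{Gl3b} duplicates work already packaged inside Corollary \ref{Tcomp} and then strands you at a combinatorial verification the paper never needs; you used the ellipticity hypothesis only to get mutually elliptic edge groups, rather than to get the refinement statement that actually carries the proof. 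The hard edge-by-edge analysis you anticipate does occur in the paper, but in the \emph{maximality} direction (Propositions \ref{reduced} and \ref{maxcasnonreduit}, showing collapsed edges would destroy universal compatibility), not in the proof of universal compatibility itself.
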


\begin{proof}
From Lemma 5.3 of  \cite{Gl3a}, every $G$-tree is refined by a JSJ tree. Thus by Corollary \ref{Tcomp}, the tree $T_{comp}$ is universally compatible.
\end{proof}

A \GBSn{n} group $G$ is \textit{generic} if the trivial $G$-tree is not abelian JSJ $G$-tree. The description of the generic \GBSn{n} groups is given in \cite{Moi1}. In particular, if a \GBSn{n} group is not isomorphic to a semi-direct product $\Z^n\rtimes \Z$ then it is generic.

\begin{lemma}\label{GBSsplit}
Let $G$ be a generic \GBSn{n} group. If $T$ is a minimal $G$-tree with an edge group $A\simeq \Z^r$ with $r\leq n$, then $r=n$ and $A$ is universally elliptic over abelian $G$-trees. 
\end{lemma}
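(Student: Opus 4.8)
The plan is to prove Lemma \ref{GBSsplit} by analyzing what happens when a \GBSn{n} group $G$ admits a minimal $G$-tree $T$ with an edge group $A\simeq\Z^r$ of rank $r\leq n$. First I would fix a JSJ tree $T_{JSJ}$ of $G$ over $\Z^n$, which exists and is nontrivial because $G$ is generic. The key structural fact to exploit is that all vertex groups of $G$ (in any \GBSn{n} decomposition) are free abelian of rank exactly $n$, so every element of $G$ either is elliptic in $T_{JSJ}$ (hence lies in some $\Z^n$ conjugate) or is hyperbolic.

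The main step is to bound $r$ from below and show $r=n$. I would consider the action of the edge group $A$ on the JSJ tree $T_{JSJ}$. Since $A\simeq\Z^r$ is abelian and finitely generated, it is either elliptic in $T_{JSJ}$ (fixing a vertex, so $A$ embeds in some $\Z^n$, giving $r\leq n$ immediately but not yet equality) or it acts with a hyperbolic element along an axis. If $A$ were elliptic of rank $r<n$, then collapsing all but the edge $\gr e$ of $T$ would yield a nontrivial splitting of $G$ over a group of rank strictly less than $n$; I would argue this contradicts genericity together with the structure of \GBSn{n} groups described in \cite{Moi1}, essentially because a rank-$n$ vertex group cannot be split off by a smaller-rank edge group without forcing $G$ to be of the degenerate form $\Z^n\rtimes\Z$ excluded by genericity. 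The cleanest route is to observe that any vertex group $\Z^n$ of the JSJ tree must remain elliptic in $T$, so it fixes a point; the edge group $A$ of $T$ separating two such vertex groups (or cutting one) must contain a finite-index subgroup of the relevant $\Z^n$ intersection, which forces $r=n$.

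For the second conclusion, that $A$ is universally elliptic over abelian $G$-trees, I would use that an edge group $A\simeq\Z^n$ is a maximal-rank abelian subgroup. Given any abelian $G$-tree $T'$, I want to show $A$ fixes a vertex of $T'$. Since $A$ has finite index in (or equals) a conjugate of a JSJ vertex group $\Z^n$, and the vertex groups of the JSJ are universally elliptic by definition of the JSJ, any finite-index subgroup of a universally elliptic group is again universally elliptic (an abelian group of finite index in an elliptic abelian group must itself fix the same point, as hyperbolic elements generate infinite-index behavior incompatible with finite index in an elliptic $\Z^n$). Thus $A$ is elliptic in every abelian $G$-tree.

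The hard part will be the rank equality $r=n$: ruling out the possibility that $A$ is a proper-rank elliptic subgroup requires carefully invoking the classification of generic \GBSn{n} groups from \cite{Moi1} and showing that a splitting over $\Z^r$ with $r<n$ either contradicts minimality (a non-reduced edge could be collapsed) or forces the degenerate $\Z^n\rtimes\Z$ structure. The elliptic-vs-hyperbolic dichotomy for the abelian group $A$ on the JSJ tree, combined with the fact that edge groups there have rank $n$, is what drives the argument, but pinning down that no strictly smaller edge group can appear in a minimal tree of a generic group is the delicate point and where I expect to lean most heavily on the cited structural results.
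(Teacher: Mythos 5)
Your proposal has a genuine gap exactly where you flag it: the rank equality. Your ``cleanest route'' asserts that an edge group $A$ of $T$ lying between two elliptic $\Z^n$ vertex groups ``must contain a finite-index subgroup of the relevant $\Z^n$ intersection'', but nothing you say justifies this --- an amalgam such as $\Z^2\ast_{\Z}\Z^2$ with the edge $\Z$ a direct factor on each side shows that, in general, an edge group separating two elliptic $\Z^n$'s need not virtually contain anything of rank $n$. Ruling this out for generic \GBSn{n} groups is precisely the content of the lemma, so as written the step is circular. The paper closes this gap with a commensurator argument you do not have: by genericity every \GBSn{n} tree is a JSJ tree (\cite[Theorem 1.2]{Moi1}), so the edge stabilizers of a fixed JSJ tree are universally elliptic and pairwise commensurable, and the commensurator of one such stabilizer $E$ is all of $G$. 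Then $E$, being elliptic in $T$, fixes a vertex $v$; since $\mathrm{Comm}(E)=G$, a finite-index subgroup of $E$ fixes each translate $g\cdot v$, hence (by minimality, as $T$ is the convex hull of the orbit of $v$) every edge stabilizer of $T$ virtually contains $E\simeq\Z^n$. This simultaneously forces $r=n$ and exhibits each edge group $A$ as containing a universally elliptic subgroup of finite index, from which universal ellipticity of $A$ follows by the usual normal-core argument.

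Two further assertions in your write-up need repair even granting the above. First, vertex groups of a JSJ tree are \emph{not} universally elliptic ``by definition'': the JSJ condition concerns edge stabilizers only, and in general JSJ trees have flexible vertices; for these groups ellipticity of vertex groups is a consequence of edge groups being universally elliptic and of finite index in them, which again presupposes the rank-$n$ structure you are trying to prove. Second, your finite-index reasoning runs in the wrong direction: a subgroup of an elliptic group is trivially elliptic, but what the argument actually requires is that a group containing a finite-index elliptic subgroup is itself elliptic (pass to the normal core; its fixed subtree is invariant and the finite quotient fixes a point there). Your parenthetical justification conflates these two directions, and without the correct one you cannot place $A$ inside a conjugate of a JSJ vertex group in the first place.
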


\begin{proof}
Since  $G$ is generic, any \GBSn{n} tree is a JSJ $G$-tree (\cite[Theorem 1.2]{Moi1}). Then all edge stabilizers of a given JSJ tree are universally elliptic and commensurable. Let $E$ be one of these stabilizers. Its commensurator is $G$. In every other $G$-tree $T$ over groups $\Z^r$ with $r\leq n$, the group $E$ is elliptic, thus included in the stabilizer of some vertex $v$. As its commensurator is $G$, it is then virtually contained in every stabilizer of vertex in the orbit of $v$. If we assume $T$ is minimal, it implies that $E$ is virtually included in every edge stabilizer. Hence every edge stabilizer is a $\Z^n$ which contains with finite index a universally elliptic group. Thus every edge stabilizer is universally elliptic.
\end{proof} 

\begin{corollary}\label{GGBSncomp}
Let $G$ be a \GBSn{n} group.
The tree $T_{comp}$ associated to the JSJ deformation space of $G$ is compatible with every $G$-tree over the subgroups of $\subset \Z^n$.
\end{corollary}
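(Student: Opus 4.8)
\textbf{Proof plan for Corollary \ref{GGBSncomp}.}

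The plan is to obtain the statement by combining the general universal-compatibility criterion of Proposition \ref{univcomp} with the structural input of Lemma \ref{GBSsplit}. The hypothesis of Proposition \ref{univcomp} that must be verified is twofold: first, that $G$ admits a JSJ tree over the class $\mathcal A$ of subgroups contained in $\Z^n$; second, that the edge groups of every minimal $G$-tree over this $\mathcal A$ are elliptic in the JSJ deformation space of $G$. Granting both, Proposition \ref{univcomp} immediately yields that $T_{comp}$ is $\mathcal A$-universally compatible, which is exactly the assertion that $T_{comp}$ is compatible with every $G$-tree over subgroups of $\Z^n$.

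First I would dispose of the degenerate possibility in which $G$ is not generic. If $G$ is a \GBSn{n} group that fails to be generic, then by the description recalled before Lemma \ref{GBSsplit} it is of a very restricted form (isomorphic to a semidirect product $\Z^n\rtimes\Z$), and these cases can be checked by hand; I would state them as exceptions or treat them directly. So the substance is the generic case, where by \cite[Theorem 1.2]{Moi1} every \GBSn{n} tree is already an abelian JSJ tree, which in particular furnishes the required JSJ tree over $\mathcal A$ and hence settles the existence hypothesis.

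The key step is verifying the ellipticity hypothesis, and this is precisely the content of Lemma \ref{GBSsplit}. Let $T$ be any minimal $G$-tree with edge groups in $\mathcal A$, so each edge stabilizer is some $\Z^r$ with $r\le n$. Lemma \ref{GBSsplit} tells me that in fact $r=n$ and that each such edge stabilizer is universally elliptic over abelian $G$-trees; in particular it is elliptic in the JSJ deformation space. This verifies the second hypothesis of Proposition \ref{univcomp} for the class $\mathcal A$ of subgroups of $\Z^n$. I would therefore write: by Lemma \ref{GBSsplit}, every edge group of every minimal $G$-tree over subgroups of $\Z^n$ is elliptic in the JSJ deformation space of $G$, so the hypotheses of Proposition \ref{univcomp} hold with $\mathcal A$ the set of subgroups of $\Z^n$, and the conclusion follows.

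The main obstacle I anticipate is not in the generic case, where the argument is essentially a citation of the two preceding results, but in correctly handling the non-generic groups so that the universal-compatibility statement is not vacuous or false there. Since Lemma \ref{GBSsplit} is stated only for generic $G$, I would need either to observe that the non-generic \GBSn{n} groups are so simple that $T_{comp}$ and the relevant $G$-trees can be compared explicitly, or to note that in those cases the collection of $G$-trees over $\Z^n$ is itself trivial enough that compatibility is automatic. Modulo this bookkeeping, the corollary is a direct consequence of Proposition \ref{univcomp} and Lemma \ref{GBSsplit}.
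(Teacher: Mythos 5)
Your generic case is exactly the paper's argument: invoke Lemma \ref{GBSsplit} to verify the ellipticity hypothesis of Proposition \ref{univcomp} (genericity supplying, via \cite[Theorem 1.2]{Moi1}, the existence of the JSJ tree), and conclude that $T_{comp}$ is universally compatible over subgroups of $\Z^n$. That part is correct and needs no further comment.

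The non-generic case, which you leave as ``bookkeeping,'' is the one place where your proposal has a real gap, and neither of the two escape routes you sketch is the right one. The resolution is a one-liner that you missed: by the definition given just before Lemma \ref{GBSsplit}, $G$ non-generic means precisely that the \emph{trivial} $G$-tree is an abelian JSJ tree. Hence the JSJ deformation space is that of the trivial tree, the construction of Definition \ref{comp} applied to it yields the trivial tree, so $T_{comp}$ is trivial --- and the trivial tree is compatible with every $G$-tree, since every $G$-tree refines it. In particular, your first suggestion (stating the non-generic groups ``as exceptions'') would needlessly weaken the corollary, which holds without exception; and your second suggestion --- that for non-generic $G$ the collection of $G$-trees over subgroups of $\Z^n$ is ``trivial enough that compatibility is automatic'' --- is simply false: already $\Z^n\times\Z$ admits infinitely many pairwise incompatible nontrivial splittings over $\Z^n$ (one for each surjection onto $\Z$ restricting suitably), so compatibility of a \emph{nontrivial} candidate tree would not be automatic at all. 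The point is not that there are few trees to be compatible with, but that $T_{comp}$ itself degenerates to the trivial tree. With that observation inserted, your proof coincides with the paper's.
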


\begin{proof}
If $G$ is generic, this is a direct consequence of Proposition \ref{univcomp} and Lemma \ref{GBSsplit}.

If $G$ is not generic, then the trivial $G$-tree is a JSJ decomposition. Hence $T_{comp}$ is trivial and universally compatible.
\end{proof}

\subsection{Maximality}

In this section, we prove that in the case of \vGBS groups (and not only \GBSn{n} groups), the tree $T_{comp}$ dominates every other universally compatible trees.

For this we need some technical lemmas, that we divide in three categories. The lemmas of the first category give conditions on trees to be compatible. the lemmas of the second category give  existence of deformation sequences in the deformation space and the lemmas of the third category give some refinement conditions.

\paragraph{Compatibility lemmas}
For $S$ a set of elements of $G$ and $T$ a $G$-tree, call $\mathcal E_S(T)$ the convex hull of all characteristic spaces of elements of $S$ in $T$. Note that if $S$ is reduced to one element then $\mathcal E_S(T)$ is just the characteristic space of this element.

\begin{lemma}\label{incompatibility}\label{constructionnoncomp}
Let $T$ and $T'$ be two $G$-trees and let $a$, $b$, $c$, and $d$ be four elements of $G$. 
\begin{itemize}
\item If the intersection $\mathcal E_{a,b}(T)\cap \mathcal E_{c}(T)$ is empty, and the intersection $\mathcal E_{a,b}(T')\cap \mathcal E_{c}(T')$ contains an edge, then $T$ and $T'$ are not compatible.
\item If $\mathcal E_{\{a,b\}}(T)$ and $\mathcal E_{\{c,d\}}(T)$ are disjoint in $T$ and $\mathcal E_{\{a,c\}}(T')$ and $\mathcal E_{\{b,d\}}(T')$ are disjoint in $T'$, then $T$ and $T'$ are not compatible.
\end{itemize}
\end{lemma}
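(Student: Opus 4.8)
The plan is to prove both bullet points by the same basic mechanism: if $T''$ refines both $T$ and $T'$, then the collapse maps $T'' \to T$ and $T'' \to T'$ are equivariant and contract subtrees, so characteristic spaces and their convex hulls transport along these maps in a controlled way. The key structural fact I would isolate first is a monotonicity lemma: if $\pi\colon T'' \to T$ is an equivariant collapse and $S$ is any subset of $G$, then $\pi(\mathcal E_S(T'')) = \mathcal E_S(T)$, because collapsing edges sends the axis (or fixed-point set) of each $g \in S$ onto the corresponding characteristic space in $T$, and $\pi$ commutes with taking convex hulls (the image of a convex hull under a collapse map between trees is the convex hull of the image). Equivalently, $\mathcal E_S(T)$ is the image of $\mathcal E_S(T'')$, and the preimage $\pi^{-1}(\mathcal E_S(T))$ contains $\mathcal E_S(T'')$.

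\emph{First bullet.} Suppose for contradiction that a common refinement $T''$ exists, with collapses $\pi\colon T'' \to T$ and $\pi'\colon T'' \to T'$. Consider the convex hulls in $T''$: since $\pi(\mathcal E_{a,b}(T'')) = \mathcal E_{a,b}(T)$ and $\pi(\mathcal E_{c}(T'')) = \mathcal E_c(T)$, and these two images are disjoint in $T$ by hypothesis, the two subtrees $\mathcal E_{a,b}(T'')$ and $\mathcal E_c(T'')$ must already be disjoint in $T''$ (two subtrees whose images under a collapse are disjoint cannot meet, since a common point would map to a common point). Now push forward to $T'$ via $\pi'$: I claim $\pi'(\mathcal E_{a,b}(T''))$ and $\pi'(\mathcal E_c(T''))$ are \emph{disjoint or meet in at most a point}, because disjoint subtrees of a tree can overlap after collapsing only in a single vertex, never in an edge. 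But $\pi'$ carries these to $\mathcal E_{a,b}(T')$ and $\mathcal E_c(T')$, which by hypothesis share an \emph{edge} — a contradiction. The crux is the elementary tree fact that collapsing a pair of disjoint subtrees can merge them in at most one point.

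\emph{Second bullet.} Here I would use the same transport but exploit a different combinatorial configuration. Assume again a common refinement $T''$ with collapses $\pi,\pi'$. In $T$ the hulls $\mathcal E_{\{a,b\}}(T)$ and $\mathcal E_{\{c,d\}}(T)$ are disjoint, so (as above) their lifts $\mathcal E_{\{a,b\}}(T'')$ and $\mathcal E_{\{c,d\}}(T'')$ are disjoint in $T''$; symmetrically, disjointness of $\mathcal E_{\{a,c\}}(T')$ and $\mathcal E_{\{b,d\}}(T')$ forces $\mathcal E_{\{a,c\}}(T'')$ and $\mathcal E_{\{b,d\}}(T'')$ to be disjoint in $T''$. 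The contradiction comes from analyzing the four characteristic spaces $\mathcal E_a(T''), \mathcal E_b(T''), \mathcal E_c(T''), \mathcal E_d(T'')$ inside the single tree $T''$: the bridge between the $\{a,b\}$-hull and the $\{c,d\}$-hull, and the bridge between the $\{a,c\}$-hull and the $\{b,d\}$-hull, impose incompatible separation conditions. Concretely, the first pairing separates $\{a,b\}$ from $\{c,d\}$ by an edge-path in $T''$, while the second separates $\{a,c\}$ from $\{b,d\}$ by an edge-path; these two separating structures cannot coexist in a tree (this is exactly the four-point/cross-ratio obstruction, where no tree metrization of four points can realize both the $(ab|cd)$ and the $(ac|bd)$ split with both gaps nondegenerate).

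\textbf{The main obstacle} will be the second bullet's final step: making rigorous the claim that the two separation patterns are incompatible in $T''$. I expect to reduce it to the classical fact that four points in a tree admit exactly one of three topological types $(ab|cd)$, $(ac|bd)$, $(ad|bc)$, so that if the separating segment for $(ab|cd)$ is nondegenerate then the segment for $(ac|bd)$ must be degenerate — and I must verify that "the hulls are disjoint in $T''$" genuinely yields a nondegenerate separating segment (an honest edge, not a single vertex), which is where the hypothesis that the hulls are disjoint (rather than merely non-overlapping in an edge) is used. The transport lemma $\pi(\mathcal E_S(T'')) = \mathcal E_S(T)$ itself is routine but must be stated carefully for the elliptic case, where $\mathcal E_S$ is a fixed-point set rather than an axis.
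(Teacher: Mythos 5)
Your proposal is correct and follows essentially the same route as the paper: the transport lemma that collapse maps carry $\mathcal E_S$ of a refinement onto $\mathcal E_S$ of the collapsed tree (so disjointness lifts and edge-containment descends/lifts), combined with the bridge argument for the four-element case. The only cosmetic difference is that in the second bullet you derive the contradiction inside the common refinement using both lifted disjointness conditions (your four-point split obstruction), whereas the paper lifts only the $(ab|cd)$ disjointness, observes that the bridge lies in both cross hulls $\mathcal E_{\{a,c\}}$ and $\mathcal E_{\{b,d\}}$, and pushes that nonempty intersection down to $T'$ --- the two formulations are equivalent.
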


\begin{proof}
First notice that if $\tilde T$ is a refinement of $T$, then $\mathcal E_{S}(\tilde T)$ surjects onto $\mathcal E_S(T)$ via the natural map.

Thus for the first point, the two assumptions are stable by refinement.  Hence a common refinement should have both properties. This is impossible.

For the second point, assume there exists a common refinement $\tilde T$. As $\mathcal E_{\{a,b\}}(T)$ and $\mathcal E_{\{c,d\}}(T)$ are disjoint in $T$ then $\mathcal E_{\{a,b\}}(\tilde T)$ and $\mathcal E_{\{c,d\}}(\tilde T)$ are also disjoint in $\tilde T$. Call $B$ the bridge in $\tilde T$ between $\mathcal E_{\{a,b\}}(\tilde T)$ and $\mathcal E_{\{c,d\}}(\tilde T)$. Then $B$ is contained in $\mathcal E_{\{a,c\}}(\tilde T)$ and  $\mathcal E_{\{b,d\}}(\tilde T)$ in $\tilde T$. Thus $\mathcal E_{\{a,c\}}(\tilde T)\cap \mathcal E_{\{b,d\}}(\tilde T)$ is not empty in $\tilde T$, hence $\mathcal E_{\{a,c\}}(T')\cap \mathcal E_{\{b,d\}}(T')$ is not empty in $T'$.
\end{proof}

\begin{lemma}\label{normalisation}
Let $G$ be a \vGBS group and $T$ be an abelian JSJ $G$-tree. Let $e$ be a reduced edge with initial vertex $v$ in $T$ such that $G_e \subsetneq G_v$. Then there exists $g\in G_v\setminus G_e$ that centralizes $G_e$.
 \end{lemma}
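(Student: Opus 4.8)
The plan is to exploit the fact that $T$ is an abelian JSJ tree of a \vGBS group, so that the vertex stabilizer $G_v$ is a finitely generated free abelian group, say $G_v \simeq \Z^m$, and $G_e$ is a subgroup with $G_e \subsetneq G_v$. Since $G_v$ is abelian, \emph{every} element of $G_v$ centralizes $G_e$, so the only content of the statement is the existence of an element of $G_v$ lying \emph{outside} $G_e$. Thus the claim reduces to showing $G_e \neq G_v$ as \emph{sets}, which is exactly the hypothesis $G_e \subsetneq G_v$. So on the face of it the statement looks trivial once one records that $G_v$ is abelian.

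\textbf{Locating the real obstacle.} This suggests the intended meaning is stronger than a literal reading: the element $g$ must centralize $G_e$ \emph{in $G$}, not merely within $G_v$, or equivalently the point is that $g$ can be chosen so that the centralizing is compatible with the global structure (for instance, that $G_e$ is normal in $G_v$ and that $g$ realizes a genuine extension used later to build slides or inductions). The hard part will therefore be to verify that the abelian vertex group is large enough: one must rule out the degenerate possibility that, although $G_e \subsetneq G_v$ as subgroups, the reducedness of $e$ together with the JSJ hypotheses does not accidentally force $G_e$ to have finite index in a way that interferes. First I would invoke that $T$ is a \vGBS tree, so $G_v$ and $G_e$ are both finitely generated free abelian with $G_e$ of some rank $r \le m = \operatorname{rk} G_v$; reducedness of $e$ guarantees $G_e \subsetneq G_v$ is a \emph{strict} inclusion (this is given), so $G_v \setminus G_e$ is nonempty.

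\textbf{Extracting the centralizing element.} Concretely, I would pick any $g \in G_v \setminus G_e$, which exists by the strict inclusion. Because $G_v$ is abelian, $g$ commutes with every element of $G_e$, so $g$ centralizes $G_e$; and by choice $g \notin G_e$. This produces the desired element directly. The only point requiring care is confirming that "centralizes $G_e$" is to be understood as centralizing the subgroup $G_e \le G$ (which holds since $g, G_e \subseteq G_v$ and $G_v$ is abelian), rather than some external normalizer condition; granting that, no computation is needed.

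\textbf{Where the subtlety would lie if the statement were stronger.} If, as I suspect, the lemma is later used to guarantee that sliding or an induction can be performed, then the substantive version of the argument would require $g$ to interact correctly with edges adjacent to $v$. In that case I would argue as follows: since $G_v$ is abelian and $G_e \subsetneq G_v$ is a reduced inclusion of free abelian groups, choose a basis of $G_v$ adapted to $G_e$ (using the structure theorem for finitely generated abelian groups, so that $G_e$ is spanned by multiples of a sub-basis); any basis element of $G_v$ whose image is not contained in $G_e$ serves as $g$, and it centralizes $G_e$ precisely because $G_v$ is abelian. The main obstacle in this reading is purely that one must confirm the reduced edge hypothesis $G_e \subsetneq G_v$ is strict and not merely an inclusion up to finite index, but this is exactly what "reduced edge with $G_e \subsetneq G_v$" supplies, so the lemma follows.
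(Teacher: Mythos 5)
Your proof breaks at its very first step: you assume that, because $G$ is a \vGBS group, the vertex stabilizer $G_v$ in the abelian JSJ tree $T$ is finitely generated free abelian. That is not what the hypothesis gives you. An \emph{abelian JSJ tree} is a JSJ tree over the class of abelian subgroups: its \emph{edge} groups are abelian, but its vertex groups need not be, and the JSJ tree is in general not itself a \vGBS tree. The entire content of the lemma lies in the non-abelian case, which your argument never sees. If $G_v$ were always abelian the statement would indeed be trivial --- and the paper's proof dismisses that case in one line --- but your extended discussion of where a hidden subtlety might lie (centralizing in $G$ rather than in $G_v$, finite-index issues, adapted bases) chases difficulties that are not present while missing the one that is.

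The paper handles the substantive case by invoking the structure theorem of \cite{Moi1}: a non-abelian vertex group of an abelian JSJ tree of a \vGBS group is a semi-direct product $\Z^n\rtimes_\varphi\langle t\rangle$ with $\varphi$ a non-trivial automorphism of $\Z^n$, and the edge group of a reduced edge satisfies $G_e\subsetneq\Z^n$; then any $g\in\Z^n\setminus G_e$ centralizes $G_e$ because the fibre $\Z^n$ is abelian. This input cannot be bypassed: in $\Z\rtimes_{-1}\langle t\rangle$ (the Klein bottle group) the subgroup $\langle t\rangle$ is its own centralizer, so a proper subgroup of a vertex group of this shape may fail to have any centralizing element outside itself. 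The conclusion of the lemma therefore genuinely depends on knowing that edge groups of the JSJ sit strictly inside the abelian fibre $\Z^n$, which is exactly what the citation to \cite{Moi1} supplies and what your proposal has no access to.
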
 

\begin{proof}
If $G_v$ is abelian, the lemma is trivial. Otherwise by \cite{Moi1}, the group $G_v$ is a semi-direct product $\Z^n\rtimes_\varphi \langle t \rangle$, with $\varphi$ a non-trivial automorphism of $\Z^n$, and $G_e\subsetneq \Z^n$, thus any element in $\Z^n\setminus G_e$ centralizes $G_e$.
\end{proof}

\begin{corollary}\label{casreduit}
Let $T$ be a $G$-tree. Let $\gr e$ be an orbit of  reduced edges of $T$.
Assume $\gr e$ has one of the following properties:
\begin{enumerate}[noitemsep]
\item $\gr e$ is \pa in $T$ and $G$ is not an ascending HNN-extension,
\item $\gr e$ is not ascending and there exists an orbit of reduced edges $\gr f$ which slides along two consecutive edges of $\gr e\cup \bar {\gr e}$ in $T$,
\item $T$ is a \vGBS-tree, the orbit $\gr e$ is not ascending and there exists an orbit of reduced edges $\gr f$ which slides along $\gr e$ in $T$.
\item $\gr e$ is \tor of endpoint $\gr v$ and there exist two distinct orbits of reduced edges $\gr f$ and $\gr g$ with terminal vertex $\gr v$, distinct from the orbits $\gr e$ or $\bar {\gr e}$.
\end{enumerate}
Let $T'$ be a collapse of $T$ such that $e$ is not collapsed in $T'$. Then $T'$ is not universally compatible.
\end{corollary}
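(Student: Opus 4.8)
The plan is to exhibit, for each of the four cases, a single $G$-tree $T''$ that is \emph{not} compatible with $T'$; this suffices, since a universally compatible tree must be compatible with every $G$-tree. The only certifying tool is Lemma~\ref{incompatibility}: I pick a few elements of $G$ and compare the convex hulls $\mathcal E_S$ of their characteristic spaces in $T'$ and in $T''$, arranging either that an intersection $\mathcal E_{a,b}\cap\mathcal E_{c}$ is empty in one tree and contains an edge in the other (first bullet), or that the uncrossed hulls $\mathcal E_{\{a,b\}},\mathcal E_{\{c,d\}}$ are disjoint in $T''$ while the crossed hulls $\mathcal E_{\{a,c\}},\mathcal E_{\{b,d\}}$ are disjoint in $T'$ (second bullet). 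In each case $T''$ is produced from $T$ by the admissible move(s) that the hypothesis on $\gr e$ makes available, possibly followed by a collapse, so that $T''$ lies in the allowed class while its geometry around the surviving edge $e$ differs from that of $T'$.

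A guiding observation shapes the whole argument. A single slide of an edge $\gr f$ along $\gr e$ can never yield an incompatible tree: one subdivides $e$ at a midpoint $m$ with $G_m=G_e$, attaches $f$ at $m$, and recovers both $T'$ and the slid tree by collapsing the two halves of $e$ in turn. More generally, slides preserve all edge and vertex stabilizers, whereas inductions and $\mathcal A^{\pm1}$-moves \emph{alter} them, and it is exactly an alteration of stabilizers that a pure collapse of $T$ cannot reproduce. This splits the proof into two regimes. When $\gr e$ carries an available stabilizer-changing move (the \pa case), I exploit that move directly; when only slides are available (the slippery and \tor cases), I must force a purely geometric crossing, using the full hypothesis to obtain a configuration with no common refinement. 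The auxiliary elliptic witnesses come from Lemma~\ref{normalisation}, which for a reduced edge with $G_e\subsetneq G_v$ supplies an element of $G_v\setminus G_e$ centralizing $G_e$; together with hyperbolic elements crossing the relevant branches these are the $a,b,c,d$ fed into Lemma~\ref{incompatibility}.

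I then treat the cases by mechanism. In case~1 the element $t$ with $tG_et^{-1}\subsetneq G_e$ is hyperbolic with axis $L$ through $e$, and the conjugates $t^{k}G_et^{-k}$ are strictly nested; choosing $c\in G_e\setminus tG_et^{-1}$ gives a $\mathrm{Fix}(c)$ running along $L$ as a half-line in $T'$. Performing the $\mathcal A$-move and induction on the \pa edge produces $T''$ in which the stabilizer in the position of $e$ is strictly enlarged, so $c$ no longer fixes that edge and $\mathrm{Fix}(c)$ is cut off; since $G$ is not an ascending HNN-extension, $L$ is not all of $T$ and there is an off-axis edge $h$, whose crossing hyperbolic localises the discrepancy to a single edge, giving the empty-versus-edge pattern of the first bullet. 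Case~3 is analogous, using the element of $G_v\setminus G_e$ from Lemma~\ref{normalisation} and the admissible slide to detach $\mathrm{Fix}(c)$ from the branch crossed by the moved edge, where the higher rank of the \vGBS vertex group is what makes such a $c$ available. For cases~2 and~4 I use the second bullet. A reduced \tor edge must be a loop (reducedness forbids $G_e=G_v$ across distinct orbits), so $t$ normalises $G_e$ and $L$ is its axis; the two branches $\gr f,\gr g$ at $v$ let me build $T''$ in which they lie on opposite sides of (a subdivision of) $e$, and the four hulls of hyperbolics pointing into the $f$-branch, into the $g$-branch, and along the two rays of $L$ are matched in $T''$ by a bridge crossed relative to their matching in $T'$. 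Case~2 runs the same exchange argument along the length-two segment of $\gr e\cup\bar{\gr e}$ traversed by $\gr f$.

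The main obstacle is exactly this verification: one must confirm that the chosen hulls have the asserted intersection behaviour in both trees, and in particular that the intersection meant to be \emph{empty} does not secretly share a vertex — for instance an endpoint of $e$, which every element of $G_e$ fixes. Ruling out such degeneracies is where reducedness of $e$, the non-ascending hypothesis, the higher rank in case~3, and the presence of two genuinely independent branches in case~4 are really used. I expect case~4 to be the delicate one, since there the surviving edge is a \tor loop along which all naive single slides are compatible, so the incompatible $T''$ and its witnessing quadruple must be chosen with care to produce a crossing that no common refinement can carry.
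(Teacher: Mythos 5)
Your framework is the right one---Lemma \ref{incompatibility} is indeed the paper's certifying tool, and your guiding observation that a \emph{single} slide always yields a compatible tree is correct (it is precisely why case 2 of the statement demands two consecutive slides); your cases 1 and 2 are in the paper's spirit, though for case 1 the paper simply quotes \cite[Proposition 7.1]{GL07} rather than rebuilding the argument. The first genuine gap is your case 3, which contradicts your own guiding observation: the hypothesis there supplies only a single slide of $\gr f$ along $\gr e$, and since slides alter no stabilizer, nothing can ``detach $\mathrm{Fix}(c)$''---the fixed-point sets are the same $G$-subsets before and after a slide, and the cutting-off mechanism you invoke belongs to the $\mathcal A$-move/induction of case 1, which is unavailable without a \pa hypothesis. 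The paper uses Lemma \ref{normalisation} quite differently from you: the element $\lambda\in G_v\setminus G_e$ centralizing $G_e$ satisfies $G_{\lambda\cdot e}=G_e$, so after sliding along $e$ the edge $f$ may slide again along $\lambda\cdot\bar e$; this upgrades the single slide to the two-consecutive-slides configuration, and case 3 is then reduced verbatim to case 2. You name the correct lemma but attach it to the wrong mechanism, so as written your case 3 does not close.

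Case 4 is a second genuine gap, not mere roughness: you only ``expect'' a witnessing quadruple to exist, and the one construction you do sketch---subdividing $e$ and placing $\gr f$ and $\gr g$ on opposite sides of the midpoint---cannot work, since that tree refines $T$ (collapsing one half of the subdivided edge recovers $T$), hence refines $T'$ and is automatically compatible with it. The paper instead splits the case. If $\gr f\neq\bar{\gr g}$ it collapses $\gr g$, after which $e$ is reduced and no longer ascending, and reduces to case 2 (with the extra care that the resulting tree refines the collapse of $T'$ by $\gr g$, which suffices for the incompatibility conclusion). If $\gr g=\bar{\gr f}$ it exploits the \tor hypothesis, $G_e=G_{t\cdot e}$, to slide $f$ along the two consecutive edges $e$ and $t\cdot e$, and it writes the witnesses explicitly: $a=s^{-1}ts$, $b=sts^{-1}$, $c=t^{-1}sts^{-1}t$, $d=t^{-2}sts^{-1}t^{2}$, where $t\cdot v=v'$ and $s\cdot v=w$, verifying the crossed disjointness $\mathcal E_{\{a,b\}}$, $\mathcal E_{\{c,d\}}$ versus $\mathcal E_{\{b,c\}}$, $\mathcal E_{\{a,d\}}$ demanded by the second bullet of Lemma \ref{incompatibility}. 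These two devices---slide-doubling via the centralizer/\tor identities, and the explicit conjugate quadruple---are the substantive content of cases 3 and 4, and your proposal supplies neither.
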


Here we do not need the slides to be admissible.

\proof~
\begin{enumerate}[noitemsep]
\item If $\gr e$ is \pa in $T$, then $\gr e$ is \pa in $T'$. By \cite[Proposition 7.1]{GL07} the tree $T'$ is not universally compatible.
\item Let $e$, and $e'$ be edges of $\gr e\cup \gr {\bar e}$ and $f$ an edge of $\gr f$ such that $f$ may consecutively slide along $e$ and $\bar e'$ in $T$. Let $v$ and $v'$ be the initial and terminal vertices of $e$, call $v''$ the terminal vertex of $e'$ and $w$ the initial vertex of $f$ (note that the terminal vertex of $f$ is $v$ and the initial vertex of $e'$ is $v'$).

As $e$ is reduced and non ascending, we may find three elements $b\in G_v\setminus G_e$, $c\in G_{v'}\setminus (G_e\cup G_{e'})\neq \emptyset$ and $d\in G_{v''}\setminus G_{e'}$.
If $f$ is not ascending let $a$ be in $G_{w}\setminus G_f$. If $f$ is ascending, let $t$ be such that $t\cdot w=v$, an call $a=t^{-1}ct$.
The sets $\mathcal E_{\{a,b\}}(T)$ and $\mathcal E_{\{c,d\}}(T)$ are separated by an edge in $\gr e$. As $\gr e$ is not collapsed in $T'$, the sets $\mathcal E_{\{a,b\}}(T')$ and $\mathcal E_{\{c,d\}}(T')$ are disjoint in $T'$. But making slide $f$ along $e$ and $\bar e'$, we now have a new tree $T''$ in which $\mathcal E_{\{b,c\}}(T'')\cap \mathcal E_{\{a,d\}}(T'')=\emptyset$ (see Figure \ref{glisssimple}). By Lemma \ref{incompatibility}, the tree $T'$ is not universally compatible.
\begin{figure}[!ht]
\begin{center}
\scalebox{1} 
{
\begin{pspicture}(0,-0.9429687)(11.839531,0.9429687)
\psline[ArrowInside=->, ArrowInsidePos=0.5,arrowsize=0.25291667cm,linewidth=0.04cm,fillcolor=black](1.6995313,-0.5154688)(3.2995312,-0.5154688)
\psdots[dotsize=0.12](3.2995312,-0.5154688)
\psdots[dotsize=0.12](1.6995313,-0.5154688)
\psline[ArrowInside=->, ArrowInsidePos=0.5,arrowsize=0.25291667cm,linewidth=0.04cm,fillcolor=black](3.2995312,-0.5154688)(4.8995314,-0.5154688)
\psdots[dotsize=0.12](4.8995314,-0.5154688)
\psdots[dotsize=0.12](4.8995314,-0.5154688)
\psline[ArrowInside=-<, ArrowInsidePos=0.5,arrowsize=0.25291667cm,linewidth=0.04cm,fillcolor=black](1.6995313,-0.5154688)(0.29953125,0.48453125)
\psdots[dotsize=0.12](0.29953125,0.48453125)
\usefont{T1}{ptm}{m}{n}
\rput(1.2823437,0.22953124){$f$}
\usefont{T1}{ptm}{m}{n}
\rput(2.4823437,-0.7704688){$e$}
\usefont{T1}{ptm}{m}{n}
\rput(4.1423435,-0.7704688){$e'$}
\usefont{T1}{ptm}{m}{n}
\rput(0.22234374,0.74953127){$a$}
\usefont{T1}{ptm}{m}{n}
\rput(1.7523437,-0.25046876){$b$}
\usefont{T1}{ptm}{m}{n}
\rput(3.2523437,-0.25046876){$c$}
\usefont{T1}{ptm}{m}{n}
\rput(4.862344,-0.23046875){$d$}
\psline[ArrowInside=->, ArrowInsidePos=0.5,arrowsize=0.25291667cm,linewidth=0.04cm,fillcolor=black](6.8995314,-0.5154688)(8.499531,-0.5154688)
\psdots[dotsize=0.12](8.499531,-0.5154688)
\psdots[dotsize=0.12](6.8995314,-0.5154688)
\psline[ArrowInside=->, ArrowInsidePos=0.5,arrowsize=0.25291667cm,linewidth=0.04cm,fillcolor=black](8.499531,-0.5154688)(10.099531,-0.5154688)
\psdots[dotsize=0.12](10.099531,-0.5154688)
\psdots[dotsize=0.12](10.099531,-0.5154688)
\psline[ArrowInside=-<, ArrowInsidePos=0.5,arrowsize=0.25291667cm,linewidth=0.04cm,fillcolor=black](10.1,-0.5154688)(11.5,0.48453125)
\psdots[dotsize=0.12](11.5,0.48453125)
\usefont{T1}{ptm}{m}{n}
\rput(10.479062,0.22953124){$f$}
\usefont{T1}{ptm}{m}{n}
\rput(7.6823435,-0.7704688){$e$}
\usefont{T1}{ptm}{m}{n}
\rput(9.342343,-0.7704688){$e'$}
\usefont{T1}{ptm}{m}{n}
\rput(11.5390625,0.74953127){$a$}
\usefont{T1}{ptm}{m}{n}
\rput(6.952344,-0.25046876){$b$}
\usefont{T1}{ptm}{m}{n}
\rput(8.452344,-0.25046876){$c$}
\usefont{T1}{ptm}{m}{n}
\rput(10.062344,-0.23046875){$d$}
\psline[linewidth=0.03cm,arrowsize=0.05291667cm 2.0,arrowlength=1.4,arrowinset=0.4]{->}(5.56,-0.45703125)(6.36,-0.45703125)
\end{pspicture} 
}
\end{center}
\caption{}
\label{glisssimple}
\end{figure}
\item Assume that $T$ is a \vGBS-tree. Let $f$ be an edge which slides along a reduced non ascending edge $e$. Call $v$ the terminal vertex of $e$. The set $G_v\setminus G_e$ is not empty since $e$ is reduced and non-ascending. Applying Lemma \ref{normalisation}, there exists $\lambda$ in this set such that $G_e=G_{\lambda\cdot e}$, thus $f$ slides consecutively on $e$ and $\lambda \cdot \bar e$. Thus the point $3$ is implied by the point $2$.

\item As $\gr e$ is \tor then $\gr f$ slides along two consecutive edges in $\gr e$. If $\gr f\neq \bar {\gr g}$, collapsing $\gr g(\neq \gr e, \bar{\gr e})$ in $T$, the edge $e$ is no more ascending (and still reduced) in the new tree $T''$, we may apply the point $2$. Note that $T''$ may perhaps not refines $T'$, however it refines the tree obtained from $T'$ by collapsing $\gr g$, this is enough to obtain the result.

If $\gr g$ is in the orbit of $\bar {\gr f}$. Let $f$ be an edge of $\gr f$. Call $v$ its terminal vertex and let $g$ be an edge of $\gr g=\bar{\gr f}$ with terminal vertex $v$. Let $e$ be an edge of $\gr e$ with initial vertex $v$ (on which $f$ may slide). Let $v'$ be the terminal vertex of $e$ and $w$ be the initial vertex of $g$. Call $t\in G$ an element which sends $v$ onto $v'$ and $s\in G$ which sends $v$ onto $w$.

As $e$ is \tor $G_e=G_{t\cdot e}$, and $f$ slides consecutively along $e$ and $t\cdot e$. Define the four elements $a=s^{-1}ts$, $b=sts^{-1}$, $c=t^{-1}sts^{-1}t^{1}$ and $d=t^{-2}sts^{-1}t^{2}$ (see Figure \ref{doubleboucle}). Then the sets $\mathcal E_{\{a,b\}}(T')$ and $\mathcal E_{\{c,d\}}(T')$ are disjoint in $T'$ but after making slide $f$ along $e$, and $t\cdot e$, we have $\mathcal E_{\{b,c\}}\cap \mathcal E_{\{a,d\}}=\emptyset$. Thus by Lemma \ref{incompatibility}, the tree $T'$ cannot be universally compatible.\hfill\qed

\begin{figure}[!ht]
\begin{center}
\scalebox{1} 
{
\begin{pspicture}(0,-1.9181249)(5.543281,1.9181249)
\psline[ArrowInside=->, ArrowInsidePos=0.5,arrowsize=0.25291667cm,linewidth=0.04cm](1.46,0.07968745)(2.86,0.07968745)
\psdots[dotsize=0.12](1.46,0.07968745)
\psline[ArrowInside=->, ArrowInsidePos=0.5,arrowsize=0.25291667cm,linewidth=0.04cm](2.86,0.07968745)(4.26,0.07968745)
\psdots[dotsize=0.12](2.86,0.07968745)
\psdots[dotsize=0.12](4.26,0.07968745)
\psline[ArrowInside=->, ArrowInsidePos=0.5,arrowsize=0.25291667cm,linewidth=0.04cm](1.46,-1.1203126)(1.46,0.07968745)
\psdots[dotsize=0.12](1.46,-1.1203126)
\psline[ArrowInside=->, ArrowInsidePos=0.5,arrowsize=0.25291667cm,linewidth=0.04cm](0.06,-1.1203126)(1.46,-1.1203126)
\psdots[dotsize=0.12](0.06,-1.1203126)
\psline[ArrowInside=->, ArrowInsidePos=0.5,arrowsize=0.25291667cm,linewidth=0.04cm](2.86,1.2796874)(2.86,0.07968745)
\psline[ArrowInside=->, ArrowInsidePos=0.5,arrowsize=0.25291667cm,linewidth=0.04cm](2.06,1.0796875)(2.86,1.2796874)
\psline[ArrowInside=->, ArrowInsidePos=0.5,arrowsize=0.25291667cm,linewidth=0.04cm](2.86,1.2796874)(3.66,1.4796875)
\psline[ArrowInside=->, ArrowInsidePos=0.5,arrowsize=0.25291667cm,linewidth=0.04cm](4.26,1.0796875)(4.26,0.07968745)
\psline[ArrowInside=->, ArrowInsidePos=0.5,arrowsize=0.25291667cm,linewidth=0.04cm](3.3,0.71968746)(4.26,1.0796875)
\psline[ArrowInside=->, ArrowInsidePos=0.5,arrowsize=0.25291667cm,linewidth=0.04cm](4.26,1.0796875)(5.26,1.4796875)
\psline[ArrowInside=->, ArrowInsidePos=0.5,arrowsize=0.25291667cm,linewidth=0.04cm](1.46,-1.1203126)(2.66,-1.1203126)
\psdots[dotsize=0.12](2.66,-1.1203126)
\psdots[dotsize=0.12](2.86,1.2796874)
\psdots[dotsize=0.12](3.66,1.4796875)
\psdots[dotsize=0.12](5.26,1.4796875)
\psdots[dotsize=0.12](3.28,0.71968746)
\psdots[dotsize=0.12](2.06,1.0796875)
\usefont{T1}{ptm}{m}{n}
\rput(1.1528125,0.04468745){$v$}
\usefont{T1}{ptm}{m}{n}
\rput(2.8328125,-0.15531255){$v'$}
\usefont{T1}{ptm}{m}{n}
\rput(4.3928123,-0.17531255){$t^2\cdot v$}
\usefont{T1}{ptm}{m}{n}
\rput(1.2028126,0.7046875){$g$}
\usefont{T1}{ptm}{m}{n}
\rput(1.2228125,-0.59531254){$f$}
\usefont{T1}{ptm}{m}{n}
\rput(2.1028125,0.28468746){$e$}
\usefont{T1}{ptm}{m}{n}
\rput(1.3328124,-1.6953125){$\mathcal A_a$}
\usefont{T1}{ptm}{m}{n}
\rput(2.7528124,1.7246875){$\mathcal A_c$}
\usefont{T1}{ptm}{m}{n}
\rput(4.362812,1.5846875){$\mathcal A_d$}
\psline[linewidth=0.03cm,arrowsize=0.05291667cm 2.0,arrowlength=1.4,arrowinset=0.4]{->}(0.26,-1.3203125)(2.66,-1.3203125)
\psline[linewidth=0.03cm,arrowsize=0.05291667cm 2.0,arrowlength=1.4,arrowinset=0.4]{->}(2.06,1.2796874)(3.66,1.6796875)
\psline[linewidth=0.03cm,arrowsize=0.05291667cm 2.0,arrowlength=1.4,arrowinset=0.4]{->}(3.26,0.8796874)(5.12,1.6196874)
\psdots[dotsize=0.12](4.26,1.0796875)
\psline[ArrowInside=->, ArrowInsidePos=0.5,arrowsize=0.25291667cm,linewidth=0.04cm](1.46,1.2596874)(1.46,0.05968745)
\psline[ArrowInside=->, ArrowInsidePos=0.5,arrowsize=0.25291667cm,linewidth=0.04cm](0.46,1.0596875)(1.46,1.2596874)
\psline[ArrowInside=->, ArrowInsidePos=0.5,arrowsize=0.25291667cm,linewidth=0.04cm](1.46,1.2596874)(2.46,1.4596875)
\psdots[dotsize=0.12](0.46,1.0596875)
\psdots[dotsize=0.12](2.46,1.4596875)
\usefont{T1}{ptm}{m}{n}
\rput(1.5628128,1.7046875){$\mathcal A_b$}
\psline[linewidth=0.03cm,arrowsize=0.05291667cm 2.0,arrowlength=1.4,arrowinset=0.4]{->}(0.66,1.2596874)(2.26,1.5796875)
\psdots[dotsize=0.12](1.46,1.2596874)
\end{pspicture} 
}
\end{center}
\caption{}
\label{doubleboucle}
\end{figure}
\end{enumerate}

\begin{corollary}\label{casfacile} Let $T$ be a $G$-tree. Let $e$ be an edge of $T$ with initial and terminal vertices $v$ and $v'$.
Call $T_e$ the tree obtained from $T$ by collapsing all orbits of edges except $\gr e$.

If there are two reduced non-ascending edges, $f$ with initial vertex $v$ and $g$ with initial vertex $v'$, such that $\gr f\neq \gr {\bar g}$, $G_f\subset G_g$, and $G_e\not \subset G_g$, then $T_e$ is not universally compatible.
\end{corollary}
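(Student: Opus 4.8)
The plan is to exhibit a single $G$-tree $S$ that is incompatible with $T_e$, and to detect the incompatibility with the first item of Lemma~\ref{incompatibility}. The whole argument rests on one preliminary observation: the hypothesis $G_f\subset G_g$ automatically forces $G_f\subset G_e$. Indeed, any $x\in G_f$ fixes the endpoints $v,w$ of $f$, and since $x\in G_g$ it also fixes the endpoints $v',u$ of $g$; fixing both $v$ and $v'$, it fixes the unique edge $e$ joining them, so $x\in G_e$. The very same bookkeeping yields the two identities I will use to control fixed-point sets, namely $G_w\cap G_e=G_f$ and $G_u\cap G_e\subset G_g$ (an element fixing $w$ and $e$ fixes the segment $[w,v]=f$, and an element fixing $u$ and $e$ fixes the segment $[u,v']=g$).

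Next I would build $S$. Since $G_f\subset G_e$, the edge $\bar f$ may slide along $e$, moving the initial vertex of $f$ from $v$ to $v'$; then, since $G_f\subset G_g$ and $\gr f\neq\gr{\bar g}$, it may slide along $g$, moving that vertex to the far endpoint $u$ of $g$. As the remark after Corollary~\ref{casreduit} allows, these slides need not be admissible: I only need $S$ to be a genuine $G$-tree. In $S$ the edges then lie along an embedded segment from $v$ to $w$ passing successively through $e$, $g$, and (the reattached) $f$, in the order $v,v',u,w$; these four vertices are pairwise distinct because $T$ is a tree and, for instance, $g=\bar e$ or $f=e$ would force $G_e\subset G_g$, contradicting $G_e\not\subset G_g$.

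Finally I would choose three elements: $c\in G_e\setminus G_g$, nonempty exactly by the hypothesis $G_e\not\subset G_g$; and $b\in G_w\setminus G_f$, $d\in G_u\setminus G_g$, nonempty because $f$ and $g$ are reduced non-ascending, giving $G_f\subsetneq G_w$ and $G_g\subsetneq G_u$. In $T_e$ only the orbit $\gr e$ survives as edges, $w$ lands on one side of $e$ and $u$ on the other; using $G_w\cap G_e=G_f$ and $G_u\cap G_e\subset G_g$ one gets $b,d\notin G_e$, so $\mathcal E_{\{b,d\}}(T_e)$ contains the edge $e$ and so does $\mathcal E_{c}(T_e)$; hence $\mathcal E_{\{b,d\}}(T_e)\cap\mathcal E_{c}(T_e)$ contains $e$. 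In $S$ the same elements fix $w$, $u$, and the segment $[v,v']$ respectively, and none of them lies in $G_g$, so $\mathcal E_{\{b,d\}}(S)$ stays on the $[u,w]$ side of $g$ while $\mathcal E_{c}(S)$ stays on the $[v,v']$ side; the edge $g$ separates them, so the intersection is empty. Applying the first item of Lemma~\ref{incompatibility} with $(T,T')=(S,T_e)$ shows $S$ and $T_e$ are incompatible, whence $T_e$ is not universally compatible.

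The step I expect to be most delicate is not the construction of $S$ but the verification that the three characteristic spaces have exactly the predicted reach: that $b,d$ do \emph{not} fix $e$ in $T_e$, and that $b,c,d$ do \emph{not} cross $g$ in $S$. Each of these is a question of membership in a single edge group, and all of them reduce to the two intersection identities established at the outset together with the fact that non-ascending (here, strictly contained in both endpoint stabilizers) edges give strict inclusions $G_f\subsetneq G_w$, $G_g\subsetneq G_u$; I would make sure to use the paper's trichotomy convention for ``non-ascending'' rather than the literal reading, since a merely descending $f$ would have $G_f=G_w$ and break the choice of $b$. A secondary point to pin down is that $\gr f\neq\gr{\bar g}$ is genuinely needed, both to legitimize the second slide and to keep the segment of $S$ embedded.
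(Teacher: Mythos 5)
Your proof is correct, and it uses the same key construction as the paper but closes with a different certificate. The paper's proof performs exactly your double slide of $\bar f$ along $e$ and then along $g$, but detects the incompatibility with the \emph{second} item of Lemma~\ref{incompatibility}: it chooses $a$ fixing the terminal vertex of $f$ but not $f$, $d$ fixing the terminal vertex of $g$ but not $g$, a conjugator $r\in G_e\setminus G_g$ (the same set your $c$ comes from), and sets $b=rar^{-1}$, $c=rdr^{-1}$, so that $\mathcal E_{\{a,b\}}$ and $\mathcal E_{\{c,d\}}$ are disjoint in $T_e$ while, after the equivariant slides (which move $\bar f'=r\cdot\bar f$ along $e$ and $g'=r\cdot g$ simultaneously), $\mathcal E_{\{a,d\}}$ and $\mathcal E_{\{b,c\}}$ are disjoint. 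You instead invoke the \emph{first} item with only three elements, and this buys a genuinely cheaper verification: in $T_e$ the intersection $\mathcal E_{\{b,d\}}(T_e)\cap\mathcal E_c(T_e)$ visibly contains the edge $e$ (since $b,d$ fix the two images of its endpoints and $c\in G_e$), whereas the paper must prove a disjointness in $T_e$ whose reduction to the pairwise statements $\mathrm{Fix}(x)\cap\mathrm{Fix}(y)=\emptyset$ is left implicit; on the other side you need the single separation of $\mathcal E_{\{b,d\}}(S)$ from $\mathcal E_c(S)$ by the edge $g$, which follows from $b,c,d\notin G_g$, and your memberships all check out ($b\in G_g$ would force $b\in G_w\cap G_e=G_f$). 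You also make explicit two facts the paper uses silently: that $G_f\subset G_g$ forces $G_f\subset G_e$, which is precisely what legitimizes the first slide $\bar f/e$, and the intersection identities $G_w\cap G_e=G_f$, $G_u\cap G_e\subset G_g$. Your reading of ``non-ascending'' as also excluding descending edges is indeed the paper's convention (see the trichotomy stated after Definition~\ref{comp}), and it is what the paper itself relies on to choose its elements $a$ and $d$. One shared blind spot, which is not a gap relative to the paper: both arguments tacitly assume the orbit-level conditions $\gr f\neq \gr g$ and $\gr f,\gr g\notin\{\gr e,\bar{\gr e}\}$ so that the two moves are honest slides in the sense of Section~\ref{prelim}; your element-level exclusions ($f\neq e$, $g\neq\bar e$, and the embeddedness of the segment in $S$) do follow from $G_e\not\subset G_g$ as you argue, and the orbit-level degeneracies do not occur where the corollary is applied in Proposition~\ref{maxcasnonreduit}.
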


Here the edge $e$ is not necessarily reduced.

\begin{proof}
As $f$ and $g$ are reduced and non ascending, we may take $a$ an element which fixes the terminal vertex of $f$ but not $f$ and $d$ which fixes the terminal vertex of $g$ but not $g$.
Let $r$ be an element of $G_e\setminus (G_g\cap G_e)$ and define $f'=r\cdot f$ and $g'=r\cdot g$. Define $b=rar^{-1}$ and $c=rdr^{-1}$.
We are as in Figure \ref{cassimple}.
Here $\mathcal E_{\{a,b\}}(T_e)$ and $\mathcal E_{\{c,d\}}(T_e)$ are disjoint. But $\bar f$ may slides along $e$ and $g$. By equivariance, at the same time $\bar f'$ slides along $e$ and $g'$.

After performing these slides, we obtain a tree $T'$ in which $\mathcal E_{\{a,d\}}(T')$ and $\mathcal E_{\{b,c\}}(T')$ are disjoint. By Lemma \ref{incompatibility}, the tree $T_e$ is not universally compatible.
\end{proof}

\begin{figure}[!ht]
\begin{center}
\scalebox{1} 
{
\begin{pspicture}(0,-1.4929688)(5.6028123,1.4929688)
\psline[ArrowInside=->, ArrowInsidePos=0.5,arrowsize=0.25291667cm,linewidth=0.04cm](1.9809375,-0.20546874)(0.7809375,1.1945312)
\psline[ArrowInside=->, ArrowInsidePos=0.5,arrowsize=0.25291667cm,linewidth=0.04cm](1.9809375,-0.20546874)(0.5809375,-1.2054688)
\psline[ArrowInside=->, ArrowInsidePos=0.5,arrowsize=0.25291667cm,linewidth=0.04cm](1.9809375,-0.20546874)(3.7809374,-0.20546874)
\psline[ArrowInside=->, ArrowInsidePos=0.5,arrowsize=0.25291667cm,linewidth=0.04cm](3.7809374,-0.20546874)(4.9809375,0.9945313)
\psline[ArrowInside=->, ArrowInsidePos=0.5,arrowsize=0.25291667cm,linewidth=0.04cm](3.7809374,-0.20546874)(4.9809375,-1.2054688)
\usefont{T1}{ptm}{m}{n}
\rput(2.7623436,-0.04046875){$e$}
\usefont{T1}{ptm}{m}{n}
\rput(1.6223438,0.65953124){$f$}
\usefont{T1}{ptm}{m}{n}
\rput(1.4423437,-0.98046875){$f'$}
\usefont{T1}{ptm}{m}{n}
\rput(4.0823436,0.49953124){$g$}
\usefont{T1}{ptm}{m}{n}
\rput(4.182344,-0.98046875){$g'$}
\usefont{T1}{ptm}{m}{n}
\rput(0.46234375,1.2995312){$a$}
\usefont{T1}{ptm}{m}{n}
\rput(0.23234375,-1.2204688){$b$}
\usefont{T1}{ptm}{m}{n}
\rput(5.302344,-1.3204688){$c$}
\usefont{T1}{ptm}{m}{n}
\rput(5.2923436,1.1195313){$d$}
\psdots[dotsize=0.12](0.7809375,1.1945312)
\psdots[dotsize=0.12](4.9809375,0.9945313)
\psdots[dotsize=0.12](3.7809374,-0.20546874)
\psdots[dotsize=0.12](1.9809375,-0.20546874)
\psdots[dotsize=0.12](0.5809375,-1.2054688)
\psdots[dotsize=0.12](4.9809375,-1.2054688)
\end{pspicture} 
}
\end{center}
\caption{}
\label{cassimple}
\end{figure}

\begin{lemma}\label{casalmostasc}
Let $T$ be a $G$-tree. Let $e$ be an edge of $T$ whose endpoints are in distinct orbits, call $v$ the initial vertex of $e$ and $v'$ its terminal vertex. Call $T_e$ the tree obtained from $T$ by collapsing all orbits of edges except $\gr e$.
If there exists two reduced edges $f$ and $f'$ in the same orbit $\gr f\neq \gr e$, such that the initial vertex of $f$ is $v$, the terminal vertex of $f'$ is $v'$, and $G_{f}\subsetneq G_{f'}$, then $T_e$ is not universally compatible.
\end{lemma}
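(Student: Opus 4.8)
The plan is to apply the crossing criterion of Lemma~\ref{incompatibility} (its second bullet): I will exhibit a $G$-tree $T'$, obtained from $T$ by a short sequence of slides, together with four elements $a,b,c,d$ such that $\mathcal E_{\{a,b\}}(T_e)$ and $\mathcal E_{\{c,d\}}(T_e)$ are disjoint in $T_e$ while $\mathcal E_{\{a,d\}}(T')$ and $\mathcal E_{\{b,c\}}(T')$ are disjoint in $T'$. Since these are the two crossing partitions of $\{a,b,c,d\}$, Lemma~\ref{incompatibility} then gives that $T_e$ and $T'$ are not compatible, whence $T_e$ is not universally compatible.

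First I would fix the geometry. Let $w$ be the terminal vertex of $f$, let $w'$ be the initial vertex of $f'$, and write $t\in G$ for the element with $f'=t\cdot f$; then $t\cdot v=w'$ and $t\cdot w=v'$, and the hypothesis $G_f\subsetneq G_{f'}$ reads $G_f\subsetneq tG_ft^{-1}$, i.e.\ $t$ acts on $G_f$ by an expanding conjugation. This places us exactly in the case of Corollary~\ref{casfacile} that was excluded there: taking the ``second edge'' to be $g:=\bar{f'}$ (which has initial vertex $v'$) one has $G_f\subset G_g=G_{f'}$, the analogue of the inequality used in Corollary~\ref{casfacile}, but now $\gr g=\bar{\gr f}$, which is precisely why a separate argument is needed.

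Next I would choose the four elements. As $f$ and $f'$ are reduced with endpoints in distinct orbits, I have $G_f\subsetneq G_v,G_w$ and $G_{f'}\subsetneq G_{w'},G_{v'}$, so I may pick $a\in G_w\setminus G_f$ and $d\in G_{w'}\setminus G_{f'}$ witnessing the two ``outer'' vertices $w$ and $w'$. To produce the second pair $b,c$, rather than conjugating by an element of $G_e$ as in Corollary~\ref{casfacile}, I would use the strictness $G_f\subsetneq G_{f'}$: an element $x\in G_{f'}\setminus G_f$ fixes $f'$ but moves $f$, and together with $t$ it furnishes translated copies of the leaves $w,w'$, at which I place $b$ and $c$ (concretely as $t$- and $x$-conjugates of $a$ and $d$, in the spirit of the combinations $s^{-1}ts,\,sts^{-1},\dots$ used in Corollary~\ref{casreduit}(4)). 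After collapsing every orbit but $\gr e$, the vertices $w,v$ merge and $w',v'$ merge, so $\{a,b\}$ becomes elliptic on the $v$-side of $e$ and $\{c,d\}$ on the $v'$-side; choosing all representatives outside $G_e$ guarantees that $\mathcal E_{\{a,b\}}(T_e)$ and $\mathcal E_{\{c,d\}}(T_e)$ are separated by $e$.

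Finally I would perform the slides: sliding $\bar f$ along $e$ and, equivariantly, its $t$-translate, repositions the leaf $w$ and its copy onto the $v'$-side, so that in the resulting tree $T'$ the hulls of $\{a,d\}$ and of $\{b,c\}$ fall into two distinct subtrees meeting $e$ only through the sliding vertices, giving the desired disjointness. I expect the main obstacle to be exactly this last verification. Because $\gr f$ and $\gr g=\bar{\gr{f'}}$ are opposite orbits of a single orbit, a naive choice of elements makes the two final hulls share the endpoint $v'$ of $e$ rather than be disjoint; the role of the strict inclusion $G_f\subsetneq G_{f'}$, equivalently of the expanding conjugation by $t$, is precisely to create enough room (as the doubling by $r$ does in Corollary~\ref{casfacile} and the powers of the ascending element do in Corollary~\ref{casreduit}(4)) to push the translated characteristic spaces into genuinely disjoint positions. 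Getting the bookkeeping of these translates right, and checking that the slides used are legitimate tree deformations, is where the real work lies.
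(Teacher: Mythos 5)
Your overall strategy---build a crossing quadruple $a,b,c,d$ and a second $G$-tree, then invoke the second bullet of Lemma~\ref{incompatibility}---is the same as the paper's, but the proposal has two genuine gaps, and the first one is fatal to the deformation you propose. The move you rely on, ``sliding $\bar f$ along $e$ (and, equivariantly, its $t$-translate),'' requires $G_f\subset G_e$, and nothing in the hypotheses relates $G_f$ to $G_e$: the statement only gives $G_f\subsetneq G_{f'}$. This is not bookkeeping to be sorted out later; it is exactly the obstruction the paper's proof is engineered around. In its main case (when $G_v\neq G_e$ or $G_e\not\subset G_{f'}$) the paper never slides $\bar f$ at all: it collapses $e$, performs an expansion at the resulting vertex $w$ with the \emph{larger} group $G_{f'}$ and set $\left\{f,\bar{f'}\right\}$, and the new edge $g$, which satisfies $G_g=G_{f'}$, is what slides --- namely $a\cdot g$ slides along $\bar f'''$ and $f'$, where $f''=t\cdot f'$, $a\in G_{f''}\setminus G_{f'}$ and $f'''=a\cdot f'$ (note $G_{f'}\subsetneq G_{f''}$ follows from $G_f\subsetneq G_{f'}$ by conjugating by $t$, so such an $a$ exists). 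The comparison tree witnessing incompatibility lives after this collapse--expansion, not in any tree reachable from $T$ by slides of $\bar f$.

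Second, you miss the degenerate case $G_v=G_e\subset G_{f'}$, which the paper treats by a completely different mechanism: collapsing $f$ there makes $e$ \pa, so $e$ is \pa in $T_e$, and Corollary~\ref{casreduit}, case 1 (i.e.\ \cite[Proposition 7.1]{GL07}) shows $T_e$ is not universally compatible. The quadruple method genuinely breaks down in that case (the paper's choice of $d$ needs either $d\in G_v\setminus G_e$ or $h\in G_e\setminus G_{f'}$, neither of which exists when $G_v=G_e\subset G_{f'}$), so no uniform slide argument can cover it. Finally, even in the main case you never define $b$ and $c$ and explicitly defer the disjointness verification as ``where the real work lies''; since that verification is the entire content of the lemma, the proposal is a plan rather than a proof. (For what it is worth, your choices $a\in G_w\setminus G_f$ and $d\in G_{w'}\setminus G_{f'}$ are sound: the path from $w$ to $v$ is $f$ itself, so $a\notin G_v\supset G_e$, and similarly $d\notin G_{v'}\supset G_e$; and your diagnosis of why Corollary~\ref{casfacile} does not apply, namely $\gr g=\bar{\gr f}$, is correct.)
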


\begin{proof}
As $f$ and $f'$ are in the same orbit we may take $t\in G$ such that $t\cdot f=f'$. Let us define $e'=t\cdot e$ and $f''=t\cdot f'$.

First assume $G_v=G_e \subset G_{f'}$. As $G_f\subsetneq G_{f'}$, we have $G_{f'}\subsetneq G_{f''}$ and $G_e\subsetneq G_{e'}$. Thus collapsing $f$ in $T$, the edge $e$ becomes \pa, hence it stays \pa in $T_e$. 
By Corollary \ref{casreduit}, the tree $T_e$ is not universally compatible.

Assume now that $G_v\neq G_e$ or $G_e \not \subset G_{f'}$.
We have $G_{f'}\subsetneq G_{f''}$. Take $a$ in $G_{f''}\setminus G_{f'}$, and call $f'''=a\cdot f'$. As $G_v\neq G_e$ or $G_e \not \subset G_{f'}$, as $f'$ is reduced and $\gr v\neq \gr v'$, we have $G_{f'}\subsetneq G_{v'}$. 

We may now take an element $b$ which fixes the terminal vertex of $f'''$ but not $f'''$ and an element $c$ which fixes the terminal vertex of $f$ but not $f$.

If $G_v\neq G_e$, take $d$ in $G_v\setminus G_e$ (automatically $d\not \in G_f$ and $d\not\in G_{f'}$). If $G_v=G_e$ then $G_e \not \subset G_{f'}$. Take $h\in G_e \setminus (G_{f'}\cap G_e)$. Define $\tilde f=h \cdot f$ and take $d$ which fixes the terminal vertex of $\tilde f$ but not $\tilde f$.

Then $\mathcal E_{\{a,b\}}(T)$ and $\mathcal E_{\{c,d\}}(T)$ are separated at least by $e$. Thus $\mathcal E_{\{a,b\}}(T_e)\cap\mathcal E_{\{c,d\}}(T_e)$ is empty.

Now call $T'$ the tree obtained by collapsing $e$ in $T$ and call $w$ the initial vertex of $f$ in $T'$. Perform the expansion of the vertex $w$ with group $G_f'$ and set $\left\{f,\bar {f'}\right\}$. Call $g$ the edge obtained in this expansion, and $w_1$ the terminal edge of $g$.

We may note that:
 \begin{itemize}[topsep=0cm,noitemsep]\item the element $a$ still stabilizes $f''$ but not $f'$,
\item the element $b$ stabilizes $a\cdot w_1$ but not $a\cdot g$,
\item the element $c$ stabilizes $t^{-1}\cdot w_1$ but not $t^{-1}\cdot g$,
\item  if $G_v\neq G_e$, the element $d$ stabilizes $w_1$ but not $g$,
\item  if $G_v= G_e$, the edge $g$ is between $f$ and the characteristic space of $d$.
\end{itemize}

Then making the edge $a\cdot g$ slide along $\bar f'''$ and $f'$, we obtain a tree in which $\mathcal E_{\{a,d\}}\cap\mathcal E_{\{b,c\}}$ is empty.
By Lemma \ref{incompatibility}, the edge $T_e$ is not universally compatible (see the two slides in Figure \ref{casstrictasc}).
\begin{figure}[!ht]
\begin{center}
\scalebox{0.9} 
{
\begin{pspicture}(0,-4.4176562)(12.640938,3.4176562)
\psline[ArrowInside=->, ArrowInsidePos=0.5,arrowsize=0.25291667cm,linewidth=0.04cm](2.4609375,1.7792188)(1.0609375,1.7792188)
\psline[ArrowInside=->, ArrowInsidePos=0.5,arrowsize=0.25291667cm,linewidth=0.04cm](3.8609376,1.7792188)(2.4609375,1.7792188)
\psline[ArrowInside=->, ArrowInsidePos=0.5,arrowsize=0.25291667cm,linewidth=0.04cm](5.2609377,1.7792188)(3.8609376,1.7792188)
\psdots[dotsize=0.12](2.4609375,1.7792188)
\psdots[dotsize=0.12](3.8609376,1.7792188)
\psdots[dotsize=0.12](5.2609377,1.7792188)
\psdots[dotsize=0.12](1.0609375,1.7792188)
\psline[ArrowInside=->, ArrowInsidePos=0.5,arrowsize=0.25291667cm,linewidth=0.04cm](3.8609376,1.7792188)(2.4609375,2.5792189)
\psdots[dotsize=0.12](2.4609375,2.5792189)
\psline[ArrowInside=-<, ArrowInsidePos=0.5,arrowsize=0.25291667cm,linewidth=0.04cm,linestyle=dashed,dash=0.16cm 0.16cm](2.4609375,0.77921873)(2.4609375,1.5792187)
\psline[ArrowInside=->, ArrowInsidePos=0.5,arrowsize=0.25291667cm,linewidth=0.04cm,linestyle=dashed,dash=0.16cm 0.16cm](1.0609375,1.7792188)(1.0609375,0.77921873)
\psline[ArrowInside=-<, ArrowInsidePos=0.5,arrowsize=0.25291667cm,linewidth=0.04cm,linestyle=dashed,dash=0.16cm 0.16cm](2.4609375,0.77921873)(3.2609375,-0.22078125)
\psline[ArrowInside=->, ArrowInsidePos=0.5,arrowsize=0.25291667cm,linewidth=0.04cm](3.2609375,-0.22078125)(1.8609375,-0.22078125)
\psline[ArrowInside=-<, ArrowInsidePos=0.5,arrowsize=0.25291667cm,linewidth=0.04cm,linestyle=dashed,dash=0.16cm 0.16cm](3.6609375,3.1792188)(2.4609375,2.5792189)
\psline[ArrowInside=-<, ArrowInsidePos=0.5,arrowsize=0.25291667cm,linewidth=0.04cm,linestyle=dashed,dash=0.16cm 0.16cm](0.4609375,-0.22078125)(1.8609375,-0.22078125)
\psdots[dotsize=0.12](1.8609375,-0.22078125)
\psdots[dotsize=0.12](3.2609375,-0.22078125)
\psdots[dotsize=0.12](2.4609375,0.77921873)
\psdots[dotsize=0.12](1.0609375,0.77921873)
\psdots[dotsize=0.12](3.6609375,3.1792188)
\usefont{T1}{ptm}{m}{n}
\rput(1.7623438,1.9242188){$f$}
\usefont{T1}{ptm}{m}{n}
\rput(3.2023437,1.5842187){$f'$}
\usefont{T1}{ptm}{m}{n}
\rput(4.722344,1.9242188){$f''$}
\usefont{T1}{ptm}{m}{n}
\rput(3.4023438,2.3442187){$f'''$}
\usefont{T1}{ptm}{m}{n}
\rput(2.7223437,1.2242187){$g$}
\usefont{T1}{ptm}{m}{n}
\rput(2.2023437,0.7842187){$w_1$}
\usefont{T1}{ptm}{m}{n}
\rput(2.6260939,-0.46578124){($\tilde f$)}
\usefont{T1}{ptm}{m}{n}
\rput(4.0423436,1.5642188){$a$}
\usefont{T1}{ptm}{m}{n}
\rput(3.9723437,3.2242188){$b$}
\usefont{T1}{ptm}{m}{n}
\rput(1.0423437,0.60421876){$c$}
\usefont{T1}{ptm}{m}{n}
\rput(2.8123438,0.76421875){$d$}
\usefont{T1}{ptm}{m}{n}
\rput(0.35234374,-0.45578125){$(d)$}
\psdots[dotsize=0.12](0.4209375,-0.24078125)
\psline[linewidth=0.03cm,arrowsize=0.05291667cm 2.0,arrowlength=1.4,arrowinset=0.4]{->}(4.1409373,-0.37921873)(4.9609375,-1.04078125)

\psline[ArrowInside=->, ArrowInsidePos=0.5,arrowsize=0.25291667cm,linewidth=0.04cm](6.2409377,-1.9607813)(4.8409376,-1.9607813)
\psline[ArrowInside=->, ArrowInsidePos=0.5,arrowsize=0.25291667cm,linewidth=0.04cm](7.640938,-1.9607813)(6.2409377,-1.9607813)
\psline[ArrowInside=->, ArrowInsidePos=0.5,arrowsize=0.25291667cm,linewidth=0.04cm](9.040937,-1.9607813)(7.640938,-1.9607813)
\psdots[dotsize=0.12](6.2409377,-1.9607813)
\psdots[dotsize=0.12](7.640938,-1.9607813)
\psdots[dotsize=0.12](9.040937,-1.9607813)
\psdots[dotsize=0.12](4.8409376,-1.9607813)
\psline[ArrowInside=->, ArrowInsidePos=0.5,arrowsize=0.25291667cm,linewidth=0.04cm](7.640938,-1.9607813)(6.2409377,-1.16078125)
\psdots[dotsize=0.12](6.2409377,-1.16078125)
\psline[ArrowInside=-<, ArrowInsidePos=0.5,arrowsize=0.25291667cm,linewidth=0.04cm,linestyle=dashed,dash=0.16cm 0.16cm](6.2409377,-2.9607812)(7.640938,-1.9607813)
\psline[ArrowInside=->, ArrowInsidePos=0.5,arrowsize=0.25291667cm,linewidth=0.04cm,linestyle=dashed,dash=0.16cm 0.16cm](6.2409377,-1.9607813)(4.8409376,-2.9607812)
\psline[ArrowInside=-<, ArrowInsidePos=0.5,arrowsize=0.25291667cm,linewidth=0.04cm,linestyle=dashed,dash=0.16cm 0.16cm](6.2409377,-2.9607812)(7.840938,-3.9607813)
\psline[ArrowInside=->, ArrowInsidePos=0.5,arrowsize=0.25291667cm,linewidth=0.04cm](7.840938,-3.9607813)(6.4409375,-3.9607813)
\psline[ArrowInside=-<, ArrowInsidePos=0.5,arrowsize=0.25291667cm,linewidth=0.04cm,linestyle=dashed,dash=0.16cm 0.16cm](7.440937,-0.57921876)(7.640938,-1.9607813)
\psline[ArrowInside=-<, ArrowInsidePos=0.5,arrowsize=0.25291667cm,linewidth=0.04cm,linestyle=dashed,dash=0.16cm 0.16cm](5.0409374,-3.9607813)(6.4409375,-3.9607813)
\psdots[dotsize=0.12](6.4409375,-3.9607813)
\psdots[dotsize=0.12](7.840938,-3.9607813)
\psdots[dotsize=0.12](6.2409377,-2.9607812)
\psdots[dotsize=0.12](4.8409376,-2.9607812)
\psdots[dotsize=0.12](7.440937,-0.57921876)
\usefont{T1}{ptm}{m}{n}
\rput(5.5423436,-1.81578124){$f$}
\usefont{T1}{ptm}{m}{n}
\rput(6.9823437,-2.1757813){$f'$}
\usefont{T1}{ptm}{m}{n}
\rput(8.502344,-1.81578124){$f''$}
\usefont{T1}{ptm}{m}{n}
\rput(7.1823435,-1.39578125){$f'''$}
\usefont{T1}{ptm}{m}{n}
\rput(6.5023437,-2.5157813){$g$}
\usefont{T1}{ptm}{m}{n}
\rput(5.972344,-2.9757812){$w_1$}
\usefont{T1}{ptm}{m}{n}
\rput(7.822344,-2.1757812){$a$}
\usefont{T1}{ptm}{m}{n}
\rput(7.752344,-0.52421875){$b$}
\usefont{T1}{ptm}{m}{n}
\rput(4.842344,-3.2157813){$c$}
\usefont{T1}{ptm}{m}{n}
\rput(6.592344,-2.9757812){$d$}
\usefont{T1}{ptm}{m}{n}
\rput(4.932344,-4.2957812){$(d)$}
\psdots[dotsize=0.12](5.0009375,-3.9807813)
\psline[linewidth=0.03cm,arrowsize=0.05291667cm 2.0,arrowlength=1.4,arrowinset=0.4]{->}(8.140938,-1.04078124)(9.060938,-0.21921875)

\psline[ArrowInside=->, ArrowInsidePos=0.5,arrowsize=0.25291667cm,linewidth=0.04cm](9.760938,1.7792188)(8.360937,1.7792188)
\psline[ArrowInside=->, ArrowInsidePos=0.5,arrowsize=0.25291667cm,linewidth=0.04cm](11.160937,1.7792188)(9.760938,1.7792188)
\psline[ArrowInside=->, ArrowInsidePos=0.5,arrowsize=0.25291667cm,linewidth=0.04cm](12.560938,1.7792188)(11.160937,1.7792188)
\psdots[dotsize=0.12](9.760938,1.7792188)
\psdots[dotsize=0.12](11.160937,1.7792188)
\psdots[dotsize=0.12](12.560938,1.7792188)
\psdots[dotsize=0.12](8.360937,1.7792188)
\psline[ArrowInside=->, ArrowInsidePos=0.5,arrowsize=0.25291667cm,linewidth=0.04cm](11.160937,1.7792188)(9.760938,2.5792189)
\psdots[dotsize=0.12](9.760938,2.5792189)
\psline[ArrowInside=-<, ArrowInsidePos=0.5,arrowsize=0.25291667cm,linewidth=0.04cm,linestyle=dashed,dash=0.16cm 0.16cm](10.160937,0.17921875)(10.9609375,0.77921873)
\psline[ArrowInside=->, ArrowInsidePos=0.5,arrowsize=0.25291667cm,linewidth=0.04cm,linestyle=dashed,dash=0.16cm 0.16cm](8.9609375,1.3792187)(8.360937,0.77921873)
\psline[ArrowInside=-<, ArrowInsidePos=0.5,arrowsize=0.25291667cm,linewidth=0.04cm,linestyle=dashed,dash=0.16cm 0.16cm](10.160937,0.17921875)(11.360937,-0.22078125)
\psline[ArrowInside=->, ArrowInsidePos=0.5,arrowsize=0.25291667cm,linewidth=0.04cm](12.360937,-0.62078124)(10.9609375,-0.62078124)
\psline[ArrowInside=-<, ArrowInsidePos=0.5,arrowsize=0.25291667cm,linewidth=0.04cm,linestyle=dashed,dash=0.16cm 0.16cm](9.560938,-0.62078124)(10.9609375,-0.62078124)
\psdots[dotsize=0.12](10.9609375,-0.62078124)
\psdots[dotsize=0.12](11.360937,-0.22078125)
\psdots[dotsize=0.12](10.160937,0.17921875)
\psdots[dotsize=0.12](8.360937,0.77921873)
\psdots[dotsize=0.12](10.9609375,3.1792188)
\usefont{T1}{ptm}{m}{n}
\rput(9.062344,1.9242188){$f$}
\usefont{T1}{ptm}{m}{n}
\rput(10.502344,1.5842187){$f'$}
\usefont{T1}{ptm}{m}{n}
\rput(12.022344,1.9242188){$f''$}
\usefont{T1}{ptm}{m}{n}
\rput(10.702344,2.3442187){$f'''$}
\usefont{T1}{ptm}{m}{n}
\rput(10.402344,0.76421875){$g$}
\usefont{T1}{ptm}{m}{n}
\rput(9.952344,-0.05578125){$w_1$}
\usefont{T1}{ptm}{m}{n}
\rput(11.342343,1.5642188){$a$}
\usefont{T1}{ptm}{m}{n}
\rput(11.272344,3.2242188){$b$}
\usefont{T1}{ptm}{m}{n}
\rput(8.362344,0.60421876){$c$}
\usefont{T1}{ptm}{m}{n}
\rput(9.852344,0.34421876){$d$}
\usefont{T1}{ptm}{m}{n}
\rput(9.452344,-0.90578126){$(d)$}
\psdots[dotsize=0.12](9.520938,-0.6407812)
\psline[ArrowInside=->, ArrowInsidePos=0.5,arrowsize=0.25291667cm,linewidth=0.04cm](11.160937,1.7792188)(10.9609375,0.77921873)
\psline[ArrowInside=->, ArrowInsidePos=0.5,arrowsize=0.25291667cm,linewidth=0.04cm](9.760938,1.7792188)(8.9609375,1.3792187)
\psdots[dotsize=0.12](8.9609375,1.3792187)
\psdots[dotsize=0.12](10.9609375,0.77921873)
\psbezier[linewidth=0.04,linestyle=dashed,dash=0.16cm 0.16cm](10.9609375,3.1792188)(9.760938,3.1792188)(9.760938,3.1792188)(9.560938,2.9792187)(9.360937,2.7792187)(9.160937,2.3792188)(9.760938,1.7792188)
\psline[ArrowInside=-<, ArrowInsidePos=0.5,arrowsize=0.25291667cm, linewidth=0.04,linestyle=dashed,dash=0.16cm 0.16cm](9.560938,2.9792187)(9.360937,2.6792187)
\psline[ArrowInside=->, ArrowInsidePos=0.5,arrowsize=0.25291667cm,linewidth=0.04cm](12.360937,-0.62078124)(11.360937,-0.22078125)
\psdots[dotsize=0.12](12.360937,-0.62078124)
\end{pspicture} 
}
\end{center}
\caption{The dashed edges are in the orbit $\gr g$ and the plain ones in the orbit $\gr f$.}
\label{casstrictasc}
\end{figure}
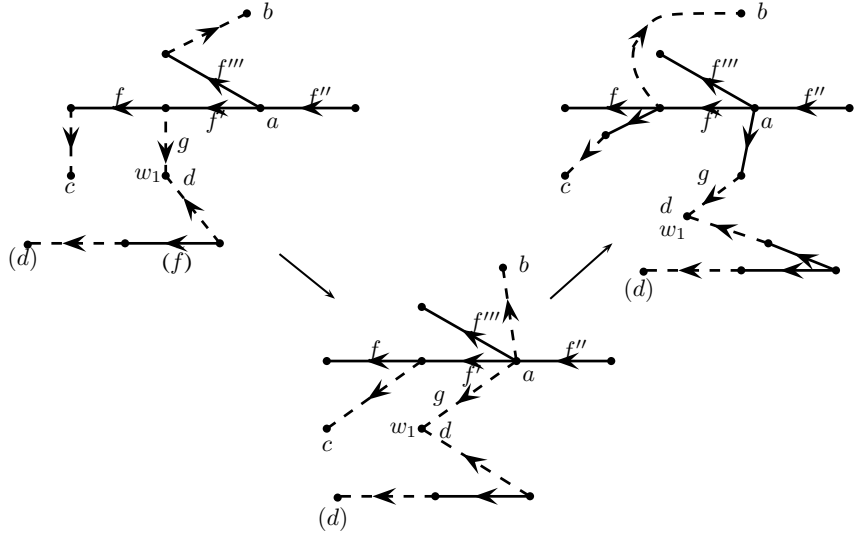  
\end{proof}

\paragraph{Deformation lemmas}

\begin{lemma}\label{suiteglissement}
If $f$ is slippery or \psad, up to exchanging the role of $f$ and $\bar f$, there exists an admissible sequence of moves $$S={g}_1/{h}_1\dots { g}_p/{ h}_p\cdot f/{ i}_1\dots f/{ i}_q$$  which preserves $f$ with ${\gr h}_j,{\gr  i}_j, {\gr g}_j\neq {\gr f},\bar {\gr f}$ for all $j$ such that, after performing the sequence, the edge $F$ is \pa or there exists an admissible slide along $\bar f$.
\end{lemma}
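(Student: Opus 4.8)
The plan is to reduce the entire statement to a single \emph{normalization} assertion about admissible sequences, and then simply read off the conclusion at the moment where $f$ first becomes ``essential.'' First I would use the licensed exchange of $f$ and $\bar{\gr f}$ to reduce the \psad hypothesis to ``$f$ is \psa.'' In both remaining cases the definitions hand us a raw witness: a finite admissible sequence $\Sigma$ preserving $\gr f$ after which either a slide \emph{along} $\gr f$ is admissible (slippery case) or the edge $f$ is \pa (\psa case). The whole burden of the lemma is to replace $\Sigma$ by a sequence of the prescribed shape $S=g_1/h_1\cdots g_p/h_p\cdot f/i_1\cdots f/i_q$, whose first block avoids $\gr f,\bar{\gr f}$ entirely and whose second block consists only of slides of $f$, without destroying the witnessed property. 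I would record at the outset that any move avoiding $\gr f,\bar{\gr f}$ automatically preserves $f$, since the only way an edge can vanish is through an $\mathcal A^{-1}$-move on its own orbit.

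Next I would classify the moves of $\Sigma$ touching $\gr f\cup\bar{\gr f}$ and locate the first \emph{essential} one, $\mathbf m^*$: namely the first move that is either \textbf{(a)} an induction or an $\mathcal A^{\pm1}$-move on $\gr f$, or \textbf{(b)} a slide along $\gr f$ or $\bar{\gr f}$. Since inductions and $\mathcal A^{\pm1}$-moves may only be performed on (strictly) ascending or \pa edges, if $\mathbf m^*$ is of type (a) then $f$ is already \pa (or, after the relabeling, $\bar{\gr f}$ is, a strictly ascending edge being \pa) just before $\mathbf m^*$, and I truncate $\Sigma$ there. If $\mathbf m^*$ is of type (b), I truncate just before it, so that at the truncation point a slide along $\gr f$, equivalently along $\bar{\gr f}$ after relabeling, is admissible. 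In the \psa case $\mathbf m^*$ may simply fail to occur, and then $f$ is already \pa at the end of $\Sigma$. Thus in every case the desired conclusion holds at the retained endpoint, and the retained prefix $\Sigma'$ contains only slides of $f$ (of type $f/\gr i$ with $\gr i\neq\gr f,\bar{\gr f}$) together with moves avoiding $\gr f,\bar{\gr f}$.

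The final and most delicate step is to normalize $\Sigma'$ into two-block form by commuting every slide $f/\gr i$ to the right, past each following move $\mathbf m$ that avoids $\gr f$; this is where the real work lies. When the supports of $f/\gr i$ and $\mathbf m$ are disjoint near the endpoints of $f$, the two moves genuinely commute onto the same reduced tree and the type-(a)/(b) analysis is untouched. The problematic interactions are exactly those in which $\mathbf m$ is an induction or an $\mathcal A$-move changing a vertex stabilizer adjacent to an endpoint of $f$, because such a change can alter which slides of $f$ are admissible. I would control these by working with a minimal witness and showing that a genuine failure to commute forces $f$ to become (strictly) ascending or \pa, or else makes a slide along $\gr f$ (resp.\ $\bar{\gr f}$) admissible — in either case returning us to situation (a) or (b) and terminating the rewriting with the conclusion already established. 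The technical heart is therefore a confluence analysis of the three Whitehead moves localized around $\gr f$, combined with careful tracking of edge and vertex stabilizers through the inductions and $\mathcal A^{\pm1}$-moves performed on neighboring orbits, while certifying that each intermediate tree remains reduced; this commutation-and-termination bookkeeping is the main obstacle I anticipate.
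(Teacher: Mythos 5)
Your opening skeleton---truncate the raw witness sequence at the first move that is an induction or $\mathcal A^{\pm1}$-move on $\gr f$ (type (a)) or a slide along $\gr f$ or $\bar{\gr f}$ (type (b)), noting that type (a) forces $f$ to be \pa and type (b) yields the slide conclusion---is essentially the paper's first step, which achieves the same reduction via a minimal-length witness. The genuine gap comes immediately after, when you assert that the retained prefix ``contains only slides of $f$ (of type $f/\gr i$) together with moves avoiding $\gr f,\bar{\gr f}$.'' Your truncation does nothing to exclude slides of $\bar f$ along other edges: these are neither of type (a) nor of type (b), and the single licensed exchange $f\leftrightarrow\bar f$ cannot dispose of them, since slides of $f$ and slides of $\bar f$ can both occur interleaved in one sequence. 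Converting or deleting them is the hardest part of the paper's proof: after normalizing to a block $S_3$ of slides of $f$ followed by a block $S_4$ of slides of $\bar f$ (these two kinds commute with each other), the paper shows (i) if the endpoint witness is an admissible slide $g/\bar f$ and $f$ is not \pa, then admissibility of $\bar f/h\cdot g/\bar f$ already forces $G_g\subset G_f$ with matching terminal vertices, so $g/\bar f$ was admissible before $S_4$ and $S_4$ may simply be dropped; and (ii) if the endpoint witness is that $f$ is \pa, a dedicated fact (Fact of the paper's proof) converts the last slide $\bar f/k_p$ into a slide $f/\bar k_p'$ of the required form while preserving the \pa conclusion, and one inducts on the number of slides of $\bar f$. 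None of this appears in your plan, and the two-block form $f/i_1\dots f/i_q$ demanded by the statement (slides of $f$ only, never $\bar f$) is exactly what it is needed for.

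A second, smaller defect: your commutation dichotomy---``either the moves genuinely commute, or failure forces $f$ to become ascending/\pa or makes a slide along $\gr f$ admissible''---is false as stated. The typical non-commuting case is elementary and leads to neither alternative: if $f$ slides along $\mu$ and the next move slides $\mu$ itself along $\mu'$, one instead rewrites $f/\mu\cdot\mu/\mu'=\mu/\mu'\cdot f/\mu\cdot f/\bar\mu'$, at the cost of an \emph{extra} trailing slide of $f$; likewise an $\mathcal A$-move creating a new edge $h$ yields a four-term rewriting through $h$, and an $\mathcal A^{-1}$-move in which $\mu$ vanishes simply deletes the slide $f/\mu$. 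Because these rewritings can lengthen the $f$-block, termination is not automatic, and the paper certifies it with a lexicographic measure (for each move not involving $f$, count the $f$-moves preceding it). So the two real pillars of the argument---the $\bar f$-to-$f$ conversion at the end, and a concrete termination measure for the rewriting process---are respectively absent from and incorrectly replaced in your proposal.
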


\begin{proof}
As $f$ is slippery or \psad, there exists an admissible sequence of moves $S'=\lambda_1/\mu_1\cdot\dots \cdot\lambda_r/\mu_r)$ such that after performing the sequence either $f$ is \pa (or \pde) or there exists an admissible slide along $f$ or $\bar f$. We may chose $S'$ of minimal length.

Suppose there exists $i$ such that  $\mu_i\in\gr f, \bar {\gr f}$, then after the $i-1$ first moves either $f$ or $\bar f$ is  \pde (if $\lambda_i=f$ or $\bar f$) or there exists an edge $g$ which slides along $f$ (take $g=\lambda_i$ if $\lambda \neq f, \bar f$).
As we chose $S'$ of minimal length, this is impossible, thus $\mu_i\not\in\gr f, \bar {\gr f}$.

Up to replacing $f$ by $\bar f$, we may assume that, in $S'\cdot T$, the edge $f$ is \pa or that the admissible slide is along $\bar f$.

We will first show that $S'$ may be changed into the product of two sequences of move $S_1$ and $S_2$ such that no edge of $\gr f$ or $\bar {\gr f}$ occurs in $S_1$, and $S_2$ is a product of slides of $f$ and $\bar f$. Then it will remain to remove the slides of $\bar f$ in $S_2$.

If $S$ is not already of this form there exists an $i$ such that $\lambda_i= f$ or $\bar {f}$ and $\lambda_{i+1}\neq{\gr f}, \bar {\gr f}$.
We only consider the case $\lambda_i= f$, the following formulas remain true replacing ${f}$ by $\bar f$.
We replace the sequence $f/\mu_i \cdot \lambda_{i+1}/ \mu_{i+1}$ by a sequence of $2$, $3$ or $4$ terms in which $f$ appears in the last terms.

If $\lambda_{i+1}$ and $\mu_{i+1}$ are not in the same orbit (it is not an induction or a $\mathcal A^{\pm 1}$-move), there are tree possibilities:
\begin{itemize}
\item either $\lambda_{i+1}$ is not in the orbit of $\mu_i$ or $\bar \mu_i$, then the two slides commute,
\item or $\lambda_{i+1}$ and $\mu_i$ are in the same orbit (we may assume $\lambda_{i+1}=\mu_i$), then $G_f\subset G_{\mu_{i+1}}$ and we have the equality $f/\mu_i \cdot \lambda_{i+1}/ \mu_{i+1}=\lambda_{i+1}/ \mu_{i+1} \cdot f/\mu_i \cdot f/ \bar \mu_{i+1}$,
\item or $\lambda_{i+1}$ and $\bar \mu_i$ are in the same orbit (we may assume $\lambda_{i+1}=\bar \mu_i$), then $G_f\subset G_{\mu_{i+1}}$ and we have the equality $f/\mu_i \cdot \lambda_{i+1}/ \mu_{i+1}=\lambda_{i+1}/ \mu_{i+1}\cdot f/\mu_{i+1} \cdot f/\mu_i$.
\end{itemize}

If $\lambda_{i+1}$ and $\mu_{i+1}$ are in the same orbit then the move is an induction or an $\mathcal A^{\pm1}$-move.

If $\lambda_{i+1}/\mu_{i+1}$ is an induction then the two moves $f/\mu_i $ and $\lambda_{i+1}/\mu_{i+1}$ commute.

If $\lambda_{i+1}/ \mu_{i+1}$ is an $\mathcal A^{\pm1}$-move we have three cases to consider:
\begin{itemize}[topsep=0cm]
\item If $\lambda_{i+1}$ is not in the orbit of  $\mu_i$ or $\bar \mu_i$ and $\mu_i$ does not vanish in the move $\lambda_{i+1}/ \mu_{i+1}$ then the two moves commute.
\item If $\lambda_{i+1}$ is not in the orbit of  $\mu_i$ or $\bar \mu_i$ but $\mu_i$ vanishes in the move $\lambda_{i+1}/ \mu_{i+1}$ then $f/\mu_i \cdot \lambda_{i+1}/ \mu_{i+1}=\lambda_{i+1}/ \mu_{i+1}$.

\item If $\lambda_{i+1}$ is in the orbit of $\mu_i$ or $ \bar {\mu_i}$.

 If $\lambda_{i+1}/ \mu_{i+1}$ is an $\mathcal A^{-1}$-move, then the two moves commute. 
 Else it is an $\mathcal A$-move, then calling $h$ the new edge we have $f/\mu_i \cdot \lambda_{i+1}/ \mu_{i+1}=\lambda_{i+1}/ \mu_{i+1}\cdot f/h\cdot f/\mu_i \cdot f/ \bar h$.
\end{itemize}
Using these equalities, we may put all slides of $f$ and $\bar f$ at the end of the sequence. 

Let us prove that the process of transformation finishes.

  First note that the number of moves not involving $f$ stays constant. Call $n$ this number. Let $\mathcal S$ be the set of move sequences with exactly $n$ terms not involving $f$ or $\bar f$. Let $\varphi: \mathcal S \rightarrow \N^n$ be defined as follows: for $S$ a sequence $\varphi(S)=(k_1,\dots, k_n)$ where $k_i$ counts the number of terms involving $f$ or $\bar f$ appearing before the $i$th term not involving $f$ or $\bar f$ in $S$. Note that $\varphi(S)=(0,\dots 0)$ if and only if $S=S_1S_2$ with $S_1$ not involving $f$ or $\bar f$ and $S_2$ composed of slides of $f$ or $\bar f$.

Then $\varphi$ decreases strictly for the lexicographic order on $\N^n$ when we apply one of the transformation described previously. Thus the process finishes.

Now, it remains to prove that we may exchange $S_2$ by a sequence in which only slides of $f$ occurs. As a slide of $f$ and a slide of $\bar f$ commute, we may assume that $S_2$ is a product of $S_3$ and $S_4$, with $S_3$ a sequence of slides of $f$ and $S_4$ a sequence of slides of $\bar f$.

After performing the sequence $S'$, either the edge $f$ is \pa or there exists an edge $g$ which slides along $\bar f$.

First assume that after performing the sequence of moves $S'$, the slide $g/\bar f$ is admissible, and that $f$ is not \pa. If a sequence $\bar f/h \cdot g /\bar f$ is admissible in a tree $T$, this implies that $G_g\subset G_f$ and that the terminal vertices of $f$ and $g$ are the same in $T$. Thus $g /\bar f$ is also admissible in $T$ (the admissibility comes from the fact that $f$ is \pa: $g$ is still reduced after the slide).
Thus sequence $S_1\cdot S_3\cdot g /\bar f$ is admissible in the initial tree, and $S=S_1\cdot S_3$ is as required.

Now assume that after performing the sequence of move $S$, the edge $f$ is \pa. 

We now use the following simple fact.
\begin{fait}\label{inter}
Let $T$ be a $G$-tree. Let $f$ and $g\not \in \gr f, \bar {\gr f}$ be two edges of $T$. If the move $\bar f/g$ is admissible in $T$ and $f$ is \pa in $\bar f/g\cdot T$, then there exists $g'\in \gr g$ such that $f/\bar g'$ is admissible in $T$ and $f$ is \pa in $f/\bar g'\cdot T$
\end{fait}

\begin{proof}[Proof of the fact]
Call $v$ and $v'$ the initial and terminal vertices of $g$. As $f$ is \pa in $\bar f / g\cdot T$, there exists $f'\in \gr f$ with terminal vertex $v'$, such that $G_{f'}\subset G_{f} (\subset G_g)$. As $g\not \in \gr f, \bar {\gr f}$, the slide $f'/\bar g$ is admissible in $T$ and $f$ is \pa in $f'/\bar g\cdot T$.
Let $a$ be an element such that $f=a\cdot f'$. Then the edge $g'=a\cdot g$ is as required (the moves $f'/\bar g$ and  $f/\bar g'$ are equal).
\end{proof}

Call $T$ the tree obtained by performing $S_1$ and $S_3$ on the initial tree. Assume $S_4$ is the product $\prod_{i=1}^{p} \bar f/ k_i$. 
We prove the lemma by recurrence. If $p=0$ then the sequence $S_1\cdot S_3$ is as required. 

If $p>0$, applying Fact \ref{inter}, there exists $k'_p\in \gr k_p$ such that we may change $S_4$ by $(\prod_{i=1}^{p-1} \bar f/ k_i)\cdot f /\bar k'_p$. 
Moreover the move $f /\bar k'_p$ commutes with every move $( \bar f/ k_i)$. 
Thus $S_4$ may be changed by $f /\bar k_p\cdot (\prod_{i=1}^{p-1} \bar f/ k_i)$.
We may conclude by applying the recurrence hypothesis.
\end{proof}

\begin{lemma} \label{nonslipvanish}
Let $T$ be a $G$-tree.  Let $f$ be a  vanishing non-slippery edge of $T$. 

There exists a strictly ascending orbit of edges $\gr e$ of $T$ whose endpoint $\gr w$ is an endpoint of $\gr f$ and of valence $3$, and there exists an admissible sequence $S$ of inductions on $\gr e$ and slides of $\gr f$ along $\gr e$ or $\bar {\gr e}$ finishing by a $\mathcal A^{-1}$-move on $\gr e$ in which $\gr f$ and $\gr w$ vanish. 
\end{lemma}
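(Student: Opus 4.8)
The plan is to unwind the definition of a vanishing orbit, extract from the final move a strictly ascending edge, and then rigidify the witnessing sequence using, in an essential way, the hypothesis that $f$ is non-slippery.

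\emph{Extracting the data.} By definition of a vanishing orbit there is an admissible sequence $S'$ preserving $\gr f$ and ending with an $\mathcal A^{-1}$-move $\textbf{m}$ in which $\gr f$ vanishes; I choose such an $S'$ of minimal length. Writing $S''$ for $S'$ with $\textbf{m}$ deleted and $\hat T=S''\cdot T$, the very definition of an $\mathcal A^{-1}$-move presents $\textbf{m}$ as being performed on a strictly ascending orbit $\gr e_0$ of $\hat T$ whose two endpoints form a single orbit $\gr w$, this $\gr w$ being also the terminal-vertex orbit of $\gr f$. Admissibility of $\textbf{m}$ (the converse induction is carried out with group $G_f$) forces $G_{\gr e_0}\subset G_f\subset G_{\gr w}$, together with $G_g\subset G_f$ for every edge $g$ of initial vertex $w$ outside $\gr e_0$.

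\emph{Rigidifying the sequence.} The key input is that $f$ is non-slippery: no prefix of $S'$ can end with a slide of an edge along $\gr f$ or $\bar{\gr f}$, for such a prefix is admissible, preserves $\gr f$, and would exhibit $\gr f$ as slippery. Since the endpoints of $\gr f$ lie in distinct orbits (its initial vertex stays outside $\gr w$, so $\gr f$ is never a loop), no induction or $\mathcal A^{\pm1}$-move can act on $\gr f$ either; hence the only moves of $S'$ touching $\gr f$ are slides of $\gr f$. I then run a commutation argument in the style of Lemma \ref{suiteglissement}, using the identities recorded there to commute a slide past a slide, an induction, or an $\mathcal A^{\pm1}$-move, and the lexicographic count $\varphi$ to guarantee termination. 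This sorts $S'$ into inductions carried by the ancestor of $\gr e_0$ and slides of $\gr f$; minimality of $S'$ discards every move feeding into neither (in particular every induction or slide on a foreign edge). As inductions keep an edge strictly ascending and slides of $\gr f$ leave all edge stabilizers unchanged, the ancestor $\gr e$ of $\gr e_0$ is a strictly ascending edge \emph{already in} $T$ sharing the endpoint $\gr w$ with $\gr f$, the role of the surviving inductions being only to enlarge $G_{\gr e}$ (up to $G_f$) so that the slides of $\gr f$ along $\gr e$ and the final move become admissible. Thus $S$ consists of inductions on $\gr e$ and slides of $\gr f$ along $\gr e$ or $\bar{\gr e}$, followed by $\textbf{m}$.

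\emph{The valency of $\gr w$.} I claim $\gr w$ has valency exactly $3$, the three orbits being $\gr e$, $\bar{\gr e}$ and $\bar{\gr f}$. Suppose a further orbit $\gr g$ had a representative $g$ of initial vertex $w$. Admissibility of $\textbf{m}$ gives $G_g\subset G_f=G_{\bar f}$, and since $\bar g$ has terminal vertex $w$, which is the initial vertex of $\bar f$, the slide $\bar g/\bar f$ is admissible; carried out on $\hat T$ after the $\gr f$-preserving sequence $S''$ it is a slide along $\gr f$, contradicting non-slipperiness. An edge whose initial vertex is the other endpoint $v$ of $\gr e$ is treated identically after translating by the element $t\in G$ with $t\cdot v=w$, which sends it to an edge of initial vertex $w$ whose stabilizer is still contained in $G_f$. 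Hence no fourth orbit occurs, $\gr w$ has valency $3$, and $\textbf{m}$ is exactly the $\mathcal A^{-1}$-move of Figure \ref{figureamouvement} performed at $\gr w$, vanishing both $\gr f$ and $\gr w$ as required.

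The step I expect to be the main obstacle is the rigidification of $S'$: one must check that the commutation identities of Lemma \ref{suiteglissement} really do let every foreign move be deleted or absorbed without ever manufacturing a slide along $\gr f$, and that the surviving inductions are all carried by the single ascending orbit $\gr e$. It is precisely there, and in the valency count above, that the hypothesis \emph{non-slippery} (rather than merely vanishing) is indispensable.
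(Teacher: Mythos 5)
Your first and third steps coincide with the paper's proof: the final $\mathcal A^{-1}$-move supplies the strictly ascending orbit $\gr e$ with vanishing vertex $\gr w$, and non-slipperiness of $f$ gives both that $S'$ contains no slide along $\gr f$ or $\bar{\gr f}$ and that $\gr w$ has valence exactly $3$ (any further edge $g$ at $w$ satisfies $G_g\subset G_f$ by admissibility of the move, so $\bar g$ would slide along $\bar f$). The genuine gap is in your middle step, the rigidification of $S'$, which is also where the paper's argument is structured quite differently. The paper does not sort the sequence by commutation at all: it argues by induction on the length of $S'$, applying the inductive hypothesis to the tail (legitimate because $f$ remains vanishing and non-slippery after the first move --- if $f$ were slippery in $(\gr g/\gr g')\cdot T$, prefixing by $\gr g/\gr g'$ would exhibit it as slippery in $T$), and then performing a case analysis on the \emph{first} move: if it involves none of $\gr f$, $\gr e$, $\bar{\gr e}$ it can simply be deleted, \emph{because the already-rigidified tail mentions only those orbits} and so remains admissible in $T$; if it acts on $\gr e$ it must be an induction (a slide of $e$ along $f$ would contradict non-slipperiness, and only $\gr f$, $\bar{\gr e}$ are adjacent); if it is a slide of $\gr f$, the valence count forces the target to be $\gr e$ or $\bar{\gr e}$.

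Your sketch replaces this recursion with the commutation identities of Lemma \ref{suiteglissement} plus an appeal to minimality, and neither ingredient delivers what you need. First, minimality of $S'$ does not by itself ``discard every move feeding into neither'': to invoke minimality you must first prove that a foreign move can be deleted while the rest of the sequence stays admissible, and that deletion argument is only available once the remainder has already been reduced to moves on $\gr e$ and $\gr f$ --- precisely the circularity the paper's inductive hypothesis breaks by rigidifying the tail before touching the first move. Second, the commutation identities are the wrong tool for the target normal form: commuting a slide of $f$ past another slide or past an $\mathcal A$-move manufactures new slides of $f$ along edges outside $\gr e\cup\bar{\gr e}$ (terms of the form $f/\bar\mu_{i+1}$, or $f/h\cdot f/\mu_i\cdot f/\bar h$ with $h$ the fresh edge of the $\mathcal A$-move), so the sorted sequence need not consist of slides along $\gr e$, $\bar{\gr e}$ only; worse, if you then delete the foreign move by minimality, these manufactured slides refer to edges that no longer exist. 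Note also that the required conclusion is an \emph{interleaved} sequence (inductions on $\gr e$ interspersed among the slides of $\gr f$, since each induction enlarges the stabilizer and may be what makes the next slide admissible), so sorting all slides of $f$ to the end, which is what Lemma \ref{suiteglissement} is built to do, is not even aimed at the right normal form. The conclusion and the valence argument are right, but the rigidification as you describe it would need to be replaced by the paper's induction on the length of the witnessing sequence.
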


\begin{proof}
As $f$ vanishes, there exists an admissible sequence $S$ such that there exists an admissible $\mathcal A^{-1}$-move on $T'=S\cdot T$ in which $\gr f$ vanishes. Call $w$ the vanishing vertex of this move. As $f$ is non-slippery, there are exactly three orbits $\gr f$, $\gr e$, $\bar {\gr e}$ adjacent to $w$ (by definition every other edge should slide along $\bar f$), and the sequence $S$ does not involve slides along $\gr f$ or $\bar {\gr f}$.

We prove the lemma by recurrence on the length $n$ of the sequence $S$.
 If $n=0$ then $v=w$, and the lemma is trivial.
 
 Assume $n\geq 1$, let $\gr g /\gr g'$ the first move of $S$, and $S'$ the sequence of the $n-1$ last moves of $S$. Call $T''=\gr g /\gr g' \cdot T$. By recurrence hypothesis, we may replace $S'$ by a sequence $S''$ of induction on $\gr e$ and slides along $\gr e$ and $\bar {\gr e}$.
  As $f$ is not slippery we have $\gr g'\neq \gr f, \bar {\gr f}$.
 If $\gr g$ and $\gr g'$ differ from $\gr f$, $\gr e$ and $\bar {\gr e}$, then we may remove the slide from $S$, without changing anything around $\gr v$, thus the sequence $S''$ is admissible in $T$ and the lemma is proven.
If $\gr g=\gr e$ (or $\bar {\gr e}$), as the only edges adjacent to $\gr e$ are in $\gr f$ and $\bar {\gr e}$, and that $\gr f$ is non-slippery, then $\gr g/\gr g'$ is an induction on $e$, thus the sequence $\gr g/\gr g'\cdot S''$ is a sequence of inductions on $e$ and slides of $\gr f$ along $\gr e$ and $\bar {\gr e}$.
Finally, if $\gr g=\gr f$, as $\gr e$, $\bar {\gr e}$ and $\gr f$ are the only three orbits adjacent to $\gr w$, we have $\gr g'=\gr e$ or $\bar {\gr e}$. Thus $\gr g/\gr g'\cdot S''$ is as required. That finishes the proof on the lemma.
\end{proof}

\begin{lemma}\label{lemmeascendant}
Let $T$ be a reduced $G$-tree, $e$ an edge of $T$ and $\mathbf m$ an admissible move. If $e$ is \tor in $T$ but not in $\mathbf m\cdot T$, then there exists a strictly ascending edge $f$ adjacent to $e$ in $T$ and $e$ is \psad.
\end{lemma}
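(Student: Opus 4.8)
The plan is to first determine which of the three Whitehead moves can change the toric character of $e$, and then, in each surviving case, to produce the required strictly ascending edge and deduce that $e$ becomes strictly ascending after a short admissible sequence preserving $\gr e$. Two preliminary observations organize everything. First, a toric edge is neither strictly ascending/descending nor \pa (the equalities $G_e=G_{v_0}=G_{v_1}$ are incompatible with the strict conjugation inclusion defining \pa edges), so $e$ is not the edge on which $\mathbf m$ performs an induction or an $\mathcal A^{\pm1}$-move; since $e$ survives in $\mathbf m\cdot T$ the orbit $\gr e$ does not vanish, and by the identification principle of Section \ref{prelim} we may view $e$ as an edge of $\mathbf m\cdot T$ with unchanged stabilizer $G_e$, necessarily reduced there. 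Second, a \emph{monotonicity} remark: if an endpoint group of $e$ changes under $\mathbf m$ it can only grow, because a new endpoint group must still contain $G_e$ while $G_e$ itself is unchanged.

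I would then locate the strictly ascending edge by eliminating the moves that cannot affect $e$ and analyzing the rest. A slide of an edge outside $\gr e\cup\bar{\gr e}$ changes no stabilizer and no endpoint of $e$, so $e$ stays toric. An $\mathcal A$-move is the converse of an $\mathcal A^{-1}$-move, hence it only \emph{creates} a new vertex of small stabilizer $G_a$ (and upgrades that new vertex) without raising the stabilizer of any pre-existing vertex; by monotonicity $e$ cannot be reattached to that small vertex, so $e$ again stays toric. There remain three cases. If $\mathbf m$ slides $e$ along an edge $h$, then the terminal vertex $v_1$ of $e$ is the initial vertex of $h$ and $G_e\subseteq G_h\subseteq G_{v_1}=G_e$, forcing $G_h=G_{v_1}=G_e$; reducedness of $T$ then puts the endpoints of $h$ in one orbit, so $h$ is strictly ascending and adjacent to $e$. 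If $\mathbf m$ is an induction on $\gr a$ or an $\mathcal A^{-1}$-move on $\gr a$, the only vertex whose stabilizer grows is the initial vertex of a representative of the strictly ascending edge $a$, which must be the endpoint $x$ of $e$ that changed; there $G_a=G_x=G_e$ and $a\colon x\to y$ with $y$ in the orbit of $x$ and $G_a\subsetneq G_y$. In every non-vacuous case we obtain a strictly ascending edge $f$ (namely $h$ or $a$) adjacent to $e$ at a common vertex $x$ with $G_f=G_e$, $f\colon x\to y$, $x$ and $y$ in one orbit, and $G_e\subsetneq G_y$. This is the first assertion.

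To conclude that $e$ is \psad, orient $e$ so that $x$ is its terminal vertex and let $z$ be its other endpoint, so $G_e=G_z=G_x$. If exactly one endpoint of $e$ grew (this always holds when $\mathbf m$ slides $e$, and more generally when $x$ and $z$ lie in distinct orbits of $T$), then in $\mathbf m\cdot T$ the edge $e$ has one endpoint group equal to $G_e$ and the other strictly larger; being reduced, its endpoints lie in one orbit, so $e$ is strictly ascending in $\mathbf m\cdot T$. Since $\mathbf m$ preserves $\gr e$, this shows $\gr e$ is \pa after an admissible move preserving it, i.e. $e$ is \psa. If instead both endpoints grew, then $z$ and $x$ already lie in the common orbit carrying $f$; the slide of $e$ (oriented $z\to x$) along $f$ is admissible, since afterward $e$ joins $z$ to $y$ with $z$ and $y$ in one orbit, and it turns $e$ into a strictly ascending edge. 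This slide preserves $\gr e$, so again $e$ is \psa. In all cases $e$ is \psad.

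The main obstacle is the move-by-move bookkeeping, and in particular controlling the $\mathcal A^{\pm1}$-moves, which are composite (an expansion or a collapse together with an induction): one must verify that the only stabilizer increase felt by $e$ is the one produced by the induction on the strictly ascending edge $a$, and that the accompanying expansion/collapse cannot by itself destroy toric-ness. This is exactly where the monotonicity remark and the inequality $G_{w''}=G_a\subsetneq G_e$ are used, to forbid reattaching $e$ to the newly created small-stabilizer vertex. A secondary point requiring care is applying the reducedness argument in the correct tree — in $T$ itself in the two-endpoint (loop) case, and in $\mathbf m\cdot T$ otherwise.
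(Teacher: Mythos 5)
Your proof is correct and follows essentially the same route as the paper's: you rule out moves performed on $e$ itself and $\mathcal A$-moves, observe that a slide of $e$ must be along a strictly ascending edge (making $e$ strictly ascending in $\mathbf m\cdot T$), and in the induction/$\mathcal A^{-1}$ case you locate the strictly ascending edge $f$ adjacent to $e$ and slide $e$ along it in $T$ to make $e$ strictly ascending, hence \psa. Your extra bookkeeping (the monotonicity remark, the orientation and loop-case analysis) only fills in details the paper leaves implicit.
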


\begin{proof}
Note that a slide of a \tor edge in a reduced tree must be along an ascending edge. As $e$ is not \tor in $\mathbf{m}\cdot T$, if $\mathbf{m}$ is a slide, it is a slide of $e$ along a strictly ascending edge $f$, and $e$ is strictly ascending in $\mathbf{m}\cdot T$. In this case the lemma is proved.

As $e$ is \tor in $T$, the move $\mathbf m$ is not an induction or an $\mathcal A^{\pm1}$-move on $e$.
The move $\mathbf m$ may not be an $\mathcal A$-move on any edge $f$, since an $\mathcal A$-move leaves unchanged the set of \tor edges.

If $\mathbf m$ is an induction or an $\mathcal A^{-1}$-move on an orbit of edges $\gr f$, then $\gr f$ is strictly ascending in $T$ and adjacent to $e$. Thus sliding $e$ along $f$ in $T$, it become strictly ascending.
\end{proof}

\begin{lemma}\label{lemme2slip}
Let $T$ be a reduced $G$-tree, and $e$ be a $2$-slippery \tor edge. Then there exists an admissible sequence $S$ of moves which does not involve $\gr e$ and two edges $f$ and $g$ of distinct orbits in $S\cdot T$ such that the slides $f/e$ and $g/e$ are both admissible in $S\cdot T$.
\end{lemma}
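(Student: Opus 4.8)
The plan is to adapt the rewriting technique of Lemma~\ref{suiteglissement} to the \tor edge $e$, starting from a sequence witnessing $2$-slipperiness and reorganizing it, by commuting moves, into one that never touches $\gr e$. The key structural input is that a reduced \tor edge is very rigid: writing $v,w$ for the endpoints of $e$, we have $G_e=G_v=G_w$, so $G_e$ is not strictly contained in its endpoint stabilizers and reducedness forces $\gr v=\gr w$. Consequently, any edge $\lambda\notin\gr e,\bar{\gr e}$ whose terminal vertex is an endpoint of $e$ has $G_\lambda\subset G_e$, so $\lambda$ may slide along $e$; and since $\gr v=\gr w$ and $G_v=G_w$, such a slide changes neither the orbit relation between the endpoints of $\lambda$ nor the relevant stabilizers, hence preserves reducedness and is automatically admissible. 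Thus the real content of $2$-slipperiness for $e$ is the existence, after suitable preparation, of two distinct orbits incident to $v$ other than $\gr e,\bar{\gr e}$, and the lemma asserts that the preparation can avoid $\gr e$.

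First I would choose, among all admissible sequences $S$ preserving $\gr e$ for which $S\cdot T$ admits two distinct orbits $\gr f\neq\gr f'$ with $\gr f/\gr e$ and $\gr f'/\gr e$ admissible, one of minimal length, and show it does not involve $\gr e$. Suppose it does, and let $\mathbf m$ be the last move of $S$ involving $\gr e$, so that $S=S_0\cdot\mathbf m\cdot S_1$ with $S_1$ not involving $\gr e$. If $\mathbf m$ is a slide $\lambda/e$ or $\lambda/\bar e$ of some edge along $e$, then by the remark above it alters no stabilizer and breaks no reducedness; using commutation identities analogous to those in the proof of Lemma~\ref{suiteglissement} I would push $\mathbf m$ past $S_1$ to the end of $S$ and delete it, checking that the two witnessing orbits for the shortened sequence are the images of $\gr f,\gr f'$ up to translation by an element of $G$, contradicting minimality. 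If instead $\mathbf m$ is a slide $e/\mu$ of $e$ itself, the slide condition $G_e\subset G_\mu$ combined with $G_\mu\subset G_w=G_e$ forces $G_\mu=G_e$, so $\mu$ carries the stabilizer $G_e$ and $\mathbf m$ merely relocates $e$ among vertices of stabilizer $G_e$; the same commute-and-delete argument applies.

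The remaining, and genuinely delicate, case is when $\mathbf m$ is an induction or an $\mathcal A^{\pm1}$-move on $\gr e$. Since $e$ is \tor in $T$ it is neither strictly ascending nor \pa, so such a move is impossible unless an earlier move of $S$ has already destroyed the \tor property of $e$; this is the main obstacle, and I expect to dispatch it with the induction-on-length device of Lemmas~\ref{nonslipvanish} and~\ref{lemmeascendant}. Locating the first move of $S$ after which $e$ ceases to be \tor and applying Lemma~\ref{lemmeascendant}, I obtain a strictly ascending edge adjacent to $e$ responsible for the change, and I would then rewrite every subsequent operation on $\gr e$ as an operation on this adjacent edge together with slides not involving $\gr e$. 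Once all occurrences of $\gr e$ are eliminated, the resulting minimal sequence $S$ does not involve $\gr e$, and in $S\cdot T$ the two (relabelled) orbits $\gr f,\gr g$ remain distinct with terminal vertex at an endpoint of $e$, so both slide admissibly along $\gr e$, which is the conclusion.
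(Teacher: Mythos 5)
Your opening observations are correct and coincide with the paper's first step: for a reduced \tor edge $e$ one has $G_e=G_v=G_w$ and $\gr v=\gr w$, so any edge orbit other than $\gr e,\bar{\gr e}$ ending at $\gr v$ slides admissibly along $e$, and the whole problem is to arrange two such orbits without touching $\gr e$. But your case analysis has genuine gaps, and it misses the one consequence of minimality that makes the paper's proof work: if $S$ is a shortest witnessing sequence, then at every \emph{proper} intermediate stage the initial vertex of $e$ has valence exactly $3$ (otherwise, as long as $e$ is still \tor, two distinct orbits already end at $\gr v$ and you could truncate $S$). Valence $3$ excludes any strictly ascending edge adjacent to $e$ (such an edge would contribute both its orbit and the opposite orbit), so by Lemma \ref{lemmeascendant} the edge $e$ remains \tor in every intermediate tree. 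This kills your Cases 2 and 3 outright: no induction or $\mathcal A^{\pm1}$-move on a \tor edge is admissible, and a \tor edge can only slide along an ascending edge, of which there are none adjacent. It also reduces your Case 1 to a one-line argument: with valence $3$, the configuration is exactly the rigidity case of \cite[Theorem 1, case 3]{Levitt05}, so a slide along $e$ leaves the tree unchanged and is simply deleted from $S$ — no commutation is needed. That is the paper's proof.

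As written, your treatments of the three cases do not close. In Case 2 you infer from $G_e\subset G_\mu\subset G_w=G_e$ that the slide $e/\mu$ ``merely relocates $e$ among vertices of stabilizer $G_e$''; this conflates $G_\mu=G_e$ with $G_{\mathrm{term}(\mu)}=G_e$. If $\mu$ is strictly ascending (the typical case, since a \tor edge slides only along ascending edges), the new terminal vertex of $e$ has stabilizer strictly containing $G_e$, so $e$ becomes strictly ascending, and inductions and $\mathcal A^{\pm1}$-moves on $\gr e$ — your Case 3 — become available; your Case 3 itself is only the statement of a hope (``rewrite every subsequent operation on $\gr e$ as an operation on this adjacent edge''), with no construction given, and this is precisely the hard point. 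Finally, in Case 1 the commutation identities of Lemma \ref{suiteglissement} push slides of one fixed tracked edge past moves \emph{not involving that edge}; here you must push $\lambda/e$ past later moves that may involve $\gr\lambda$ arbitrarily. For instance a later slide $\rho/\lambda$ forces $\lambda/e\cdot\rho/\lambda=\rho/\lambda\cdot\lambda/e\cdot\rho/e$, creating a \emph{new} slide along $\gr e$ further to the right (and $\lambda$ may also vanish in a later $\mathcal A^{-1}$-move), so your ``last move involving $\gr e$'' induction does not terminate as stated; moreover the terminal deletion is unjustified, since a slide along a \tor edge is not in general the identity on the tree — that is exactly what the valence-$3$ rigidity input supplies.
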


\begin{proof}
Note that if $e$ is \tor in a tree $T'$, then there exists two admissible slides on $e$ if and only if the initial vertex of $e$ is of valence at least four. 

By definition of being  $2$-slippery there exists an admissible sequence $S$ of moves and two edges $f$ and $g$ of distinct orbits in $S\cdot T$ such that $f/e$ and $g/e$ are both admissible in $S\cdot T$. But a move involving $e$ may occur in $S$. We may assume that for every initial subsequence $S_1\neq S$ of $S$, the initial vertex of $e$ in $S_1\cdot T$ is of valence $3$.
Hence $\gr e$ has no adjacent strictly ascending edges, thus by Lemma \ref{lemmeascendant}, the edge $e$ is \tor in every intermediate tree.

Recall that no induction and no $\mathcal A^{\pm1}$-move on a \tor edge are admissible, and note that if a \tor edge slides along an edge $f$ then $f$ is strictly ascending. Hence, no induction, no $\mathcal A^{\pm1}$-moves on $e$ and no slide of $e$ appears in $S$.

If a slide on $e$ occurs in $S$, as the initial vertex of $e$ is of valence $3$, by \cite[Theorem 1, case 3]{Levitt05} this slide may be remove from $S$.

Thus $S$ may be taken as required.
\end{proof}

\paragraph{Refinement lemmas}
\begin{lemma}\label{rafinementcontinu}
Let $T$ be a $G$-tree, let $\gr e$ be an orbit of edges of $T$. Let $T_e$ be the tree obtained from $T$ by collapsing every orbit of edges except $\gr e$. Let $S$ be a sequence of moves on $T$, in which no slide along $\gr e$ or $\bar {\gr e}$ , no induction and no $\mathcal A^{\pm 1}$-move on $\gr e$ occurs, and such that $\gr e$ does not vanish (all moves involving $\gr e$ except slides of $\gr e$ and $\bar {\gr e}$).
Then $S\cdot T$ refines $T_e$.
\end{lemma}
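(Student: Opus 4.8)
The plan is to prove the sharper statement that collapsing in $S\cdot T$ all orbits of edges except $\gr e$ produces exactly the tree $T_e$; since such a collapse is by definition a collapse of $S\cdot T$, the refinement $S\cdot T\to T_e$ follows at once. First note that, because $S$ involves no induction and no $\mathcal A^{\pm1}$-move on $\gr e$ and $\gr e$ does not vanish, the orbit $\gr e$ is preserved along the whole of $S$ and can be equivariantly identified, with constant stabilizer $G_e$, in every intermediate tree (this is the identification recorded in Section~\ref{prelim}); in particular every edge added or deleted along $S$ lies outside $\gr e\cup\bar{\gr e}$, and the collapse $(S\cdot T)_e$ makes sense. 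Arguing by induction on the length of $S$, it is enough to treat a single admissible move $\mathbf m$ satisfying the hypotheses and to show $(\mathbf m\cdot T)_e=T_e$.

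Write $F$ for the $G$-invariant subforest of $T$ consisting of all edges not in $\gr e\cup\bar{\gr e}$, so that $T_e=T/F$ and the vertices of $T_e$ are the components of $F$. The hypotheses leave only two possibilities for $\mathbf m$: it acts entirely on edges of $F$, or it slides an edge of $\gr e$ along an edge of $F$ (no slide along $\gr e$ being allowed). The main tool is endpoint identification: in any admissible slide $g/h$ arising here the edge $h$ belongs to $F$, hence is a collapsed edge joining its two endpoints inside a single component of $F$. Consequently, moving the appropriate endpoint of $g$ from the initial to the terminal vertex of $h$ changes neither the image of $g$ in $T_e$, when $g\in\gr e$, nor the partition of the (unchanged) vertex set into components of $F$, when $g\in F$; in either case $(\mathbf m\cdot T)_e=T_e$.

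The main obstacle is the case where $\mathbf m$ is an induction or an $\mathcal A^{\pm1}$-move on an orbit $\gr a\neq\gr e$, for such a move modifies edge and vertex stabilizers and adds or removes edges and vertices, so one must check that not merely the underlying graph but also the vertex groups of the collapse are unaffected. I would reduce each such move to a composition of expansions and collapses of edges of $F$ — which do not alter $T_e$ — together with the replacement of the stabilizer $G_v=G_a$ of a vertex $v$ lying in a component $C$ of $F$ by a group $A$ satisfying $G_a\subseteq A\subseteq G_w$ with $w\in C$. Since $A\subseteq G_w\subseteq \mathrm{Stab}(C)$, the group thereby introduced already lies in the stabilizer of $C$, and since $G_a\subseteq A$ nothing is lost; hence $\mathrm{Stab}(C)$, and with it the vertex group of the corresponding vertex of the collapse, is unchanged, the equivariance of $\mathbf m$ propagating the identification to all components. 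Combining this with the slide case gives $(\mathbf m\cdot T)_e=T_e$, and the induction on the length of $S$ then yields $(S\cdot T)_e=T_e$, whence $S\cdot T$ refines $T_e$.
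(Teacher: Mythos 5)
Your proof is correct and follows essentially the same route as the paper: the paper also works with the invariant that collapsing everything but $\gr e$ in each intermediate tree returns $T_e$, justified by the fact (cited from Clay--Forester, and implicit in the paper's own definitions of induction and the $\mathcal A^{\pm1}$-moves) that each allowed move decomposes into expansions and collapses in which $\gr e$ is never collapsed, so that refinement of $T_e$ is preserved at every step. Your extra hands-on verifications (endpoint tracking for slides, the sandwich $G_a\subseteq A\subseteq G_w\subseteq \mathrm{Stab}(C)$ for inductions) merely spell out details the paper delegates to the reference; note that the stabilizer claim follows most cleanly from the composition of collapse maps $(\mathbf m\cdot T)/F'=(T/F)$ rather than from generation arguments, since $\mathrm{Stab}(C)$ need not be generated by vertex stabilizers.
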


\begin{proof}
In \cite{ClayFor09}, Clay and Forester give a description of the Whitehead moves in terms of expansions and collapses. If a tree $\tilde T$ refines $T_e$ and if we perform an expansion on $\tilde T$, clearly the resulting tree also refines $T_e$. If we perform a collapse of an orbit distinct from $\gr e$ in $\tilde T$ it will still refines $T_e$ (since $T_e$ is obtained from $\tilde T$ by collapsing all orbits of edges except $\gr e$ in $T$). 

A move which is not a slide along $\gr e$ or $\bar {\gr e}$ , not an induction or an $\mathcal A^{\pm 1}$-move on $\gr e$ or in which $\gr e$ vanishes, may be seen as expansions and collapses with no collapse of $\gr e$ (see  \cite{ClayFor09}).
Thus $S\cdot T$ still refines $T_e$.
\end{proof}

\begin{lemma}\label{lift}
Let $T'$ be a $G$-tree. Let $f$ be an edge of $T'$ with initial vertex $w$ and $H$ a group such that $G_f\subset H \subset G_w$. Let $T$ be a refinement of $T'$ obtained by performing an expansion on $w$ with group $H$ and set $\{ f\}$. Call $e$ the new edge and $v$ its initial vertex. Let $T_e$ be the tree obtained from $T$ by collapsing all orbits of edges except $\gr e$.

Let $\mathbf{m}$ be a move of $T'$ which is not a slide of $\bar f$, a slide along $f$, an induction of $f$, an $\mathcal A^{\pm1}$-move of $f$ or in which $f$ vanishes.
Then we may perform $\mathbf m$ in $T$, and $\mathbf m \cdot T$ refines both $T_e$ and $\mathbf m\cdot T'$.

Moreover if $\mathbf{m}$ is not a slide along $\bar f$, then the vertex $v$ in $\mathbf{m}\cdot T$ is still of valence two. 
\end{lemma}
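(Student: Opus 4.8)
The plan is to lift $\mathbf m$ to a single move $\tilde{\mathbf m}$ on $T$ that is supported away from the new edge $e$, and then to read off both refinements and the valence claim. Recall how $T$ looks near $e$: the expansion splits $w$ into the vertex $v$ (with $G_v=H$), carrying the edges $e$ and $f$, and a vertex $v'$ (with $G_{v'}=G_w$), carrying $\bar e$ together with every edge that issued from $w$ in $T'$ other than $f$. Collapsing $\gr e$ in $T$ recovers $T'$, so the edges of $T'$ are identified with the edges of $T$ distinct from $\gr e$; under this identification an edge of $T'$ issuing from $w$ now issues from $v$ if it is $f$ and from $v'$ otherwise, while an edge of $T'$ pointing to $w$ now points to $v$ if it is $\bar f$ and to $v'$ otherwise. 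I would define $\tilde{\mathbf m}$ as the move on $T$ given by the same defining data (edges and groups) as $\mathbf m$ read through this identification; the group conditions transport verbatim because $G_{v'}=G_w$ and no edge group changes under the expansion, which is precisely the assertion that we may perform $\mathbf m$ in $T$.

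First I would check that $\tilde{\mathbf m}$ avoids $e$ in the sense required by Lemma~\ref{rafinementcontinu}. The hypotheses on $\mathbf m$ forbid exactly the moves whose surgery acts on the geometric edge $f$: sliding $f$ or $\bar f$, an induction or $\mathcal A^{\pm1}$-move of $f$ (equivalently of $\bar f$), and any $\mathcal A^{-1}$-move in which $f$ vanishes. For a slide $g/h$ whose common vertex is $w$, the hypothesis $h\neq f$ forces $h$ to issue from $v'$ and the hypothesis $g\neq\bar f$ forces $g$ to point to $v'$, so the lifted slide takes place at $v'$; a slide along $\bar f$ is the single permitted move landing at $v$, since it moves the slid edge onto $v$. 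For an induction or an $\mathcal A^{\pm1}$-move on an edge $g\notin\{\gr f,\bar{\gr f}\}$, all the internal surgery (adding, collapsing, or expanding an edge) happens at the initial or terminal vertex of $g$; when that vertex is $w$ it is realised at $v'$, whose group is $G_w$, and the edges kept in place there include $\bar e$, so the stabiliser $G_e=H$ and the vertex $v$ are left untouched. In every case $\tilde{\mathbf m}$ is neither a slide along $\gr e$ or $\bar{\gr e}$, nor an induction or $\mathcal A^{\pm1}$-move on $\gr e$, and $\gr e$ does not vanish; Lemma~\ref{rafinementcontinu} then gives that $\mathbf m\cdot T$ refines $T_e$.

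Next, for the refinement over $\mathbf m\cdot T'$ I would argue that the collapse of $\gr e$ commutes with $\tilde{\mathbf m}$. Using the Clay--Forester description of the Whitehead moves as expansions and collapses \cite{ClayFor09} (exactly as in the proof of Lemma~\ref{rafinementcontinu}), a move supported away from $\gr e$ is realised by expansions and collapses none of which collapses $\gr e$; hence collapsing $\gr e$ before or after performing $\tilde{\mathbf m}$ yields the same tree. Since collapsing $\gr e$ in $T$ gives $T'$, collapsing it in $\mathbf m\cdot T$ gives $\mathbf m\cdot T'$, so $\mathbf m\cdot T$ refines $\mathbf m\cdot T'$.

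Finally, for the valence statement, note that in $T$ the vertex $v$ has exactly the two outgoing orbits $\gr e$ and $\gr f$. The number of edges issuing from $v$ can grow only if a move moves some edge's terminal vertex onto $v$, and the only edges with terminal vertex $v$ are $\bar e$ and $\bar f$; a slide along $\bar e$ never occurs (there is no such edge in $T'$ to slide along, and it is excluded for $T_e$ anyway), whereas the inductions and $\mathcal A$-moves create their new edges at $v'$ or at vertices disjoint from $e$, not at $v$. Hence the valence of $v$ rises only under a slide along $\bar f$, and if $\mathbf m$ is not such a slide then $v$ stays of valence two in $\mathbf m\cdot T$. The main obstacle is the induction/$\mathcal A^{\pm1}$-move analysis of the second paragraph: one must confirm that splitting $w$ into $\{v,v'\}$ genuinely commutes with the internal expansion/collapse steps of these moves, so that the surgery stays on the $v'$-side and the edge $e$ with its stabiliser $H$ is preserved — this is exactly where the exclusion of all moves acting on $f$ (and hence on $\bar f$) is used.
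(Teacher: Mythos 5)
Your proof is correct and follows essentially the same route as the paper's: identify the edges of $T'$ with those of $T$ away from $\gr e$, check that every excluded move forces the surgery of $\mathbf m$ to take place on the $v'$-side (so that Lemma~\ref{rafinementcontinu} applies and the collapse of $\gr e$ commutes with the move), and observe that among moves not acting on $f$ only a slide along $\bar f$ can move an edge onto $v$, giving the valence claim. One small imprecision in your local description: an edge of $\gr f\setminus\{f\}$ issuing from $w$ attaches after the expansion to a translate $a\cdot v$ rather than to $v'$ (the paper handles this by noting that the subtree $W$ collapsing onto $w$ has diameter $2$, with $W\cap \gr v'=\{v'\}$), but since moves are performed equivariantly on orbits and the exclusions cover the whole orbit of $f$, this does not affect your argument.
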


\begin{proof}
Call $v'$ the terminal vertex of $e$ in $T$.
Let $W$ be the subtree of $T$ which collapses onto $w$ in $T'$. As $G_e=G_v$, for each vertex $v''$ in the orbit $\gr v$ in $T$ there is only one edge in $\gr e$ with initial vertex $v''$. This implies that $W$ is of diameter $2$ and that $W\cap \gr v'=\{v'\}$ (and every other vertex of $W$ is in $\gr v$).

Given $g$ and $h$ two edges with a common initial vertex in $T'$, from the previous remark, if $g$ and $h$ are not in the orbit $\gr f$ (but may be in $\bar {\gr f}$), their lifts in $T$ have a common initial vertex. Thus if the move $\mathbf m$ is an induction or an $\mathcal A^{\pm1}$-move on an orbit of edges $\gr g\neq \gr f$, the edge $\gr g$ is still \pa in $T$, and $\mathbf m$ may be performed in $T$. If $\mathbf m$ is a slide, then the two concerned orbits still have a common vertex in $T$ and the slide may be performed.

From Lemma \ref{rafinementcontinu}, the tree $\mathbf m \cdot T$ refines $T_e$.

As the edges of initial vertex $v$ in $T$ are either $e$ or in $\gr f$, if $\mathbf m$ is not a slide along $\bar f$, the vertex stays of valence $2$ in $\mathbf m\cdot T$.
\end{proof}

\begin{corollary}\label{liftsequence}
Let $T'$, $T$ and $T_e$ be as in Lemma \ref{lift}. Let $S$ be a sequence of moves on $T'$ in which no move is  a slide of $\bar f$, a slide along $f$ or $\bar f$, an induction of $f$, an $\mathcal A^{\pm1}$-move of $f$ or in which $f$ vanishes, then  $S$ may be performed in $T$ and $S\cdot T$ refines $T_e$ and $S\cdot T'$.
\end{corollary}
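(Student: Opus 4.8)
The plan is to prove the statement by induction on the length $n$ of the sequence $S$, feeding each move in turn into Lemma~\ref{lift}. The base case $n=0$ is immediate: $S\cdot T=T$ refines $T_e$ by the very definition of $T_e$ as a collapse of $T$, and $T$ refines $T'=S\cdot T'$ because it is an expansion of $T'$. For the inductive step I would write $S=\mathbf m\cdot S'$, with $\mathbf m$ its first move and $S'$ of length $n-1$.

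To make the induction go through I would carry along a strengthened statement: in addition to the two refinements, the image of the vertex $v$ in $S\cdot T$ remains of valence two. This is the crucial bookkeeping, because valence two at $v$ (with adjacent orbits exactly $\gr e$ and $\gr f$, and $G_v=G_e=H$) says precisely that $S\cdot T$ is again the expansion of $S\cdot T'$ at the image of $w$ with group $H$ and set $\{f\}$, with $\gr e$ the expansion edge. Thus the triple $T_1':=\mathbf m\cdot T'$, $T_1:=\mathbf m\cdot T$, $T_e$ again satisfies the hypotheses of Lemma~\ref{lift}, and the induction hypothesis may be applied to $S'$ on $T_1'$. Here I use that $\mathbf m$ does not make $f$ vanish and is neither an induction nor an $\mathcal A^{\pm1}$-move on $f$, so that the edge $f$, its stabilizer, and the inclusion $G_f\subset H\subset G_w$ persist after performing $\mathbf m$.

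For the single-move input: since each move of $S$ is, by hypothesis, not a slide of $\bar f$, not a slide along $f$, not an induction or an $\mathcal A^{\pm1}$-move of $f$, and not one in which $f$ vanishes, Lemma~\ref{lift} applies to $\mathbf m$, so $\mathbf m$ can be performed in $T$ and $\mathbf m\cdot T$ refines both $T_e$ and $\mathbf m\cdot T'$. The extra hypothesis of the corollary, absent from Lemma~\ref{lift}, is that $\mathbf m$ is moreover not a slide along $\bar f$; this is exactly what the ``Moreover'' clause of Lemma~\ref{lift} requires in order to conclude that $v$ stays of valence two. (The geometric reason the slide-along-$\bar f$ case is dangerous is that the terminal vertex of $\bar f$ in $T$ is $v$, so such a slide would attach a further edge orbit at $v$ and destroy the expansion structure.) Combining Lemma~\ref{lift} with the induction hypothesis applied to $S'$ on $\mathbf m\cdot T'$ then yields that $S$ is performable in $T$, that $S\cdot T$ refines $S\cdot T'$, and that $v$ stays of valence two.

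It remains to obtain $S\cdot T$ refines $T_e$, and I expect the only genuine (though mild) obstacle here to be that the naive recursion produces a refinement of $(\mathbf m\cdot T)_e$ rather than of the original $T_e$. The clean way around this is to invoke Lemma~\ref{rafinementcontinu} once, applied to the whole lifted sequence on $T$: at every stage the lifted moves avoid the internal edge $\gr e$ — none is a slide along $\gr e$ or $\bar{\gr e}$, none is an induction or an $\mathcal A^{\pm1}$-move on $\gr e$, and $\gr e$ never vanishes, these being exactly the moves forbidden by the corollary's hypotheses once read through the expansion — so Lemma~\ref{rafinementcontinu} gives $S\cdot T$ refines $T_e$ directly. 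Equivalently one checks $(\mathbf m\cdot T)_e=T_e$, since a move avoiding $\gr e$ decomposes into expansions and collapses with no collapse of $\gr e$ and hence leaves the one-edge collapse over $G_e=H$ unchanged; this closes the induction.
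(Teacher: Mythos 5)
Your proof is correct and follows essentially the same route as the paper's: an induction on the length of $S$, observing that since no slide along $\bar f$ occurs the valence of $v$ stays $2$ after every initial subsequence (so $S_1\cdot T$ is again the expansion of $S_1\cdot T'$ with group $H$ and set $\{f\}$), which lets Lemma \ref{lift} be applied at each step. Your extra care about $(\mathbf m\cdot T)_e$ versus $T_e$, resolved via Lemma \ref{rafinementcontinu} applied to the lifted sequence, is exactly the bookkeeping the paper leaves implicit (its own proof of Lemma \ref{lift} obtains the $T_e$-refinement the same way), so this is a legitimate filling-in rather than a divergence.
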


\begin{proof}
This is just a recurrence on the length of $S$, noticing that as in $S$ no slide along $\bar f$ occurs, for all initial subsequence $S_1$ of $S$, the valence of $v$ in $S_1\cdot T$ remains $2$, and we may apply Lemma \ref{lift}.
\end{proof}

\paragraph{Maximality}

\begin{lemma}\label{rafinementdeT}
 Let $T_e$ be a universally compatible $G$-tree with exactly one orbit of edges $\gr e$ (we choose for $\gr e$ an arbitrary orientation). 
There exists a JSJ tree $T$ which refines $T_e$, such that every edge collapsed in $T_e$ is reduced.
\end{lemma}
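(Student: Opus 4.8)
The plan is to first produce a single JSJ tree refining $T_e$, and then to economize on the collapsed edges so that none of them is non-reduced. The subtlety is that in the general \vGBS case not every $G$-tree has universally elliptic edge groups, so before invoking any refinement result I must exploit the hypothesis that $T_e$ is universally compatible.

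The first point to establish is that the edge stabilizer $G_e$ is universally elliptic, and this is exactly where universal compatibility enters. Given an arbitrary $G$-tree $T'$ over $\mathcal A$, universal compatibility provides a common refinement $\hat T$ of $T_e$ and $T'$. Under the collapse $\hat T\to T_e$ the preimage of the open edge $e$ is a single open edge $\hat e$, so $G_{\hat e}=G_e$. Considering now the collapse $\hat T\to T'$: either $\hat e$ survives, in which case $G_e$ is contained in an edge stabilizer (hence a vertex stabilizer) of $T'$, or $\hat e$ is collapsed, in which case $G_e$ fixes a vertex of $T'$. In both cases $G_e$ is elliptic in $T'$, and since $T'$ is arbitrary, $G_e$ is universally elliptic.

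With $G_e$ universally elliptic, Lemma 5.3 of \cite{Gl3a} applies and $T_e$ is refined by some JSJ tree, so the family of JSJ trees refining $T_e$ is non-empty. For such a tree I will call an edge \emph{internal} when it is collapsed by the map onto $T_e$ (equivalently, when it maps to a vertex of $T_e$); the internal edges form a $G$-invariant subforest, and the lift of $\gr e$ is never internal. To finish I would choose, among all JSJ trees refining $T_e$, a tree $T$ with the least possible number of orbits of internal edges, and argue by minimality: if some internal orbit $\gr g$ were non-reduced, then collapsing $\gr g$ keeps us in the same deformation space (collapsing a non-reduced orbit does not change the deformation space), hence yields another JSJ tree $T''$; since the collapse $T\to T_e$ factors as $T\to T''\to T_e$, the tree $T''$ still refines $T_e$, now with one fewer orbit of internal edges, contradicting minimality. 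Thus every internal (i.e.\ collapsed) edge of $T$ is reduced, as required. Equivalently one may phrase this as iteratively collapsing non-reduced internal orbits, the process terminating because the number of edge orbits is finite and strictly decreases.

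The only genuinely delicate step is the universal-ellipticity argument, since it is the sole place the hypothesis of universal compatibility is used and it is precisely what licenses the application of Lemma 5.3 of \cite{Gl3a}; everything after that is a routine minimality argument resting on the fact that collapsing a non-reduced orbit preserves the deformation space and that the lift of $\gr e$ is never among the collapsed orbits.
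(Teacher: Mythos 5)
Your argument is correct, and it reaches the lemma by a somewhat different route than the paper. The paper fixes a \emph{reduced} JSJ tree $T'$, invokes universal compatibility to form the least common refinement $T$ of $T_e$ and $T'$ (existence from \cite{Gl3b}), and notes that $T$ is again a JSJ tree because $T'$ dominates $T_e$; the universal ellipticity of $T_e$ is asserted there only parenthetically. You instead prove that universal ellipticity directly --- your collapse-map argument, that the preimage of the open edge $e$ in any common refinement is a single open edge with the same stabilizer, is exactly right --- and then appeal to \cite[Lemma 5.3]{Gl3a} to refine $T_e$ to a JSJ tree, which is the same use of that lemma the paper itself makes in Lemma \ref{dominerafine}. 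The closing steps coincide in substance: collapsing a non-reduced internal orbit stays in the same deformation space, keeps the edge stabilizers among those of the original tree (hence preserves universal ellipticity, so one still has a JSJ tree), and the collapse onto $T_e$ factors through it, so your choose-a-minimal-tree phrasing and the paper's collapse-one-by-one phrasing are the same termination argument. What your version buys is self-containedness, supplying the one fact the paper leaves implicit; what the paper's version buys is the immediate bookkeeping that $T$ has at most one orbit of edges more than a reduced JSJ tree, which lines up directly with the dichotomy it feeds into (Propositions \ref{reduced} and \ref{maxcasnonreduit}) --- though the tree you produce satisfies the same dichotomy after your trimming step.
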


\begin{proof}
Let $T'$ be a reduced JSJ $G$-tree. As $T_e$ is universally compatible, there exists a common refinement tree between $T_e$ and $T'$. Call $T$ their least common refinement (for the existence of such a tree see \cite{Gl3b}). As  $T'$ is a JSJ tree, it dominates $T_e$ (which is universally elliptic), and thus $T$ is also a JSJ tree (see \cite{Gl3b}).

Two cases may appear. Either $\gr e$ is reduced in $T$, then $T'=T$, and $T$ is reduced, or $\gr e$ is not reduced, and then $T$ has one orbit of (unoriented) edges more than $T'$.

It remains to prove that we may assume that the only non reduced edges orbit of $T$ is $\gr e$.

If there exists another non reduced orbit $\gr f\neq \gr e, \bar{\gr e}$ in $T$, then collapsing $\gr f$ we obtain a JSJ tree which still refines $T_e$. We may thus collapse one by one non-reduced edge which are not in the orbit of  $\gr e$ or $\bar{\gr e}$ until every edge except maybe $\gr e$ is reduced.
\end{proof}

\begin{proposition}\label{reduced}
Let $G$ be a \vGBS group. Assume that $G$ is not an ascending HNN-extension. Let $T_e$ be a universally compatible $G$-tree with exactly one orbit of edges $\gr e$. If $T_e$ is refined by a reduced JSJ tree, then $T_{comp}$ dominates $T_e$
\end{proposition}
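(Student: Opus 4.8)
The plan is to replace $T_e$ by a concrete collapse of a JSJ tree, reduce domination to a refinement statement, and then show that the single edge orbit $\gr e$ cannot be one of the orbits destroyed in the construction of $T_{comp}$. First I would invoke Lemma \ref{rafinementdeT} to fix a JSJ tree $T$ refining $T_e$ whose only possibly non-reduced orbit is $\gr e$, so that $T_e$ is exactly the tree $T_{\gr e}$ obtained from $T$ by collapsing every orbit except $\gr e$. Since $T_e$ is minimal and has a single edge orbit, I would first observe that $\gr e$ is reduced and non-ascending as an edge of $T_e$: a non-reduced one-edge splitting would be $A\ast_A B=B$ and hence non-minimal, and an ascending single edge would make $\gr e$ \pa, which is already excluded below. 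The tree $T_{comp}$ is built, via Definition \ref{comp} and Proposition \ref{construction}, from a reduced tree of the JSJ deformation space by blowing up dead ends and collapsing the \psa, non-ascending slippery, \tor $2$-slippery and vanishing orbits.

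The core dichotomy is whether $\gr e$ is collapsed in the passage to $T_{comp}$. If $\gr e$ survives, then $T_{comp}$ is obtained from the reduced JSJ tree by expansions (the dead-end blow-ups) and collapses of orbits \emph{different from} $\gr e$ only, and no induction or $\mathcal A^{\pm1}$-move is ever performed; by the refinement mechanism of Lemma \ref{rafinementcontinu} (expansions and collapses of orbits $\neq\gr e$ preserve a refinement of the tree obtained by collapsing everything but $\gr e$), the tree $T_{comp}$ refines $T_e$. As refinement implies domination, this settles the surviving case immediately. Thus the entire content is to rule out $\gr e$ being collapsed, i.e. to prove that $\gr e$ is neither vanishing, nor \psa, nor non-ascending slippery, nor \tor $2$-slippery, using that $T_e=T_{\gr e}$ is universally compatible.

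This is where I would do the real work, treating the four offending types in turn and deriving in each case that $T_e$ is incompatible with some explicit tree, contradicting the hypothesis. For the \psa case, since $G$ is not an ascending HNN-extension, I would use a slide sequence preserving $\gr e$ (the normal form of Lemma \ref{suiteglissement}) to reach a tree in which $\gr e$ is honestly \pa, and then apply Corollary \ref{casreduit}, point~\ref{casreduit}.1; this is precisely where the hypothesis on $G$ is consumed. For the non-ascending slippery case, Lemma \ref{suiteglissement} produces an orbit $\gr f$ sliding along $\gr e$, and Corollary \ref{casreduit}, point~\ref{casreduit}.3 (the \vGBS version, which via Lemma \ref{normalisation} reduces to the two-consecutive-slide situation of point~\ref{casreduit}.2) yields the contradiction. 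For the \tor $2$-slippery case, Lemma \ref{lemme2slip} supplies a sequence avoiding $\gr e$ after which two distinct orbits slide along $e$, whence point~\ref{casreduit}.4 applies. For a vanishing $\gr e$, Lemma \ref{nonslipvanish} exhibits the adjacent strictly ascending orbit together with its $\mathcal A^{-1}$-configuration, and I would reduce either to the \psa case or to a direct application of the four-element criterion of Lemma \ref{incompatibility}.

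The main obstacle, and the delicate bookkeeping, is that each offending property is witnessed only in a tree $S\cdot T$ obtained after an admissible sequence $S$ preserving $\gr e$, whereas universal compatibility must be contradicted \emph{for the tree $T_e$ itself}. I would handle this by expressing the incompatibility through characteristic spaces, as in Lemma \ref{incompatibility}: the separations $\mathcal E_{\{a,b\}}\cap\mathcal E_{\{c,d\}}=\emptyset$ are intrinsic and depend only on which side of the surviving edge $\gr e$ the chosen elements $a,b,c,d$ lie, so the disjointness established in $(S\cdot T)_{\gr e}$ transports to $T_e$, while the slide $\gr f/\gr e$ produces the overlapping configuration in a second tree; the refinement lemmas \ref{lift} and \ref{liftsequence} guarantee that the intermediate trees built over the expansion of the relevant vertex still refine the appropriate collapse. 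Matching the slides of $S$ with the correct characteristic-space separations in $T_e$, and checking throughout that $\gr e$ remains reduced and non-ascending so that the corollaries apply, is the technically demanding point of the argument.
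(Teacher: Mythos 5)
Your proposal is correct and follows essentially the same route as the paper: the dichotomy on whether $\gr e$ survives the collapse to $T_{comp}$, then a case analysis over the four collapsed types using Lemma \ref{suiteglissement}, Lemma \ref{lemme2slip}, Lemma \ref{nonslipvanish} and Lemma \ref{lemmeascendant} to normalize by a sequence preserving $\gr e$, with Lemma \ref{rafinementcontinu} transporting the refinement of $T_e$ and Corollary \ref{casreduit} (resting on Lemma \ref{incompatibility}) delivering the contradiction with universal compatibility. Two marginal remarks: reducedness of the refining JSJ tree is already hypothesized, so Lemma \ref{rafinementdeT} need not be invoked, and Lemmas \ref{lift} and \ref{liftsequence} belong to the non-reduced case (Proposition \ref{maxcasnonreduit}) rather than here, but neither point affects the validity of your argument.
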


\proof
Let $T$ be a reduced JSJ $G$-tree that refines $T_e$.
Let $\gr e$ be the orbit of edge of $T'$ not collapsed in $T_e$.

If $\gr e$ is not collapsed in $T_{comp}$, then $T_{comp}$ refines $T_e$ thus dominates it. If $\gr e$ is collapsed in $T_{comp}$, by construction of $T_{comp}$ the orbit $\gr e$ is non-ascending slippery, \psad, \tor $2$-slippery, or vanishing.

\begin{enumerate}
\item \label{casglissant} If $\gr e$ non-ascending and slippery or \psad in $T$.

By Lemma \ref{suiteglissement} there exists an admissible sequence of moves $S$ without slide along $\gr e$ or $\bar {\gr e}$ and no induction or $\mathcal A^{\pm 1}$-move of $\gr e$, at the end of which either $\gr e$ is \pa or \pde, or  there exists an admissible slide along $\gr e$ or $\bar {\gr e}$.

By Lemma \ref{rafinementcontinu}, the tree $S\cdot T$ refines $T_e$. By Lemma \ref{lemmeascendant}, we may assume that $e$ is not \tor. Then by Corollary \ref{casreduit} cases 1. and 3, the tree $T_e$ is not universally compatible. This is a contradiction.

 \item If $\gr e$ is $2$-slippery and \tor in $T$. 

By Lemmas \ref{lemme2slip} and \ref{rafinementcontinu}, we may assume that there exists two edges $f$ and $g$ in $T$ of distinct orbits with terminal vertices $v$ which may (directly) slide along $e$ . Then by Corollary \ref{casreduit} case 3, the tree $T_e$ is not universally compatible. This is a contradiction.

\item If $\gr e$ is vanishing non-slippery in $T$.

Take $e$ an edge in the orbit $\gr e$ and call $v$ the terminal vertex of $e$. By Lemma \ref{nonslipvanish}, there exists a strictly ascending edge $f$ with initial vertex $v$ such that $v$ has exactly $3$ orbits of adjacent edges $\gr e$, $\gr f$ and $\bar{\gr f}$. Moreover up to perform inductions on $\gr f$ and slides of $e$ along $\gr f$, we may assume that there exists an admissible $\mathcal A^{-1}$-move on $\gr f$ in $T$ in which $e$ vanishes. It implies that $G_f\subset G_e$, and $f$ slides along $e$ (this slide is not admissible, since $f$ is ascending). By Corollary \ref{casreduit} case 3. the tree $T_e$ is not universally compatible. This is a contradiction.

\item  If $\gr e$ is vanishing, slippery and \tor in $T$.


As $\gr e$ is vanishing there exists a tree in the deformation space of $T$ in which $\gr e$ is not ascending. Then by Lemma \ref{lemmeascendant}, the orbit $\gr e$ is \psad, and the case is already done in Case \ref{casglissant}.\hfill\qed

\end{enumerate}

\begin{proposition}\label{maxcasnonreduit} Let $G$ be a \vGBS group.
Let $T_e$ be a universally compatible tree with one orbit of edges $\gr e$ (whose orientation is fixed arbitrarily). 
If there is a  JSJ tree $T$ which refines $T_e$, in which the unique non-reduced orbit of edges is the one not collapsed in $T_e$, then $T_{comp}$ dominates $T_e$.
 \end{proposition}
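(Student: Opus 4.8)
The plan is to mirror the argument of Proposition \ref{reduced}, while accounting for the fact that here the distinguished orbit $\gr e$ is non-reduced in the JSJ tree $T$, so $\gr e$ is not an edge of the reduced JSJ tree from which $T_{comp}$ is built; domination of $T_e$ must therefore come from a blow-up rather than from $\gr e$ surviving. First I would fix the orientation of $\gr e$ so that $G_e=G_v$ for its initial vertex $v$; since $\gr e$ is the unique non-reduced orbit of $T$, its endpoints $v$ and $v'$ lie in distinct orbits and $G_v=G_e\subseteq G_{v'}$. Collapsing $\gr e$ produces a reduced $G$-tree $T'$, and as collapsing a non-reduced orbit does not change the deformation space, $T'$ is a reduced JSJ tree lying in the same deformation space as $T$; hence $T'$ dominates $T_e$ and $T_{comp}$ may be constructed from $T'$. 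Writing $w$ for the image of $v$ and $v'$ in $T'$ (so $G_w=G_{v'}$), the edge $e$ is recovered from $T'$ by an expansion of $w$ with group $H=G_e$ along the set $\mathcal F$ of edges emanating from $v$ in $T$.

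The heart of the proof is to show that this expansion is, up to collapse, realized inside $T_{comp}$, which I would do by proving that $w$ is a dead end whose blow-up recovers $\gr e$. If $G_e=G_{v'}$, then $\gr e$ is \tor in $T$, $H=G_w$, and the expansion is exactly the blow-up of a dead end; it then suffices to verify the four defining conditions of a dead end at $w$ and their stability across the reduced deformation space. When $G_e\subsetneq G_{v'}$, I would apply the contrapositives of Corollary \ref{casfacile} and Lemma \ref{casalmostasc} to the universally compatible tree $T_e$: the absence of an orbit $\gr f\neq\gr e$ with a representative issuing from $v$ and another terminating at $v'$ with strictly nested stabilizers prevents $\gr e$ from becoming \psad, and for every reduced non-ascending $f$ at $v$ and $g$ at $v'$ with $G_f\subseteq G_g$ one must have $G_e\subseteq G_g$. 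These constraints pin down the inclusion pattern of the edge groups around $v$ and $v'$ and force $w$ to be a dead end whose wall recovers the expansion along $\mathcal F$.

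Once $w$ is identified as a dead end (or $\gr e$ as a retained \tor edge), the construction of $T_{comp}$ performs precisely the expansion producing an edge $e^\ast$ in the orbit of $e$. Collapsing every other orbit of $T_{comp}$ and invoking Lemma \ref{rafinementcontinu} together with Corollary \ref{liftsequence} then shows that $T_{comp}$ refines a one-edge tree dominating $T_e$, whence $T_{comp}$ dominates $T_e$. For the converse I would argue by contradiction: if the dead-end conditions failed at $w$, one of the explicit configurations of Corollary \ref{casfacile} or Lemma \ref{casalmostasc} would be present, producing a $G$-tree incompatible with $T_e$ and contradicting the universal compatibility of $T_e$.

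The main obstacle will be the middle step: translating the single hypothesis that $T_e$ is universally compatible into the full combinatorial definition of a dead end, including the clause that $w$ must be a dead end in every tree of the reduced deformation space, so that the blow-up is genuinely independent of the chosen reduced representative. Reconciling the sub-case $G_e\subsetneq G_{v'}$ — where the expansion group $H=G_e$ is a proper subgroup of $G_w=G_{v'}$, so the recovered edge is not literally the full-group blow-up edge — with the edges actually present in $T_{comp}$ is the most delicate point, and is exactly where the incompatibility lemmas must be used most carefully.
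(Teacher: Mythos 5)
Your opening moves match the paper's: collapse $\gr e$ to obtain a reduced JSJ tree $T'$, observe that if $\gr e$ is obtained by blowing up a dead end then $T_{comp}$ retains it and refines $T_e$, and otherwise aim to exploit the universal compatibility of $T_e$. But the central claim of your plan --- that universal compatibility forces the collapse vertex $w$ to be a dead end whose blow-up recovers $\gr e$ --- fails in both halves of your dichotomy. In the sub-case $G_e\subsetneq G_{v'}$ there is no dead end to find: a blow-up is by definition an expansion with the \emph{full} vertex group, whereas recovering $e$ would require an expansion with the proper subgroup $H=G_e\subsetneq G_w$, and the construction of $T_{comp}$ performs no such expansions (proper-subgroup expansions of inert edges occur only in the construction of $T_{ab}$ in Section \ref{sectioninert}). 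You flag this as ``the most delicate point'' but propose no mechanism to resolve it; the paper never tries to realize $e$ inside $T_{comp}$ in this case. Instead it shows that either an admissible sequence, lifted from $T'$ to $T$ via Lemma \ref{lift} and Corollary \ref{liftsequence} so that the deformed tree still refines $T_e$, turns $e$ into a reduced edge --- handing the problem back to Proposition \ref{reduced}, where $\gr e$ may survive in $T_{comp}$ after all --- or one of the explicit incompatibility configurations (Corollary \ref{casreduit}, Corollary \ref{casfacile}, Lemma \ref{casalmostasc}, Lemma \ref{cas1213}) is produced, contradicting the universal compatibility of $T_e$. So your premise that ``domination must come from a blow-up rather than from $\gr e$ surviving'' is wrong on both counts.

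Your dichotomy also omits the paper's third outcome: domination without refinement. Lemma \ref{casnonecrase} shows that if no edge with initial vertex $v$ outside $\gr e$ is collapsed in $T_{comp}$, then the axis of every hyperbolic element of $T_e$ passes through $v$ and hence contains an edge surviving in $T_{comp}$, so $T_{comp}$ dominates $T_e$ even though $w$ is not a dead end and no edge in the orbit of $e$ appears in $T_{comp}$ at all. In such configurations $T_e$ is universally compatible and the dead-end conditions fail, so no contradiction can be derived, and a proof organized around ``dead end or contradiction'' cannot close. Even in the sub-case $G_e=G_{v'}$, where your instinct is closest to the paper (there the negation of ``$w$ is a dead end with wall $f$'' is indeed what gets contradicted when $v$ has valence $2$), the verification is not a matter of checking four conditions at $w$ in $T'$: the definition quantifies over every tree of the reduced deformation space, and the paper must chase each failed clause through admissible sequences using Lemma \ref{suiteglissement}, Lemma \ref{nonslipvanish} and Lemma \ref{lemmeascendant}, repeatedly re-lifting to $T$ while keeping $v$ of valence $2$ and reducing to the higher-valence cases when a lifted slide increases it. That case analysis, which your proposal compresses into a single closing step, is where the bulk of the proof actually lives.
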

 
\proof
The orbit of edges of $T$ not collapsed in $T_e$ is also called $\gr e$.
Let $T'$ be the tree obtained by collapsing $\gr e$ in $T$. Then $T'$ is a reduced JSJ tree.

If $\gr e$ is obtained by blowing up a dead end vertex in $T'$, then $\gr e$ is not collapsed in $T_{comp}$, thus $T_{comp}$ refines (hence dominates) $T_e$ . We may  assume that $\gr e$ is not obtained by blowing up a dead end vertex in $T'$.

Fix $e$ a representative of $\gr e$,
and let $v$ and $v'$ be the initial and terminal vertices of $e$. Since $e$ is not reduced, the orbits of $v$ and $v'$ are distinct. We assume $G_v=G_e$. Then there exists $f$ not in the orbit $\gr e$ with initial vertex $v$ (otherwise the tree is not minimal). 

\begin{lemma}\label{casnonecrase}
If none of the edges with initial vertex $v$ except those in $\gr e$ is collapsed in $T_{comp}$ then $T_{comp}$ dominates $T_e$.
\end{lemma}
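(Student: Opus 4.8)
The plan is to establish domination directly, by showing that every vertex stabilizer of $T_{comp}$ is elliptic in $T_e$; since $T_e$ has a single orbit of edges, this is exactly the assertion that $T_{comp}$ admits an equivariant map onto $T_e$, i.e.\ dominates it. First I would record that the edge group $G_e$ is an edge group of the JSJ tree $T$, hence universally elliptic, so that $T_e$ (whose only edge group is $G_e$) is a universally elliptic $G$-tree over abelian groups. Consequently the reduced JSJ tree $T'$ obtained from $T$ by collapsing $\gr e$ dominates $T_e$; I would fix an equivariant map $\phi\colon T'\to T_e$. Recall from Definition \ref{comp} and Proposition \ref{construction} that $T_{comp}$ is obtained from $T'$ by blowing up the dead ends and then collapsing the vanishing, \psa, non-ascending slippery and \tor $2$-slippery edges.

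Blowing up dead ends only refines $T'$, so it preserves domination of $T_e$: the blown-up tree still maps equivariantly onto $T_e$, its vertex stabilizers being subgroups of those of $T'$. The remaining and essential step concerns the collapses. For a vertex $x$ of $T_{comp}$, the stabilizer $G_x$ is the setwise stabilizer of the connected subtree $W_x$ spanned by the collapsed edges that collapses to $x$. Thus $G_x$ is elliptic in $T_e$ as soon as the $G_x$-action on $T_e$ does not cross the edge of $T_e$; concretely it suffices that the image of $W_x$ be either a single vertex of $T_e$ or the whole closed edge of $T_e$ fixed pointwise by $G_x$.

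The heart of the argument, and the step I expect to be the main obstacle, is the geometric claim that no collapsed edge crosses the separating edge $e$. Working in $T$, where $G_v=G_e$ and every edge $f\notin\gr e$ issuing from $v$ satisfies $G_f\subset G_e$, the edge $e$ separates the $v$-side, which carries all such edges $f$, from the $v'$-side. By hypothesis none of these edges $f$ is collapsed in $T_{comp}$, so the only edge of $T$ incident to $v$ that can lie in a collapsed subtree is $e$ itself, and then only as a terminal leaf. Hence each collapsed subtree, viewed in $T$, is either contained in one side of $e$ --- whence its image in $T_e$ is a single vertex --- or it contains $e$ with $v$ as a leaf, in which case its image is the closed edge of $T_e$ and $G_x$, stabilising the subtree without inversion, fixes that edge pointwise. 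In either case $G_x$ is elliptic in $T_e$. The delicate part is to carry this out rigorously: one must track the subtrees through the collapse of $\gr e$ that identifies $v$ with $v'$, and check for each of the four prescribed types of collapsed edge that it respects the separation. Once this is done, every vertex stabilizer of $T_{comp}$ is elliptic in $T_e$, so $T_{comp}$ dominates $T_e$, as required.
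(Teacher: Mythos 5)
Your overall reduction (domination is ellipticity of the vertex stabilizers of $T_{comp}$ in $T_e$) is legitimate, but the key geometric claim is false as stated, and this is exactly where your proof breaks. The hypothesis only controls the edges issuing from $v$; it says nothing about the edges issuing from $v'$. After collapsing $\gr e$, the vertices $v$ and $v'$ become a single vertex $w$ of the reduced tree $T'$, and the collapsed edges incident to $w$ lift to edges incident to $v'$ in $T$. Hence a connected component $W_x$ of collapsed edges may contain several vertices of the orbit $\gr w$ --- a chain of collapsed edges can run from $w$ through other vertices to a translate $g\cdot w$ --- and even a single occurrence of $w$ puts into the preimage of $W_x$ in $T$ not one edge of $\gr e$ but a whole star of them around $v'$ (when $G_e\subsetneq G_{v'}$, all the translates $h\cdot e$ with $h\in G_{v'}$ abut $v'$; only the $v$-endpoint is forced to be a leaf, since $G_e=G_v$ gives a unique $\gr e$-edge at $v$). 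Consequently the image of the lifted subtree in $T_e$ is in general a union of many edges of $T_e$, possibly an unbounded invariant subtree, not ``a single vertex or the closed edge,'' and ellipticity of $G_x$ no longer follows from your dichotomy. (A smaller defect earlier on: the map $\phi\colon T'\to T_e$ you fix comes from domination, not from a collapse, so images of edges under $\phi$ are paths and ``the image of $W_x$'' is not controlled edge-by-edge; lifting to $T$ is the right repair, but it runs into the problem just described.)

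The paper sidesteps all of this by arguing with hyperbolic elements rather than subtrees: if $h$ is hyperbolic in $T_e$, its axis in $T$ crosses a translate of $e$, hence passes through a translate of $v$; since $e$ is the unique edge of $\gr e$ at $v$, the axis must leave $v$ through some edge $f\notin\gr e$, which by hypothesis is not collapsed in $T_{comp}$, so $h$ remains hyperbolic in $T_{comp}$. Thus every element elliptic in $T_{comp}$ is elliptic in $T_e$, and finite generation (of $G$, hence of the vertex stabilizers of the cocompact tree $T_{comp}$) upgrades element-wise ellipticity to ellipticity of vertex stabilizers, i.e.\ to domination. The point is that an axis passes through $v$ along exactly two edges, so the local observation at $v$ --- the same one you use --- suffices there, whereas a collapsed subtree can branch and recur at the uncontrolled vertex $v'$, which is precisely what your case analysis misses. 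Repairing your subtree argument would essentially amount to reproving the element-wise statement, so the axis formulation is the efficient route.
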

\begin{proof}[Proof of Lemma \ref{casnonecrase}]
Given a hyperbolic $h$ in $T_e$, its axis contains an edge in the orbit $\gr e$ (we may assume it is $e$), thus in $T$ its axis contains $v$. As $G_e=G_v$ (and $e$ is not reduced), the edge $e$ is the only edge of $\gr e$ with initial or terminal vertex $v$ and thus the axis of $h$ also contains an edge which is not collapsed in $T_{comp}$. Then $h$ is also hyperbolic in $T_{comp}$. Then all hyperbolic elements of $T_e$ are hyperbolic in $T_{comp}$, as $G$ is finitely generated this implies $T_{comp}$ dominates $T_e$.
\end{proof}

We may thus assume there always exists an edge of $T$ distinct from $e$ with initial vertex $v$ which is collapsed in $T_{comp}$.

\begin{enumerate}
\item We first assume that $G_{v'}\neq G_e$. 

This hypothesis will not be used in the cases \ref{cas1-2strictasc}, \ref{cas1-2strictdes} and \ref{vannonslip}. 
\begin{enumerate}
\item \label{troisaretes} Assume that $v$ has at least three orbits of adjacent edges $\gr e$, $\gr f$, and $\gr g$.

\begin{enumerate}
      \item \label{strictlyasc} Assume that $\gr f$ (or $\gr g$) is strictly ascending/descending in $T$.
     Up to taking $\bar {\gr f}$, we may assume that $\gr f$ is ascending.
     Then $\bar e$ may slide along $\gr f$ and becomes a reduced edge. We obtain a reduced JSJ tree which refines $T_e$. This case is done in Proposition \ref{reduced}.
     
 \item Assume that $\gr f$ and ${\gr g}$ are not ascending/descending in $T$.    
     
     Let $f$ and $g$ be representatives of $\gr f$ and $\gr g$ with initial vertices $v$.
     
     We may take $a$, $b$, $c$ and $d$ as described in Figure \ref{cas13}: call $w$ and $w'$ the terminal vertices of $f$ and $g$. Take $a$ in $G_w\setminus G_f$, $b$ in $G_{w'}\setminus G_g$, and $c$ in $G_{v'}\setminus G_e$ that centralizes $G_f$ (it exists by Lemma \ref{normalisation}). Call $e'=c\cdot e$, $g'=c\cdot g$ and $w''=c\cdot w'$. Take $d$ in $G_{w''}\setminus G_{g'}$. The sets  $\mathcal E_{\{a,b\}}(T)$ and $\mathcal E_{\{c,d\}}(T)$ are separated by the edge $e$. In $T_e$ every edge is collapsed except the orbit $\gr e$, thus $\mathcal E_{\{a,b\}}(T_e)\cap \mathcal E_{\{c,d\}}(T_e)=\emptyset$. 

As $c$ centralizes $G_f$, the group $G_f$ is contained in $G_{e'}$, the edge $\bar f$ may slide along $e$ and $\bar e'$. We obtain a new tree $T''$ in which $\mathcal E_{\{b,c\}}(T'')\cap \mathcal E_{\{a,d\}}(T'')=\emptyset$. By Lemma \ref{incompatibility}, the tree $T_e$ is not universally compatible.

\begin{figure}[!ht]
\begin{center}
\scalebox{1} 
{
\begin{pspicture}(0,-1.3929688)(7.3228126,0.529688)
\psline[ArrowInside=->, ArrowInsidePos=0.5,arrowsize=0.25291667cm,linewidth=0.04cm](1.7609375,-0.22546875)(0.5609375,0.37453124)
\psline[ArrowInside=->, ArrowInsidePos=0.5,arrowsize=0.25291667cm,linewidth=0.04cm](1.7609375,-0.22546875)(3.3609376,-0.22546875)
\psline[ArrowInside=->, ArrowInsidePos=0.5,arrowsize=0.25291667cm,linewidth=0.04cm](1.7609375,-0.22546875)(0.5609375,-1.0254687)
\psline[ArrowInside=->, ArrowInsidePos=0.5,arrowsize=0.25291667cm,linewidth=0.04cm](4.9609375,-0.22546875)(3.3609376,-0.22546875)
\psline[ArrowInside=->, ArrowInsidePos=0.5,arrowsize=0.25291667cm,linewidth=0.04cm](4.9609375,-0.22546875)(6.5609374,-1.0254687)
\psdots[dotsize=0.12](6.5609374,-1.0254687)
\psdots[dotsize=0.12](3.3609376,-0.22546875)
\psdots[dotsize=0.12](4.9609375,-0.22546875)
\psdots[dotsize=0.12](1.7609375,-0.22546875)
\psdots[dotsize=0.12](0.5609375,-1.0254687)
\psdots[dotsize=0.12](0.5609375,0.37453124)
\usefont{T1}{ptm}{m}{n}
\rput(2.4623437,0.07953125){$e$}
\rput(4.282344,0.07046875){$e'$}
\rput(0.23234375,0.43453124){$a$}
\rput(0.88234377,-0.12046875){$f$}
\rput(1.3623438,-0.8204687){$g$}
\rput(1.8523438,-0.02046875){$v$}
\rput(3.3923438,0.05046875){$v'$}
\rput(0.23234375,-1.2204688){$b$}
\rput(3.3623438,-0.5204688){$c$}
\rput(6.0423436,-0.5204688){$g'$}
\rput(7.012344,-1.1204687){$d$}
\end{pspicture} 
}

\end{center}
\caption{}
\label{cas13}
\end{figure}

\item \label{castoric} Assume that $\gr f$ is \tor in $T$. 

We may assume $\gr g=\bar {\gr f}$. We proceed as in Figure \ref{figurecastoric}: by Lemma \ref{normalisation}, we may take $a$ in $G_{v'}\setminus G_e$ that centralizes $G_f$, and $e'=a\cdot e$ (note that $G_e=G_{e'}$ thus $\bar f$ may slide consecutively along $e$ and $\bar e'$). Call $v''$ the initial vertex of $e'$, call  $f'=a\cdot f$ and $g'=a\cdot g$ the edges in the orbit of $f$ and $g$ with initial vertex $v''$, and $e''=a\cdot e'$ such that when $\bar f$ slides along $e$ and $\bar e'$, then $f'$ slide along $e'$ and $\bar e''$ (note that if $G_e$ is of index $2$ in $G_{v'}$, we have $e''=e$). Call $w$ the terminal vertex of $f$, $w'$ the terminal vertex of $f'$ and $w''$ the terminal vertex of $g'$. Let  $t$ be such that $t\cdot v= w$, and $t'$ such that $t'\cdot v'=w'$.
Define $b=tat^{-1}$,  $c=t'^{-1}at'$ and $d=t'at'^{-1}$.

Then in $T$, the sets  $\mathcal E_{\{a,b\}}(T)$ and $\mathcal E_{\{c,d\}}(T)$ are separated by the edge $e$. In $T_e$ every edge is collapsed except the orbit $\gr e$, thus $\mathcal E_{\{a,b\}}(T_e)\cap \mathcal E_{\{c,d\}}(T_e)=\emptyset$. 
     
As $a$ centralizes $G_f$, we have $G_f \subset G_{e'}$.
Making slide $\bar f$ along $e$ and $\bar e'$, simultaneously $\bar f'$ slides along $e'$ and $\bar e''$ and $g'$ slides along $t^{-1}\cdot e'$ and $t^{-1}\cdot \bar e$. We obtain $\mathcal E_{\{b,c\}}\cap \mathcal E_{\{a,d\}}=\emptyset$. By Lemma \ref{incompatibility}, the tree $T_e$ is not universally compatible.

\begin{figure}[!ht]
\begin{center}
\scalebox{1} 
{
\begin{pspicture}(0,-1.8629688)(11.722813,1.8629688)
\psline[ArrowInside=->, ArrowInsidePos=0.5,arrowsize=0.25291667cm,linewidth=0.04cm](0.4809375,0.48453125)(0.4809375,-0.71546876)
\psline[ArrowInside=->, ArrowInsidePos=0.5,arrowsize=0.25291667cm,linewidth=0.04cm,linestyle=dashed,dash=0.16cm 0.16cm](0.4809375,-0.71546876)(1.6809375,-0.71546876)
\psline[ArrowInside=->, ArrowInsidePos=0.5,arrowsize=0.25291667cm,linewidth=0.04cm,linestyle=dashed,dash=0.16cm 0.16cm](0.4809375,0.48453125)(1.6809375,0.48453125)
\psdots[dotsize=0.12](1.6809375,0.48453125)
\psdots[dotsize=0.12](0.4809375,0.48453125)
\psdots[dotsize=0.12](0.4809375,-0.71546876)
\psdots[dotsize=0.12](1.6809375,-0.71546876)
\psline[ArrowInside=->, ArrowInsidePos=0.5,arrowsize=0.25291667cm,linewidth=0.04cm,linestyle=dashed,dash=0.16cm 0.16cm](3.0809374,0.48453125)(1.6809375,0.48453125)
\psline[ArrowInside=->, ArrowInsidePos=0.5,arrowsize=0.25291667cm,linewidth=0.04cm](3.0809374,0.48453125)(3.0809374,-0.71546876)
\psline[ArrowInside=-<, ArrowInsidePos=0.5,arrowsize=0.25291667cm,linewidth=0.04cm](3.0809374,1.4845313)(3.0809374,0.48453125)
\psline[ArrowInside=->, ArrowInsidePos=0.5,arrowsize=0.25291667cm,linewidth=0.04cm,linestyle=dashed,dash=0.16cm 0.16cm](3.0809374,1.4845313)(2.0809374,1.4845313)
\psdots[dotsize=0.12](2.0809374,1.4845313)
\psdots[dotsize=0.12](3.0809374,0.48453125)
\psdots[dotsize=0.12](3.0809374,1.4845313)
\psdots[dotsize=0.12](3.0809374,-0.71546876)
\psline[ArrowInside=->, ArrowInsidePos=0.5,arrowsize=0.25291667cm,linewidth=0.04cm,linestyle=dashed,dash=0.16cm 0.16cm](3.0809374,-0.71546876)(2.0809374,-1.5154687)
\psdots[dotsize=0.12](2.0809374,-1.5154687)
\usefont{T1}{ptm}{m}{n}
\rput(1.7023437,0.7295312){$a$}
\usefont{T1}{ptm}{m}{n}
\rput(1.0623437,0.62953126){$e$}
\usefont{T1}{ptm}{m}{n}
\rput(0.20234375,-0.11046875){$f$}
\usefont{T1}{ptm}{m}{n}
\rput(1.7523438,-0.5104687){$b$}
\usefont{T1}{ptm}{m}{n}
\rput(3.3823438,-0.19046874){$f'$}
\usefont{T1}{ptm}{m}{n}
\rput(3.3823438,0.96953124){$g'$}
\usefont{T1}{ptm}{m}{n}
\rput(1.9723438,-1.6904688){$d$}
\usefont{T1}{ptm}{m}{n}
\rput(1.7423438,1.5295312){$c$}
\usefont{T1}{ptm}{m}{n}
\rput(2.3623438,0.6695312){$e'$}
\psline[linewidth=0.03cm,arrowsize=0.05291667cm 2.0,arrowlength=1.4,arrowinset=0.4]{->}(4.6209373,0.50453126)(6.7009373,0.50453126)
\psline[ArrowInside=->, ArrowInsidePos=0.5,arrowsize=0.25291667cm,linewidth=0.04cm](7.9609375,0.88453126)(7.9609375,-0.31546876)
\psline[ArrowInside=->, ArrowInsidePos=0.5,arrowsize=0.25291667cm,linewidth=0.04cm,linestyle=dashed,dash=0.16cm 0.16cm](11.180938,-0.6954687)(9.980938,-0.6954687)
\psline[ArrowInside=->, ArrowInsidePos=0.5,arrowsize=0.25291667cm,linewidth=0.04cm,linestyle=dashed,dash=0.16cm 0.16cm](7.9609375,0.88453126)(9.880938,0.48453125)
\psdots[dotsize=0.12](9.780937,0.50453126)
\psdots[dotsize=0.12](7.9609375,0.88453126)
\psdots[dotsize=0.12](7.9609375,-0.31546876)
\psdots[dotsize=0.12](9.980938,-0.6954687)
\psline[ArrowInside=->, ArrowInsidePos=0.5,arrowsize=0.25291667cm,linewidth=0.04cm,linestyle=dashed,dash=0.16cm 0.16cm](11.180938,0.50453126)(9.780937,0.50453126)
\psline[ArrowInside=->, ArrowInsidePos=0.5,arrowsize=0.25291667cm,linewidth=0.04cm](11.180938,0.50453126)(11.180938,-0.6954687)
\psline[ArrowInside=-<, ArrowInsidePos=0.5,arrowsize=0.25291667cm,linewidth=0.04cm](9.280937,1.4845313)(11.180938,0.50453126)
\psline[ArrowInside=->, ArrowInsidePos=0.5,arrowsize=0.25291667cm,linewidth=0.04cm,linestyle=dashed,dash=0.16cm 0.16cm](11.220938,1.5045313)(10.220938,1.5045313)
\psdots[dotsize=0.12](10.180938,1.5045313)
\psdots[dotsize=0.12](11.180938,0.50453126)
\psdots[dotsize=0.12](11.200937,1.5045313)
\psdots[dotsize=0.12](11.180938,-0.6954687)
\psline[ArrowInside=->, ArrowInsidePos=0.5,arrowsize=0.25291667cm,linewidth=0.04cm,linestyle=dashed,dash=0.16cm 0.16cm](7.9609375,-0.31546876)(8.9609375,-1.1154687)
\psdots[dotsize=0.12](8.9609375,-1.1154687)
\usefont{T1}{ptm}{m}{n}
\rput(9.802343,0.74953127){$a$}
\usefont{T1}{ptm}{m}{n}
\rput(8.622344,1.0295312){$e''$}
\usefont{T1}{ptm}{m}{n}
\rput(7.722344,0.28953126){$f'$}
\usefont{T1}{ptm}{m}{n}
\rput(9.871407,-0.49046874){$b$}
\usefont{T1}{ptm}{m}{n}
\rput(11.442344,-0.17046875){$f$}
\usefont{T1}{ptm}{m}{n}
\rput(10.562344,1.1295313){$g'$}
\usefont{T1}{ptm}{m}{n}
\rput(9.031406,-1.2904687){$d$}
\usefont{T1}{ptm}{m}{n}
\rput(10.122344,1.6695312){$c$}
\usefont{T1}{ptm}{m}{n}
\rput(10.462344,0.28953126){$e'$}
\psline[ArrowInside=-<, ArrowInsidePos=0.5,arrowsize=0.25291667cm,linewidth=0.04cm,linestyle=dashed,dash=0.16cm 0.16cm](10.180938,1.5045313)(9.180938,1.5045313)
\psdots[dotsize=0.12](9.240937,1.5045313)
\end{pspicture} 
}
\end{center}
\caption{The plain lines are in the orbit $\gr f$, the dashed ones in the orbit $\gr e$}
\label{figurecastoric}
\end{figure}
\end{enumerate} 

\item \label{semireduit1}  \label{cas121}  Assume that $v$ has only two orbits of adjacent edges $\gr e$ and $\gr f$.

As $\gr f$ is the only orbit with initial orbit of vertices $\gr v$ distinct from $\gr e$, we may assume that $\gr f$ is collapsed in $T_{comp}$ (see Lemma \ref{casnonecrase}).
Fix $f$ a representative of $\gr f$ with initial vertex $v$.

As every edge of $T$ except the ones in the orbit $\gr e$ is reduced and the terminal vertex of $\gr f$ is not adjacent to $\gr v$, we have $G_f\subsetneq G_e$.

As $f$ is reduced in $T$,  the edge $f$ is not \tor in $T'$, thus it is slippery, \psad, or vanishing.
\begin{enumerate}
\item \label{semireduitslip} Assume that $f$ is slippery or \psad.

 From Lemma  \ref{suiteglissement}, there exists a sequence of moves $S$ admissible in  $T'$ such that either $f$ is \pa or \pde in $S\cdot T'$, or there exists an admissible slide $g/f$ or $g/\bar f$ in $S\cdot T'$. We thus have four cases to treat here.
 
 Moreover this sequence may be split in two subsequence $S_1$ and $S_2$, such that no move involving $f$ occurs in $S_1$, and $S_2$ is composed either of slides of $f$ or of slides of $\bar f$.
       By Corollary \ref{liftsequence}, the sequence $S_1$ may be performed in $T$ and  the tree $S_1\cdot T$ still refines $T_e$. Up to replacing  $T$ by $S_1\cdot T$, we may thus assume that $S=S_2$. 
  
\begin{enumerate}
\item Assume that $S$ is composed of slides of $f$ and there exists an admissible slide $g/\bar f$ in $S\cdot T'$, then by Corollary \ref{liftsequence} and Lemma \ref{lift}, we may apply the sequence $S\cdot g/\bar f$ to  $T$, and obtain a tree  $(S\cdot g/\bar f) \cdot T$ which refines $T_e$. Moreover $v$ is now of valence three, we may thus apply Case \ref{troisaretes}.

\item \label{cas1-2strictasc} Assume that $S$ is composed of slides of $f$ and $f$ is \pa in $S\cdot T'$. 

By Corollary \ref{liftsequence}, we may perform $S$ in $T$, the tree $S\cdot T$ refines $T_e$, and the vertex $v$ in $S\cdot T$ has valence $2$. Thus, up to replacing $T$ by $S\cdot T$ we may assume that the sequence $S$ is empty. Now $f$ is \pa in $T'$. As in $T$, the terminal vertex of $f$ is not in $\gr v$ it must be in $\gr {v'}$, that is there exists an edge $f'$ in the orbit of $f$, with terminal vertex $v'$ and $G_{f'}\subsetneq G_f$. We may apply Lemma \ref{casalmostasc} (interchanging $f$ and $f'$). Thus the tree $T_e$ is not universally compatible, which is a contradiction.
\item \label{suiteglissementdroite} Assume here that there exists a sequence $ S\cdot g/\bar f  =\bar f/i_1\dots \bar f/i_q\cdot g/f$ admissible in $T'$.

In $T$ we may perform the sequence $\bar f /e\cdot S\cdot g/\bar f$. Up to collapsing all orbits of edges different from $\gr e$, $\gr f$ and $\gr g$, we may assume that $S$  is composed of slides of $\bar f$ along edge in $\gr g$ and $\bar {\gr g}$ (that is every $i_j$ is in $\gr g\cup \bar {\gr g}$).

\begin{lemma}\label{cas1213}
Let $T$ be a $G$-tree (not necessarily a \vGBS tree). Let $f$ and $g$ be two edges of $T'$ verifying the following properties:
\begin{itemize}
\item for $w'$ the terminal vertex of $f$, there exists an edge $f'\neq f$ in $\gr f$ with terminal vertex $w'$, such that $G_f=G_{f'}$,
\item there exists an admissible sequence $S$ of slides in $T'$, composed of slides of $\bar f$ along edges in $\gr g$ or $\bar{\gr g}$, ending with a slide of $g$ along $f$.  
\end{itemize}
Let $w$ be the initial edge of $f$ and let $H$ be a subgroup such $G_f\subsetneq H \subsetneq G_w$, call $T$ the tree obtained from $T'$ by performing an expansion of $w$ with group $H$ and set $\{f\}$ and $T_H$ the tree obtained by collapses every orbit of edges of $T$ except the one created in the expansion.
Then $T_H$ is not universally compatible.
\end{lemma}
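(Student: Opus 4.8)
The plan is to apply the second criterion of Lemma~\ref{incompatibility} to $T_H$: I will exhibit four elements $a,b,c,d$ of $G$ whose characteristic hulls split across the unique orbit of edges of $T_H$ in the pattern $\{a,b\}\mid\{c,d\}$, together with a $G$-tree $T''$ reached from $T$ by an admissible sequence of slides in which the crossed pattern $\{a,c\}\mid\{b,d\}$ holds. Write $e$ for the edge created by the expansion, so that $G_e=H$ and $e$ separates $f$ (together with its terminal vertex $w'$) from all the other edges issuing from $w$; in $T_H$ every orbit of edges but $\gr e$ is collapsed, so $e$ survives as the only orbit of edges.

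The first step is to exploit the first hypothesis. Since there is $f'\ne f$ in $\gr f$ with terminal vertex $w'$ and $G_{f'}=G_f$, choosing $s\in G$ with $s\cdot f=f'$ gives $s\in G_{w'}$ (as $s$ fixes the common terminal vertex $w'$), $s\notin G_f$ (else $f'=f$), and $sG_fs^{-1}=G_f$. Thus $w'$ is met by the two edges $f$ and $f'$ carrying the same group $G_f$; equivalently, $w'$ is adjacent to the two copies $e$ and $s\cdot e$ of the expansion edge, and $s$ is elliptic in $T_H$, fixing the image of $w'$. This doubling is exactly what will let a single slide create the required crossing. I also record that the second hypothesis forces $G_g\subset G_f\subsetneq H$, while $H\subsetneq G_w$ supplies an element of $G_w\setminus H$.

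The core of the argument is then the choice of the four elements and the tracking of their hulls, carried out as in Case~\ref{castoric} and in Corollary~\ref{casreduit}. I would take $a,b$ elliptic on the $w'$-side of $e$ and $c,d$ elliptic on the $w$-side, built from $s$, from a chosen element of $G_w\setminus H$, and from conjugates of these by the translation elements realizing the moves of the slide sequence $S$; with such a choice $\mathcal E_{\{a,b\}}(T_H)$ and $\mathcal E_{\{c,d\}}(T_H)$ are separated by $\gr e$, hence disjoint. To produce $T''$ I would, following Case~\ref{suiteglissementdroite}, perform in $T$ the sequence $\bar f/e\cdot S\cdot g/\bar f$: the first slide $\bar f/e$ is admissible because $G_f\subsetneq H=G_e$ and it pulls $f$ across $e$ to the $w$-side; Corollary~\ref{liftsequence} and Lemma~\ref{lift} guarantee that the part of $S$ not involving $f$ lifts to $T$ while still refining $T_H$; and the final slides use $G_g\subset G_f$ together with the doubling by $s$ to pull $\{a,c\}$ apart from $\{b,d\}$, so that $\mathcal E_{\{a,c\}}(T'')\cap\mathcal E_{\{b,d\}}(T'')=\emptyset$. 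Lemma~\ref{incompatibility} then shows that $T_H$ and $T''$ are incompatible, whence $T_H$ is not universally compatible.

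The main obstacle is the bookkeeping in this last step. Unlike the reduced situations already treated, the sequence $S$ may be long and drags the terminal vertex of $\bar f$ through a chain of edges of $\gr g\cup\bar{\gr g}$ before the concluding slide along $f$, so one must check that each of the four fixed points stays in its intended component throughout, and in particular that the crossing really is produced at the end rather than undone along the way. The element $s$ is what makes this robust: it furnishes a second lift $f'$ of $f$ at $w'$, so that sliding across one copy of $e$ does not destroy the separation provided by the other, and this is precisely where the equality $G_f=G_{f'}$ (and not merely an inclusion) is used.
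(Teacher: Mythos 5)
Your overall strategy is the paper's: apply the second point of Lemma \ref{incompatibility} with a $\{a,b\}\mid\{c,d\}$ separation by $\gr e$ in $T_H$, produce the crossed pattern by performing $\bar f/e$, then $S$, then the final slides of $g$ across $f$ and $\bar f'$, using the translate $s$ (the paper's $c$) with $s\cdot f=f'$, $G_{f'}=G_f$. But the proposal has genuine gaps precisely at the two load-bearing steps. First, the appeal to Lemma \ref{lift} and Corollary \ref{liftsequence} is misapplied: those statements explicitly exclude slides of $\bar f$, and here $S$ consists \emph{entirely} of slides of $\bar f$, so there is no ``part of $S$ not involving $f$'' to lift. (The paper does not need any lifting lemma at this point: Lemma \ref{incompatibility} requires no refinement relation between $T''$ and $T_H$, and after $\bar f/e$ the slides of $S$ can simply be performed in $T$.) Second, your four elements are never pinned down, and the recipe you give — $s$, an element of $G_w\setminus H$, and their conjugates by translation elements of $S$ — cannot produce the paper's $a$: the paper takes $a$ fixing the initial vertex of $g$ but not $g$, which exists only when $g$ is \emph{not} ascending. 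When $g$ is strictly ascending no such $a$ exists; the paper then observes that admissibility of $S$ forces $f$ to be a loop in $T'$, takes a translate $f''$ of $f$ whose terminal vertex is the initial vertex of $g$, and replaces $a$ by an element $a'$ fixing the initial vertex of $f''$ but not $f''$. This case split (together with the remark that $g$ cannot be \tor, else it could never slide along $f$) is entirely absent from your sketch.

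The deeper missing idea concerns the ``main obstacle'' you correctly identify but do not resolve: ensuring the crossing produced at the end is not undone by the equivariant motion of the intermediate edges. Your claim that the element $s$ ``makes this robust'' is not the actual mechanism. The paper handles it by a preliminary reduction: one may assume $G_f\subsetneq G_{g'}$ for every edge $g'$ along which $\bar f$ slides in $S$, for otherwise $G_{g'}\subset G_f$, so $\bar g'$ itself slides along $f$ and one can replace $g$ by $\bar g'$ and truncate $S$. With this strict inclusion, when $g$ finally slides along $f$ and $\bar f'$, no translate of any $g'$ slides along $f$ by equivariance, which is exactly what guarantees that $f$ and $f'$ end up \emph{between} $g'$ and $g$, so that $\mathcal E_{\{b,c\}}$ and $\mathcal E_{\{a,d\}}$ are separated by $f'$ in the resulting tree. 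Without this reduction the final separation claim can genuinely fail, since edges $g'$ with $G_{g'}\subset G_f$ would be dragged across $f$ together with $g$. So: right skeleton, but the truncation reduction, the ascending/non-ascending dichotomy for $g$ with the concrete choices of $a,b,c,d$, and the equivariance bookkeeping must all be supplied, and the citation of Lemma \ref{lift} must be removed.
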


This lemma proves that in Case \ref{suiteglissementdroite}, the tree $T_e$ is not universally compatible. Indeed taking $H=G_e$ we obtain $T_e=T_H$.

\begin{proof}[Proof of Lemma \ref{cas1213}]
We may assume that for every slide $\bar f/g'$ in $S$ we have $G_f \subsetneq G_{g'}$, otherwise $\bar g'$ may slide along $f$, we may take $g=\bar g'$ and truncate the sequence $S$.

Call $e$ the edge of $T'$ obtained in the expansion of $w$ and call $v$ and $v'$ the initial and terminal vertices of $e$ (the vertex $v$ is the initial vertex of $f$ in $T$).

These two cases appear, depending on whether $g$ is strictly ascending (or descending) or not (note that $g$ cannot be \tor, otherwise $g$ may never slide along $f$).

First assume that $g$ is not ascending. Let $c$ such that $f'=c\cdot f$. 
Let $a$, $b$ and $d$ be three elements such that $a$ fixes the initial vertex of $g$ but not $g$ (this is possible since $g$ is not ascending), the element $b$ fixes $v'$ but not $e$ and $d$ fixes $c\cdot v'$ but not $e'=c\cdot e'$. 
  
  Then  $\mathcal E_{\{a,b\}}(T_H)\cap \mathcal E_{\{c,d\}}(T_H)=\emptyset$ (since $\mathcal E_{\{a,b\}}(T)$ and $\mathcal E_{\{c,d\}}(T)$ are separated by $e$). Now perform the sequence of deformations $f/e\cdot S\cdot g/f'$ in $T$.  Let $g'$ be an edge on which $\bar f$ slides in $S$. When $g$ slides along $\bar f$ and $f'$, $g'$ slides along an edge in $\gr f\cup\bar{\gr f}$. However since $G_f\subsetneq G_{g'}$, neither $g'$ nor $\bar g'$ slides along $f$  when $g$ slides along $f$. This ensure us that $f$ and $f'$ are between $g'$ and $g$ after performing the slides. Thus the sets $\mathcal E_{\{b,c\}}$ and $\mathcal E_{\{a,d\}}=\emptyset$ are separated by $f'$ in the resulting tree. Hence $T_H$ is not universally compatible.

\begin{figure}[!ht]
\begin{center}
\include{casglissementnonreduit2}
\end{center}
\caption{}
\label{casglissementnonreduit}
\end{figure}

If $g$ is strictly ascending, as the sequence $S$ is admissible in $T$, this implies that $f$ is a loop in $T'$ (otherwise $g$ becomes non-reduced after sliding along $f$). Thus in $T$ the terminal vertex of $f$ is in $\gr v$.

Take $f''$ in the orbit of $f$ such that its terminal vertex is the initial vertex of $g$. We may take $a'$ which fixes the initial vertex of $f''$ but not $f''$.  We may then proceed exactly as in the case where $g$ is not ascending, replacing $a$ by $a'$. Note that $a'$ does not fix $g$ since $f''$ is between $g$ and the characteristic space of $a'$.
\end{proof}
\item \label{cas1-2strictdes} The last possibility is when $S$ is composed of slides of $\bar f$ and after performing the sequence $ f/e\cdot S$ in $T$ , the edge $f$ is \pde.

   Collapsing all edges along which $f$ slides in $S$, we obtain a tree $\tilde T$ in which $e$ is not collapsed, and $T_e$ is tree obtained from $\tilde T$ by collapsing all orbits of edges except $\gr e$, the edge $f$ has initial vertex $v$, there exists an edge $f'$ in the orbit $\gr f$ with terminal vertex $v'$ and $G_f\subset G_{f'}$. By Lemma \ref{casalmostasc}, the tree $T$ is not universally compatible.

\end{enumerate}
 \item \label{vannonslip}Assume that $f$ is vanishing non-slippery.

 By Lemma \ref{nonslipvanish}, the edge $f$ is not a loop and there exists in $T'$ a strictly ascending edge $g$ adjacent to $f$ and we may perform an admissible sequence $S$ of inductions on $g$ and slides of $f$ (or $\bar f$ depending on whether the vertices of $g$ are in the orbit of the initial or terminal vertex of $f$) along $\gr g$ or $\bar {\gr g}$ followed by a $\mathcal A^{-1}$-move in which $f$ vanishes. 
 
 If the endpoints of $g$ are in the same orbit as the terminal vertex of $f$ then $S$ is also admissible in $T$ and by Lemma \ref{rafinementcontinu} the resulting tree still refines $T_e$ but there is now three distinct orbits $\gr e$, $\gr g$, and $\bar {\gr g}$ adjacent to $v$, thus the case is already treated in Case \ref{troisaretes}.
 
 If $g$ is adjacent to (the image of) $v$ in $T'$,  then $g$ and $\bar {\gr g}$ are adjacent to $v'$ in $T$. Call $\bar g'$ the edge in the orbit of $\bar {\gr g}$ adjacent to $v'$, we have $G_{g'}\subsetneq G_{g}=G_{v'}$.
 
 We prove that $T_e$ is not universally compatible by induction on the length of $S$.
 
 If the sequence $S$ is empty, we have $G_{g'}\subset G_f\subset G_e\subset G_g=G_{v'}$, thus $g'$ may slide consecutively along $\bar e$ and $f$. As $f$ is reduced, $g'$ may then slide along an edge $\bar f'$ in the orbit $\bar {\gr f}$ different from $\bar f$, and then along an edge $e'$ in the orbit $\gr e$ (see Figure \ref{vanishnonreduced}).
 Collapsing $f$, the obtained tree $\tilde T$ still refines $T_e$, but then $e$ is reduced and non ascending in $\tilde T$ and $g'$ slides consecutively along two distinct edges in the orbits $\gr e\cup \bar {\gr e}$. By Corollary \ref{casreduit} case 2. the tree $T_e$ is not universally compatible.
 
 If the sequence $S$ is not empty, let $\mathbf m$ be the first move of $S$. Then $\mathbf m$ is either an induction on $\gr g$ or a slide of $f$ on $\gr g$ or $\gr g'$.
 
 If $\mathbf m$ is an induction, then by Lemma \ref{lift}, we may perform the induction in $T$, and the obtained tree still refines $T_e$, thus we may reduce the length of $S$ by one.
If $\mathbf m$ is a slide of $f$ along an edge $\tilde g$ in the orbit of $\gr g$ or $\bar {\gr g}$, then if $G_e\not \subset G_{\tilde g}$, then by Lemma \ref{casalmostasc}, the tree $T_e$ is not universally compatible. 
Else $G_e \subset G_{\tilde g}$ and $\mathbf m\cdot T'$ is refined by $e/\tilde g\cdot T$ which also refines $T_e$. Thus the sequence $S$ may be shortened.
 
 \begin{figure}[!ht]
 \begin{center}
\begin{pspicture}(0,-1.08375)(5.401875,2.06375)
\psline[ArrowInside=->, ArrowInsidePos=0.5,arrowsize=0.25291667cm,linewidth=0.04cm](2.06,0.98375)(0.06,0.18375)
\psline[ArrowInside=->, ArrowInsidePos=0.5,arrowsize=0.25291667cm,linewidth=0.04cm](2.06,-0.61625)(0.06,0.18375)
\psline[ArrowInside=->, ArrowInsidePos=0.5,arrowsize=0.25291667cm,linewidth=0.04cm](2.06,0.98375)(4.06,0.98375)
\psline[ArrowInside=->, ArrowInsidePos=0.5,arrowsize=0.25291667cm,linewidth=0.04cm](2.06,-0.61625)(4.06,-0.61625)
\psline[ArrowInside=->, ArrowInsidePos=0.5,arrowsize=0.25291667cm,linewidth=0.04cm](5.26,-0.01625)(4.06,0.98375)
\psline[ArrowInside=->, ArrowInsidePos=0.5,arrowsize=0.25291667cm,linewidth=0.04cm](4.06,0.98375)(5.26,1.91625)
\psdots[dotsize=0.12](5.26,-0.01625)
\psdots[dotsize=0.12](5.26,1.91625)
\psdots[dotsize=0.12](4.06,0.98375)
\psdots[dotsize=0.12](2.06,0.98375)
\psdots[dotsize=0.12](0.06,0.18375)
\psdots[dotsize=0.12](2.06,-0.61625)
\psdots[dotsize=0.12](4.06,-0.61625)
\usefont{T1}{ptm}{m}{n}
\rput(5.061406,0.68875){$g'$}
\rput(5.061406,1.48875){$g$}
\usefont{T1}{ptm}{m}{n}
\rput(3.0814064,0.70875){$e$}
\usefont{T1}{ptm}{m}{n}
\rput(0.80140626,0.76875){$f$}
\usefont{T1}{ptm}{m}{n}
\rput(0.68140626,-0.41125){$f'$}
\usefont{T1}{ptm}{m}{n}
\rput(3.1414063,-0.91125){$e'$}
\end{pspicture} 
 \end{center}
 \caption{}
 \label{vanishnonreduced}
 \end{figure}
      
\end{enumerate}
\end{enumerate}
\item Assume now that $G_{v'}=G_e$. 

Let  $g$ be an edge  with initial vertex $v'$, $g\neq \bar e$ (then $g$ is not in the orbit $\bar {\gr e}$).

\begin{enumerate}
 \item \label{222} Assume that there exist at least three orbits of edges $g$, $i$ and $\bar e$ adjacent to $v'$ and at least three orbits of edges $f$, $h$ and $e$ adjacent to $v$.

If one of the edges $f$, $g$, $h$ or $i$ is strictly ascending, with the same argument of \ref{strictlyasc} the edge $e$ (or $\bar e$) may slide along it and we are now in Case \ref{semireduit1}.

\begin{enumerate}
\item First assume none of the edges $f$, $g$, $h$ or $i$ is \tor.
We will find a contradiction using Lemma \ref{incompatibility}.

\begin{figure}[!ht]
\begin{center}
\scalebox{1} 
{
\begin{pspicture}(0,-1.3829688)(5.4228125,1.3829688)
\psline[ArrowInside=->, ArrowInsidePos=0.5,arrowsize=0.25291667cm,linewidth=0.04cm](1.9009376,-0.05546875)(0.7009375,1.1445312)
\psline[ArrowInside=->, ArrowInsidePos=0.5,arrowsize=0.25291667cm,linewidth=0.04cm](1.9009376,-0.05546875)(0.7009375,-1.0554688)
\psline[ArrowInside=->, ArrowInsidePos=0.5,arrowsize=0.25291667cm,linewidth=0.04cm](1.9009376,-0.05546875)(3.5009375,-0.05546875)
\psline[ArrowInside=->, ArrowInsidePos=0.5,arrowsize=0.25291667cm,linewidth=0.04cm](3.5009375,-0.05546875)(4.7009373,1.1445312)
\psline[ArrowInside=->, ArrowInsidePos=0.5,arrowsize=0.25291667cm,linewidth=0.04cm](3.5009375,-0.05546875)(4.7009373,-1.0554688)
\psdots[dotsize=0.12](3.5009375,-0.05546875)
\psdots[dotsize=0.12](1.9009376,-0.05546875)
\psdots[dotsize=0.12](0.7009375,1.1445312)
\psdots[dotsize=0.12](0.7009375,-1.0554688)
\psdots[dotsize=0.12](4.7009373,1.1445312)
\psdots[dotsize=0.12](4.7009373,-1.0554688)
\usefont{T1}{ptm}{m}{n}
\rput(0.26234376,1.1895312){$a$}
\rput(0.23234375,-1.2104688){$b$}
\rput(5.0423436,1.1495312){$c$}
\rput(5.112344,-1.0704688){$d$}
\rput(1.5623437,0.7295312){$f$}
\rput(3.8623438,0.70953125){$g$}
\rput(2.7423437,0.24953125){$e$}
\rput(0.89234376,-0.49046874){$h$}
\rput(4.282344,-0.37046874){$i$}
\end{pspicture} 
}
\end{center}
\caption{}
\label{egal222}
\end{figure}

We take $a$, $b$, $c$ and $d$ stabilizing respectively the terminal vertices of $f$, $h$, $g$ and $i$ but not stabilizing the edges (see Figure \ref{egal222}). In $T$ the spaces $\mathcal E_{\{a,b\}}(T)$ and $\mathcal E_{\{c,d\}}(T)$ are separated by the edge $e$ which is not collapsed in $T_e$, thus $\mathcal E_{\{a,b\}}(T_e)\cap \mathcal E_{\{c,d\}}(T_e)=\emptyset$. But making $\bar f$ and $\bar g$ slide respectively along  $e$ and $\bar e$, we obtain $\mathcal E_{\{b,c\}}\cap \mathcal E_{\{a,d\}}=\emptyset$. Thus $T_e$ is not universally compatible.

\item \label{toricpastoric} Assume that $g$ (or $i$) is \tor and $\gr h$ and $\gr f$ are not \tor. Then $\bar h$ may slide along $e$, then along $g$ and  finally along another edge $\bar e'$ in $\bar {\gr e}$. Call $w$ the terminal vertex of $h$ and $w'$ the terminal vertex of $f$. Then we may take $a$ in $G_w\setminus G_h$, $b$  in $G_{w'}\setminus G_f$, $c$ which send the initial vertex of $g$ onto its terminal vertex and $d=cbc^{-1}$ (see Figure \ref{casasc}). 

\begin{figure}[!ht]
\begin{center}
\scalebox{1} 
{
\begin{pspicture}(0,-1.6829687)(6.1228123,1.6829687)
\psline[ArrowInside=->, ArrowInsidePos=0.5,arrowsize=0.25291667cm,linewidth=0.04cm](1.2409375,-0.05546875)(0.2409375,1.1445312)
\psline[ArrowInside=->, ArrowInsidePos=0.5,arrowsize=0.25291667cm,linewidth=0.04cm](1.2409375,-0.05546875)(0.2409375,-1.2554687)
\psline[ArrowInside=->, ArrowInsidePos=0.5,arrowsize=0.25291667cm,linewidth=0.04cm](1.2409375,-0.05546875)(2.8409376,-0.05546875)
\psline[ArrowInside=->, ArrowInsidePos=0.5,arrowsize=0.25291667cm,linewidth=0.04cm](2.8409376,-0.05546875)(2.8409376,1.1445312)
\psline[ArrowInside=->, ArrowInsidePos=0.5,arrowsize=0.25291667cm,linewidth=0.04cm](4.4409375,1.1445312)(2.8409376,1.1445312)
\psline[ArrowInside=->, ArrowInsidePos=0.5,arrowsize=0.25291667cm,linewidth=0.04cm](4.4409375,1.1445312)(5.8409376,0.14453125)
\psdots[dotsize=0.12](0.2409375,1.1445312)
\psdots[dotsize=0.12](1.2409375,-0.05546875)
\psdots[dotsize=0.12](0.2409375,-1.2554687)
\psdots[dotsize=0.12](2.8409376,-0.05546875)
\psdots[dotsize=0.12](2.8409376,1.1445312)
\psdots[dotsize=0.12](4.4409375,1.1445312)
\psdots[dotsize=0.12](5.8409376,0.14453125)
\usefont{T1}{ptm}{m}{n}
\rput(3.7023437,1.3095312){$e'$}
\usefont{T1}{ptm}{m}{n}
\rput(1.9023438,0.12953125){$e$}
\usefont{T1}{ptm}{m}{n}
\rput(2.5423439,0.58953124){$g$}
\usefont{T1}{ptm}{m}{n}
\rput(5.4423437,0.78953123){$f'$}
\usefont{T1}{ptm}{m}{n}
\rput(0.95234376,0.76953125){$h$}
\usefont{T1}{ptm}{m}{n}
\rput(0.44234374,-0.6304687){$f$}
\usefont{T1}{ptm}{m}{n}
\rput(0.24234375,1.4895313){$a$}
\usefont{T1}{ptm}{m}{n}
\rput(0.23234375,-1.5104687){$b$}
\usefont{T1}{ptm}{m}{n}
\rput(5.8123436,-0.11046875){$d$}
\usefont{T1}{ptm}{m}{n}
\rput(3.2223437,0.48953125){$c$}
\psline[linewidth=0.03cm,arrowsize=0.05291667cm 2.0,arrowlength=1.4,arrowinset=0.4]{->}(3.0009375,-0.05546875)(3.0009375,1.0445312)
\end{pspicture} 
}
\end{center}
\caption{}
\label{casasc}
\end{figure}

In $T$ the spaces $\mathcal E_{\{a,b\}}$ and $\mathcal E_{\{c,d\}}$ are separated by the edge $e$ which is not collapsed in $T$, thus $\mathcal E_{\{a,b\}}\cap \mathcal E_{\{c,d\}}=\emptyset$  in $T_e$. But making $\bar h$ sliding consecutively along  $e$, $g$ and $\bar e'$, the spaces $\mathcal E_{\{b,c\}}$ and $\mathcal E_{\{a,d\}}$ are separated by $e'$. Thus $T_e$ is not universally compatible.

\item Up to symmetries the only case that remains is when $f$ and $g$ are both \tor.

Define $e'$ as in Case \ref{toricpastoric}. The edge $\bar f$ may slide along $e$ and $g$ and $\bar {e'}$. Collapsing $g$, we are reduced to the case where $G_e\subsetneq G_{v'}$. The difference here is that $G_{v'}$ is not abelian, however the argument of Case \ref{castoric} only use the fact that $\bar f$ slides consecutively along two edges in the orbits $\gr e$ or $\bar {\gr e}$. This is also the case here, we thus have a contradiction with the universal compatibility of $T$.
\end{enumerate}

\item \label{cas221}Assume that there are exactly two orbits of edges  $\gr e$ and $\gr f$ adjacent to $\gr v$. (Up to symmetries, this is the last remaining case).

As $f$ is collapsed in $T_{comp}$ but is not \tor in $T'$, it is slippery, \psad or vanishing non slippery.  

If $f$ is vanishing non-slippery, the case is already treated in \ref{vannonslip} (which does not use the fact that $G_e\subsetneq G_{v'}$). 
If $f$ is slippery or \psad,  by Lemma  \ref{suiteglissement}, there exists a sequence of moves $S$ admissible in  $T'$ such that in $S\cdot T'$, either $f$ is \pa or \pde, or there exists an admissible slide $g/f$ or $g/\bar f$. Moreover this sequence may be split in two subsequences $S_1$ and $S_2$, such that no move involving $f$ occurs in $S_1$, and such that 
\begin{itemize}[topsep=0cm,noitemsep]
\item either $S_2$ is composed of slides of $f$, and there exists a slide along $\bar f$ in $S\cdot T'$,
\item or $S_2$ is composed of slides of $\bar f$, and there exists a slide along $f$ in $S\cdot T'$,
\item or $S_2$ is composed of slides of $f$, and $f$ is strictly ascending in $S\cdot T'$,
\item or $S_2$ is composed of slides of $\bar f$, and $\bar f$ is strictly ascending in $S\cdot T'$.
\end{itemize}

The last two cases are done in \ref{semireduitslip}.
It remains the first and second cases.

As before, we may assume that there is no strictly ascending edge with initial vertex $v'$. 

We now have to use the fact the vertex $w$ obtained by collapsing $e$ is not a dead end of wall $f$. 

Recall that in $T'$, the non-vanishing vertex $w$ is a dead of wall $f$ if:
\begin{itemize}
\item the vertex group of $G_f$ is strictly include in $G_w$,
\item for every edge $h$ with initial vertex $w$ and not in $\gr f$ we have $\langle G_f,G_h\rangle=G_w$,
\item for every edge $h$ with initial vertex $w$, we have $G_h\neq G_w$,
\item there exists an edge $g$ with initial vertex $w$ and not in $\gr f$ such that for every edge $h$ with initial vertex $w$ and not in $\gr f$ we have $G_h\subset G_g$,
\end{itemize}
and if for every tree $T''$ in the deformation space there exists an edge $f'$ (not necessarily in the orbit $\gr f$) with initial vertex $w$ having these properties.

Note that we may assume that $G_f$ is strictly include in $G_w$ (because $f$ is reduced in $T$).

We distinguish the cases depending on whether the fact that $w$ is not a dead end may be seen in $T$ or not.
\begin{enumerate}
\item \label{danslarbre} Assume that one of the listed properties does not hold in $T'$.

There are three possibilities: 
\begin{enumerate}
\item There exists an edge $h$ of initial vertex $w$ with $\langle G_f,G_h\rangle\subsetneq G_w$.

Then $G_h\subsetneq G_w$, thus in $T$ the edge is not ascending. Call $\tilde v$ the terminal vertex of $f$ and $\tilde v'$ the terminal vertex of $h$.
Define $\tilde T$ the tree obtained from $T'$ by performing an expansion on the vertex $w$ with group $G_{\tilde w'}=\langle G_f, G_h\rangle$ and set $\left\{f,h\right\}$.

Call $\tilde e$ the edge of the expansion. We prove that $\tilde T$ is not compatible with $T_e$, contradiction the universal compatibility of $T_e$.

Take $a\in G_{\tilde v}\setminus G_f$, $b\in G_{\tilde v'}\setminus G_h$ and $c$ in $G_w\setminus \langle G_f,G_h\rangle$. 
In $T_e$, the sets $\mathcal E_{a,b}(T_e)$ and $\mathcal E_{c}(T_e)$ both contain $e$, but in $\tilde T$, the sets $\mathcal E_{a,b}(\tilde T)$ and $\mathcal E_{c}(\tilde T)$ are separated be $\tilde e$. By Lemma \ref{constructionnoncomp}, the trees $T_e$ and $\tilde T$ are not compatible. We have a contradiction.

\item \label{deadendcastoric} There exists $g$ of initial vertex $w$ with $G_g=G_w$.

By Lemma \ref{lemmeascendant}, as no edge of endpoint $w$ is strictly ascending, the edge $g$ is \tor and remains \tor with initial vertex $w$ in any tree of the reduced deformation space.

We use now the fact that there exists a sequence $S=S_1\cdot S_2$  such that $S_2$ which is composed either of slide of $\bar f$, that finishes by a slide of an edge $g'$ along $f$, or of slide of $f$, that finishes by a slide of an edge $g'$  along $\bar f$.

In both cases we may assume that $g'\neq g$. Indeed, after preforming $S$ in $T'$, the edge $g$ is still \tor. Thus if $g$ slides along $f$ then $f$ is ascending. Let $S_1$ be the minimal initial subsequence of $S$ such that $f$ is ascending in $S_1\cdot T'$. Either $f$ is strictly ascending, or it is \tor, and then by Lemma \ref{lemmeascendant}, $f$ may be change in a strictly ascending edge in one slide. Thus up to change $S$, we may assume that $f$ is \pa, and the case is already done.

If $S_2$ is composed of slides of $f$, then by Corollary \ref{liftsequence} and Lemma \ref{lift}, the tree $(S\cdot g'/f)\cdot T$ refines $T_e$, but now there $v$ is of valence $3$ (with orbits $\gr f$, $\gr {\bar g'}$ and $\gr e$) and $v'$ is of valence at least $3$ (with orbits $\gr g$, $\gr {\bar g}$ and $\gr {\bar e}$). We are reduced to Case \ref{222}.

If $S_2$ is composed of slides of $\bar f$, as $g'\neq g$, first call $T_1=S_1\cdot T$. Then collapsing in $T_1$ the orbit $\gr g$ and every edge not in $\gr f$, $\gr e$ and $\gr g'$ in $T$, we obtain a tree $\tilde T$ in which there still exists an admissible sequence of slides of $\bar f$ along edges in $\gr g'$ and $\bar {\gr g'}$, followed by a slide $g'/f$, but now $G_e\subsetneq G_{v'}$. We may now apply Lemma \ref{cas1213} in which $T = \tilde T$ and $T_e = T_H$ with $H=G_e$.

\item There are two edges $g$ and $g'$ such that $G_{g'}\not\subset G_g$ and $G_g\not\subset G_{g'}$, and there is no edge $h$ such that $G_{g'}\subset G_h$ and $G_g\subset G_h$.
If there exists an edge $h$ with initial vertex $v'$ such that $G_f\subset G_h$, or $G_f\subset G_h$, by Lemma \ref{casfacile}, the tree $T_e$ is not universally elliptic.

Thus the case where $S_2$ is composed of slide of $\bar f$ is not possible.
It remains the case where $S_2$ is composed of slide of $f$, and then there exists a slide $h/\bar f$.

By Corollary \ref{liftsequence} and Lemma \ref{lift} we may perform $(S\cdot h/\bar f)$ in $T$, and the obtained tree still refines $T_e$. 
The conditions on $f$, $g$ and $g'$ implies, that $G_f$, $G_g$ and $G_{g'}$ are maximal for the inclusion between groups of edge with an endpoint $w$. Applying \cite[Proposition 4.10]{GL07}, there are at least $3$ edges with initial vertex $w$ and distinct maximal edge groups  in $(S\cdot h/\bar f)\cdot T'$. As $G_h\subset G_f$, the vertex $w$ is of valence at least $4$. As in $(S\cdot h/\bar f)\cdot T$ the vertex $v$ is of valence $3$ (including the one added by $e$), this implies that $v'$ is of valences at least $3$, we are thus reduced to Case \ref{222}.

\end{enumerate}
\item Assume that there exists another tree $T''$ in which $w$ vanishes or in which the listed properties does not hold.

Let $S'$ be an admissible sequence such that at the end $\gr w$ vanishes, or the listed properties does not hold in $T''=S'\cdot T'$.
\begin{enumerate}
\item \label{suitecool} Assume that no moves involving $\gr f$ or $\bar {\gr f}$ (except slides of $f$), occurs in $S'$.

Then by Corollary \ref{liftsequence}, we may apply $S'$ to $T$ and the obtained tree is still refines $T_e$. 

If one of the listed properties does not hold in $T''$, we are reduced to Case \ref{danslarbre}.

If $w$ vanishes,  it is now of valence $3$ with a strictly ascending vertex $g$. If $g$ is still strictly ascending in $S'\cdot T$ (that is the endpoints of $g$ are in $\gr v'$, then $e$ may slide along $g$. We obtained a new tree which still refines $T_e$ in which $G_e\subsetneq G_{v'}$. We may thus apply Case \ref{semireduit1}. 

If $g$ is no more ascending in $S\cdot T$, then its endpoints are in the orbit $\gr v$ and $\gr v'$, and by Lemma \ref{casalmostasc} the tree $T_e$ is not universally elliptic.
\item If a move involving $\gr f$ or $\bar {\gr f}$ different from a slide of $f$ occurs in $S'$, call $S'_1$ the maximal initial subsequence of $S$ with no moves involving $f$ and let $\mathbf m$ be the next move.
Then $S'_1\cdot T$ refines $T_e$, and the vertex $v$ is still of valence $2$. If $\mathbf m$ is a slide on $\bar f$, in $(S'_1\cdot \mathbf m)\cdot T$ the vertex $v$ is now of valence $3$. If $v'$ is of valence greater than $3$ then we may apply Case \ref{222}, if $v'$ is of valence $2$ in $(S'_1\cdot \mathbf m)\cdot T$, then we may exchange the role of $v$ and $v'$ and continue to preform the sequence $S$ (which is strictly shorter).

If $\mathbf m$ is an induction or an $\mathcal A^{\pm1}$-move on $f$, then $f$ is \pa in $S'_1\cdot T'$, and the terminal vertex of $f$ is in the orbit $v'$. By Lemma \ref{casalmostasc} the tree $T_e$ is not universally compatible.

If  $\mathbf m$ is an $\mathcal A^{-1}$-move in which $f$ vanishes, if the move occurs at $w$, then $w$ is the vanishing vertex and the case is already treated in \ref{suitecool}. If the move occurs at the terminal vertex of $f$, then $(S'_1\cdot \mathbf m)\cdot T$ still refines $T_e$ and now the valence of $v$ is $3$, as previously, if the valence of $v'$ is greater than $3$, we may apply Case \ref{222},  and if $v'$ is of valence $2$ in $(S'_1\cdot \mathbf m)\cdot T$, then we may exchange the role of $v$ and $v'$ and continue to preform the sequence $S'$ (which is strictly shorter).

If $\mathbf m$ is a slide of $\bar f$ along an edge $g$. If $G_g\neq G_w$ then Corollary \ref{casfacile}, the tree $T_e$ is not universally elliptic.

If $G_g=G_w$ then we are in Case \ref{deadendcastoric}.
\hfill\qed
\end{enumerate}
\end{enumerate}
\end{enumerate}
\end{enumerate}

\begin{theorem}
 Let $G$ be  a \GBSn{n} group which is not an ascending HNN-extension, then $T_{comp}$ is the compatibility JSJ tree for decomposition over $\Z^n$ groups.
\end{theorem}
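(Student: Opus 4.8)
The plan is to verify directly the two defining properties of a compatibility JSJ tree over $\Z^n$ groups: that $T_{comp}$ is $\Z^n$-universally compatible, and that it dominates every universally compatible $G$-tree over $\Z^n$. The first property is already established in Corollary \ref{GGBSncomp}, so the entire remaining content is the maximality (domination) statement. For this I will reduce the general case to the single-edge-orbit situation, which has been settled in Propositions \ref{reduced} and \ref{maxcasnonreduit}.

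First I would carry out the reduction to trees with one orbit of edges. Let $T'$ be an arbitrary universally compatible $G$-tree over $\Z^n$, and for each orbit of edges $\gr f$ of $T'$ let $T'_f$ denote the collapse of all orbits of $T'$ except $\gr f$. Each $T'_f$ is again universally compatible: given any $G$-tree $S$, a common refinement $\tilde T$ of $T'$ and $S$ also refines $T'_f$ (since $T'$ refines $T'_f$ and refinement is transitive), so $\tilde T$ is a common refinement of $T'_f$ and $S$. Next I claim that it suffices to prove $T_{comp}$ dominates each $T'_f$. Indeed, the vertex stabilizers of $T_{comp}$ are $\Z^n$ or $\Z^n\rtimes\Z$, hence polycyclic, so every subgroup $H$ elliptic in $T_{comp}$ is finitely generated. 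If $T_{comp}$ dominates every $T'_f$, such an $H$ has bounded orbit in each $T'_f$; since $d_{T'}(x,y)=\sum_{\gr f} d_{T'_f}(\bar x,\bar y)$ (the distance in $T'$ counts, orbit by orbit, the edges crossed, and collapsing all orbits but $\gr f$ records exactly the $\gr f$-edges), the orbit of $H$ is also bounded in $T'$. By Serre's lemma a finitely generated group with bounded orbit on a tree fixes a point, so $H$ is elliptic in $T'$. Thus $T_{comp}$ dominates $T'$, and the reduction is complete.

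It then remains to treat a universally compatible $G$-tree $T_e$ with a single orbit of edges $\gr e$. By Lemma \ref{rafinementdeT} there is a JSJ tree $T$ refining $T_e$ in which every orbit collapsed in $T_e$ is reduced. Exactly two configurations arise. If $\gr e$ is reduced in $T$, then all orbits of $T$ are reduced, so $T$ is a reduced JSJ tree refining $T_e$; since $G$ is not an ascending HNN-extension, Proposition \ref{reduced} yields that $T_{comp}$ dominates $T_e$. If $\gr e$ is not reduced, then it is the unique non-reduced orbit of $T$, and Proposition \ref{maxcasnonreduit} yields that $T_{comp}$ dominates $T_e$. In both cases $T_{comp}$ dominates $T_e$, which by the previous paragraph shows that $T_{comp}$ dominates every universally compatible $G$-tree over $\Z^n$.

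Combining this with Corollary \ref{GGBSncomp}, $T_{comp}$ is $\Z^n$-universally compatible and dominates every universally compatible tree, hence is the compatibility JSJ tree over $\Z^n$ groups. The genuinely difficult work lies entirely in Propositions \ref{reduced} and \ref{maxcasnonreduit}, where universal compatibility of $T_e$ is contradicted configuration by configuration by producing, through an explicit sequence of slides, characteristic spaces whose crossing pattern violates the incompatibility criterion of Lemma \ref{incompatibility}; I expect the only new ingredient at the level of the theorem, namely the edge-orbit-by-edge-orbit reduction, to be routine. I note finally that the hypothesis that $G$ not be an ascending HNN-extension is exactly what Proposition \ref{reduced} requires, and that it automatically rules out the degenerate non-generic groups $\Z^n\rtimes\Z$, so no separate degenerate case needs to be examined.
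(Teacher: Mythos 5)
Your proof follows the paper's argument almost verbatim: the paper's own proof consists of exactly your three ingredients --- universal compatibility from Corollary \ref{GGBSncomp}, domination of one-edge-orbit universally compatible trees via Lemma \ref{rafinementdeT} together with Propositions \ref{reduced} and \ref{maxcasnonreduit} (split into the same two configurations, $\gr e$ reduced or the unique non-reduced orbit), and a reduction from arbitrary universally compatible trees to one-edge-orbit trees. The only divergence is that for this last reduction the paper simply cites \cite[Proposition 3.22]{Gl3b}, whereas you prove it by hand: each collapse $T'_f$ is universally compatible, and domination of all the $T'_f$ implies domination of $T'$ by the additivity of distances under collapses and a bounded-orbit argument. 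Your self-contained version is a legitimate alternative and makes the step transparent rather than outsourced.

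However, your justification of that reduction contains a false auxiliary claim. The vertex stabilizers of $T_{comp}$ are \emph{not} $\Z^n$ or $\Z^n\rtimes\Z$: $T_{comp}$ is obtained from a JSJ tree by collapsing (among others) slippery edges, and collapsing an orbit of edges merges vertex groups into the fundamental group of the collapsed subgraph of groups --- already for the \GBS group of Figure \ref{exempleslip}, collapsing $\gr e$ produces a non-abelian amalgam $\Z\ast_{3\Z}\Z$ as a vertex group. Consequently your assertion that every subgroup elliptic in $T_{comp}$ is finitely generated also fails (such vertex groups contain non-finitely-generated subgroups). The damage is local and repairable in either of two ways: (i) domination only requires that each \emph{vertex stabilizer} of $T_{comp}$ be elliptic in $T'$, and vertex stabilizers are finitely generated, being fundamental groups of finite graphs of finitely generated groups (minimality plus $G$ finitely generated gives cocompactness); or (ii) drop finite generation altogether, since any group with a bounded orbit on a simplicial tree fixes a point (the center of the convex hull of the orbit is invariant), so Serre's lemma in its finitely generated form is not needed. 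With that repair your reduction, and hence the whole proof, is correct and matches the paper's.
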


\begin{proof}\label{GGBSn} By Proposition \ref{GGBSncomp},  the tree $T_{comp}$ is universally compatible.
By Lemma \ref{rafinementdeT}, Propositions \ref{reduced} and \ref{maxcasnonreduit}, the tree $T_{comp}$ dominates every universally compatible tree with one edge.
By \cite[Proposition 3.22]{Gl3b}, this imply that $T_{comp}$ dominates every universally compatible tree.
\end{proof}

\section{\texorpdfstring{The Case of \vGBS groups}{}}\label{sectioninert}
In the previous section, we showed that $T_{comp}$ is the compatibility JSJ tree of a \GBSn{n} group over $\Z^n$ groups.
However some \GBSn{n} groups may split over  $\Z^{n+1}$. Thus the abelian compatibility JSJ tree may be different from $T_{comp}$. This is actually the case even for very simple \GBSn{1} groups such as  $BS(2,2)=\langle a, t | ta^2t^{-1}=a^2\rangle$.

Indeed, the reduced (abelian and cyclic) JSJ deformation space of $BS(2,2)$ is reduced to one tree with one orbit of (non-oriented) non-ascending edges. Thus $T_{comp}$ is also this JSJ tree.

However $BS(2,2)$ may split over $\Z^2$ is the following way: define $$A=\langle a,s | sa^2s^{-1}=a^2\rangle\simeq BS(2,2)$$ and $$B=\langle b, t | bt=tb\rangle\simeq \Z^2.$$
Define $G=A\ast_{a^2=b,s=t^2}B$, then $$G=\langle a,t | ta^2t^{-1}=a^2\rangle\simeq BS(2,2).$$
The element $t^2$ is hyperbolic in the JSJ decomposition, but belongs to the edge group of the decomposition $A\ast_{a^2=b,s=t^2}B$. Thus the tree associated to the decomposition $A\ast_{a^2=b,s=t^2}B$ is not compatible with the JSJ tree (which is $T_{comp}$). This implies that $T_{comp}$ is not the abelian compatible JSJ tree.
In fact in this case the compatibility JSJ tree is the trivial $G$-tree.

\subsection{Definitions and construction tools}

An element $t\in G$ is \textit{bi-elliptic} in a $G$-tree $T$ if it stabilizes an edge of the tree, \textit{\pbe}if there exists a reduced $G$-tree over abelian groups in which it is bi-elliptic. 
An edge is \textit{\bea}if it is contained in the axis of some hyperbolic element  which is \pbe.

Let $T$ be a reduced $G$-tree. Let $e$ be an edge of $T$ with initial vertex $v$. Let $w$ be the vertex obtained by collapsing the ascending edges adjacent to $v$. Assume that $G_v$ is abelian.

\paragraph{Inert edges} Let $e$ be an edge of $T$ verifying the following four properties in every tree of the deformation space of $G$ :
\begin{itemize}
\item the initial vertex $v$ of $e$ is of valence at least $2$,
\item the edge $e$ is \bea,
\item the edge $e$ is not vanishing, not slippery and not \psad,
\item the centralizer of $G_e$ in $G_w$ is $G_v$.
\end{itemize}

The edge $e$ is \textit{inert} if furthermore it has one of the following properties:
\begin{itemize}
\item the group $G_v/G_e$ is not isomorphic to $\Z/p^k\Z$ for any prime number $p$, and for all $f\not\in \gr{e}$ with initial vertex $v$, we have $\langle{G_f,G_e}\rangle=G_v$,
 \item there exit a prime number $p$ and an integer $k$ such that $G_v/G_e\simeq \Z/p^k\Z$, and for all $f\not\in \gr{e}$ of initial vertex $v$, we have the implication if $G_e \subset G_f$  then $ f$ is \tor,
\end{itemize} and if $e$ has these properties in every tree of the reduced deformation space (note that as $e$ is not \psad, we may follow $e$ in every tree of the reduced deformation space).

The edge $e$ is said to be \textit{inert of type 1} in the first case and \textit{inert of type 2} in the second.

An orbit of edge $\gr e$ is \textit{inert} if one of its representative is inert.

The reason why we have to distinguish inert edges of type $1$ and $2$ comes from the following fact:

\begin{fact}
Let $G_e$ be a subgroup of $G_v\simeq \Z^n$.
The subgroups of $G_v$ containing $G_e$ are pairwise comparable for inclusion if and only if $G_v/G_e\simeq  \Z/p^k\Z$ for some prime number $p$ and integer $k$.
\end{fact}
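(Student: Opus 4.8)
The plan is to reduce the statement to a purely lattice-theoretic fact about the quotient and then verify that fact by the structure theorem for finitely generated abelian groups. Set $Q=G_v/G_e$. Since $G_v\simeq \Z^n$ is finitely generated abelian, $Q$ is a finitely generated abelian group, and by the correspondence theorem the subgroups of $G_v$ containing $G_e$ are in inclusion-preserving bijection with the subgroups of $Q$. Thus the hypothesis ``the subgroups of $G_v$ containing $G_e$ are pairwise comparable'' is equivalent to ``the subgroup lattice of $Q$ is totally ordered''. The whole Fact therefore follows once I show that a finitely generated abelian group has a totally ordered subgroup lattice if and only if it is a cyclic $p$-group $\Z/p^k\Z$ (the trivial group being the case $k=0$).

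The reverse implication I would dispatch immediately: the subgroups of $\Z/p^k\Z$ are exactly the $p^i\Z/p^k\Z$ for $0\le i\le k$, and these form the single chain $0\subset p^{k-1}\Z/p^k\Z\subset\cdots\subset \Z/p^k\Z$, so they are pairwise comparable.

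For the forward implication I would successively eliminate any structure in $Q$ that manufactures incomparable subgroups, in three steps. First, $Q$ is finite: if $Q$ had positive free rank it would contain a copy of $\Z$, and the subgroups $2\Z$ and $3\Z$ of that copy are incomparable subgroups of $Q$. Second, $Q$ is a $p$-group: if two distinct primes $p$ and $q$ divided $\lvert Q\rvert$, then by Cauchy's theorem $Q$ would contain cyclic subgroups of orders $p$ and $q$, which are incomparable since their orders are coprime. Third, writing the finite abelian $p$-group as $Q\simeq \Z/p^{a_1}\Z\oplus\cdots\oplus\Z/p^{a_s}\Z$, the $p$-torsion subgroup $Q[p]=\{x\in Q : px=0\}$ is isomorphic to $(\Z/p\Z)^s$; if $s\ge 2$ this elementary abelian group has at least two distinct subgroups of order $p$, which are again incomparable. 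Hence $s=1$ and $Q\simeq \Z/p^{a_1}\Z$ is a cyclic $p$-group, as required.

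I do not expect a genuine mathematical obstacle here: each reduction rests on exhibiting one explicit pair of incomparable subgroups, and once the correspondence-theorem translation is made, the argument is an elementary application of the classification of finitely generated abelian groups. The only point requiring care is bookkeeping, namely checking that the three reductions are jointly exhaustive (free rank, multiple primes, and $p$-rank at least two together account for every group that is not a cyclic $p$-group) and that each incomparability witness genuinely lies inside $Q$.
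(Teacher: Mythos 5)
Your proof is correct and complete. Note that the paper states this Fact without any proof at all, so there is no argument of the author's to compare against; your route --- the correspondence theorem reducing the claim to ``$Q=G_v/G_e$ has totally ordered subgroup lattice iff $Q$ is a cyclic $p$-group,'' then eliminating positive free rank (via $2\Z$ and $3\Z$), multiple primes (via Cauchy subgroups of coprime orders), and $p$-rank at least two (via two distinct order-$p$ subgroups of $Q[p]\simeq(\Z/p\Z)^s$) --- is the standard elementary argument, your three reductions are jointly exhaustive by the classification of finitely generated abelian groups, and your handling of the trivial quotient as the case $k=0$ closes the only edge case.
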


\begin{lemma}\label{forcementtoric}
If $e$ is inert then $G_v$ is maximal abelian and if there exists an edge $f\not\in \gr{e}$ of initial vertex $v$ such that $G_e\subset G_f$ then $f$ is \tor.
\end{lemma}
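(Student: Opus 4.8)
The plan is to deduce both conclusions from the single claim that $G_v$ is maximal abelian, and to prove that claim from the centraliser condition built into the definition of an inert edge.

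First I would record the easy reduction of the toric statement. If $e$ is inert of type $2$, the implication ``$G_e\subset G_f\Rightarrow f$ is \tor'' is literally one of the defining conditions, so there is nothing to prove. If $e$ is inert of type $1$ and $f\not\in\gr e$ has initial vertex $v$ with $G_e\subset G_f$, then the type $1$ condition $\langle G_f,G_e\rangle=G_v$ forces $G_f=\langle G_f,G_e\rangle=G_v$. Since $T$ is reduced and $G_f$ already equals the stabiliser of its initial vertex $v$, the edge $f$ cannot be reduced unless its endpoints lie in the same orbit; hence $f$ is ascending. Writing its terminal vertex as $b\cdot v$, we get $G_f=G_v\subset bG_vb^{-1}=G_{b\cdot v}$, where $bG_vb^{-1}$ is abelian. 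If the inclusion were strict, $bG_vb^{-1}$ would be an abelian subgroup properly containing $G_v$, contradicting maximality; therefore $G_f=G_{b\cdot v}$ and $f$ is \tor. Thus once maximality of $G_v$ is known, the second assertion follows in both cases.

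For maximality, let $M$ be a maximal abelian subgroup of $G$ with $G_v\subset M$; I want to show $M=G_v$. Every $m\in M$ commutes with $G_v$, hence with $G_e\subset G_v$, so $M$ is contained in the centraliser $Z_G(G_e)$. The heart of the argument is to prove $M\subset G_w$; granting this, the inert hypothesis that the centraliser of $G_e$ in $G_w$ equals $G_v$ yields $M\subset Z_{G_w}(G_e)=G_v$, so $M=G_v$ and $G_v$ is maximal abelian.

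To show $M\subset G_w$, fix $m\in M$. Because $m$ centralises $G_v$, for every $g\in G_v$ we have $g\cdot(m\cdot v)=m\cdot(g\cdot v)=m\cdot v$, so $G_v$ fixes both $v$ and $m\cdot v$ and therefore the whole segment $[v,m\cdot v]$. Each edge $\epsilon$ of this segment is fixed by $G_v$, so $G_v\subset G_\epsilon$, while $G_\epsilon$ is contained in the stabiliser of its endpoint $v$; the first edge thus satisfies $G_\epsilon=G_v$, and reducedness (an edge whose group equals its initial vertex group must have its endpoints in one orbit) shows it is ascending. Iterating along the segment, I would argue that every edge of $[v,m\cdot v]$ is an ascending edge with group $G_v$, so that the segment is collapsed when one collapses the ascending edges adjacent to $v$; hence $v$ and $m\cdot v$ are identified to $w$ and $m\in G_w$. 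This last step — checking that the \emph{entire} segment, and not merely its first edge, consists of the ascending edges absorbed into $w$ — is the main obstacle, since an intermediate edge a priori only satisfies $G_v\subset G_\epsilon$ and could carry a strictly larger abelian stabiliser. Here I expect to use the structure of vertex groups of \vGBS trees from \cite{Moi1} (an abelian vertex group together with its ascending closure is precisely what forms $G_w$) together with reducedness to exclude such an enlargement, and the standing hypotheses that $e$ is \bea and neither slippery nor \psad to ensure that $w$ and the centraliser condition behave as required.
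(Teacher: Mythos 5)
The gap you flag yourself is genuine, and within your architecture it cannot be filled, because your plan is circular. You derive the type-$1$ \tor implication \emph{from} maximality of $G_v$, and you derive maximality from the claim that every edge of $[v,m\cdot v]$ has stabilizer exactly $G_v$. But the only thing that lets the induction pass an intermediate vertex is precisely the \tor implication: after the first edge $\epsilon_1$ you know $G_{\epsilon_1}=G_v$ and that its far endpoint is $u=b\cdot v$, yet a priori $G_v=G_{\epsilon_1}\subsetneq G_u=bG_vb^{-1}$, i.e.\ $\epsilon_1$ could be strictly ascending, and then the next edge of the segment need only contain $G_v$, not equal it. You cannot invoke maximality here: $bG_vb^{-1}$ is an abelian group containing $G_v$, but nothing places it inside your chosen maximal $M$, so no contradiction arises — indeed ruling out this strict inclusion is equivalent to the toricity of $\epsilon_1$ (apply the implication at $u$ to $b\cdot e$, whose stabilizer lies in $G_u=G_{\epsilon_1}$, to get $G_{\epsilon_1}=G_u=G_{u'}$ and continue). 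The structure theory of vertex groups from \cite{Moi1}, reducedness, and the \bea/non-slippery/non-\psad hypotheses do not by themselves exclude a strictly ascending edge in the fixed subtree, so the \tor implication must be proved \emph{before} maximality, not after.

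The paper runs in exactly the opposite order, using an ingredient absent from your proposal: the definition of an inert edge requires its properties to hold in \emph{every} tree of the reduced deformation space. For type $1$, from $G_e\subset G_f$ and $\langle G_f,G_e\rangle=G_v$ one gets $G_f=G_v$, so $f$ is ascending (as in your reduction) and $e$ may slide along $f$; since $e$ is still inert after the slide, the type-$1$ condition at the new initial vertex $v'$ gives $G_f=\langle G_e,G_f\rangle=G_{v'}$, so $f$ is \tor — with no appeal to maximality. Granting this, every ascending edge adjacent to $v$ is \tor, the subtree $\mathcal V$ fixed by $G_v$ has all its vertices in $\gr v$ and all its edges \tor, hence collapses onto $w$; any element commuting with $G_v$ is either elliptic fixing a point of $\mathcal V$ or hyperbolic with axis in $\mathcal V$, so it lies in $G_w$, and the fourth bullet of the definition (the centralizer of $G_e$ in $G_w$ is $G_v$) then gives maximality. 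Your closing step $M\subset Z_{G_w}(G_e)=G_v$ coincides with the paper's; everything feeding into it needs to be reorganized in the paper's order.
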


\begin{proof}
Let us first prove the implication $G_e\subset G_f \Rightarrow f$ is \tor.

If $e$ is of type 2, this point is in the definition. If $e$ is of type 1, call $v'$ the terminal vertex of $f$. By definition $\langle{G_f,G_e}\rangle=G_v$ thus the edge $f$ is ascending and $e$ may slide along $f$, thus after sliding as $e$ is still inert we must have $\langle G_e, G_f\rangle=G_{v'}$, as $G_e\subset G_f$ we have $G_f=G_{v'}$ and $f$ is \tor.

A consequence is that an ascending edge adjacent to $v$ is \tor. Call $\mathcal V$ the subtree fixed by $G_v$. Then every vertex of $\mathcal V$ is in $\gr v$ and every edge of $\mathcal V$ is \tor, thus $\mathcal V$ collapses onto $w$. Any element that commute to $G_v$ must either fix a point of $\mathcal V$ if it is elliptic, or have its axis contained in $\mathcal V$ if it is hyperbolic. In both cases it belongs to $G_w$. 
\end{proof}

\subsection{Compatibility JSJ tree over abelian groups}\label{compunivvgbs}

\paragraph{Expansion of inert edges}

If an edge $e$ is inert of type $1$, to \textit{expand} $e$ consists in performing an expansion on $v$ with group $G_{v}$ and set $\left\{e\right\}$  (see definition in Section \ref{prelim}).

If an edge $e$ is inert of type 2, we define $H_e$ as the largest subgroup such that for every tree in the reduced deformation space of $T$, and every edge $f$ with same initial vertex as $e$, we have $H_e \subset \langle G_e, G_f\rangle$.

The edge $e$ is also \bea, and  $G_v/G_e\simeq \Z/p^k\Z$. Let $a$ be an element of $G_v$ whose projection into $G_v/G_e$ is a generator. Let $i$ be the minimal integer such that there exist $q$ with $p\not | q$ and a \pbe element whose axis contains both $e$ and $a^{p^iq}\cdot e$. Note that as $a^{p^k}\cdot e = e$, we have $i\leq k$.  Define $F_e=\langle G_e, a^{p^i}\rangle$

If $F_e \subset H_e$, to \textit{expand} $e$ consists in performing an expansion on $v$ with group $F_e$ and set $\left\{e\right\}$.
The expansion of $e$ is not defined when $F_e \not \subset H_e$

\paragraph{Construction of the abelian compatibility JSJ tree}
\begin{definition}\label{Tab}
Let $G$ be a \vGBS group.
Let $T$ be a reduced abelian JSJ tree of $G$.
Call $\mathcal E$ the set of non-ascending slippery,  strictly ascending  and \tor $2$-slippery edges.

We define $T_{ab}$ as the $G$-tree obtained by expanding inert edges, blowing up dead ends, then collapsing simultaneously every edge of $\mathcal E$ and every \bea edges.
\end{definition}

Note that the wall of a dead end cannot be inert. There may be several blow up and expansion around a vertex, but the blowing up of dead ends, and the expansion of inert edges are made on distinct edges. 

\begin{lemma}
The tree $T_{ab}$ does not depend on the choice of $T$ in the reduced JSJ deformation space of $G$.
\end{lemma}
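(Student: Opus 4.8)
The plan is to follow the strategy of Proposition~\ref{construction}. By \cite[Corollaire 1.2]{ClayFor09} any two reduced $G$-trees of the abelian JSJ deformation space are joined by a finite admissible sequence of slides, inductions and $\mathcal A^{\pm 1}$-moves, every intermediate tree being reduced. Hence it suffices to prove that the construction of Definition~\ref{Tab} yields the same $G$-tree whether we start from $T$ or from $T'$, where $T$ and $T'$ differ by a single admissible move $\mathbf m$. As in Section~\ref{prelim} I would identify all non-vanishing orbits of edges and vertices of $T$ with those of $T'$. For such a move I would then carry out four verifications: (i) the set of orbits to be collapsed — the orbits of $\mathcal E$ together with the \bea orbits — agrees on $T$ and $T'$ under this identification; (ii) the set of inert orbits to be expanded, together with the group data attached to them, agrees; (iii) $\mathbf m$ commutes with the expansions of inert edges and with the blow-ups of dead ends; and (iv) $\mathbf m$ commutes with the simultaneous collapse.

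Several of these points are immediate from the way the relevant notions were defined. Being \pbe is a property of an element of $G$, hence independent of the tree; consequently being \bea is invariant under any move preserving the edge (a \bea edge lies on the axis of a hyperbolic \pbe element, so it is never vanishing and its orbit is identified across $T$ and $T'$). Likewise, inertness was defined to hold in every tree of the reduced deformation space, and the groups $H_e$ and $F_e$ governing the expansion of an inert edge of type~$2$ are defined by quantifying over the whole reduced deformation space; thus both the set of inert orbits and the expansion data $F_e=\langle G_e,a^{p^i}\rangle$ are invariant by construction, which gives (ii). For (i), membership in $\mathcal E$ is handled exactly as in Proposition~\ref{construction}: slipperiness and $2$-slipperiness are deformation-space invariants, and the only phenomenon to control is an edge passing between the \tor and the non-ascending types. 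By Lemma~\ref{lemmeascendant} such a transition forces the edge to be adjacent to a strictly ascending edge, whence it is collapsed on both sides (either as a member of $\mathcal E$ or because it becomes \tor $2$-slippery); since strictly ascending edges are collapsed anyway, the union of $\mathcal E$ with the \bea orbits is unchanged.

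For (iii) and (iv) I would treat the three move types in turn, as in Proposition~\ref{construction}. The blow-ups of dead ends commute with $\mathbf m$ by the argument already given there; the wall of a dead end is never inert (as noted after Definition~\ref{Tab}) and, by Lemma~\ref{forcementtoric}, the edge produced by an inert expansion is \tor and sits at a maximal abelian vertex, so the blow-ups and the inert expansions act on disjoint edges and may be performed in either order. For the collapses, a strictly ascending orbit and a vanishing orbit are collapsed in both $T$ and $T'$ (one of them being precisely the orbit involved in an induction or $\mathcal A^{-1}$-move), while a slide commutes with collapsing any edge distinct from the two it involves; the only rigid configuration, where a slide could leave the tree unchanged, is the loop of Figure~\ref{figurecas1-1simplementglissant} already treated there. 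The main obstacle I expect is (iii) for the expansion of inert edges of type~$2$: one must check that performing $\mathbf m$ alters neither the minimal integer $i$ nor the resulting group $F_e$, i.e. that the expansion commutes with a move changing the local geometry at the inert vertex. This again reduces to the fact that $i$ is read off from the axes of \pbe elements, which are tree-independent, together with Lemma~\ref{forcementtoric}, which forces every ascending edge at the inert vertex to be \tor so that the local \tor subtree collapses onto $w$ independently of $\mathbf m$. Once this is established, the orbit $\gr e$ is literally the same edge in $\mathbf m\cdot T$, and the commutation follows as in the other cases.
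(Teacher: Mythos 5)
Your proposal is correct and follows essentially the same route as the paper: the paper likewise reduces, via Clay--Forester and Proposition \ref{construction}, to checking that the expansion of inert edges commutes with a single admissible move, handling the slide case through Lemma \ref{forcementtoric} together with the dichotomy ``$f$ is \tor $2$-slippery and hence collapsed, or the slide is trivial by Levitt's rigidity,'' and the induction and $\mathcal A^{-1}$ cases by noting that the initial vertex of a strictly ascending edge is not maximal abelian, so no adjacent edge is inert. Your extra verifications --- invariance of the \bea orbits and of the type-$2$ data $H_e$ and $F_e$ --- are points the paper leaves implicit in the deformation-space-wide definitions, so they only make the same argument more explicit.
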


\proof
We show that the tree obtained before collapsing \bea edges does not depend on $T$.

Using Proposition \ref{construction}, we just have to prove that the expansion of an inert edge commutes with the Whitehead moves and collapses.

Let  $T$ be a reduced JSJ tree and $\mathbf m$ be an admissible move on $T$. 
\begin{itemize}
\item Assume that $\mathbf m$ is a slide $e/f$. Let $g$ be an inert edge. If $g\neq f, \bar f, \bar e$, then obviously the slide commutes with the expansion. As $f$ and $\bar f$ are slippery, they are not inert. If $\bar e$ is inert, by Lemma \ref{forcementtoric}, the edge $f$ is \tor. Then either $f$ is $2$-slippery and must be collapsed, and thus it is the same to expand $\bar e$ and collapse $f$ than sliding $e$, expand $\bar e$ and collapse $f$, or $f$ is not $2$-slippery, thus the slide has no effect on the tree \cite[Theorem 1, case 3]{Levitt05}.


\item If $\mathbf m$ is an induction, or an $\mathcal A^{-1}$-move on an edge $e$ of initial vertex $v$, as $e$ is strictly ascending, the group $G_v$ is not maximal abelian, thus no edge adjacent to $v$ is inert. \hfill\qed
\end{itemize}

Call $\tilde T_{ab}$ the tree obtained from $T_{ab}$ by collapsing edges coming from an expansion or a blow-up.

Note that $\tilde T_{ab}$ is refined by every JSJ tree.

\begin{lemma}\label{univell}
The tree $T_{ab}$ is universally elliptic.
\end{lemma}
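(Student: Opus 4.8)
The plan is to verify that every edge stabilizer of $T_{ab}$ is elliptic in every abelian $G$-tree $T'$. The edges of $T_{ab}$ fall into three families: those inherited from the reduced JSJ tree $T$ (the ones surviving the collapse of $\mathcal E$ and of the bearing edges), the edges created by blowing up a dead end, and the edges created by expanding an inert edge (of type $1$ or type $2$). For the first family there is nothing to do: such an edge stabilizer is an edge group of the JSJ tree $T$, hence universally elliptic over abelian groups by the very definition of a JSJ tree, so it is elliptic in $T'$. Thus the whole content lies in the new edges produced by the expansions and the blow-ups.

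For an expansion of an inert edge $e$ of type $2$, the new edge carries the group $F_e=\langle G_e,a^{p^i}\rangle$. Since $G_v/G_e\simeq\Z/p^k\Z$ is finite, $F_e$ contains $G_e$ with finite index, so $F_e$ is commensurable with the universally elliptic JSJ edge group $G_e$. I would then invoke the elementary fact that a finitely generated overgroup of finite index of a universally elliptic group is itself universally elliptic: in any abelian tree $T'$ the subgroup $G_e$ fixes a vertex, so the orbit of that vertex under $F_e$ is finite, hence bounded, and a finitely generated group with a bounded orbit on a simplicial tree fixes a point. The same finite-index argument covers the blow-up edges and the type-$1$ inert expansions whenever the relevant universally elliptic JSJ subgroup ($G_e$, resp.\ the wall group $G_f$) has finite index in $G_v$.

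The remaining and genuinely delicate case is a new edge carrying the full vertex group $G_v$ when $G_v$ has infinite index over the universally elliptic subgroup it contains; this happens for type-$1$ inert expansions (and, a priori, for some blow-ups). Here I would use that, by Lemma \ref{forcementtoric}, $G_v$ is maximal abelian, together with the defining property of an inert edge that the centralizer of $G_e$ in $G_w$ is exactly $G_v$. The strategy is a contradiction argument: suppose $G_v$ is not elliptic in some abelian tree $T'$. As $G_v$ is finitely generated abelian, it then contains an element $s$ acting hyperbolically on $T'$, and $G_v$ preserves the axis of $s$; since $G_e\subset G_v$ is elliptic and commutes with $s$, the group $G_e$ acts on this axis without translation, hence fixes it pointwise, so $G_e$ is contained in every edge stabilizer along it. Collapsing $T'$ to a single orbit of such an edge exhibits an abelian one-edge splitting of $G$ across which $G_e$ is an edge group and $s$ is hyperbolic.

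I expect the main obstacle to be turning this configuration into a contradiction with the inert hypotheses. The idea is that the element $s\in G_v$ that is hyperbolic in $T'$ is, by construction, potentially bi-elliptic and has $G_e$ in its edge stabilizers, which should force the expansion edge itself to be bearing — contradicting the fact that, in the construction of $T_{ab}$, the expansion edge survives the collapse of the bearing edges — or, alternatively, should force $e$ to become slippery or \psad after a suitable admissible sequence of moves, contradicting that $e$ is inert (hence neither slippery nor \psad in any tree of the deformation space). Making this transfer precise, and checking that the auxiliary abelian splitting obtained from $T'$ can be carried back into the reduced deformation space of $T$ while preserving the edge $e$, is where the work concentrates; the remark that $\tilde T_{ab}$ is refined by every JSJ tree is the tool I would use to relate the configuration in $T'$ to the JSJ deformation space.
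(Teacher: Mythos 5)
Your treatment of the inherited JSJ edges and of the type-$2$ inert expansions is correct and coincides with the paper's proof (the paper likewise disposes of $F_e$ by noting that $G_e$ is universally elliptic of finite index in it). The gap is exactly where you locate the ``genuinely delicate case'': for dead-end blow-ups and type-$1$ inert expansions your contradiction scheme is not carried out, and as sketched it cannot close. You produce an element $s\in G_v$ hyperbolic in some abelian tree $T'$ with $G_e$ fixing its axis pointwise; but this does not make $s$ \pbe (for that $s$ itself would have to stabilize an edge of some abelian tree, whereas your configuration only puts $G_e$ in the edge stabilizers), and it does not make the expansion edge \bea either, since $s$ lies in $G_v$ and is therefore \emph{elliptic} in the JSJ tree, so the expansion edge is contained in no axis of $s$ there. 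Neither of your two intended contradictions (with the collapse of \bea edges, or with $e$ being non-slippery and not \psad) is reachable from this configuration without substantial new input.

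What you miss is the structural fact already built into the definitions, which makes the hard case easy: both for a dead end with wall $f$ and for a type-$1$ inert edge $e$, the definition requires $\langle G_e,G_f\rangle=G_v$ for every other edge $f$ at $v$ (so the new edge group $G_v$ is generated by two edge groups of the JSJ tree, hence by universally elliptic subgroups). The paper then invokes \cite[Theorem 7.3]{Moi1} to conclude that a vertex group generated in this way by two edge groups is abelian; a finitely generated abelian group generated by universally elliptic elements is universally elliptic (its generators are elliptic and commute in any abelian $G$-tree, so the whole group fixes a point). This replaces your entire contradiction argument, and in particular your dichotomy ``finite index versus infinite index of the universally elliptic subgroup in $G_v$'' is not the relevant one: even when $G_e$ and $G_f$ each have infinite index in $G_v$, the generation property settles the case directly.
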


\begin{proof}
As every edge of $\tilde T_{ab}$ comes from an edge of a reduced JSJ tree, the tree $\tilde T_{ab}$ is universally elliptic.

We have to show that the edges obtained by a blow up or an expansion are universally elliptic.
Let $T_J$ be an abelian JSJ tree, and let $e$ be an edge of $T_J$ of initial vertex $v$.

If the vertex $v$ is a dead end with wall $e$ or if $e$ is an inert edge of type $1$, let $f\not \in \gr e$ be an edge of $T_J$ with initial vertex $v$.  Then $G_v=\langle G_e, G_f\rangle$, from \cite[Theorem 7.3]{Moi1} this implies that $G_v$ is abelian. The group $G_v$ is abelian and generated by universally elliptic elements thus universally elliptic. Thus the edge comming from a blow up of $v$ or an expansion of $e$ is universally elliptic

If $e$ is inert of type $2$, call $f$ the edge coming from the expansion. Then $G_e$ is universally elliptic and of finite index in $G_f$, thus $G_f$ is also universally elliptic.
\end{proof}
\subsection{Universal compatibility}

\begin{lemma}\label{univcomploc}
The tree $T_{ab}$ is compatible with every abelian JSJ tree $T$.
\end{lemma}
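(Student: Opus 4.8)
The plan is to follow the pattern of the proof of Corollary~\ref{Tcomp}, invoking the first point of \cite[Proposition 3.22]{Gl3b}: to prove that $T_{ab}$ is compatible with $T$ it suffices to check, for every edge $f$ of $T_{ab}$, that $T$ is compatible with the tree $T_f$ obtained from $T_{ab}$ by collapsing every orbit of edges except $\gr f$. First I would fix a reduced JSJ tree $\tilde T$ refined by $T$ (such a tree exists since $T$ lies in the JSJ deformation space, and collapsing its non-reduced edges produces a reduced JSJ tree refined by $T$). As $T_{ab}$ does not depend on the chosen reduced JSJ tree, we may carry out its construction starting from $\tilde T$. Every edge $f$ of $T_{ab}$ then falls into one of three families: (A) edges surviving from $\tilde T_{ab}$, (B) edges produced by blowing up a dead end, and (C) edges produced by expanding an inert edge (the edges of $\mathcal E$ and the \bea edges having been collapsed, they do not occur).

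For family (A), the tree $T_f$ is a collapse of $\tilde T_{ab}$. Since $\tilde T_{ab}$ is refined by every JSJ tree, the tree $T$ refines $\tilde T_{ab}$, hence $T$ refines $T_f$ by transitivity of refinement; as any tree is compatible with a tree it refines, $T$ and $T_f$ are compatible.

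Families (B) and (C) I would both handle through Lemma~\ref{compexpand}, taking for $\tilde e$ the relevant edge of $\tilde T$ and for $H$ an appropriate subgroup of $G_v$: in each case $\tilde T_{\tilde e}$ refines $T_f$ (collapsing every edge of $\tilde T_{\tilde e}$ but the expansion edge yields exactly $T_f$), so producing a common refinement of $T$ and $\tilde T_{\tilde e}$, which is precisely the conclusion of Lemma~\ref{compexpand}, upgrades by transitivity to compatibility of $T$ with $T_f$. If $f$ comes from blowing up a dead end $v$ with wall $\tilde e$, take $H=G_v$: the defining property of a dead end gives $\langle G_{\tilde e},G_{\tilde f}\rangle=G_v$ for every edge $\tilde f\notin\tilde{\gr e}$ with initial vertex $v$, so the hypothesis $H\subset\langle G_{\tilde e},G_{\tilde f}\rangle$ of Lemma~\ref{compexpand} is met. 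If $f$ comes from expanding an inert edge $\tilde e$ of type $1$, take again $H=G_v$; here the required inclusion is the defining equality $\langle G_{\tilde f},G_{\tilde e}\rangle=G_v$ for inert edges of type $1$. In both situations $\tilde e$ genuinely survives in $\tilde T$, since an inert edge (and the wall of a dead end) is neither slippery, nor \psad, nor vanishing.

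The delicate case is the expansion of an inert edge $\tilde e$ of type $2$, where $H=F_e$ is now a \emph{proper} subgroup of $G_v$; this is the step I expect to be the main obstacle. Here I would use exactly the standing hypothesis under which this expansion is defined, namely $F_e\subset H_e$. By definition $H_e$ is the largest subgroup satisfying $H_e\subset\langle G_{\tilde e},G_{\tilde f}\rangle$ for every edge $\tilde f$ sharing the initial vertex of $\tilde e$, in every tree of the reduced deformation space; hence $F_e\subset H_e\subset\langle G_{\tilde e},G_{\tilde f}\rangle$ for every such $\tilde f\notin\tilde{\gr e}$, which is precisely the hypothesis of Lemma~\ref{compexpand} with $H=F_e$. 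Thus in all of cases (B) and (C) the tree $T$ is compatible with $\tilde T_{\tilde e}$, hence with $T_f$. Having verified compatibility of $T$ with $T_f$ for every edge $f$ of $T_{ab}$, \cite[Proposition 3.22]{Gl3b} yields that $T_{ab}$ is compatible with $T$. The whole difficulty is concentrated in the type~$2$ bookkeeping: one must ensure that the subgroup $F_e$ chosen for the expansion is contained in $\langle G_{\tilde e},G_{\tilde f}\rangle$ for every neighbouring edge $\tilde f$, and this is guaranteed by the very definition of $H_e$ together with the requirement $F_e\subset H_e$ built into the construction of $T_{ab}$.
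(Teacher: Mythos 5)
Your proof is correct and takes essentially the same route as the paper: the reduction via point 1 of \cite[Proposition 3.22]{Gl3b} to the one-edge collapses $T_f$, the treatment of surviving edges through the refinement of $\tilde T_{ab}$ by $T$, and the appeal to Lemma \ref{compexpand} for blow-up and expansion edges all match the paper's argument, with your explicit verification of the hypothesis $H\subset\langle G_{\tilde e},G_{\tilde f}\rangle$ (taking $H=G_v$ for dead ends and type 1 inert edges, and $H=F_e\subset H_e$ for type 2) merely spelling out what the paper leaves implicit. One stray parenthetical is wrong --- the wall of a dead end \emph{is} slippery or \psad by definition --- but nothing in your argument depends on it, since the blow-up is performed directly on the chosen reduced JSJ tree $\tilde T$.
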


\begin{proof}
For $f$ an edge of $T_{ab}$ call $T_f$ the tree obtained by collapsing every edge of $T_{ab}$ except $f$.

By the point $1.$ of \cite[Proposition 3.22]{Gl3b}, we have to show that $T$ is compatible with $T_f$ for every edge $f$ of $T_{ab}$. 

Call $\tilde T$ a reduced JSJ tree refined by $T$. By construction, $\tilde T$ refines every tree $T_f$ with $f$ an edge of $T_{ab}$ not obtained by a blow up or an expansion, thus refines $\tilde T_{ab}$. Hence $T$ also refines $\tilde T_{ab}$.

If $f$ comes from a blow up or an expansion, then we may apply Lemma \ref{compexpand}. We obtain that $T$ is compatible with  $T_f$.

If follows that $T$ is compatible with $T_{ab}$.
\end{proof}

\begin{lemma}\label{elliptic}
Let $G$ be a \vGBS group. Any $G$-tree over abelian groups is elliptic with respect to $T_{ab}$ and $\tilde T_{ab}$
\end{lemma}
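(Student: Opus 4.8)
The plan is to reduce the statement to a single element and a single collapse. First I would observe that $\tilde T_{ab}$ is obtained from $T_{ab}$ by collapsing the edges coming from expansions and blow-ups (Definition \ref{Tab}); since collapsing a tree can only enlarge vertex stabilizers, any subgroup elliptic in $T_{ab}$ is automatically elliptic in $\tilde T_{ab}$, so it suffices to treat $T_{ab}$. Next, let $T'$ be an abelian $G$-tree and $e'$ an edge with stabilizer $A=G_{e'}$; after passing to a reduced tree in the deformation space of $T'$ and discarding the non-reduced orbits distinct from $\gr{e'}$, we may assume $T'$ is reduced and that $e'$ is reduced, $t$ remaining bi-elliptic throughout. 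The group $A$ is finitely generated abelian, so it suffices to prove that each $t\in A$ is elliptic in $T_{ab}$: if two elliptic elements of an abelian group had disjoint fixed-point sets their product would be hyperbolic, so the fixed sets of elements of $A$ pairwise meet, and Helly's property for subtrees of a tree then yields a common fixed point for a finite generating set, hence for $A$.

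It remains to show that every element of $A$ is elliptic in $T_{ab}$. The key point is that each such $t$ is \pbe: it stabilizes the edge $e'$ of the reduced abelian tree $T'$, hence is bi-elliptic there, which is exactly the definition. I would then exploit the construction of $T_{ab}$ from a reduced JSJ tree $T_J$. The expansions of inert edges and the blow-ups of dead ends each create an edge whose stabilizer equals one of its endpoint stabilizers (group $G_v$, $F_e$, or $H$ according to the case), hence a non-reduced edge; collapsing it returns to $T_J$, so each of these operations keeps us in the JSJ deformation space (for blow-ups this is already recorded in Section \ref{prelim}). Consequently the tree $\hat T$ obtained from $T_J$ by performing all expansions and blow-ups, but before collapsing $\mathcal E$ and the \bea edges, lies in the JSJ deformation space, and in particular an element is elliptic (resp.\ hyperbolic) in $\hat T$ if and only if it is so in $T_J$.

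Now I would split on the behaviour of $t$ in $T_J$. If $t$ is elliptic in $T_J$, it is elliptic in $\hat T$, and since $T_{ab}$ is a collapse of $\hat T$ and collapses preserve ellipticity, $t$ is elliptic in $T_{ab}$. If instead $t$ is hyperbolic in $T_J$, it is hyperbolic in $\hat T$; being \pbe and hyperbolic, every edge of its axis $\mathcal A_t$ in $\hat T$ lies in the axis of a hyperbolic \pbe element, so by definition every such edge is \bea. All \bea edges are collapsed in passing from $\hat T$ to $T_{ab}$, so the whole of $\mathcal A_t$ collapses to a point and $t$ becomes elliptic in $T_{ab}$. In both cases $t$ is elliptic in $T_{ab}$, which by the first paragraph gives that $A$ is elliptic in $T_{ab}$, and therefore in $\tilde T_{ab}$.

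The hard part will be the two invariance statements that make the dichotomy go through: that the expansions of inert edges (not only the blow-ups) do not leave the JSJ deformation space, which I would verify directly from the expansion construction by checking that each newly created edge is non-reduced; and that the \emph{entire} axis of a hyperbolic \pbe element consists of \bea edges and is therefore collapsed, which is immediate from the definition of \bea but is precisely what forces such a $t$ to become elliptic. The remaining ingredients — that a collapse never turns an elliptic element hyperbolic, and the Helly argument for finitely generated abelian groups — are routine, and the only further technical care needed is the initial reduction of $T'$, where one keeps the edge stabilized by each $t$ so that the elements considered are genuinely bi-elliptic in a reduced abelian tree and hence \pbe.
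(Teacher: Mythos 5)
Your proof is correct and takes essentially the same route as the paper's, whose one-paragraph argument is just the compressed form of yours: every element of an abelian edge group is \pbe (via a reduced one-edge collapse of $T'$), hence becomes elliptic in $T_{ab}$ because the entire axis of a hyperbolic \pbe element consists of \bea edges and is collapsed, and finite generation of the abelian edge group upgrades elementwise ellipticity to ellipticity of the subgroup. The additional verifications you spell out --- that expansions of inert edges and blow-ups of dead ends stay in the JSJ deformation space, and the Helly/Serre argument for commuting elliptic elements --- are exactly the ingredients the paper leaves implicit.
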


\begin{proof}
Let $E$ be an abelian subgroup of $G$ and assume that $G$ splits non-trivially over $E$ as an amalgam or an HNN extension.
By definition every element of $E$ is \pbe. As every \bea edges are collapsed, every element of $E$ is elliptic in $T_{ab}$, but as $E$ is  finitely generated, this implies that $E$ is elliptic.
\end{proof}

\begin{lemma}\label{dominerafine}
 Let $T_1$ and $T_2$ be two reduced universally elliptic $G$-trees. If every JSJ tree refines $T_2$ and if $T_1$ dominates $T_2$ then $T_1$ refines $T_2$.
\end{lemma}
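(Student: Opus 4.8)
The plan is to realize the refinement through a common JSJ refinement and then push it down. Since $T_1$ is universally elliptic, the Guirardel--Levitt result invoked in the proof of Proposition~\ref{univcomp} (Lemma~5.3 of \cite{Gl3a}) provides a JSJ tree $T_J$ refining $T_1$; by the hypothesis that every JSJ tree refines $T_2$, this same $T_J$ also refines $T_2$. I would record the two collapse maps $p\colon T_J\to T_1$ and $q\colon T_J\to T_2$, with collapsed forests $F_1$ and $F_2$, and reduce the statement "$T_1$ refines $T_2$'' to the forest containment $F_1\subseteq F_2$: indeed $F_1\subseteq F_2$ is exactly the condition that $q$ factors through $p$, yielding a collapse $T_1\to T_2$. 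So it suffices to prove that every component $C$ of $F_1$ is entirely collapsed by $q$.

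Fix such a component $C$, collapsing to a vertex $v$ of $T_1$, and let $G_v=\mathrm{Stab}(C)$ be the corresponding vertex stabilizer. Since $G_v$ is elliptic in $T_1$ and $T_1$ dominates $T_2$, the group $G_v$ is elliptic in $T_2$. Suppose, for contradiction, that $C$ is not collapsed by $q$: then $C$ contains an edge $e$ that survives in $T_2$, with stabilizer $G_e$. Let $\hat T_e$ be the one-edge tree obtained by collapsing in $T_2$ (equivalently in $T_J$) every edge orbit except $\gr e$. Then $\hat T_e$ is universally elliptic, because $G_e$ is an edge group of the universally elliptic tree $T_2$; and since every JSJ tree refines $T_2$ and $T_2$ refines $\hat T_e$, every JSJ tree refines $\hat T_e$ as well. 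Finally, $G_v$ being elliptic in $T_2$ is elliptic in $\hat T_e$, hence is conjugate into a single side of this one-edge splitting.

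The crux is to turn the hypothesis "\emph{every} JSJ tree refines $T_2$'' into a contradiction with the survival of $e$. I would refine $T_1$ to a JSJ tree $T_1^{J}$ (again possible as $T_1$ is universally elliptic); by hypothesis $T_1^{J}$ refines $\hat T_e$. Now $e$ has been collapsed into $v$, so the two endpoints of $e$ lie in $C$ and the cut defined by $\bar e=q(e)$ passes "through'' $v$; yet $G_v$ sits inside one side of $\hat T_e$ and therefore does not split across that cut. The goal is to show that no JSJ refinement of $T_1$ can reproduce the splitting $\hat T_e$: the piece of $T_1^{J}$ lying over $v$ is a JSJ decomposition of $G_v$, and since $G_v$ is elliptic in $\hat T_e$ this decomposition stays on one side of the cut, so it cannot furnish the edge separating the two sides of $\bar e$ through $v$. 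This forces $T_1^{J}$ not to refine $\hat T_e$, contradicting the hypothesis. Hence no surviving edge of $C$ exists, $C$ is collapsed by $q$, and so $F_1\subseteq F_2$, giving $T_1$ refines $T_2$.

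I expect the third paragraph to be the genuine difficulty, and it is where the full strength of the hypothesis is unavoidable: domination together with the mere existence of one common (even JSJ) refinement does not suffice. The free product $G=A*B*C$ already gives reduced universally elliptic trees $T_1$ (with $A$ central) and $T_2$ (with $B$ central) that dominate one another, admit the common JSJ refinement given by the star decomposition, and yet neither refines the other; there the hypothesis "every JSJ tree refines $T_2$'' fails precisely because $T_1$ is itself a JSJ tree not refining $T_2$. Making the straddling argument rigorous --- in particular controlling edges of $T_1^{J}$ that realize $\bar e$ elsewhere than over $v$, and handling the non-reduced intermediate trees exactly as in the one-edge analyses of Propositions~\ref{reduced} and~\ref{maxcasnonreduit} --- is the delicate point of the proof.
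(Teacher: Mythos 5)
Your first two paragraphs reproduce the paper's setup exactly (a JSJ tree $T_J$ refining $T_1$, hence refining $T_2$ by hypothesis, and the reduction to showing that every edge of $T_J$ collapsed in $T_1$ is collapsed in $T_2$), but the crux in your third paragraph is not only missing, as you concede --- the statement you propose to prove there is false. You aim to show that no JSJ refinement of $T_1$ can refine $\hat T_e$; yet under your own supposition that $e\in C$ survives in $T_2$, the tree $T_J$ is itself a JSJ tree refining $T_1$ in which the edge realizing the splitting $\hat T_e$ lies precisely in the component $C$ over $v$. There is no tension with $G_v$ being elliptic in $\hat T_e$: ellipticity only says $G_v$ fixes a vertex of $\hat T_e$, and it does not prevent the $G_v$-tree $C$ lying over $v$ from containing edges of $\gr e$ --- the splitting is assembled from the $G$-translates of such edges, not from the $G_v$-translates alone, so "the piece over $v$ stays on one side of the cut" is not a consequence of $G_v$'s ellipticity. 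The claim also fails for non-reduced edges for a second reason: a non-reduced edge of $T_J$ collapsed in $T_1$ may perfectly well survive in $T_2$ without contradiction, which is why no argument ignoring reducedness can work.

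What actually closes the proof in the paper is more elementary and hinges on exactly the distinction your proposal never invokes. First, one collapses one at a time the non-reduced edges of $T_J$ that are collapsed in $T_1$; each intermediate tree lies in the same deformation space, hence is still a JSJ tree, so the hypothesis "every JSJ tree refines $T_2$" --- re-applied at each step, and this is where its full strength is used --- guarantees the new tree still refines $T_2$. Then, for a reduced edge $e$ collapsed in $T_1$, collapsing $\gr e$ strictly enlarges the set of elliptic subgroups: there is a subgroup $H$ elliptic in $T_1$ but not in the JSJ tree, whose minimal subtree consists of edges in $\gr e$. Since $T_1$ dominates $T_2$, the group $H$ is elliptic in $T_2$, and since the JSJ tree refines $T_2$, this forces $\gr e$ to lie in the collapsed forest of the map to $T_2$. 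Your $A\ast B\ast C$ example correctly identifies that the hypothesis is indispensable, but the mechanism exploiting it is this iteration together with the new-elliptic-subgroup count, not a non-refinement statement about $\hat T_e$; as it stands, your plan has a genuine gap at its central step.
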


\begin{proof}
 By \cite[Lemma 5.3]{Gl3a}, there exists a JSJ tree $T_{JSJ}$ which refines $T_1$. As any JSJ refines $T_2$, the tree $T_{JSJ}$ also refines $T_2$. We prove that every edge of $T_{JSJ}$ collapsed in $T_1$ is also collapsed in $T_2$. 

We may collapse one by one the non-reduced edges of $T_{JSJ}$ which must be collapsed in $T_1$. The remaining tree is still a JSJ tree, hence still refines $T_2$. Let $e$ be a reduced edge that is collapsed in $T_1$. As $e$ is reduced, there are more elliptic elements after collapsing $e$. Yet $T_1$ dominates $T_2$, thus elliptic of $T_1$ are elliptic of $T_2$. Necessarily, the edge $e$ is collapsed in $T_2$.
\end{proof}

\begin{lemma}\label{estcompatible}
Let $T_1$ and $T_2$ be $G$-trees. If every JSJ tree refines $T_2$ and if $T_1$ is elliptic with respect to $T_2$, then $T_1$ and $T_2$ are compatible.
\end{lemma}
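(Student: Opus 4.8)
The plan is to prove this by exhibiting a common refinement of $T_1$ and $T_2$ explicitly, rather than by the contradiction method used for the incompatibility lemmas. The hypothesis that $T_1$ is elliptic with respect to $T_2$ means exactly that every edge stabilizer of $T_1$ is elliptic in $T_2$, which is precisely the condition needed to form the \emph{refinement of $T_1$ by $T_2$}: blow up each vertex $u$ of $T_1$ into the minimal $G_u$-invariant subtree $Y_u\subset T_2$ (a single point when $G_u$ is already elliptic in $T_2$), attaching each edge of $T_1$ with initial vertex $u$ at the point of $Y_u$ fixed by its (elliptic) stabilizer. Call the resulting $G$-tree $\hat T$. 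By construction $\hat T$ refines $T_1$, since collapsing the inserted $T_2$-pieces recovers $T_1$, and its elliptic subgroups are exactly those elliptic in both $T_1$ and $T_2$; in particular $\hat T$ dominates $T_2$. Because $\hat T$ refines $T_1$, any tree refining both $\hat T$ and $T_2$ refines $T_1$ and $T_2$, so it suffices to prove that $\hat T$ refines $T_2$.

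The next step is to analyse the canonical $G$-equivariant morphism $\hat T\to T_2$ furnished by the domination: it maps each inserted piece $Y_u$ isometrically into $T_2$ and sends each lifted edge of $T_1$ to the geodesic between the images of its endpoints. The tree $\hat T$ refines $T_2$ exactly when this morphism is a collapse, that is, when no two edges are folded together over a common vertex. To exclude folding I would bring in the JSJ hypothesis together with the observation that the edge groups of $T_2$, being those of a collapse of a JSJ tree, are universally elliptic, hence elliptic in $T_1$ and in $\hat T$. The idea is that a fold in $\hat T\to T_2$ would manufacture a universally elliptic tree strictly dominating $T_2$ while remaining dominated by the JSJ deformation space; but Lemma \ref{dominerafine} upgrades domination to refinement under exactly the standing assumption that every JSJ tree refines $T_2$, and this would force the extra tree to refine $T_2$, contradicting the existence of the fold. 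Concretely I would isolate the universally elliptic structure carried by $\hat T$ and apply Lemma \ref{dominerafine} to deduce that the morphism identifies no two edges at any vertex, i.e. is a collapse, so that $\hat T$ refines $T_2$.

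The main obstacle is precisely this last step: controlling the morphism $\hat T\to T_2$. The difficulty is genuine, because $T_1$ — and therefore $\hat T$ — need not be universally elliptic (its edge stabilizers are only assumed elliptic in $T_2$, not in \emph{every} tree), so Lemma \ref{dominerafine} cannot be applied to $\hat T$ as it stands. All the force of the hypothesis "every JSJ tree refines $T_2$" is concentrated here: without it two genuinely incompatible splittings may have mutually elliptic edge groups — as already occurs for distinct free splittings of a free group — and in that situation $\hat T$ maps onto $T_2$ by folding rather than by collapsing. I therefore expect the careful bookkeeping of where $\hat T\to T_2$ could fold, and the verification that every such fold contradicts the maximality encoded in Lemma \ref{dominerafine}, to be the technical heart of the proof, with the construction of $\hat T$ and the reduction to "$\hat T$ refines $T_2$" being essentially formal.
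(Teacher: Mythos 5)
Your first paragraph is essentially the paper's first step in disguise: the blow-up of the vertices of $T_1$ into minimal invariant subtrees of $T_2$ is exactly the content of \cite[Lemma 3.2]{Gl3a}, which the paper invokes to get a refinement $T'$ of $T_1$ dominating $T_2$, and your reduction to ``$\hat T$ refines $T_2$'' matches the paper's goal. But at the decisive step there is a genuine gap, and you have correctly located it yourself: Lemma \ref{dominerafine} requires \emph{universally elliptic} reduced trees, $\hat T$ need not be universally elliptic, and your proposed substitute --- a case analysis of folds in the morphism $\hat T\to T_2$, with each fold somehow contradicting Lemma \ref{dominerafine} --- is never carried out and is not how the difficulty gets resolved. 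As written, the proof stops exactly where it would have to start.

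The missing idea is a collapse, not a fold analysis. From $\hat T$ (the paper's $T'$), collapse every edge whose stabilizer is not universally elliptic, obtaining $T''$. Since $T_2$ is universally elliptic --- as you yourself observe, its edge groups are edge groups of a collapse of a JSJ tree --- this collapse does not destroy the domination: $T''$ still dominates $T_2$. After further collapsing non-reduced edges, $T''$ and $T_2$ are both reduced and universally elliptic, $T''$ dominates $T_2$, and every JSJ tree refines $T_2$; now Lemma \ref{dominerafine} applies legitimately and yields that $T''$ refines $T_2$. Since $\hat T$ refines $T''$ by construction, $\hat T$ refines $T_2$, and $\hat T$ is the desired common refinement of $T_1$ and $T_2$. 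Note that this route never needs to control the morphism $\hat T\to T_2$ edge by edge: the hypothesis ``every JSJ tree refines $T_2$'' is consumed entirely inside Lemma \ref{dominerafine}, applied to the universally elliptic core $T''$ of $\hat T$ rather than to $\hat T$ itself, which is precisely the manoeuvre your proposal is missing.
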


\begin{proof}
Note that as $T_2$ is refined by every JSJ tree it is reduced.

 As $T_1$ is elliptic with respect to $T_2$, by \cite[Lemma 3.2]{Gl3a} there exists $T'$ a refining tree of $T_1$ which dominates $T_2$. As $T_2$ is  universally elliptic, the tree $T''$ obtained from $T'$ by collapsing the non universally elliptic edges still dominates $T_2$. We may also assume that $T''$ is reduced.
The trees $T''$ and $T_2$ are universally elliptic and reduced. By Lemma \ref{dominerafine} the tree $T''$ refines $T_2$. Thus $T'$ is a common refinement of $T_1$ and $T_2$.
\end{proof}

\begin{corollary}\label{tildetab}
Let $G$ be a \vGBS group. Then $\tilde T_{ab}$ is universally compatible.
\end{corollary}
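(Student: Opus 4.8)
The plan is to deduce the corollary directly from the two ingredients that have just been assembled, namely Lemma \ref{elliptic} and Lemma \ref{estcompatible}. Recall that being universally compatible means being compatible with every $G$-tree over abelian subgroups, so I would begin by fixing an arbitrary $G$-tree $T_1$ over abelian groups and aim to produce a common refinement of $T_1$ and $\tilde T_{ab}$.

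The strategy is to apply Lemma \ref{estcompatible} to the pair $(T_1,\tilde T_{ab})$, taking $T_2=\tilde T_{ab}$. For this I must verify the two hypotheses of that lemma. The first is that every JSJ tree refines $\tilde T_{ab}$: this is exactly the remark recorded right after the proof that $T_{ab}$ is well defined, since by construction every edge of $\tilde T_{ab}$ comes from an edge of a reduced JSJ tree while the edges created by blow-ups and by expansions of inert edges have all been collapsed. The second hypothesis is that $T_1$ is elliptic with respect to $\tilde T_{ab}$, and this is precisely the content of Lemma \ref{elliptic}, which asserts that \emph{any} $G$-tree over abelian groups is elliptic with respect to $\tilde T_{ab}$.

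Having checked both hypotheses, Lemma \ref{estcompatible} yields that $T_1$ and $\tilde T_{ab}$ are compatible, i.e. admit a common refinement. Since $T_1$ was an arbitrary abelian $G$-tree, this shows that $\tilde T_{ab}$ is compatible with every $G$-tree over abelian groups, which is the definition of universal compatibility.

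I do not expect any genuine obstacle here: the substance of the argument lives in Lemmas \ref{elliptic} and \ref{estcompatible}, and this corollary is merely their combination. The one point that must be handled carefully is the choice of tree: Lemma \ref{estcompatible} requires the target tree to be refined by \emph{every} JSJ tree, and this holds for $\tilde T_{ab}$ but not for $T_{ab}$ (whose blow-up and expansion edges are not present in a reduced JSJ tree). This is exactly why the statement concerns $\tilde T_{ab}$ rather than $T_{ab}$, and why Lemma \ref{elliptic} was stated for ellipticity with respect to $\tilde T_{ab}$ specifically. The universal compatibility of the full tree $T_{ab}$ will then have to be obtained separately, but the present corollary only claims it for $\tilde T_{ab}$.
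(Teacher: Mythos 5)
Your proof is correct and is exactly the paper's argument: the paper's proof simply states that the corollary is an immediate consequence of Lemmas \ref{elliptic} and \ref{estcompatible}, and you have correctly spelled out the verification of both hypotheses (that every JSJ tree refines $\tilde T_{ab}$, as noted after the construction, and that any abelian $G$-tree is elliptic with respect to $\tilde T_{ab}$ by Lemma \ref{elliptic}). Your closing remark on why the statement concerns $\tilde T_{ab}$ rather than $T_{ab}$ is also consistent with how the paper proceeds, handling $T_{ab}$ separately via Lemma \ref{localcomp}.
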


\begin{proof}
That is an immediate consequence of Lemmas \ref{elliptic} and \ref{estcompatible}.
\end{proof}

\begin{lemma}\label{commut}
Let $T_J$ be a reduced JSJ tree. Let $e$ be an inert edge. Call $T'_J$ the tree obtained by expending $e$ in $T_J$. Call $f$ the new edge of $T'_J$ adjacent to $e$. Call $v$ and $v'$ the initial and terminal vertices of $f$, and $T_v$ and $T_{v'}$ the connected components of $v$ and $v'$ in $T'_J\setminus f$. Then\begin{itemize}
\item every $t$ whose characteristic space is contained in $T_{v'}$ and which belongs to the centralizer of $G_e$ belongs to $G_{v'}$,
\item for every edge $e'\neq f$ adjacent to $v'$ in $T_{v'}$ and every element $t$ whose characteristic space is contained in $T_{v}$, the group $G_{e'}$ is not contained in the centralizer of $t$.
\end{itemize}
Moreover, in both point, we may replace $G_e$ and $G_{e'}$ by $G_f$.

\end{lemma}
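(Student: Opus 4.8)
The plan is to analyze everything through fixed subtrees, using repeatedly that an element commuting with a subgroup $H$ preserves the subtree $\mathrm{Fix}(H)$ fixed pointwise by $H$. First recall the shape of the expansion: whether $e$ is inert of type $1$ or $2$, after expanding we have $G_f = G_v \supseteq G_e$ and $G_{v'} = G_u$, where $u$ denotes the initial vertex of $e$ in $T_J$; the edge $e$ is adjacent to $v$, while the remaining edges at $u$ become adjacent to $v'$ and lie in $T_{v'}$. I will also use Lemma \ref{forcementtoric}: the group $G_u$ is maximal abelian, every edge $e'$ at $u$ with $G_e \subseteq G_{e'}$ is \tor, and, writing $w$ for the vertex obtained by collapsing the ascending edges adjacent to $u$, every element commuting with $G_u$ lies in $G_w$ while the inert hypothesis gives $Z_{G_w}(G_e) = G_u$.

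For the first point, set $Y = \mathrm{Fix}(G_e)$. Since $t$ centralizes $G_e$ it preserves $Y$, and $Y$ contains $e,f,v,v'$. I would first understand $Y \cap T_{v'}$: an edge $e'$ of $T_{v'}$ at $v'$ lies in $Y$ iff $G_e \subseteq G_{e'}$, and then $e'$ is \tor by Lemma \ref{forcementtoric}, hence ascending (a reduced \tor edge is ascending). Iterating, $Y \cap T_{v'}$ is a subtree made of ascending edges, so it collapses to $w$ when the ascending edges adjacent to $u$ are collapsed. If $t$ is elliptic, the nearest-point projection of $\mathrm{Fix}(t)\subseteq T_{v'}$ onto $Y$ is $t$-fixed and lies in $Y \cap T_{v'}$; after collapsing the ascending subtree this point maps to $w$, so $t \in G_w$, and since $t$ centralizes $G_e$ the inert condition yields $t \in Z_{G_w}(G_e) = G_u = G_{v'}$. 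If instead $t$ were hyperbolic, its axis would lie in the $t$-invariant subtree $Y$ and in $T_{v'}$, hence in the ascending subtree $Y \cap T_{v'}$; after collapsing, the whole axis maps to $w$, so again $t \in G_w$ and then $t \in G_u$, forcing $t$ to be elliptic in $T_J$, a contradiction. Thus $t$ is elliptic and $t \in G_{v'}$.

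For the second point I argue by contradiction, assuming $t$ centralizes $G_{e'}$ and has characteristic space in $T_v$. The key observation is that $G_{e'} \not\subseteq G_e$: otherwise $\bar{e'}$ could slide along $e$ (they share the vertex $v'=u$ and $G_{e'}\subseteq G_e$), making $e$ slippery, or, if $e' \in \gr e$, forcing $gG_eg^{-1}=G_e$ and hence $e$ to be \pa — both contradicting that $e$ is inert, i.e. non-slippery and non-\psad. Consequently $e \notin \mathrm{Fix}(G_{e'})$, so $\mathrm{Fix}(G_{e'}) \cap T_v$ is empty or equals $\{v\}$, because the only edge at $v$ leading into $T_v$ is $e$. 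Now $t$ preserves $\mathrm{Fix}(G_{e'})$ and its characteristic space lies in $T_v$; the projection argument then forces $t$ to fix the separating edge $f$ (either $t$ fixes $v$, whence $t \in G_v = G_f$ fixes $f$, or the bridge from $T_v$ to $\mathrm{Fix}(G_{e'})$ passes through $f$). In both cases the characteristic space of $t$ meets $f \notin T_v$, a contradiction; hence $G_{e'} \not\subseteq Z(t)$.

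Finally, the replacement of $G_e$ and $G_{e'}$ by $G_f$ in the last clause is formal: since $G_e \subseteq G_f$ we have $Z(G_f) \subseteq Z(G_e)$, so the first statement for $G_f$ is immediate from the one for $G_e$; and since $G_f = G_v \supsetneq G_e$ gives $e \notin \mathrm{Fix}(G_f)$, the second statement for $G_f$ follows verbatim from the argument above with $G_{e'}$ replaced by $G_f$. The main obstacle is the hyperbolic sub-case of the first point: a priori nothing prevents a hyperbolic element commuting with $G_e$ from translating along the \tor (ascending) part of $\mathrm{Fix}(G_e)$ inside $T_{v'}$, and it is precisely the centralizer clause $Z_{G_w}(G_e) = G_u$ in the definition of an inert edge — activated by collapsing the ascending subtree onto $w$ — that rules this out.
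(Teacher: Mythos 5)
Your proposal follows essentially the same route as the paper's own proof: for the first point you show that the $G_e$-fixed part of $T_{v'}$ (axis or bridge of $t$) consists of \tor edges via Lemma~\ref{forcementtoric}, collapse it onto $w$, and invoke the centralizer clause $Z_{G_w}(G_e)=G_{v'}$ of the inertness definition (your ``iterating'' step is glossed at exactly the level of the paper's ``we easily see''); for the second point you deduce $G_{e'}\not\subseteq G_e$ from $e$ being non-slippery and non-\psad and conclude by the same bridge/projection argument through $f$, and the \emph{moreover} clause is dispatched by the same two one-line observations ($G_e\subseteq G_f$, resp.\ $G_f\not\subseteq G_e$). The only blemish, in a sub-case the paper does not even separate, is your phrase that $gG_eg^{-1}=G_e$ would make $e$ \pa --- being \pa requires the \emph{strict} containment $gG_eg^{-1}\subsetneq G_e$, so the equality case needs a separate word (it is excluded by the inertness hypotheses holding in every tree of the reduced deformation space) --- but this does not change that the two arguments match in substance and rigor.
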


\begin{proof}
Call $w$ the vertex obtained by collapsing in $T_J$ every \tor edge adjacent to the initial vertex of $e$.
Let $t$ be an element whose characteristic space is contained in $T_{v'}$. Assume that it commutes with every element of $G_e$. If $t$ is hyperbolic then $G_e$ is contained in every edge group of it axis. But applying Lemma \ref{forcementtoric}, we easily see that every edge of the axis is \tor, and that the whole axis of $t$ in $T_J$ collapses onto $w$. Thus by the fourth point of the definition of an inert edge, the element $t$ belongs to $G_{v'}$.

If $t$ is elliptic, then again applying Lemma \ref{forcementtoric}, then path between $v'$ and the characteristic space of $t$ contains only \tor edges. Thus $t$ belongs to $G_w$, thus to $G_{v'}$. 

As $G_e \subset G_f$ the result is also true for $G_f$.

Let us treat now the second point of the lemma. As $e$ is inert, it is neither slippery nor \pa. Thus $G_{e'}$ is not include in $G_e$. As every edge of $T_v$ adjacent to $v$ has edge group $G_e$, if $G_{e'}$ centralizes an element $t$ with characteristic space intersecting $T_v$, then $t$ belongs to $G_v$. As $G_v\subset G_{v'}$, the characteristic space of $t$ is not contained in $T_v$.

As $G_f$ is not include in $G_e$, the result is also true replacing $G_{e'}$ by $G_f$.
\end{proof}

Let $e$ be an edge of $T_{ab}$ coming from the expansion of an inert edge or the blow up of a dead end. Call $T_e$ the tree obtained by collapsing every edge in $T_{ab}$ except the ones in $\gr e$.

\begin{lemma}\label{localcomp}
Let $T$ be an abelian $G$-tree which dominates $T_e$, then $T$ is compatible with $T_e$. 
\end{lemma}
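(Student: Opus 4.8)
The plan is to exhibit an explicit common refinement of $T$ and $T_e$, obtained by resolving $T$ over the single-edge tree $T_e$, and to reduce compatibility to a non-overlapping statement about minimal subtrees that is precisely what Lemma \ref{commut} guarantees.

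First I would unpack the hypothesis. Since $T$ dominates $T_e$, every vertex group and every edge group of $T$, being elliptic in $T$, is elliptic in $T_e$; and $G_e$ is universally elliptic by Lemma \ref{univell}, hence elliptic in $T$. I view $T_e$ as the one-edge splitting with edge group $C=G_e$ whose two sides are the images of $T_v$ and $T_{v'}$ in the notation of Lemma \ref{commut}.

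Next I would build the resolution $\hat T$ of $T$ over $T_e$: replace each vertex $v$ of $T$ by the minimal $G_v$-invariant subtree $M_v\subseteq T_e$, which is a single point exactly when $G_v$ is elliptic in $T_e$, and re-attach each edge $\epsilon$ of $T$, with endpoints $v_1$ and $v_2$, at points of $M_{v_1}$ and $M_{v_2}$ fixed by $G_\epsilon$ (such fixed points exist because $G_\epsilon$ is elliptic in $T_e$). By construction $\hat T$ collapses onto $T$, so it refines $T$, and there is a canonical $G$-equivariant map $\rho\colon \hat T\to T_e$ sending each $M_v$ by its inclusion and each edge $\epsilon$ to a point. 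The tree $\hat T$ is then a common refinement of $T$ and $T_e$, making them compatible, as soon as $\rho$ is a collapse, i.e. as soon as the preimage of each edge of $T_e$ is a single edge of $\hat T$.

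The main obstacle, and the real content of the lemma, is to show that $\rho$ does not fold, equivalently that for two adjacent vertices $v_1,v_2$ of $T$ the subtrees $M_{v_1}$ and $M_{v_2}$ never share an edge of $T_e$. Suppose they did share an edge in the orbit $\gr e$, with stabilizer a conjugate of $C$. Then minimality of $M_{v_1}$ and $M_{v_2}$ would force elements of $G_{v_1}$ and of $G_{v_2}$ acting across this edge, producing either a hyperbolic element whose characteristic space lies on the $T_{v'}$-side and centralizes $C$ yet does not lie in $G_{v'}$, or an edge group attached on the $T_{v'}$-side centralizing an element whose characteristic space meets the $T_v$-side; both situations are excluded by the two points of Lemma \ref{commut} (and, through minimality, by the fourth defining property of an inert edge). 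Hence no overlap occurs, $\rho$ is a collapse, and the construction is complete. When $\gr e$ instead arises from blowing up a dead end one has $G_e=G_v$ maximal abelian and self-centralizing in $G_w$, so the same overlap is impossible by a direct argument that replaces the appeal to Lemma \ref{commut}.
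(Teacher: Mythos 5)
There is a genuine gap: your resolution construction is incoherent precisely because of the domination hypothesis. Since $T$ dominates $T_e$, \emph{every} vertex stabilizer of $T$ is elliptic in $T_e$, so by your own description every $M_v$ is a single point and $\hat T$ is just $T$ (possibly subdivided) --- no new edge orbit is ever inserted. Your map $\rho$ then cannot exist as described: by minimality of $T_e$ any equivariant map onto it is surjective, whereas a map sending each $M_v$ to a point and each edge of $T$ to a point would have image a discrete set of vertices. What actually happens is that each edge $\epsilon$ of $T$ gets stretched over the path between the two chosen fixed points, and for $\rho$ to be a collapse one would need an edge (or subdivision segment) of $T$ whose stabilizer is exactly a conjugate of $G_e$. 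Nothing provides this: when $e$ comes from blowing up a dead end, $G_e=G_v$ is a full vertex group of the JSJ deformation space, and a reduced JSJ tree $T$ dominates $T_e$ without containing any edge with that stabilizer --- yet the lemma asserts compatibility for such $T$. Consequently your ``no overlap of $M_{v_1}$ and $M_{v_2}$'' step is vacuous (points never share an edge) and does not yield that $\rho$ is a collapse, so the appeal to Lemma \ref{commut} does no work where you place it.

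The missing idea is that the common refinement must be obtained by \emph{expanding} $T$, i.e.\ inserting a new orbit of edges with stabilizer $G_e$ at a well-chosen vertex --- the edge $\gr e$ has to be recreated inside $T$, not found there. This is how the paper proceeds: collapse the non-universally elliptic edges of $T$ to get $T'$ (which still dominates $T_e$ since $T_e$ is universally elliptic by Lemma \ref{univell}, and which is refined by a JSJ tree); use Lemma \ref{univcomploc} to form the least common refinement $T'_e$ of $T'$ and $T_e$, which lies in the same deformation space as $T'$ and differs from it by one non-reduced orbit $\gr e$; then lift the expansion back to $T$. The key point is that the subtree $T_w$ of $T$ collapsing onto the relevant vertex $w'$ of $T'$ contains a \emph{unique} vertex $w$ fixed by $G_e$, because all edges of $T_w$ are non-universally elliptic while the centralizer of $G_e$ in $G_{w'}$ is $G_{\hat v}$, which is universally elliptic --- this is where Lemma \ref{commut} genuinely enters, and it is used again to show that the lifts of the edges $\mathcal F$ of $T'_e$ issuing from $v'$ all attach at $w$. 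Expanding $w$ in $T$ with group $G_e$ and set $\mathcal F$ then produces a tree refining both $T$ and $T_e$. Your proposal lacks this expansion step and the ellipticity/centralizer argument that locates $w$, which constitute the actual content of the lemma.
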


\begin{proof}The method consists in recreating the edge $e$ in $T$.

Let $T'$ be the $G$-tree obtained from $T$ by collapsing every non-universally elliptic edge. By Lemma \ref{univell} l'arbre $T_e$ is universally elliptic, thus the tree $T'$ still dominates $T_e$.

Now, $T'$ is universally elliptic, thus refined by a JSJ tree. By Lemma \ref{univcomploc}, the tree $T_e$ is compatible with any JSJ tree, therefore $T'$ and $T_e$ are compatible. Call $T'_e$ the least common refinement. The tree $T'_e$ is also universally elliptic. As $T'$ dominates $T_e$, the trees $T'$ and $T'_e$ belong to the same deformation space.

If $T'_e=T'$ then $T$ refines $T_e$ thus is compatible with it.
Otherwise, $T'_e$ has one orbit of edges $\gr e$ more than $T'$ and this orbit is not reduced. Call $v$ and $v'$ the endpoints of $e$ in $T'_e$ such that $G_{v'}=G_{e}\subset G_v$. 

Fix $\tilde T_e$ a JSJ tree refining $T'_e$. We may assume that if an edge of $\tilde T_e$ is not reduced, then it is not collapsed in $T'_e$. Call also $e$ the unique lift of $e$ in $\tilde T_e$. As $G_e=G_{v'}$, the vertex  $v'$ as a unique preimage in $\tilde T_e$. Indeed the terminal vertex $\tilde v$ of $e$ in $\tilde T_e$ has group $G_{v'}$ and collapses onto $v'$, if an edge adjacent to $\tilde v$ collapse onto $v'$, as $G_{\tilde v}=G_{v'}$, this edge may not be reduced. This is in contradiction with the construction of $\tilde T_e$.

Let $\widehat T_e$ be a reduced JSJ tree refined by $\tilde T_e$. As $e$ in $T_{ab}$ comes from an expansion or a blow up, call $\hat v$ the vertex on which $e$ collapses in $\widehat T_e$. Let $\mathcal G$ be the set of edges of $\widehat T_e$ adjacent to $\hat v$. 

 Let $f\neq e, \bar e$ be an edge of $T'_e$ with initial vertex $v'$, call $\tilde f$ its lift in $\tilde T_e$. Then $\tilde f$ has initial vertex $\tilde v$. If $\tilde f$ is reduced then it is the lift of an edge of $\mathcal G$. If it is not reduced, as $\tilde T_e$ is minimal, it is in the path of between $\tilde v$ and the lift of some edge $g$ of $\mathcal G$. But $G_g\subset G_{\tilde v'}$, thus $G_g\subset G_{\tilde f}$. Thus in both cases there exists an edge $g\in \mathcal G$ such that $G_g\subset G_f$.

Let $w'$ be the vertex on which $e$ collapses in $T'$. We have $G_w=G_v$. Call $T_w$ the subtree of $T$ which collapses on $w'$.
As $G_e\subset G_w$ is universally elliptic, there exists a vertex $w$ of $T_w$ fixed by $G_e$. We claim that $w$ is uniquely determined.

Indeed, the tree $T'$ is obtained from $T$ by collapsing non-universally elliptic edges. Thus $T_w$ has no universally elliptic edges. But the centralizer of $G_e$ in $G_{w'}$ is $G_{\hat v}$ (see Lemma \ref{commut}) which is universally elliptic. Thus if an edge of $T_w$ is fixed by $G_e$, its stabilizer is contained in $G_{\hat v}$, and thus universally elliptic. Which is absurd.

Call $\mathcal F$ the set of edges of $T'_e$ with initial vertex $v'$. Take $f\in \mathcal F$ with $f\neq \bar e$ an edge of $T'_e$ of initial vertex $v'$, as $f$ is not collapsed in $T'$, it has a unique lift in $T$ (that we also call $f$), let us show that $f$ has initial vertex $w$ in $T$.

 There exists $g\in \mathcal G$ such that  $G_g\subset G_f$, now by Lemma \ref{commut}, the element which commutes with $G_f$ and fixes a point in $T_w$ belongs to $G_{\hat v}$. But again, as $G_{\hat v}$ is universally elliptic, no edge of $T_w$ has its stabilizer contained in $G_{\hat w}$, thus the initial edge of $f$ must be $w$. We may thus perform and expansion on $w$ with group $G_e$, and set $\mathcal F$. 
 
 The obtained tree refines $T'_e$, thus $T_e$.
\end{proof}

\begin{proposition}
The tree $T_{ab}$ is universally compatible.
\end{proposition}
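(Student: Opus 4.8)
The plan is to reduce the statement to the one-edge criterion of Guirardel--Levitt and then split the edges of $T_{ab}$ according to how they were produced. Fix an arbitrary $G$-tree $T$ over abelian groups; I must show $T$ is compatible with $T_{ab}$. By point $1$ of \cite[Proposition 3.22]{Gl3b} it suffices to prove that $T$ is compatible with $T_f$ for every edge $f$ of $T_{ab}$, where $T_f$ is obtained from $T_{ab}$ by collapsing every orbit of edges except $\gr f$. Every edge of $T_{ab}$ is of exactly one of two kinds: either it survives in $\tilde T_{ab}$, i.e.\ it comes neither from an expansion of an inert edge nor from a blow-up of a dead end, or it is one of the edges created by such an expansion or blow-up.

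First I would treat the edges $f$ of the first kind. For such an $f$ the tree $T_f$ is exactly the one-edge collapse of $\tilde T_{ab}$ that keeps $\gr f$, since collapsing the expansion and blow-up edges turns $T_{ab}$ into $\tilde T_{ab}$ by definition of the latter. By Corollary \ref{tildetab} the tree $\tilde T_{ab}$ is universally compatible, hence compatible with $T$; applying \cite[Proposition 3.22]{Gl3b} to the pair $(\tilde T_{ab},T)$ then shows that $T$ is compatible with each one-edge collapse of $\tilde T_{ab}$, in particular with $T_f$.

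For an edge $e$ of the second kind, $T_e$ is precisely the tree appearing in Lemma \ref{localcomp}, so the goal is to put myself in a position to invoke that lemma. By Lemma \ref{elliptic} the edge stabilizers of $T$ are elliptic in $T_{ab}$; since $T_e$ is a collapse of $T_{ab}$ (and collapsing a tree only enlarges the family of elliptic subgroups), they remain elliptic in $T_e$, so $T$ is elliptic with respect to $T_e$. By \cite[Lemma 3.2]{Gl3a} there is then a $G$-tree $\hat T$ over abelian groups that refines $T$ and dominates $T_e$. Lemma \ref{localcomp} applies to $\hat T$ and shows $\hat T$ is compatible with $T_e$; taking a common refinement of $\hat T$ and $T_e$ and using that $\hat T$ refines $T$, this common refinement also refines $T$, whence $T$ and $T_e$ are compatible. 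Combining the two cases, $T$ is compatible with every $T_f$, hence with $T_{ab}$, and as $T$ was arbitrary, $T_{ab}$ is universally compatible.

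The main obstacle is the detour through $\hat T$ in the second case: Lemma \ref{localcomp} requires the ambient tree to dominate $T_e$, which an arbitrary abelian $T$ need not do, so one must first refine $T$ to a dominating tree via \cite[Lemma 3.2]{Gl3a}, verify that the refinement is still over abelian groups, and only then transport the resulting compatibility back down to $T$. The reason a cheaper argument is unavailable is exactly that $T_e$ is \emph{not} refined by every JSJ tree (the edge $e$ comes from an expansion or blow-up and is absent from $\tilde T_{ab}$), which blocks a direct appeal to Lemma \ref{estcompatible} and is precisely why Lemma \ref{localcomp} had to be isolated.
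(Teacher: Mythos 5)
Your proof is correct and follows essentially the same route as the paper's: both reduce via \cite[Proposition 3.22]{Gl3b} to the one-edge collapses $T_f$, handle the edges surviving in $\tilde T_{ab}$ via Corollary \ref{tildetab}, and handle the expansion/blow-up edges by combining Lemma \ref{elliptic} with \cite[Lemma 3.2]{Gl3a} to produce a refinement dominating the relevant tree and then invoking Lemma \ref{localcomp}. The only (immaterial) difference is bookkeeping: the paper refines the given tree once at the outset into a tree dominating all of $T_{ab}$, whereas you perform the refinement per edge in the second case and transport compatibility back down afterwards.
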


\begin{proof}
Let $\tilde T$ be an abelian $G$-tree. By Lemma \ref{elliptic}, the tree $\tilde T$ is elliptic with respect to $T_{ab}$, applying \cite[Lemma 3.2]{Gl3a}, there exists $T$ a refining tree of $\tilde T$ which dominates $T_{ab}$. Let $e$ be an edge of $T_{ab}$, and $T_e$ the tree obtained by collapsing every edge orbit except $\gr e$.

If $e$ is not collapsed in $\tilde T_{ab}$, applying Corollary \ref{tildetab}, the tree $T$ is compatible with $T_e$.
If $e$ is collapsed in $\tilde T_{ab}$, applying  Lemma \ref{localcomp}, the tree $T$ is compatible with $T_e$. From \cite[Proposition 3.14]{Gl3b}, the $T$ is compatible with $\tilde T$, and consequently $\tilde T$ is compatible with $T_{ab}$. 
\end{proof}

\subsection{Maximality}
\begin{proposition} Let $G$ be a \GGBS group. The tree $T_{ab}$ dominates every universally compatible $G$-tree.
\end{proposition}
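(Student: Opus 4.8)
The plan is to run the same reduction that proved the corresponding statement for \GBSn{n} groups in the previous section. By the first point of \cite[Proposition 3.22]{Gl3b} it suffices to prove that $T_{ab}$ dominates every universally compatible $G$-tree $T_e$ with a single orbit of edges $\gr e$; the cited proposition then upgrades this to domination of an arbitrary universally compatible tree. So I fix such a one-edge tree $T_e$ and, using Lemma~\ref{rafinementdeT}, I take a JSJ tree refining it and collapse all its superfluous non-reduced orbits to obtain a reduced JSJ tree $T'$ in which $\gr e$ is the only orbit not collapsed in $T_e$. The argument then splits according to the fate of $\gr e$ in the passage from $T'$ to $T_{ab}$ described in Definition~\ref{Tab}.

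If $\gr e$ survives in $T_{ab}$ — that is, $\gr e$ is neither in $\mathcal E$, nor vanishing, nor \bea — then $T_{ab}$ is built from $T'$ by expansions and blow-ups (which only refine) together with collapses of orbits other than $\gr e$, so collapsing every remaining orbit of $T_{ab}$ recovers $T_e$; hence $T_{ab}$ refines $T_e$ and a fortiori dominates it. Since the edge $\gr e$ is a genuine edge of a reduced JSJ tree, it is never one of the edges created by an inert-edge expansion or a blow-up, so those extra orbits of $T_{ab}$ never need to be analysed separately for domination. If instead $\gr e$ is collapsed for one of the reasons already present over $\Z^n$ — membership in $\mathcal E$, being \psa, or vanishing — then $\gr e$ falls under one of the cases treated in Propositions~\ref{reduced} and~\ref{maxcasnonreduit}, which are stated for arbitrary \vGBS groups and show that $T_e$ cannot be universally compatible; this contradiction rules the situation out. (As in the hypothesis of Proposition~\ref{reduced}, the case where $G$ is an ascending HNN-extension has a degenerate JSJ deformation space and must be dispatched separately.)

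The genuinely new work therefore concerns the orbits collapsed in $T_{ab}$ only because they are \bea, and here I would argue again by contradiction with universal compatibility. If $\gr e$ is \bea then some edge $e$ lies on the axis $\mathcal A_t$ of a hyperbolic \pbe element $t$; since $t\cdot e\in\gr e$, the translates $t^n\cdot e$ are edges of $\gr e$ on $\mathcal A_t$, so $t$ stays hyperbolic after collapsing every orbit but $\gr e$, i.e.\ $t$ is hyperbolic in $T_e$. By definition $t$ is bi-elliptic in some reduced abelian $G$-tree $T_t$, and the universal compatibility of $T_e$ would force $T_e$ and $T_t$ to be compatible. The plan is to combine the edge $\mathrm{Fix}(t)$ of $T_t$ with the line $\mathcal A_t\subset T_e$ through the criterion of Lemma~\ref{incompatibility}: I choose elements whose characteristic spaces detect, in $T_t$, that $t$ fixes an edge lying between two transverse pieces, while in $T_e$ those same spaces are forced apart along $\mathcal A_t$, producing the ``crossing bridges'' configuration forbidden by a common refinement. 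This is exactly the incompatibility exhibited by the $BS(2,2)$ example of the introduction.

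I expect the main obstacle to be making this choice of detecting elements uniform and, above all, controlling the interaction with the expansions of inert edges, since the axis of $t$ may pass through an inert edge whose expansion is precisely what ``takes the axis away'' before it is collapsed. Lemma~\ref{commut} should supply the needed control, identifying which elements centralising $G_e$ (equivalently $G_f$) must lie in the relevant vertex group and thereby pinning down the position of $\mathcal A_t$ relative to the expanded edge; this is the local analysis I would develop in detail. Once the \bea case yields its contradiction, the three possibilities — survival of $\gr e$, the $\mathcal E$/\psa/vanishing cases from Propositions~\ref{reduced} and~\ref{maxcasnonreduit}, and the new \bea analysis — together show that $T_{ab}$ dominates every one-edge universally compatible tree, and \cite[Proposition 3.22]{Gl3b} then finishes the proof.
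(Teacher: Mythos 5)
Your skeleton does match the paper's: reduce to one-edge universally compatible trees via \cite[Proposition 3.22]{Gl3b}, pass to a JSJ refinement, reuse Propositions \ref{reduced} and \ref{maxcasnonreduit}, and kill the case where $\gr e$ itself is \bea by a hyperbolic/bi-elliptic contradiction. That last case, however, is far easier than you anticipate: if a \pbe element $t$ is hyperbolic in $T_e$ and bi-elliptic in some abelian tree $T_t$, then any common refinement $\hat T$ would make $t$ hyperbolic (since $\hat T$ refines $T_e$) while containing an edge with the same stabilizer as the edge of $T_t$ fixed by $t$ -- a contradiction with no choice of detecting elements, no appeal to Lemma \ref{incompatibility}, and no interaction with inert expansions; the paper disposes of this in one sentence. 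The ``local analysis'' you flag as the main obstacle simply does not arise there.

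The genuine gap is in the non-reduced case, and it sits exactly where you wave it away with ``those extra orbits of $T_{ab}$ never need to be analysed separately.'' First, Lemma \ref{rafinementdeT} does not give you a reduced JSJ refinement: it only makes the \emph{collapsed} orbits reduced, and $\gr e$ itself may be non-reduced -- that is the whole point of Proposition \ref{maxcasnonreduit}, and in that situation $\gr e$ can perfectly well be an edge created by an expansion or blow-up. Second, in the non-reduced situation the decisive object is not $\gr e$ but the adjacent edge $f$ at the valence-$2$ vertex $v$: the proof of Proposition \ref{maxcasnonreduit} uses crucially that $f$, being collapsed in $T_{comp}$ and not \tor, is slippery, \psad, or vanishing. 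For $T_{ab}$ a new possibility appears that the proposition does not cover: $f$ collapsed only because it is \bea, i.e.\ $f$ in the inert-edge territory. If $f$ is inert and $e$ is precisely the expansion of $f$, domination holds \emph{because} $T_{ab}$ contains the expansion edge -- the opposite of your claim. Otherwise one must enumerate the failure modes, which is the real content of the paper's proof: $G_{v'}/G_f$ not cyclic of prime-power order; the centralizer of $G_e$ in $G_w$ strictly containing $G_v$; $f$ inert of type $2$ with $G_e\subsetneq F_f$ or with $F_f\subsetneq G_e$; or an inert property failing in another reduced JSJ tree. Each requires a bespoke argument (an expansion with an abelian group $E$ incomparable with $G_e$ combined with Lemma \ref{constructionnoncomp}; the construction of $F_f$ producing a \pbe element whose axis shows $e$ itself is \bea; a direct domination observation; a transfer of move sequences via Lemma \ref{lift}). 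None of this is in your proposal, and none of it follows from Propositions \ref{reduced} and \ref{maxcasnonreduit} as stated.
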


\begin{proof}

Let $T'$ be a universally compatible $G$-tree with a unique orbit of edges $\gr e$. Let $\tilde T_J$ be a JSJ tree which refines $T'$. We may assume that $\tilde T_J$ has a most one orbit of non-reduced edges and if so this orbit is $\gr e$.

If $\tilde T_J$ is reduced, the only case not treated in Proposition \ref{reduced}, is when $\gr e$ is \bea. But if $\gr e$ is \bea, this exactly means that there exists an hyperbolic element of $T'$ which belongs to an edge group of another abelian $G$-tree. This immediately contradict the universal compatibility of $T'$.

If $\tilde T_J$ is not reduced, recall the notation of Proposition \ref{maxcasnonreduit}, fix $e$ a representative of $\gr e$, call $v$ the initial vertex of $e$ and $v'$ its terminal vertex. We assume that $G_v=G_e$. Call $w$ the vertex obtained by collapsing every \tor edge adjacent to $v'$. The vertex $v$ is of valence at least $2$, and $f\not \in \gr e$ is an edge of initial vertex $v$. The cases not treated in Proposition \ref{maxcasnonreduit} are the ones where $f$ is an inert edge and $e$ is not the expansion on $f$. To be more specific, after excluding the subcases already treated, the remaining cases are

the valence of $v$ is $2$, the edge $f$ is \bea but not \pa, not \pde, not slippery, not vanishing (and not \tor) and 
\begin{enumerate}
\item either $G_e=G_v\subsetneq G_{v'}$ and there is no prime $p$ such that $G_{v'}/G_f\simeq \Z/p^k\Z$,
\item or the centralizer of $G_e$ in $G_w$ strictly contains $G_v$.
\item or $G_e=G_v\subsetneq G_{v'}$, the edge $f$ is inert of type 2, and $G_e\subsetneq F_f$,
\item or $G_e=G_v\subsetneq G_{v'}$ and $f$ is  inert of type 2, and $F_f \subsetneq G_e$,
\item or one of the property of inert edges does not hold for $f$ in a reduced  JSJ tree  $T'_J$.
\end{enumerate}

Remember that if $f$ is inert of type 2, as $G_{v'}/G_f \simeq \Z/p^k\Z$ with $p$ a prime number, we must have $G_e\subset H_f$ or $H_f\subset G_e$. The definition of $F_f$ is given in Section \ref{compunivvgbs}.

In the two first cases there exists an abelian  subgroup $E$ of $G_w$ containing $G_f$ and not comparable with $G_e$. Let $a$ be an element in $G_e$ but not in $E$ and $b$ an element in $E$ but not in $G_e$. 

Let $\widehat T$ be the tree obtained from $T_J$ by performing an expansion on $v'$ with group $E$, and set $\left\{f\right\}$.

Let $c$ be an element fixing the terminal edge of $f$ but not $f$ (such an element exists since $f$ is not ascending).
Then $a$ fixes the edge $e$ in $\mathcal E_{\left\{c,bcb^{-1}\right\}}(T_e)$, but no vertex of $\mathcal E_{\left\{c,bcb^{-1}\right\}}(\widehat T)$, by Lemma \ref{constructionnoncomp}, these $T_e$ and $\widehat T$ are not compatible. Thus $T_e$ is not universally compatible.

In the third case, by construction of $F_f$, there exist an element $a\in G_{v'}\subsetneq F_f$ and a \pbe element whose axis contains $f$ and $a\cdot f$, as $e$ is between $f$ and $a\cdot f$ this implies that $e$ is \bea, thus $T_e$ cannot be universally compatible.

In the fourth case, the tree $T_{ab}$ obviously dominates $G_e$.

In the last case, call $T_J$ the reduced JSJ obtained by collapsing $e$ in $\tilde T_J$. There exists a sequence $S$ of admissible moves such that $S\cdot T_J=T'_J$. 
We prove that we may apply a sequence $S'$ at $\tilde T_J$ such that $S\cdot \tilde T_J$ still refines $T'$. As $f$ is not slippery, and not \pa, the only way $f$ may appear in $S$ is in a slide $f/g$ or $\bar f/g$. From Lemma \ref{lift}, if no slide of $\bar f$ occurs, then $S$ is admissible in $\tilde T_J$ and $S\cdot \tilde T_J$ refines $T'$. If a slide of $\bar f$ along an edge $g$ occurs, then  $g$ is \tor. Thus in $\tilde T_J$ it suffices to replace $f/g$ by $e/g$.
%
%
\end{proof}

\section{Algorithmic construction}
\subsection{\texorpdfstring{Inconstructibility of  the abelian compatibility JSJ of \vGBS groups}{}}

In \cite{BoMaVe}, Bogopolski, Martino and Ventura show that there exist four automorphisms $(\varphi_i)_{1\leq i\leq4}$ of $\Z^4$ such that the semi-direct product $G=\Z^4\rtimes_{(\varphi_i)} F_4$, where $F_4$ is the free group in four generators, has an undecidable conjugacy problem. From this example, we give a family of \vGBS groups for which it is impossible to describe algorithmically the abelian compatibility JSJ tree.

\begin{proposition}\label{JSJconj}
 Let  $G$ the group described in the previous paragraph and $T$ be the $G$-tree associated to the presentation with one orbit of vertices (whose stabilizer are all equal to $\Z^4$) and four orbits of edges.
Let $x_1$ and $x_2$ be two primitive elements of $\Z^4$. We build $G(x_1,x_2)$ the fundamental group of the following graph of groups $\Gamma(x_1,x_2)$: we add to $\Gamma$ two loops $\ell_1$ and $\ell_2$ whose edge groups an both $\Z$. The injections of $G_{\ell_i}$ are defined by $n\mapsto x_i^n$. 

The group $G(x_1,x_2)$ is then isomorphic to $\langle G, t_1, t_2 | t_ix_it_i^{-1}=x_i\rangle$.
Then the compatibility JSJ tree of $G(x_1,x_2)$ is trivial if and only if $x_1$ et $x_2$ are conjugated.
\end{proposition}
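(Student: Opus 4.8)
The plan is to run the construction of Definition \ref{Tab} on the obvious reduced abelian JSJ tree $\hat T$ of $G(x_1,x_2)$ that extends $T$: one orbit of vertices with stabiliser $\Z^4$, the four \tor edges $s_1,\dots,s_4$ coming from the automorphisms $\varphi_i$, and the two loops $\ell_1,\ell_2$ with edge groups $\langle x_1\rangle,\langle x_2\rangle$. The first observation is that each $s_j$ is $2$-slippery: its stabiliser is the whole vertex group $\Z^4$, so any two other edges at $v$ (for instance the two loops) slide along it simultaneously; hence the four \tor edges lie in $\mathcal E$ and collapse. Each loop $\ell_i$ is on the contrary reduced and non-ascending, since $\langle x_i\rangle$ is a primitive rank-one subgroup of $\Z^4$, and it is neither \tor nor \psad (the latter because $t_i$ centralises $x_i$ exactly, and no $\varphi_w(x_i)$ can be a proper power of $x_i$, elements of $GL_4(\Z)$ preserving primitivity). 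Thus whether $T_{ab}$ is trivial reduces to one question: are the loops collapsed, that is, are they slippery or \bea?

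The key computation for the forward direction is that sliding $\ell_1$ across a \tor edge $s_j$ conjugates the injection of its edge group by $\varphi_j^{\pm1}$, so an admissible sequence of such slides makes the group of $\ell_1$ equal to $\langle\varphi_w(x_1)\rangle$ for any $w\in F_4$. Since in $G=\Z^4\rtimes F_4$ conjugation by $\Z^4$ is trivial, one has $x_1\sim x_2$ if and only if $x_2=\varphi_w(x_1)$ for some $w$. So when $x_1\sim x_2$ I can slide $\ell_1$ until it becomes parallel to $\ell_2$, i.e. carries the group $\langle x_2\rangle$ at a common vertex; then $\ell_1$ slides along $\ell_2$, which exhibits $\ell_2$ (and symmetrically $\ell_1$) as non-ascending slippery, hence in $\mathcal E$. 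I would then note that no vertex is a dead end and no edge is inert, both conditions failing because two loops with non-comparable rank-one groups violate $\langle G_h,G_f\rangle=G_v$; so no blow-up or expansion produces a surviving edge. Every edge then collapses and $T_{ab}$ is trivial.

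Conversely, when $x_1\not\sim x_2$ I claim the loops survive. They are not slippery: a slide involving $\ell_i$ forces a rank-one group, and after any admissible sequence the only rank-one groups present are the $\langle\varphi_w(x_i)\rangle$, so a slide between the two loops would require $\varphi_w(x_2)\in\langle\varphi_{w'}(x_1)\rangle$, i.e. $x_1\sim x_2$, which is excluded. They are not \psad (same primitivity argument) and not vanishing. It remains to show they are not \bea, hence not inert: a hyperbolic element whose axis meets $\ell_i$ has non-trivial $t_i$-exponent and so lies in the maximal abelian subgroup $\langle x_i,t_i\rangle\cong\Z^2$; but this $\Z^2$ occurs as a leaf vertex group in $G*_{\langle x_i\rangle}\langle x_i,t_i\rangle$, and because $\Z^4/\langle x_i\rangle$ is infinite there is no $BS(2,2)$-type refinement making a power of $t_i$ bi-elliptic, so $\langle x_i,t_i\rangle$ is never an edge group of a reduced abelian tree and no such element is \pbe. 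The loops are thus collapsed neither into $\mathcal E$ nor as \bea edges; they remain, while the \tor edges collapse, leaving the non-trivial graph with one vertex $G$ and the two loops. Hence $T_{ab}$ is non-trivial.

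The main obstacle is precisely the \bea claim in the non-conjugate case: ruling out that some hyperbolic element through a loop becomes bi-elliptic in an \emph{exotic} abelian splitting. Making this rigorous requires controlling the whole abelian deformation space — for instance through the abelian JSJ and an ellipticity argument in the spirit of Lemma \ref{elliptic} — to prove that $\langle x_i,t_i\rangle$ can never be an edge group, which is exactly the failure of the \pbe phenomenon when $\Z^4/\langle x_i\rangle$ is torsion-free of positive rank. By contrast the two implications reduce cleanly to the single combinatorial fact that $\ell_i$ is slippery exactly when $x_1$ and $x_2$ lie in the same $\varphi$-orbit, i.e. are conjugate in $G$; this is where the reduction to the conjugacy problem of $G$ enters, since it is this slipperiness that cannot be detected algorithmically.
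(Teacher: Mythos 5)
Your skeleton coincides with the paper's proof on every step the paper actually carries out: the four \tor edges are $2$-slippery hence collapsed, no vertex is a dead end, no edge is inert (the paper says this via $\langle x_1,x_2\rangle$ having infinite index in $\Z^4$, you via the failure of $\langle G_h,G_f\rangle=G_v$ and of the $\Z/p^k\Z$ condition --- same thing), the loops are not \psa by primitivity and $\varphi_w\in GL_4(\Z)$, and the whole question reduces to slipperiness of the loops, which via slides across the \tor edges (your computation $\langle x_i\rangle\mapsto\langle\varphi_w(x_i)\rangle$ is correct) holds exactly when $x_1$ and $x_2$ are conjugate in $G$. Up to that point your write-up is a slightly expanded version of the paper's argument.

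The genuine gap is exactly where you flagged it, in the non-\bea claim, and the repair you propose is false. First, a hyperbolic element whose axis crosses $\ell_1$ need not lie in $\langle x_1,t_1\rangle$: take $x_2t_1$ or $q t_1$ for $q$ a stable letter of a \tor loop. Second, and fatally, the $BS(2,2)$-type refinement does \emph{not} require the edge group of the loop to have finite index in the vertex group. Let $A$ be an isomorphic copy of $G(x_1,x_2)$ in which the stable letter $t_1$ is renamed $s$, let $B=\langle b\rangle\times\langle u\rangle\cong\Z^2$, and amalgamate over $\langle x_1,s\rangle=\langle b,u^2\rangle$ (both copies of $\Z^2$, since $s$ is a trivial-monodromy stable letter over $\langle x_1\rangle$) via $x_1=b$, $s=u^2$. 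Eliminating $b$ and $s$ from the presentation of the amalgam gives $\langle\, G, u, t_2 \mid [u,x_1]=[t_2,x_2]=1\,\rangle\cong G(x_1,x_2)$ with $u=t_1$; the index of $\langle x_1\rangle$ in $\Z^4$ never enters. This is a reduced, minimal $G(x_1,x_2)$-tree over abelian groups whose edge group $\langle x_1,t_1^2\rangle\cong\Z^2$ contains $t_1^2$, so $t_1^2$ is bi-elliptic there while hyperbolic in $T(x_1,x_2)$ --- precisely the phenomenon your argument was supposed to exclude. Hence your stated reason ("because $\Z^4/\langle x_i\rangle$ is infinite there is no $BS(2,2)$-type refinement, so no such element is \pbe") is wrong, and your converse direction does not go through as written. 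Note that the paper's own proof never verifies the non-\bea condition at all --- it passes directly from "not \psa, not inert, no dead end" to "collapsed if and only if slippery" --- so you correctly isolated the one point the paper leaves implicit, but it cannot be settled by the finite-index argument you give; whatever justification one supplies must explain why Definition \ref{Tab} does not collapse the loops on account of the splitting exhibited above.
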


\begin{proof}
Call $T(x_1,x_2)$ the Bass-Serre tree associated to $\Gamma(x_1,x_2)$. The $G$-tree $T(x_1,x_2)$ is an abelian JSJ tree of $G(x_1,x_2)$. The four \tor edges are collapsed in $T_{ab}$ since they are $2$-slippery, and the vertex is not a dead end. As $\langle x_1, x_2 \rangle$ is not of finite index in the vertex group, no edge is inert. As $x_1$ and $x_2$ are primitive and the $\varphi$ are in $Gl_n(\Z)$, the edges $l_i$ are not \psa. Thus $l_1$ and $l_2$ are collapsed if and only if they are slippery. But the only edge which may slide along $l_1$ is $l_2$. Thus $l_1$ is collapsed if and only if $x_2$ is conjugate to a power of $x_1$ in $G$, but again as the elements $x_1$ and $x_2$ are primitive elements,  the edge $l_1$ is collapsed if and only if $x_1$ and $x_2$ are conjugated in $G$.
\end{proof}

\begin{corollary}
 There is no algorithm that permit to decide whether $G(x_1,x_2)$ has a trivial compatibility JSJ tree.
\end{corollary}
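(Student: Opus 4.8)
The plan is to set up a \emph{reduction}: an algorithm deciding triviality of the compatibility JSJ tree of the groups $G(x_1,x_2)$ would solve an undecidable problem. Concretely, suppose for contradiction that such an algorithm $\mathcal A$ exists, taking as input a finite graph of groups presenting a \vGBS group and returning whether its abelian compatibility JSJ tree is trivial. The assignment $(x_1,x_2)\mapsto \Gamma(x_1,x_2)$ is plainly computable, so running $\mathcal A$ on $\Gamma(x_1,x_2)$ would decide, by Proposition \ref{JSJconj}, exactly whether $x_1$ and $x_2$ are conjugate in $G$. It therefore suffices to exhibit an undecidable conjugacy question among primitive elements of $\Z^4$.

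Next I would identify this conjugacy question with an orbit problem. Writing $H=\langle \varphi_1,\dots,\varphi_4\rangle \le GL_4(\Z)=\mathrm{Aut}(\Z^4)$ and computing in $G=\Z^4\rtimes_{(\varphi_i)}F_4$, for $x,y\in\Z^4$ one finds $(w,g)(x,1)(w,g)^{-1}=(\varphi_g(x),1)$, so that $x$ and $y$ are conjugate in $G$ if and only if $y\in Hx$. Thus conjugacy between elements of the fiber $\Z^4$ is precisely the orbit problem for $H$ acting on $\Z^4$. This is exactly the source of undecidability in the Bogopolski--Martino--Ventura example \cite{BoMaVe}: their theorem says the conjugacy problem of $G$ is decidable if and only if $H$ is orbit decidable, and the $\varphi_i$ are chosen so that it is not. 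Hence the orbit relation $y\in Hx$ on $\Z^4$ is undecidable.

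It then remains to pass to \emph{primitive} vectors, since Proposition \ref{JSJconj} requires $x_1,x_2$ primitive (this primitivity is what guarantees the loops $\ell_i$ are not \psa). Here I would use that every element of $GL_4(\Z)$ preserves the content, i.e. the gcd of the coordinates: if $u,v\in\Z^4$ have contents $c_u\neq c_v$ they cannot lie in a common $H$-orbit, while if $c_u=c_v=c>0$ then writing $u=cu'$, $v=cv'$ with $u',v'$ primitive one has $v\in Hu$ if and only if $v'\in Hu'$. Since the content is computable, this is a total computable reduction of the full orbit problem to its restriction to primitive vectors, which is therefore still undecidable.

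Combining these steps gives the contradiction: from primitive $x_1,x_2$ the algorithm $\mathcal A$ applied to $\Gamma(x_1,x_2)$ would decide whether $x_2\in Hx_1$, that is, the primitive orbit problem, which is impossible. Hence no algorithm deciding triviality of the compatibility JSJ tree of $G(x_1,x_2)$ can exist. The main obstacle, and the only nonroutine point, is making sure the undecidability genuinely lives in the fiber $\Z^4$ and among primitive vectors: the first is handled by the equivalence between the conjugacy problem of $G$ and orbit decidability of $H$, and the second by the content reduction above.
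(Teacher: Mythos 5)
Your proof is correct, and on the key undecidability input it takes a genuinely different route from the paper. The paper's proof is two lines: it invokes the fact (asserted without proof) that conjugacy between \emph{hyperbolic} elements of a \vGBS group is decidable, so that undecidability of the full conjugacy problem of $G=\Z^4\rtimes_{(\varphi_i)}F_4$ from \cite{BoMaVe} forces conjugacy between \emph{elliptic} elements to be undecidable, and then concludes via Proposition \ref{JSJconj}. You instead work directly in the fiber: the computation $(w,g)(x,1)(w,g)^{-1}=(\varphi_g(x),1)$ (correct, since the fiber is normal, every conjugate of a fiber element arises this way) identifies conjugacy of fiber elements with the orbit problem for $H=\langle\varphi_1,\dots,\varphi_4\rangle\le GL_4(\Z)$, and you derive its undecidability from the Bogopolski--Martino--Ventura equivalence between solvability of the conjugacy problem in $G$ and orbit decidability of $H$, rather than from the hyperbolic/elliptic dichotomy. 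More importantly, you address a point the paper's proof silently skips: Proposition \ref{JSJconj} only applies to \emph{primitive} $x_1,x_2$, so one needs undecidability of the orbit problem restricted to primitive vectors. Your content argument --- every $A\in GL_4(\Z)$ preserves the gcd of the coordinates (integrality gives $c_u\mid c_{Au}$, and applying $A^{-1}$ gives the reverse divisibility), so $v\in Hu$ iff the contents agree and the primitive parts lie in the same $H$-orbit (with the zero vector handled trivially) --- is exactly the computable reduction needed, and it is sound. In sum: the paper's argument buys brevity at the cost of relying on a general decidability fact about \vGBS groups and of leaving the primitivity restriction implicit; yours is self-contained modulo the BMV theorem and makes explicit that the undecidability genuinely lives among primitive fiber elements, which is what the reduction $(x_1,x_2)\mapsto\Gamma(x_1,x_2)$ actually requires.
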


\begin{proof}
The conjugacy problem between hyperbolic elements in \GGBS groups is decidable. From  \cite{BoMaVe} we may deduce that the conjugacy problem between elliptic elements of $G$ is not solvable. Proposition \ref{JSJconj} allows to conclude.
\end{proof}

\bibliographystyle{plain}
\bibliography{beeker-compatibilityJSJ}
\vspace{0.3cm}
\begin{flushleft}

\noindent \textsc{Benjamin Beeker}\\
MAPMO,\\
Universit\'e d'Orl\'eans, UFR Sciences\\
B\^atiment de math\'ematiques - Rue de Chartres\\
B.P. 6759\\
45067 Orl\'eans Cedex 2\\
France\\
{\tt benjamin.beeker@univ-orleans.fr}
\end{flushleft}
\end{document}